\documentclass[11pt]{article}
\usepackage[a4paper,margin=27mm]{geometry}
\usepackage{amsmath,amssymb,amsthm,mathtools,booktabs,array,longtable}
\usepackage{xcolor,graphicx}
\usepackage[T1]{fontenc}\usepackage{lmodern,microtype}
\usepackage[colorlinks=true,linkcolor=blue,citecolor=blue,urlcolor=blue]{hyperref}
\hypersetup{pdftitle={Global small-data stability near the KZ spectrum},pdfauthor={Research manuscript},pdfsubject={Detailed proof with stated EMV and CDG inputs}}
\newtheorem{theorem}{Theorem}[section]
\newtheorem{lemma}[theorem]{Lemma}
\newtheorem{proposition}[theorem]{Proposition}
\newtheorem{definition}[theorem]{Definition}
\newtheorem{remark}[theorem]{Remark}

\newcommand{\Z}{\mathcal Z}
\newcommand{\C}{\mathcal C}
\newcommand{\F}{\mathcal F}
\newcommand{\norm}[1]{\left\|#1\right\|}
\newcommand{\dd}{\,\mathrm d}
\newcommand{\address}[1]{\AtEndDocument{\par\bigskip\noindent\textit{#1}\par}}
\newcommand{\email}[1]{\AtEndDocument{\noindent\textit{Email:} \texttt{#1}\par}}
\newcommand{\esssup}{\mathop{\mathrm{ess\,sup}}}
\newcommand{\Id}{\mathrm I}

\numberwithin{equation}{section}
\newsavebox{\equationbox}
\newcommand{\fitdisplay}[1]{\sbox{\equationbox}{$\displaystyle #1$}%
\ifdim\wd\equationbox>.96\linewidth
\resizebox{.96\linewidth}{!}{\usebox{\equationbox}}%
\else\usebox{\equationbox}\fi}
\title{Global Stability of the Kolmogorov--Zakharov Spectrum
		for the Wave Kinetic Equation\\[3pt]\large }
\author{Jiajun Wang}
\address{Courant Institute of Mathematical Sciences,
	New York University, New York, NY 10012, USA}
\email{jw9409@nyu.edu}
\date{}
\begin{document}\maketitle
\begin{abstract}
	We prove global nonlinear stability near the mass
	Kolmogorov--Zakharov spectrum
	$f_M(\omega)=\omega^{-7/6}$ for the isotropic cubic
	wave kinetic equation, including small time-dependent
	external forces.
	For sufficiently small initial perturbations and
	forcing, the solution remains uniformly close to
	$f_M$ for all time in a weighted norm.
	This norm allows
	$f(t,\omega)\sim a_0(t)\omega^{-7/6}$ as $\omega\to0$
	and $f(t,\omega)\sim a_\infty(t)\omega^{-7/6}$
	as $\omega\to\infty$.
	
	When the external force $\phi$ is independent of time, the solution
	converges at an algebraic rate to a stationary solution
	in weaker weighted norms.
	We also give a necessary and sufficient condition
	on the initial perturbation
	for convergence in the original weighted norm.
	These results resolve, for small perturbations of the
	Kolmogorov--Zakharov spectrum, the question of
	convergence to a stationary solution under
	time-independent forcing raised by
	Collot, Dietert, and Germain
	\cite[Section~1.3]{CDG}.
	
	To establish this stability, the collision integral
	is expanded around $a_0(t)f_M$ at low frequencies
	and $a_\infty(t)f_M$ at high frequencies, separating
	it into a linear term with time-dependent coefficients
	and a nonlinear remainder.
	This decomposition yields improved control of the
	nonlinear remainder.
	
	The main difficulty is that $a_0(t)$ may vary in time,
	so the linear operator itself changes with time and
	its evolution cannot be represented by the semigroup
	of a single fixed operator.
	To address this difficulty, we use a parametrix
	construction to explicitly construct the evolution
	operator $U_{a_0,a_\infty}$ and establish the required
	uniform bounds.
	Moreover, our arguments provide a general framework
	for proving global stability that may also apply
	to other evolution equations.
\end{abstract}
\tableofcontents
\section{Equation, spaces, and main results}\label{sec:setting}
\subsection{Introduction}

Wave turbulence theory describes the statistical evolution of
large systems of weakly interacting dispersive waves.
Its applications include surface water waves, plasma waves,
nonlinear optics, and Bose gases.
Rather than following each wave individually, the theory studies
the distribution of wave intensity among Fourier modes.
Under suitable assumptions on weak nonlinearity and the statistics
of the waves, resonant interactions lead to a wave kinetic equation
for this distribution.
A central prediction is the transfer of conserved quantities through stationary power-law spectra.
The monographs \cite{ZLF1992,Nazarenko2011} provide systematic
accounts of these ideas and their physical applications.

The rigorous derivation of wave kinetic equations from nonlinear
dispersive equations has undergone substantial development.
For the cubic nonlinear Schr\"odinger equation with random initial
data on large tori, Buckmaster, Germain, Hani, and Shatah
\cite{BGHS2021} established the onset of the kinetic description
on a nontrivial time scale.
Deng and Hani \cite{DH2023} subsequently derived the wave kinetic
equation up to a multiple of the kinetic time scale
in a large box with weak-nonlinearity.
Their later work \cite{DHLongTime} extends this description to
arbitrary fixed times within the lifespan of the limiting kinetic
solution.
These results establish a rigorous connection between the
underlying dispersive dynamics and the kinetic equation.
They also motivate the study of the kinetic equation itself,
particularly its long-time dynamics and the stability of its
stationary spectra.

In this paper we consider the following isotropic
four-wave kinetic equation associated with the
cubic nonlinear Schr\"odinger equation.
\begin{equation}\label{eq:PDE}
	\partial_t f=\C(f),\qquad f(0,\omega)=f_0(\omega),
\end{equation}
where
\begin{equation}\label{eq:collision}
	\C(f)(\omega_1)=
	\iint_{\omega_3,\omega_4>0,\ \omega_2>0}
	W\big[(f_1+f_2)f_3f_4-(f_3+f_4)f_1f_2\big]
	\dd\omega_3\dd\omega_4.
\end{equation}
Here $f=f(t,\omega)$ is the isotropic wave spectrum,
$t\ge0$ is time, $\omega=|k|^2>0$ is the squared modulus
of the wave vector $k\in\mathbb R^3$.
And we write $f_i=f(t,\omega_i)$ and use the notations
\begin{equation*}
	\omega_2=\omega_3+\omega_4-\omega_1>0,
	\qquad
	W=\frac{\min(\sqrt{\omega_1},\sqrt{\omega_2},
		\sqrt{\omega_3},\sqrt{\omega_4})}{\sqrt{\omega_1}}.
\end{equation*}
We also study the corresponding equation with an external force
$\phi=\phi(t,\omega)$,
\begin{equation*}
	\partial_t f=\C(f)+\phi.
\end{equation*}

For sufficiently integrable solutions, the equation (\ref{eq:PDE})
formally conserves the following mass and energy:
\begin{equation*}
	\int_0^\infty \omega^{1/2}f(\omega)\,\dd\omega
	\qquad\text{and}\qquad
	\int_0^\infty \omega^{3/2}f(\omega)\,\dd\omega,
\end{equation*}
respectively.
The transfer of these quantities toward low or high frequencies
is an important feature of the dynamics.
Escobedo and Vel\'azquez \cite{EV2015} developed a theory of weak
and mild solutions for the isotropic equation and established
results on existence, long-time behavior, blow-up, and
finite-time condensation.
For the four-wave kinetic equation without an isotropy assumption
on the solution, Germain, Ionescu, and Tran \cite{GIT2020}
proved local well-posedness in nearly critical weighted spaces,
allowing a class of radial dispersion relations.
These works provide approaches to the Cauchy problem
and to the transfer of mass and energy across frequencies.

Our interest is in the Kolmogorov--Zakharov spectrum
\begin{equation*}
	f_M(\omega)=\omega^{-7/6}.
\end{equation*}
It satisfies $\C(f_M)=0$, but the stationarity of $f_M$ does not mean that mass is not transferred between frequencies. In fact, it describes an inverse mass cascade,
with mass transferred from high to low frequencies
\cite{ZLF1992,CDG}.

Let $J_M(f)(\omega)$ denote the net mass transferred
per unit time across the frequency $\omega$, with
positive sign for transfer from lower to higher frequencies.
The collision-integral definition of this flux is given
in \cite{CDG}.
For the unforced equation (\ref{eq:PDE}), the corresponding mass 
on a frequency interval $[a,b]\subset(0,\infty)$ satisfies
\begin{equation*}
	\frac{d}{dt}\int_a^b \omega^{1/2}f(t,\omega)\,\dd\omega
	=
	J_M(f(t))(a)-J_M(f(t))(b).
\end{equation*}
Thus the mass in $[a,b]$ remains constant whenever the
fluxes at $a$ and $b$ agree.
For $f_M$, the flux is independent of frequency and
strictly negative.
Mass therefore enters $[a,b]$ through $b$ and leaves
through $a$ at the same rate.
This explains how $f_M$ can be stationary while
supporting a nonzero mass flux toward zero frequency.

Although $f_M(\omega)=\omega^{-7/6}$ is unbounded
as $\omega\downarrow0$, its mass and energy densities
are integrable near zero.
Its total mass and energy diverge because of its
high-frequency tail:
\begin{equation*}
	\int_1^\infty \omega^{1/2}f_M(\omega)\,\dd\omega
	=
	\int_1^\infty \omega^{-2/3}\,\dd\omega
	=\infty,
\end{equation*}
\begin{equation*}
	\int_1^\infty \omega^{3/2}f_M(\omega)\,\dd\omega
	=
	\int_1^\infty \omega^{1/3}\,\dd\omega
	=\infty.
\end{equation*}
Requiring the solution itself to have finite mass
and energy would therefore exclude $f_M$.
Moreover, even a small change in its amplitude,
$f=(1+\varepsilon)f_M$ with $\varepsilon\ne0$,
produces a perturbation whose absolute mass and energy
integrals are infinite.
We instead measure $f-f_M$ in the weighted space
$\mathcal Z$ defined below.

Several earlier results are particularly close to this problem.
Escobedo, Mischler, and Vel\'azquez \cite{EMVL}
constructed the fundamental solution of the linear equation
obtained by linearization around $\omega^{-7/6}$
and derived detailed asymptotic estimates.
Their subsequent work \cite{EMVN}
established local existence and uniqueness for solutions
of the Uehling--Uhlenbeck equation that behave like
$a(t)\omega^{-7/6}$ as $\omega\to0$,
under additional regularity assumptions near zero and
exponential decay at infinity.
The fundamental-solution estimates in \cite{EMVL,EMVN}
provide the starting point for our linear analysis.

For the same Kolmogorov--Zakharov spectrum,
Collot, Dietert, and Germain \cite{CDG}
proved stability under small time-independent forcing
in the stationary setting.
They constructed nonnegative solutions of
$-\mathcal C(f)=\phi$ close to $\omega^{-7/6}$
and identified their leading coefficients at zero and infinity,
together with the associated mass and energy fluxes.
Their weighted trilinear estimates are also used below
to control the nonlinear remainder.
In contrast, we study the evolution from prescribed initial data
and seek bounds uniform for all positive times.

Recent work of Costa, Krstulovic, and Nazarenko
\cite{CKN2026Acoustic} investigates the stability of
Rayleigh--Jeans and Kolmogorov--Zakharov spectra for
acoustic wave kinetic equations through theoretical analysis
and numerical simulations.
For the cubic NLS wave kinetic equation, their subsequent
preprint \cite{CKN2026NLS} studies the spectrum
$f_M(\omega)=\omega^{-7/6}$.
They analyze the linearized equation using the Mellin transform
and a Carleman equation, and compare the predicted evolution
of perturbations with simulations of the nonlinear equation.
Their analysis does not establish global existence, uniqueness,
and a uniform weighted-norm bound for the nonlinear
Cauchy problem.

Global nonlinear stability has also been established
near Rayleigh--Jeans equilibria.
Menegaki \cite{Menegaki2024} proved an $L^2$ stability result
with a frequency cutoff.
More recently, Escobedo and Menegaki \cite{EM2026RJ}
treated the full frequency domain and proved global
well-posedness and exponential relaxation near
$(\omega+\mu)^{-1}$, with $\mu>0$.
Their theorem assumes small relative perturbations in
$L^2((0,\infty);\sqrt{\omega}\,d\omega)$,
orthogonal to the kernel of the linearized operator.
These results similarly use linear estimates to establish
nonlinear stability.
However, Rayleigh--Jeans equilibria have different behavior
at both zero and infinity from $\omega^{-7/6}$,
so these theorems concern different backgrounds and
perturbation spaces.

We prove global nonlinear stability near $f_M$
on the full frequency domain.
For sufficiently small initial perturbations, the solution
of \eqref{eq:PDE} remains uniformly close to $f_M$ for all
$t\ge0$ in the weighted space $\mathcal Z$.
This space allows small changes in the asymptotic behavior at zero and infinity:
\begin{equation*}
	f(t,\omega)\sim a_0(t)\omega^{-7/6}
	\quad\text{as }\omega\to0,
	\qquad
	f(t,\omega)\sim a_\infty(t)\omega^{-7/6}
	\quad\text{as }\omega\to\infty,
\end{equation*}
where $a_0(t)$ and $a_\infty(t)$ remain close to one.
The same stability conclusion holds under a sufficiently
small time-dependent external force, measured in the
space-time norm introduced in \eqref{eq:force-norm}.
No convergence of the force as $t\to\infty$ is required.

To establish this stability, the collision integral is
expanded around $a_0(t)f_M$ at low frequencies and
$a_\infty(t)f_M$ at high frequencies.
This separates the equation into a linear term with
time-dependent coefficients and a nonlinear remainder.
The main difficulty is that $a_0(t)$ and $a_\infty(t)$
depend on the evolving solution.
The resulting linear operator therefore changes with time,
and its evolution is not given by the semigroup of a fixed
operator.

To handle this difficulty, we explicitly construct the
evolution operator $U_{a_0,a_\infty}$ by a parametrix method.
Starting from the fixed linear evolution, we use two
different time changes for the low- and high-frequency
parts and combine them with a frequency cutoff.
The resulting error is corrected by a convergent series
of time integrals.
This construction yields uniform operator bounds and
Lipschitz estimates with respect to $a_0$ and $a_\infty$.

This decomposition also improves control of the nonlinear
remainder.
After the linear contribution around $a_0(t)f_M$ is
removed, every remaining collision term contains at least
two factors of $f-a_0(t)f_M$, which are less singular
at zero.
Similarly, subtracting the linear contribution around
$a_\infty(t)f_M$ leaves at least two factors of
$f-a_\infty(t)f_M$, which decay faster at infinity.
Together with the estimates for $U_{a_0,a_\infty}$, this
closes the global stability argument.
The proof provides a general framework that may also
apply to other evolution equations for which analogous
linear estimates and nonlinear decompositions are available.

We now introduce the symmetric trilinear form used to
expand \eqref{eq:collision}.
First define
\begin{equation}\label{eq:barC}
	\overline\C(f,g,h)=
	2\iint_{0<\omega_3<\omega_4,\ \omega_2>0}
	W\big[(f_1+f_2)g_3h_4-(g_3+g_4)h_1f_2\big]
	\dd\omega_3\dd\omega_4.
\end{equation}
We define $\C(f,g,h)$ as the average of
$\overline\C$ evaluated at the six permutations of $(f,g,h)$.
In particular $\C(f,f,f)=\C(f)$.

\subsection{Weights and the two coefficients}
Fix 
\begin{equation}\label{eq:parameters}
 0<\delta<\frac1{12}.
\end{equation}
For real numbers $a,b$, $E_{a,b}$ is the space of measurable functions with norm
\begin{equation}\label{eq:E}
 \norm{g}_{E_{a,b}}=
 \max\left(\esssup_{0<\omega\le1}\omega^a|g(\omega)|,
           \esssup_{\omega\ge1}\omega^b|g(\omega)|\right)<\infty.
\end{equation}
Set
\begin{equation*}
 X=E_{\frac76-\delta,\frac76+\delta},\qquad Y=E_{\frac32-2\delta,\frac32+2\delta}.
\end{equation*}
Choose a fixed smooth function $\chi:(0,\infty)\to[0,1]$ such that
$\chi=0$ on $(0,1]$ and $\chi=1$ on $[2,\infty)$.
Define
\begin{equation}\label{eq:modes}
 f_M(\omega)=\omega^{-\frac76},\qquad \Z=\mathbb R (1-\chi)f_M\oplus\mathbb R \chi f_M\oplus X.
\end{equation}
Every $z\in\Z$ admits the following unique decomposition,
in terms of which we define its norm:
\begin{equation}\label{eq:Znorm}
	z=z_0(1-\chi)f_M+z_\infty\chi f_M+h,
	\qquad
	\norm{z}_\Z:=|z_0|+|z_\infty|+\norm{h}_X,
\end{equation}
where $z_0,z_\infty\in\mathbb R$ and $h\in X$.

Indeed $\omega^{7/6} h(\omega)$ tends to zero at both $0$ and $\infty$. Thus $z_0,z_\infty$ are the
corresponding limits of $\omega^{7/6}z$. This proves uniqueness,
and also that the two coefficient maps are bounded linear functionals.
The direct sum is complete. An important pointwise consequence is
\begin{equation}\label{eq:envelope}
 |z(\omega)|\le\norm{z}_\Z\omega^{-\frac76} \quad\hbox{for almost every }\omega>0.
\end{equation}
For $f\in\Z$, write $a_0(f),a_\infty(f)$ for its two coefficients:
\begin{equation}\label{eq:endpointfunctionals}
 a_0(f)=\mathop{\mathrm{ess\,lim}}_{\omega\downarrow0}\omega^{7/6}f(\omega),
 \qquad
 a_\infty(f)=\mathop{\mathrm{ess\,lim}}_{\omega\to\infty}\omega^{7/6}f(\omega).
\end{equation}
Thus $a_0(z)=z_0$, $a_\infty(z)=z_\infty$, and
$|a_0(z)|+|a_\infty(z)|\le\norm z_\Z$.
We will repeatedly use the elementary inclusion
\begin{equation*}
 a\le c,\quad b\ge d\quad\Longrightarrow\quad
 E_{a,b}\subset E_{c,d},\qquad
 \norm{g}_{E_{c,d}}\le\norm{g}_{E_{a,b}}.
\end{equation*}

\subsection{Solution class and the main results}
\begin{definition}[Local initial trace]
	Let $0\le s<T<\infty$ and
	$g\in L^\infty_{\mathrm{loc}}(0,\infty)$.
	A function $u:(s,T]\to L^\infty_{\mathrm{loc}}(0,\infty)$
	has local initial trace $g$ at time $s$ if
	\begin{equation*}
		\lim_{t\downarrow s}
		\|u(t)-g\|_{L^\infty(I)}=0
		\qquad
		\text{for every compact interval }I\subset(0,\infty).
	\end{equation*}
	In particular, $u(t)$ converges to $g$ almost everywhere.
\end{definition}

For a function $v$ of the frequency variable $\omega$,
$v|_I$ denotes its restriction to the interval $I$.

\begin{definition}[Integral solution]\label{definition of integral solution}
	Let $0\le s<T<\infty$ and
	$g\in L^\infty_{\mathrm{loc}}(0,\infty)$.
	Consider the equation $\partial_tu=\mathcal A(t,u)$.
	A function $u:[s,T]\to L^\infty_{\mathrm{loc}}(0,\infty)$
	is an integral solution with initial value $g$ if
	$\mathcal A(t,u(t))$ is defined for almost every
	$t\in(s,T)$ and, for every compact interval
	$I\subset(0,\infty)$,
	\begin{equation*}
		u(\cdot)|_I\in C([s,T];L^\infty(I)),
	\end{equation*}
	\begin{equation*}
		\mathcal A(\cdot,u(\cdot))|_I
		\in L^1((s,T);L^\infty(I)),
	\end{equation*}
	and
	\begin{equation*}
		u(t)|_I
		=
		g|_I+\int_s^t\mathcal A(r,u(r))|_I\,\dd r,
		\qquad s\le t\le T.
	\end{equation*}
	The integral is a Bochner integral in $L^\infty(I)$,
	and the equality holds in that space.
	In particular, $u(s)=g$ and $u$ has local initial trace
	$g$ at time $s$.
	
	In particular, a global integral solution of
	\eqref{eq:PDE} satisfies
	\begin{equation}\label{eq:integralsolution}
		f(t)|_I
		=
		f_0|_I+\int_0^t\mathcal C(f(r))|_I\,\dd r,
		\qquad t\ge0,
	\end{equation}
	for every compact interval $I\subset(0,\infty)$,
	with the same interpretation of the integral and equality.
\end{definition}

Our first result establishes global stability for the
unforced equation \eqref{eq:PDE}.

\begin{theorem}[Global stability]\label{thm:nonlinear}
	For the fixed $\delta,\chi$ in \eqref{eq:parameters}--\eqref{eq:modes},
	there exist $\varepsilon>0$, $C<\infty$, and $R>0$ such that,
	if $f_0-f_M\in\Z$ and $\norm{f_0-f_M}_\Z\le\varepsilon$, equation
	\eqref{eq:PDE} has a global integral solution satisfying
	\begin{equation*}
		\sup_{t\ge0}\norm{f(t)-f_M}_\Z\le C\norm{f_0-f_M}_\Z.
	\end{equation*}
	The solution has the local initial trace $f_0$ and is unique among
	global integral solutions with
	$\sup_{t\ge0}\norm{f(t)-f_M}_\Z\le R$.
	
\end{theorem}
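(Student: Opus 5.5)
We will prove Theorem~\ref{thm:nonlinear} by a fixed-point argument in the space $L^\infty_t\Z$ for the perturbation, built on the time-dependent linear evolution described in the introduction. We write the unknown as $f=f_M+z$ and introduce the coefficient paths $a_0(t)=1+a_0(z(t))$ and $a_\infty(t)=1+a_\infty(z(t))$. We then expand the trilinear form $\C(f,f,f)$. On $\{\omega\le 2\}$ we expand around $a_0(t)f_M$, and on $\{\omega\ge1\}$ around $a_\infty(t)f_M$, gluing the two with $\chi$. This gives
\[
\partial_t z=L_{a_0(t),a_\infty(t)}z+N(t,z),
\]
where $L_{a_0,a_\infty}$ is the cut-off combination of $3a_0^2\,\C(f_M,f_M,\cdot)$ and $3a_\infty^2\,\C(f_M,f_M,\cdot)$. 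Since $\C(f_M)=0$ and $\C$ is homogeneous, the zeroth-order terms $\C(a_0f_M)$ and $\C(a_\infty f_M)$ vanish. The remainder $N$ then consists of terms containing at least two factors of $f-a_0f_M$ near zero, or two factors of $f-a_\infty f_M$ near infinity, together with commutator terms from the cutoff, which are supported on $[1,2]$ and therefore harmless.

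\textbf{Step 1 (nonlinear estimate).} Using the CDG weighted trilinear estimates, we show that $N(t,z)$ maps $\Z$ boundedly into a space of the type $Y$, with
\[
\norm{N(z)}_Y\lesssim \norm{z}_\Z^2\quad\text{and}\quad \norm{N(z)-N(w)}_Y\lesssim(\norm z_\Z+\norm w_\Z)\norm{z-w}_\Z .
\]
The gain comes from the fact that $f-a_0f_M$ lies in $X$ near $0$: it is $O(\omega^{-7/6+\delta})$ there. Two such factors give the extra $\omega^{2\delta}$ needed to land in the weight $\omega^{3/2-2\delta}$. The analogous statement holds at infinity with the weight $\omega^{3/2+2\delta}$. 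The constraint $\delta<1/12$ should be exactly what makes these integrals converge.

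\textbf{Step 2 (linear evolution).} This is the main obstacle. For coefficient paths $a=(a_0,a_\infty)$ with $\sup_t|a_0(t)-1|+|a_\infty(t)-1|\le\eta$ small, we construct the evolution operator $U_a(t,s)$ of $\partial_t-L_{a(t)}$. We start from the EMV fundamental solution $S(t)$ of the frozen operator $L_{1,1}$. The parametrix is
\[
U^0(t,s)=(1-\chi)S(A_0(t)-A_0(s))+\chi S(A_\infty(t)-A_\infty(s)),\qquad A_\bullet(t)=\int_0^t a_\bullet^2 .
\]
The time rescaling exactly absorbs the $a^2$ factor on each side. The error $R=(\partial_t-L_a)U^0$ is localized near the cutoff region or is proportional to $|a_0-a_\infty|$. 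We correct it by the Duhamel series
\[
U=U^0+U^0\ast R+U^0\ast R\ast R+\cdots
\]
The key bounds we need are
\[
\sup_{t\ge s}\norm{U(t,s)}_{\Z\to\Z}\le C,\qquad \sup_t\Big\|\int_0^tU(t,s)F(s)\,ds\Big\|_\Z\le C\sup_s\norm{F(s)}_{Y}\ \text{(suitably time-weighted)},
\]
together with Lipschitz dependence of $U_a$ on $a$ in $L^\infty_t$. The bounds must come from EMV asymptotics: $S(t)$ maps $Y$ into $X$ with an integrable-in-time gain, and $S(t)$ preserves the coefficient modes. Convergence of the series requires $\int_s^t\norm{R(r,s)}\,dr$ to be small or geometrically summable uniformly. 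I would try to exploit that $R$ is compactly supported in frequency, where $S$ decays in time.

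\textbf{Step 3 (fixed point and uniqueness).} We define the map
\[
\Phi(z)(t)=U_{a(z)}(t,0)z_0+\int_0^tU_{a(z)}(t,s)N(s,z(s))\,ds
\]
on the ball $\{\sup_t\norm{z(t)}_\Z\le 2C\varepsilon\}$. Steps 1 and 2, including the Lipschitz dependence on $a$, show that $\Phi$ is a contraction for $\varepsilon$ small, which yields $\sup_t\norm{z}_\Z\le C\norm{z_0}_\Z$. Local continuity in $L^\infty(I)$ and the integral identity \eqref{eq:integralsolution} follow from the construction of $U$ and from local boundedness of $\C$ on $\Z$-envelopes $\omega^{-7/6}$. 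These give the integral solution and its local initial trace. For uniqueness in the $R$-ball, any such integral solution satisfies the Duhamel formula with its own coefficients: one verifies that $U_a$ inverts $\partial_t-L_a$ on integral solutions. The contraction estimate then forces agreement, provided $R$ is small enough.
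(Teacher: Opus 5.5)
Your architecture matches the paper's. You take the remainder $\mathcal F(f)=(1-\chi)[\mathcal C(f)-a_0(f)^2Lf]+\chi[\mathcal C(f)-a_\infty(f)^2Lf]$ and place it in $Y$ with the CDG trilinear bounds. With this linear part there are in fact no commutator terms in the nonlinearity. You then use the parametrix $Q(t,s)=(1-\chi)S(\int_s^ta_0^2)+\chi S(\int_s^ta_\infty^2)$ corrected by a Volterra series, Lipschitz dependence on $(a_0,a_\infty)$, and a contraction in $L^\infty_t\mathcal Z$.

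The gap is at the step you yourself flag as the main obstacle: the mechanism you propose for summing the series does not work. Write $c=a_0^2(1-\chi)+a_\infty^2\chi$. The defect of the parametrix is
\[
(\partial_t-cL)\,Q(t,s)=-\Bigl(c\,(L\chi-\chi L)-(a_\infty^2-a_0^2)(1-\chi)\chi L\Bigr)\Bigl[S\Bigl(\int_s^ta_\infty^2\Bigr)-S\Bigl(\int_s^ta_0^2\Bigr)\Bigr].
\]
The commutator term is not compactly supported in frequency. Its kernel $K(r/\omega)(\chi(r)-\chi(\omega))$ couples $\omega\le1$ with $r\ge2$, which produces tails bounded by $\omega^{-1/6}$ at $0$ and $\omega^{-11/6}$ at $\infty$. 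Its coefficient $c\approx1$ also carries no factor $|a_0-a_\infty|$. If you bound it using only localization and the decay of $S$, you get a Volterra kernel $\le C\,k(t-s)$ with $C=O(1)$. Its Neumann series can be summed on bounded time intervals, but the resulting bound is uniform in time only if $C\,2^{2+\kappa}\|k\|_{L^1}<1$. So $\|U(t,s)\|_{\mathcal Z\to\mathcal Z}\lesssim1$ and $\|U(t,s)\|_{Y\to\mathcal Z}\lesssim k(t-s)$ do not follow.

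The missing idea is that every term of the defect acts on the bracket $S(\int_s^ta_\infty^2)-S(\int_s^ta_0^2)$, and this bracket is small.
\begin{itemize}
\item Since $\frac12(t-s)\le\int_s^ta_j^2\le2(t-s)$ and $|\int_s^ta_\infty^2-\int_s^ta_0^2|\le5\eta(t-s)$, the derivative bounds $\|tS'(t)\|_{\mathcal Z\to\mathcal Z}\lesssim1$ and $\|tS'(t)\|_{Y\to\mathcal Z}\lesssim k(t)$ apply. These come from the differentiated EMV asymptotics. They make the bracket $\lesssim\eta$ as a map $\mathcal Z\to\mathcal Z$ and $\lesssim\eta\,k(t-s)$ as a map $Y\to\mathcal Z$.
\item Composing with $L\chi-\chi L:E_{7/6,7/6}\to E_{1/6,11/6}\subset Y$ and $(1-\chi)\chi L:E_{7/6,7/6}\to Y$ gives $\|V_1(t,s)\|_{\mathcal Z\to Y}\lesssim\eta$ and $\|V_1(t,s)\|_{Y\to Y}\lesssim\eta\,k(t-s)$.
\item The inequality $\int_s^tk(t-r)k(r-s)\,dr\le2^{2+\kappa}\|k\|_{L^1}k(t-s)$ then makes the series geometric for small $\eta$.
\end{itemize}
The same smallness closes a step you only assert: that any bounded integral solution of the linear problem coincides with the $U$-evolution. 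The paper's backward-differentiation argument produces errors $\lesssim\eta\,k(t-r)\|u(r)\|_{\mathcal Z}$, which are absorbed because $\eta\|k\|_{L^1}$ is small.

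Two smaller inaccuracies. First, $S(t)$ maps $Y$ into $\mathcal Z$, not into $X$, because the $\sigma(\tau)z^{-7/6}$ part of the kernel generates a $(1-\chi)f_M$ component. Second, $S(t)$ does not preserve the low-frequency coefficient; only $S(t)f_M=f_M$ holds. The uniform $\mathcal Z\to\mathcal Z$ bound on the mode $\chi f_M$ comes from $S(t)(\chi f_M)=\chi f_M+\int_0^tS(r)L(\chi f_M)\,dr$, with $L(\chi f_M)\in Y$ and $k\in L^1$.
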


We next consider an external force that may depend on both
time and frequency:
\begin{equation*}
	\partial_t f(t,\omega)
	=\mathcal C(f(t))(\omega)+\phi(t,\omega),
	\qquad f(0,\omega)=f_0(\omega).
\end{equation*}
This is equation \eqref{eq:forced-equation}.
The force is measured in the space $\mathcal P$, defined
in Section~\ref{sec:time-dependent-forcing} by the
space-time norm \eqref{eq:force-norm}.
In particular, $\mathcal P$ contains
$L^\infty((0,\infty);Y)$.
The following theorem extends
Theorem~\ref{thm:nonlinear} to sufficiently small forces
in $\mathcal P$, without requiring convergence of
$\phi(t)$ as $t\to\infty$.

\begin{theorem}[Global stability with external force]\label{thm:forced-stability}
	There exist constants
	$\varepsilon_{\mathrm{ext}}>0$,
	$R_{\mathrm{ext}}>0$, and
	$C_{\mathrm{ext}}<\infty$, with the following property.
	
	Let $f_0-f_M\in\mathcal Z$, $\phi\in\mathcal P$ and 
	\begin{equation}\label{eq:forced-smallness}
		\|f_0-f_M\|_{\mathcal Z}
		+\|\phi\|_{\mathcal P}
		\le\varepsilon_{\mathrm{ext}}.
	\end{equation}
	Then \eqref{eq:forced-equation} has a global integral
	solution satisfying
	\begin{equation}\label{eq:forced-stability}
		\sup_{t\ge0}\|f(t)-f_M\|_{\mathcal Z}
		\le
		C_{\mathrm{ext}}
		\bigl(
		\|f_0-f_M\|_{\mathcal Z}
		+\|\phi\|_{\mathcal P}
		\bigr).
	\end{equation}
	The solution is unique among integral solutions for
	which $f-f_M$ satisfies
	\begin{equation*}
		\sup_{t\ge0}\|f(t)-f_M\|_{\mathcal Z}
		\le R_{\mathrm{ext}}.
	\end{equation*}
	
\end{theorem}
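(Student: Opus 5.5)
We will reduce Theorem~\ref{thm:forced-stability} to the same fixed-point scheme used for Theorem~\ref{thm:nonlinear}, treating $\phi$ as an additional inhomogeneous source in the Duhamel formula. Write $f=f_M+z$ with $z(t)\in\Z$, and set $a_0(t)=1+z_0(t)$, $a_\infty(t)=1+z_\infty(t)$. Expanding $\C(f)$ with the symmetric trilinear form $\C(\cdot,\cdot,\cdot)$, we split $\C(f)=L_{a_0(t),a_\infty(t)}z+N(z)$. Here $L_{a_0,a_\infty}$ is the linearization around $a_0f_M$ on the low-frequency side, glued to the linearization around $a_\infty f_M$ on the high-frequency side with $\chi$. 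The remainder $N$ contains, near zero, at least two factors of $f-a_0f_M$, and near infinity at least two factors of $f-a_\infty f_M$. The equation then becomes
\begin{equation*}
z(t)=U_{a_0,a_\infty}(t,0)z(0)+\int_0^tU_{a_0,a_\infty}(t,s)\bigl[N(z(s))+\phi(s)\bigr]\dd s ,
\end{equation*}
where $U_{a_0,a_\infty}$ is the evolution operator built by the parametrix construction described in the introduction. We use its uniform bounds and its Lipschitz dependence on $(a_0,a_\infty)$ as established earlier.

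The key steps are as follows. First, we record the linear estimate for the source term: for $\phi\in\mathcal P$,
\begin{equation*}
\sup_t\Bigl\|\int_0^tU_{a_0,a_\infty}(t,s)\phi(s)\dd s\Bigr\|_\Z\le C\|\phi\|_{\mathcal P},
\end{equation*}
uniformly for $|a_0-1|,|a_\infty-1|\le R$. The norm \eqref{eq:force-norm} is presumably designed for exactly this bound; in the model case $\phi\in L^\infty Y$ it follows from the smoothing and decay of $U$ from $Y$ to $\Z$ in $t-s$. The same Duhamel bound already controls $N(z)$ in the unforced proof, via the CDG trilinear estimates placing $N(z)$ in $Y$, or in $\mathcal P$, with norm $\lesssim\|z\|_\Z^2$. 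Second, we define the map $\Phi$ on the ball $B_R=\{z:\sup_t\|z(t)\|_\Z\le R\}$. We freeze the coefficients $a_0,a_\infty$ from the input $z$ and apply the right side of the Duhamel formula. The resulting bound is
\begin{equation*}
\sup_t\|\Phi(z)\|_\Z\le C(\|z(0)\|_\Z+\|\phi\|_{\mathcal P}+R^2).
\end{equation*}
The difference estimate combines two ingredients: the Lipschitz bound of $U$ in $(a_0,a_\infty)$, which gives terms of size $C(\varepsilon+\|\phi\|_{\mathcal P}+R)\,\|z-\tilde z\|$, and the bilinear structure of $N$. Choosing $R=2C(\|f_0-f_M\|_\Z+\|\phi\|_{\mathcal P})$ with $\varepsilon_{\mathrm{ext}}$ small yields a contraction. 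The fixed point gives \eqref{eq:forced-stability} with $C_{\mathrm{ext}}=2C$.

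Third, we check that the fixed point is an integral solution in the sense of Definition~\ref{definition of integral solution}. On compact $I$, the local $L^\infty$ continuity of $\C(f)$ and the assumed local integrability of $\phi$ in $\mathcal P$ give the required regularity. We then check that the Duhamel formulation and the integral formulation agree. This follows from the defining property $\partial_tU=L U$ of the constructed evolution operator, applied on compact frequency sets. Uniqueness among integral solutions with $\sup_t\|f-f_M\|_\Z\le R_{\mathrm{ext}}$ follows by running the argument backwards. Any such solution satisfies the Duhamel identity with its own coefficients, hence is a fixed point of $\Phi$ in the ball, and the contraction forces it to agree with the constructed solution.

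The main obstacle is the compatibility between a force that need not converge in time and the time-dependent coefficients. The forcing continually shifts $a_0(t)$ and $a_\infty(t)$, so the source estimate must hold uniformly for arbitrary admissible coefficient paths, not just near-constant ones. The Lipschitz estimate in $(a_0,a_\infty)$ must then be applied to the forced Duhamel term without losing a factor growing in $t$. I would first try to prove that bound for the parametrix series term by term. I would check that each correction integral, acting on a $\mathcal P$-source, retains the integrable decay in $t-s$ that the parametrix provides for $Y$-data.
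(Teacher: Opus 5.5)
Your proposal is correct and follows essentially the same route as the paper. The paper runs a contraction for the Duhamel map $\mathcal T_\phi$ on a small ball of $L^\infty((0,\infty);\mathcal Z)$, using \eqref{eq:L1}--\eqref{eq:L3Y} together with \eqref{eq:N1}--\eqref{eq:N2}, and it uses Lemma~\ref{lem:forcing-P} both to turn the fixed point into an integral solution and to show that any integral solution in the $R_{\mathrm{ext}}$-ball is a fixed point. The obstacle you anticipate does not arise: \eqref{eq:L2} and \eqref{eq:L3Y} are operator-norm bounds that hold uniformly over all coefficient paths with $\|a_j-1\|_{L^\infty}\le\eta$, so applying them pointwise in $s$ to $\phi(s)\in Y$ gives the bounds $C\|\phi\|_{\mathcal P}$ and $C\|\phi\|_{\mathcal P}\|u-v\|_{L^\infty_t\mathcal Z}$ at once, with no need to revisit the parametrix series. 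Also, the homogeneous term of the map should carry the fixed datum $f_0-f_M$, not the input's $z(0)$.
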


When the force is independent of time, uniform stability
can be strengthened to convergence to a stationary solution.
This addresses, for small perturbations of $f_M$, the
question raised in \cite[Section~1.3]{CDG} of whether
solutions of the forced evolution approach a stationary
state.
The conclusion below concerns this question near the mass
Kolmogorov--Zakharov spectrum; it does not establish the
full conjectured scenario for arbitrary initial data
and stationary states with different power laws at zero
and infinity.

More precisely, the high-frequency coefficient
$a_\infty(f(t))$ remains equal to $a_\infty(f_0)$ and
determines the stationary solution approached by $f(t)$.
Convergence holds in every $\mathcal Z_\theta$ with
$0<\theta<\delta$, with the algebraic rate stated in
\eqref{eq:asymptotic-nonlinear-rate}.
Here $\mathcal Z_\theta$ is defined in the same way as
$\mathcal Z=\mathcal Z_\delta$, with $\delta$ replaced
by $\theta$, and $\beta>0$ is specified in
\eqref{eq:asymptotic-beta}.
The theorem also gives a necessary and sufficient
condition for convergence in the original space
$\mathcal Z_\delta$.

\begin{theorem}[Convergence under time-independent forcing]
	\label{thm:stationary-convergence}
	There exists $\varepsilon_*>0$ such that the following holds.
	Suppose that
	\begin{equation*}
		f_0-f_M\in\mathcal Z_\delta,
		\qquad \phi\in Y_\delta,
		\qquad
		\|f_0-f_M\|_{\mathcal Z_\delta}+\|\phi\|_{Y_\delta}
		\le\varepsilon_*.
	\end{equation*}
	Let $f$ be the global integral solution of
	\eqref{eq:asymptotic-equation} given by
	Theorem~\ref{thm:forced-stability}.
	Set $b=a_\infty(f_0)$, and let $f_*$ be the stationary solution
	in Lemma~\ref{lem:asymptotic-stationary} with
	$a_\infty(f_*)=b$.
	
	For every $0<\theta<\delta$,
	\begin{equation}\label{eq:asymptotic-nonlinear-rate}
		\|f(t)-f_*\|_{\mathcal Z_\theta}
		\lesssim_\theta
		(1+t)^{-\beta\min\{1,\,2(\delta-\theta)/\delta\}}
		\|f_0-f_*\|_{\mathcal Z_\delta},
		\qquad t\ge0.
	\end{equation}
	Moreover,
	\begin{equation}\label{eq:asymptotic-original-convergence}
		\lim_{t\to\infty}\|f(t)-f_*\|_{\mathcal Z_\delta}=0
	\end{equation}
	holds if and only if
	\begin{equation}\label{eq:asymptotic-initial-tail}
		\lim_{R\to\infty}
		\operatorname*{ess\,sup}_{\omega\ge R}
		\omega^{7/6+\delta}
		|f_0(\omega)-b f_M(\omega)|=0.
	\end{equation}
	The smallness threshold $\varepsilon_*$ is independent of $\theta$.
\end{theorem}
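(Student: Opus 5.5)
We will compare the global solution $f$ from Theorem~\ref{thm:forced-stability} with the stationary solution $f_*$ of Lemma~\ref{lem:asymptotic-stationary}, and study $w(t)=f(t)-f_*$. Since $\mathcal C(f_*)+\phi=0$, the difference solves
\begin{equation*}
\partial_t w=\mathcal C(f_*+w)-\mathcal C(f_*)=L_{*}(t)w+N(w),
\end{equation*}
with $N$ at least quadratic in $w$. We first show that $a_\infty(f(t))=b$ for all $t$. The collision term and the force $\phi\in Y_\delta$ decay like $\omega^{-3/2-2\delta}$ at infinity, which is faster than $\omega^{-7/6}$. Passing to the limit $\omega\to\infty$ in the integral formulation \eqref{eq:integralsolution} after multiplying by $\omega^{7/6}$ therefore gives $a_\infty(w(t))=0$. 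The same argument applies with the evolving coefficient $a_0(f(t))$ replaced by the linearization around $f_*$. We then use the parametrix evolution operator $U_{a_0,a_\infty}$, with $a_\infty\equiv b$ frozen and $a_0(t)$ the actual low-frequency coefficient of the solution. The mode $(1-\chi)f_M$ is not an obstruction: because the flux of $f_*$ is prescribed, $a_0(w(t))$ is expected to relax to zero through the linear dynamics.

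The key linear input is a decay estimate for $U$ acting on data with vanishing high-frequency coefficient:
\begin{equation*}
\|U(t,s)h\|_{\mathcal Z_\theta}\lesssim_\theta (1+t-s)^{-\beta\min\{1,2(\delta-\theta)/\delta\}}\|h\|_{\mathcal Z_\delta}.
\end{equation*}
We would derive it from the EMV fundamental-solution asymptotics. The self-similar structure of the linearized operator around $\omega^{-7/6}$ transports perturbations in frequency at algebraic rates. Trading the weight gap $\delta-\theta$ at both ends converts the extra decay of $X_\delta$ relative to $X_\theta$ into time decay. At low frequency this is a gain $\omega^{\delta-\theta}$ evaluated at the transported scale, and the high-frequency tail is handled likewise. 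The factor $2(\delta-\theta)/\delta$ reflects this interpolation, capped at the full rate $\beta$.

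For the nonlinear step we write Duhamel's formula $w(t)=U(t,0)w_0+\int_0^tU(t,s)N(w(s))\,\mathrm ds$. We would bound $N(w)$ in $Y$ by the CDG trilinear estimates, using the quadratic structure, which produces at least two factors of $w$. We then run a bootstrap on $M(t)=\sup_{s\le t}(1+s)^{\gamma}\|w(s)\|_{\mathcal Z_\theta}$, with $\|w\|_{\mathcal Z_\delta}\lesssim\varepsilon$ supplied by Theorem~\ref{thm:forced-stability}. The product $\|w\|_{\mathcal Z_\delta}\|w\|_{\mathcal Z_\theta}$ carries one decaying factor, and convolving $(1+t-s)^{-\gamma}$ against $(1+s)^{-\gamma}$ requires splitting the Duhamel integral at $s=t/2$. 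The $\theta$-independence of $\varepsilon_*$ would come from closing the bootstrap once in a fixed intermediate norm, say $\theta=\delta/2$, and then interpolating. The main obstacle is this linear decay estimate for the time-dependent operator $U$. It must hold uniformly in $a_0(\cdot)$, which forces the parametrix series to be resummed with decaying kernels, and the weight loss must be tracked carefully.

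For the equivalence in \eqref{eq:asymptotic-original-convergence}, we handle the two directions separately.

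Sufficiency: under \eqref{eq:asymptotic-initial-tail}, approximate $w_0$ in $\mathcal Z_\delta$ by data whose high tail lies in a strictly better weight space, for which the argument above gives decay in $\mathcal Z_\delta$ itself. Uniform $\mathcal Z_\delta$ stability of the difference, via the Lipschitz bounds, then controls the error term. The low-frequency side needs no condition, since the dynamics transport mass toward zero and regularize there.

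Necessity: the high-frequency tail of $\omega^{7/6+\delta}w$ is asymptotically preserved. Integrated over $[0,t]$, the collision and forcing contributions lie in $Y$, so at large $\omega$ they are $o(\omega^{-7/6-\delta})$. Hence $\limsup_{\omega\to\infty}\omega^{7/6+\delta}|w(t)|$ is independent of $t$, and convergence forces it to vanish. We also need $f_*-bf_M$ to have vanishing tail in this weight, which we expect from Lemma~\ref{lem:asymptotic-stationary}.
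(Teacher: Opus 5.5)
Your overall architecture matches the paper: you study $w=f-f_*$, use conservation of $a_\infty$, write Duhamel's formula with $U_{a_0(f),b}$, close the estimate in $\mathcal Z_{\delta/2}$ and interpolate, and use tail persistence for necessity. However, the step you yourself call the main obstacle, the linear decay estimate, is left unproved. The route you suggest (EMV transport asymptotics, resumming the parametrix with decaying kernels) is unclear and unnecessary.

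The missing idea is a high-frequency truncation that uses only the coefficient-uniform bounds \eqref{eq:asymptotic-U-bounds}--\eqref{eq:asymptotic-U-smoothing} at $\theta=\delta/2$. If $a_\infty(z)=0$, put $z_R=\mathbf 1_{\{\omega\le R\}}z$.
\begin{itemize}
\item The whole low-frequency part, including the mode $a_0(z)(1-\chi)f_M$, already lies in $Y_{\delta/2}$, because $\omega^{3/2-\delta}\omega^{-7/6}\le1$ on $(0,1]$. Hence $\|z_R\|_{Y_{\delta/2}}\lesssim R^{1/3}\|z\|_{\mathcal Z_\delta}$.
\item The tail satisfies $\|z-z_R\|_{\mathcal Z_{\delta/2}}\le R^{-\delta/2}\|z\|_{\mathcal Z_\delta}$.
\item Therefore $\|U(t,0)z\|_{\mathcal Z_{\delta/2}}\lesssim(R^{-\delta/2}+t^{-1-\kappa}R^{1/3})\|z\|_{\mathcal Z_\delta}$ for $t\ge1$. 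Choosing $R\sim t^{(1+\kappa)/(1/3+\delta/2)}$ gives exactly $t^{-\beta}$ with $\beta$ as in \eqref{eq:asymptotic-beta}.
\end{itemize}
No relaxation or flux argument for $a_0(w)$ is needed, and your heuristic gives no way to arrive at this specific $\beta$. With $R$ fixed and $t\to\infty$, the same splitting gives $\|U(t,0)z\|_{\mathcal Z_\delta}\to0$ under the tail condition. This makes sufficiency direct; the Duhamel term is then absorbed by a $\limsup$ argument using the integrable kernel and smallness. Your density argument instead presupposes $\mathcal Z_\delta$-decay for approximants that are improved only at high frequency. The argument above gives decay from $\mathcal Z_\delta$ only into weaker spaces $\mathcal Z_\theta$, so it does not provide that.

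The nonlinear step as written would not close either.
\begin{itemize}
\item \textbf{The convolution kernel.} You convolve $(1+t-s)^{-\gamma}$ against $(1+s)^{-\gamma}$ with $\gamma\le\beta<1$. This integral is of order $t^{1-2\gamma}\gg t^{-\gamma}$; the range $0<s<t/2$ alone contributes that much, so no splitting recovers the rate. The Duhamel term must instead be estimated with the integrable smoothing kernel of \eqref{eq:asymptotic-U-smoothing}, for which $\int_0^t k_{\delta/2}(t-s)(1+s)^{-\beta}\,ds\lesssim(1+t)^{-\beta}$. The decay estimate is needed only for the homogeneous term.
\item \textbf{The source term.} With $U_{a_0(f),b}$ the source is not quadratic in $w$. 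It is $G=[a_0(f)^2-a_0(f_*)^2](1-\chi)Lf_*+\mathcal F(f)-\mathcal F(f_*)$, and it only satisfies $\|G\|_{Y_\theta}\lesssim\epsilon\|w\|_{\mathcal Z_\theta}$, with $\epsilon=\|f_0-f_M\|_{\mathcal Z_\delta}+\|\phi\|_{Y_\delta}$ a small but non-decaying coefficient. The first term requires $\|Lf_*\|_{Y_\delta}\lesssim\|\phi\|_{Y_\delta}$. This is obtained from the stationary equation by dividing by $a_0(f_*)^2(1-\chi)+b^2\chi$. The second term follows from \eqref{eq:N2}. By contrast, when $a_0(w)\neq0$ the CDG estimate yields low exponent $3/2$ for a genuinely quadratic term such as $\mathcal C(f_*,w,w)$, which is worse than the $3/2-2\delta$ that $Y$ requires.
\item \textbf{The interpolation.} Interpolation gives a bound linear in $\|f_0-f_*\|_{\mathcal Z_\delta}$ only if you first prove $\sup_t\|w(t)\|_{\mathcal Z_\delta}\lesssim\|w(0)\|_{\mathcal Z_\delta}$. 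This comes from the same Duhamel formula at $\theta=\delta$. The bound $\lesssim\varepsilon$ from Theorem~\ref{thm:forced-stability} does not suffice.
\end{itemize}
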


\subsection{Disclosure, organization, and notation}

\vspace{10pt}
\noindent
\textbf{Disclosure of AI assistance.}
The author used ChatGPT-6 Astra during the preparation and
revision of this paper to assist with developing partial
arguments, checking calculations, and revising the exposition
and \LaTeX{} code.
The author takes full responsibility for the mathematical
claims, proofs, and references presented in the paper.

\vspace{10pt}
\noindent
\textbf{Organization.}
Section~\ref{sec:inputs} states the results from the literature
used in the proof and specifies their normalizations.
Section~\ref{sec:fixed} establishes the estimates for the fixed
linear evolution and the commutator.
Section~\ref{sec:linear} constructs the evolution operator
$U_{a_0,a_\infty}$ for the linear equation with time-dependent
coefficients and proves its bounds, uniqueness, composition
property, and dependence on the coefficients.
Section~\ref{nonlinear} proves the estimates for
the nonlinear remainder.
Section~\ref{sec:contraction} combines these results to prove
Theorem~\ref{thm:nonlinear}.
Section~\ref{sec:time-dependent-forcing} introduces the
space-time norm for the external force and proves the
corresponding global stability Theorem \ref{thm:forced-stability}.
The final section proves
Theorem~\ref{thm:stationary-convergence}, establishing
convergence to a stationary solution under time-independent
forcing, with algebraic decay in weaker weighted norms
and a necessary and sufficient condition for convergence
in the original stability norm.
The appendix justifies the differentiation, commutation,
and integration formulas used in the main text.

\vspace{10pt}
\noindent
\textbf{Notation.}
\begin{itemize}
	\item By $u\in C([0,T];B)$ (or $L^p([0,T];B)$) for a
	Banach space $B$, we mean that $u$ is a continuous
	(or Bochner $L^p$) map from $[0,T]$ to $B$;
	see \cite[page 301]{evans2022partial}.
	
	\item By $A\lesssim B$ (resp. $A\sim B$), we mean that
	there exists a positive constant $C$ such that $A\le CB$
	(resp. $C^{-1}B\le A\le CB$).
	If the constant $C$ depends on $p$, then we write
	$A\lesssim_p B$ (resp. $A\sim_p B$).
	
	\item By $A\ll B$, we mean that $A/B$ is sufficiently small.

	\item By $a+$, we mean $a+\varepsilon$ for an arbitrarily
	small $\varepsilon>0$. An estimate with exponent $a+$
	holds for every $\varepsilon>0$, with an implicit constant
	that may depend on $\varepsilon$.

\end{itemize}

\vspace{10pt}
\section{External results: statements and normalizations}\label{sec:inputs}

\subsection{Linear operator and fundamental solution}
Define the linearization of the evolution equation at $f_M$ by
\begin{equation}\label{eq:Ldefinition}
	Lg=3\C(f_M,f_M,g).
\end{equation}

\begin{proposition}[{\cite[Proposition 2.1]{EMVL}}]
The operator \eqref{eq:Ldefinition} can be written
\begin{equation}\label{eq:L}
 Lg(\omega)=-\mathfrak a\omega^{-1/3}g(\omega)
 +\omega^{-4/3}\int_0^\infty K(r/\omega)g(r)\dd r.
\end{equation}
Here $\mathfrak a>0$, and $K$ is smooth off $1$. There are real constants
$A_1,\ldots,A_6$ such that
\begin{equation*}
K(r)\sim A_1r^{1/2}\quad(r\downarrow0),\qquad K(r)\sim A_2r^{-7/6}\quad(r\to\infty).
\end{equation*}
\begin{equation*}
K(r)=A_3(1-r)^{-5/6}+A_4+O((1-r)^{1/6}) (r\uparrow1),
\end{equation*}
\begin{equation*}
K(r)=A_5(r-1)^{-5/6}+A_6+O((r-1)^{1/6}) (r\downarrow1).
\end{equation*}

The implicit constants and $K$ are determined by the collision integral
\eqref{eq:Ldefinition}; explicit formulas are given in \cite[Appendix A]{EMVL}.
\end{proposition}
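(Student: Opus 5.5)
The plan is to compute $Lg=3\C(f_M,f_M,g)$ directly from \eqref{eq:barC} and the symmetrization, and to sort the terms into two kinds. Terms in which $g$ is evaluated at the outer frequency $\omega_1$ produce the multiplication part $-\mathfrak a\omega^{-1/3}g(\omega)$. Terms in which $g$ sits at one of $\omega_2,\omega_3,\omega_4$ produce the integral part; for these we change variables so that the argument of $g$ becomes the integration variable $r$. By the symmetries of $W$ and of the resonance manifold, the three positions $\omega_2,\omega_3,\omega_4$ reduce to two distinct cases.

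\textbf{Step 1: the multiplication part.} Collecting the terms with $g(\omega_1)$ gives
\[
\Bigl[\iint W\bigl(f_M(\omega_3)f_M(\omega_4)-f_M(\omega_2)(f_M(\omega_3)+f_M(\omega_4))\bigr)\dd\omega_3\dd\omega_4\Bigr]\,g(\omega_1).
\]
Scaling $\omega_i=\omega_1 x_i$ shows this coefficient is homogeneous. Since $W$ has degree $0$, $f_Mf_M$ has degree $-7/3$ and $\dd\omega_3\dd\omega_4$ has degree $2$, the coefficient is $c\,\omega_1^{-1/3}$. We set $\mathfrak a=-c$. Convergence near the diagonal should follow from the Zakharov cancellation that gives $\C(f_M)=0$, or else we regroup with the integral part. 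Positivity of $\mathfrak a$ is then an explicit sign check; EMVL obtain it by writing the integrand as a loss term.

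\textbf{Step 2: the kernel.} For each term with $g(\omega_j)$, $j\ne1$, we integrate out the remaining variable on the resonance surface with $r=\omega_j$ fixed. Homogeneity then gives $\omega^{-4/3}K(r/\omega)$, where $K(s)$ is an explicit one-dimensional integral in the dimensionless variables. The exponent matches: $2-\tfrac73-1=-\tfrac43$, and $\dd r$ carries the remaining degree. Smoothness of $K$ away from $s=1$ follows because, for $s\ne1$, the integrand is smooth except at the kinks of $\min(\sqrt{\cdot})$, and these are integrated over transversally.

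\textbf{Step 3: asymptotics.} For $s\downarrow0$, the factor $W\approx\sqrt{s}$ dominates, which yields $A_1s^{1/2}$. For $s\to\infty$, we expand $f_M$ at the large argument, which yields $A_2s^{-7/6}$.

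\textbf{Main obstacle.} The expansions near $s=1$, namely the singular term $(1\mp s)^{-5/6}$ plus a constant plus an $O(|1-s|^{1/6})$ remainder, are the hardest part. They come from the region where the free variable tends to $0$ along the resonance surface, where $f_M(\text{small})^{\,}\sim(\text{small})^{-7/6}$ is integrated against $W\sim\sqrt{\text{small}}$ over a segment of length $|1-s|$. The plan is to isolate this region with a cutoff and rescale the free variable by $|1-s|$. The leading term then reads off as $|1-s|^{1-7/6+1/2-1}\cdot$const. Depending on how the near-diagonal contribution was grouped in Step 1, the exponents need to be matched carefully to obtain $-5/6$. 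A Taylor expansion of the smooth remaining factors then gives the constant and the $1/6$ remainder.

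In keeping with the paper, we would ultimately rely on the explicit formulas of \cite[Appendix A]{EMVL} for the constants $A_1,\dots,A_6$.
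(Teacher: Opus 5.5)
The paper gives no proof of this proposition; it is quoted from \cite[Proposition~2.1]{EMVL}. Your framework is the natural one: separate the terms containing $g(\omega)$, make the argument of $g$ the integration variable, and use homogeneity. Steps~1--2 are essentially fine. In fact the coefficient integral converges absolutely, with no cancellation needed. The identity $\mathcal C(f_M)(1)=0$ is used for the sign rather than for convergence: it rewrites $\mathfrak a$ as $\iint W|_{\omega_1=1}f_M(\omega_2)f_M(\omega_3)f_M(\omega_4)\,d\omega_3\,d\omega_4>0$.

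The genuine gaps are in every asymptotic claim about $K$. Write $K=K_2+K_3+K_4$ according to the slot occupied by $g$, normalized at $\omega=1$; by symmetry $K_3=K_4$.

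Near $s=1$, your count $|1-s|^{1-7/6+1/2-1}=|1-s|^{-2/3}$ is wrong. No regrouping with Step~1 can change it, because $K$ is uniquely determined. What you miss is the following. In $K_3$, with $\omega_3=s$, on the part of the fiber where $\omega_4$ is small, $\omega_2=\omega_4+s-1$ is small as well. So the gain term $Wf_M(\omega_2)f_M(\omega_4)$ carries \emph{two} singular factors, not one. There $W=\min(\sqrt{\omega_2},\sqrt{\omega_4})$. Substitute $\omega_4=|s-1|u$ for $s>1$, exchanging the roles of $\omega_2$ and $\omega_4$ for $s<1$. The leading part of this term is then $|s-1|^{1+\frac12-\frac73}\int_0^\infty u^{-2/3}(1+u)^{-7/6}\,du$, which is exactly the $|1-s|^{-5/6}$ singularity. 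The terms with a single singular factor stay bounded and contribute to $A_4,A_6$ and the remainder.

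At the two ends your mechanisms fail termwise; the missing ingredient is cancellation.

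For $s\downarrow0$, $W$ is not $\approx\sqrt s$ on the region where the fourth frequency is also small. In $K_3$ one has $W=\min(\sqrt s,\sqrt{\omega_2})$ there. The gain term $Wf_M(\omega_2)f_M(\omega_4)$ and the loss term $-Wf_M(1)f_M(\omega_2)$ are each of size about $9s^{1/3}$, with opposite signs. Only the cancellation $f_M(\omega_4)-f_M(1)=O(s+\omega_2)$ on $\omega_4=1-s+\omega_2$ brings $K$ down to $O(s^{1/2})$. The analogous cancellation is needed in $K_2$, with $\omega_3$ small.

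For $s\to\infty$, the leading terms do not evaluate $f_M$ at a large argument at all. One finds $K_3(s)=9-6s^{-1/6}+O(s^{-7/6})$. Here $9=\int_0^\infty\min(1,\sqrt x)\,x^{-7/6}\,dx$ comes from $Wf_M(1)f_M(\omega_4)$, and $-6s^{-1/6}$ comes from $-Wf_M(1)f_M(\omega_2)$. Meanwhile $K_2(s)=-18+12s^{-1/6}+O(s^{-7/6})$, coming from $-Wf_M(1)\bigl(f_M(\omega_3)+f_M(\omega_4)\bigr)$. So the decay $K(s)=O(s^{-7/6})$ is a cancellation between the $\omega_2$-slot and the $\omega_3,\omega_4$-slots, at two successive orders. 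Expanding $f_M$ at the large argument cannot see this.

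Without these cancellations (gain--loss within a slot at $0$, across slots at $\infty$), termwise estimates give only $|K(s)|\lesssim s^{1/3}$ near $0$ and $|K(s)|\lesssim1$ near $\infty$. That is not what the proposition asserts.
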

There is no free choice of $K$ or $\mathfrak a$ in this proposition. For
example $\mathfrak a$ is minus the coefficient of $g_1$ at $\omega_1=1$,
namely
\begin{equation*}
\fitdisplay{\mathfrak a=-\iint_{\omega_3,\omega_4>0,\ \omega_2>0}
 W\big|_{\omega_1=1}\left[\omega_3^{-7/6}\omega_4^{-7/6}
 -\omega_2^{-7/6}(\omega_3^{-7/6}+\omega_4^{-7/6})\right]\,d\omega_3\,d\omega_4,
 \quad\omega_2=\omega_3+\omega_4-1.}
\end{equation*}
The locally integrable kernel $K$ is uniquely characterized by
\eqref{eq:L} on smooth compactly supported $g$. We only need the consequence
\begin{equation}\label{eq:Kbounds}
 |K(r)|\lesssim \begin{cases}
 r^{1/2},&0<r\le1/2,\\
 |r-1|^{-5/6},&1/2<r<2,\\
 r^{-\frac76},&r\ge2.
 \end{cases}
\end{equation}

\begin{theorem}[{\cite[Theorem 3.5]{EMVN}}]\label{thm:EMV}
For each $r>0$ the equation $\partial_tF=LF$ with initial measure
$\delta_r$ has a unique fundamental solution in the class described below.
It has a Dirac part at $\omega=r$ and a function part; the latter has
bounds $C\omega^{-\frac76}$ for $\omega\le r/2$, $C\omega^{-\frac{11}{6}}$ for
$\omega\ge2r$, and $C|\omega-r|^{-5/6}$ for $|\omega-r|\le r/2$,
with local bounds elsewhere. The constants here may depend on $t,r$.
It obeys
\begin{equation*}
 F(t,\omega,r)=r^{-1}F(t/r^{1/3},\omega/r,1).
\end{equation*}
For $0<z<2$ it has the decomposition
\begin{equation}\label{eq:EMVdecomp}
 F(\tau,z,1)=e^{-\mathfrak a\tau}\delta_1(z)
             +\sigma(\tau)z^{-\frac76}+R_1(\tau,z)+R_2(\tau,z).
\end{equation}
The function $\sigma$ belongs to $C([0,\infty))$. There is a real constant $A$ such that
$\sigma(\tau)=A\tau^4+O(\tau^{4+\epsilon})$ as $\tau\downarrow0$,
and $\sigma(\tau)=O(\tau^{-(3v_0-5/2)})$ as $\tau\to\infty$.
For every $1<b<\frac76$ and sufficiently small $\epsilon>0$ the remaining bounds are
\begin{equation*}
R_1(\tau,z)=0\quad (|z-1|\ge1/2),\quad 
 |R_1(\tau,z)|\lesssim  e^{-(\mathfrak a-\epsilon)\tau}|z-1|^{-5/6},
\end{equation*}
\begin{equation*}
|R_2(\tau,z)|\lesssim z^{-b}
 \begin{cases}\tau^{3b-5/2-\epsilon},&\tau\le1,\\
 \tau^{3b-3v_0+\epsilon},&\tau>1,
 \end{cases}
\end{equation*}
\begin{equation}\label{eq:EMVfar}
|F(\tau,z,1)|\lesssim z^{-\frac{11}{6}}
 \begin{cases}\tau^{1-\epsilon},&\tau\le1,\\
 \tau^{-(3v_0-9/2)+\epsilon},&\tau>1,
 \end{cases}\qquad z>2.
\end{equation}

Here $v_0=1.84020\ldots$ is computed numerically.
We use only the inequality $v_0>11/6$.
\end{theorem}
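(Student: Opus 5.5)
Since this theorem is quoted from \cite[Theorem~3.5]{EMVN}, the plan is not to reprove it. Instead I would check that the normalizations of \eqref{eq:L} and \eqref{eq:Ldefinition} agree with those of \cite{EMVL,EMVN}, and I would recover the structural facts that the rest of the paper uses directly.

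\textbf{Step 1: the scaling identity.} This follows from the homogeneity of $L$. If $g_r(\omega)=r^{-1}g(\omega/r)$, then substituting $r'=r s$ in the integral in \eqref{eq:L} gives
\begin{equation*}
L g_r(\omega)=r^{-4/3}(Lg)(\omega/r),
\end{equation*}
since the multiplier term $\omega^{-1/3}$ and the kernel term $\omega^{-4/3}\int K(r'/\omega)\,\cdot\,\dd r'$ both scale this way. Hence $G(t,\omega):=r^{-1}F(t/r^{1/3},\omega/r,1)$ solves $\partial_tG=LG$ with $G(0)=r^{-1}\delta_1(\omega/r)=\delta_r$. Uniqueness in the stated class, whose bounds are themselves scale covariant, then identifies $G$ with $F(\cdot,\cdot,r)$. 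The same homogeneity check also confirms that the constant $\mathfrak a$ defined above is the coefficient of the Dirac part: the singular part $e^{-\mathfrak a\tau}\delta_1$ solves the multiplicative part of $L$ exactly at $\omega=1$.

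\textbf{Step 2: the decomposition \eqref{eq:EMVdecomp}, following \cite{EMVL}.} I would subtract the Dirac part and the near-diagonal correction $R_1$. The correction $R_1$ comes from one application of the kernel to $e^{-\mathfrak a\tau}\delta_1$ and inherits the $|z-1|^{-5/6}$ singularity from the behaviour of $K$ at $r=1$, cut off to $|z-1|<1/2$. The remaining regular part is then treated by a Mellin transform in $z$. Because of the factor $\omega^{-1/3}$, the equation becomes a first-order difference equation in the Mellin variable, with shift $1/3$ and coefficient $\mathfrak a-\widehat K(s)$. This equation is solved explicitly by a Carleman-type product or integral formula. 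Shifting the inversion contour across the pole at the Mellin exponent corresponding to $z^{-7/6}$ produces the term $\sigma(\tau)z^{-7/6}$. The next zero, at $v_0$, of the symbol governs the remainder $R_2$ for $1<b<7/6$ and the large-$\tau$ rates. The bound \eqref{eq:EMVfar} comes from the behaviour $K(r)\sim A_1r^{1/2}$ near $0$, which yields the tail $z^{-11/6}$.

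\textbf{Main obstacle.} The hard part is the Mellin and Carleman analysis: locating the zeros of $\mathfrak a-\widehat K$, justifying the contour shifts, and obtaining the time exponents $3b-5/2$ and $3b-3v_0$. I would not redo this analysis. I would import it from \cite{EMVL,EMVN}, after checking that their operator coincides with $3\C(f_M,f_M,\cdot)$ under the normalization of $W$ used in \eqref{eq:collision}. The only quantitative fact used later is $v_0>11/6$, which makes the large-time exponents in $R_2$ and \eqref{eq:EMVfar} negative for $b$ near $7/6$.
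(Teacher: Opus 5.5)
Your proposal matches the paper's treatment: the paper does not prove this theorem but imports it from \cite[Theorem~3.5]{EMVN}, with $L$ identified through \cite[Proposition~2.1]{EMVL}, and your homogeneity computation $Lg_r(\omega)=r^{-4/3}(Lg)(\omega/r)$ for $g_r=r^{-1}g(\cdot/r)$, combined with uniqueness in the scale-covariant class, is a correct self-contained derivation of the scaling law. Your Step~2 sketch is only heuristic: the near-diagonal part $R_1$ must also absorb the unbounded singularities $|z-1|^{-2/3},|z-1|^{-1/2},\dots$ coming from the second through sixth kernel iterates, not just the first. Likewise, the downstream role of $v_0>11/6$ is the equivalent condition $3v_0-9/2>1$, i.e.\ integrable time decay in \eqref{eq:EMVfar}, rather than mere negativity of the exponents. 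Since you defer the analysis to \cite{EMVL,EMVN} exactly as the paper does, neither point is a gap.
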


The corresponding solution operators are given by
\begin{equation*}
 (S(t)g)(\omega)=\int_0^\infty F(t,\omega,r)g(r)\dd r, \qquad \forall t>0.
\end{equation*}
We also additionally set $S(0)=\Id$. 

We take the following \eqref{eq:fundamental-tested-identity} and
\eqref{eq:fundamental-initial-trace} as the interpretation of the
fundamental solution $F$ satisfying the equation $\partial_tF=LF$ and initial data $\delta_r$: for $0<t_1<t_2$, $r>0$, and
$\eta\in C_c^\infty(0,\infty)$,
\begin{equation}\label{eq:fundamental-tested-identity}
 \int\eta(\omega)\,[F(t_2,\omega,r)-F(t_1,\omega,r)]d\omega
 =\int_{t_1}^{t_2}\langle LF(\tau,\cdot,r),\eta\rangle\,d\tau.
\end{equation}
The pairing is obtained by applying the kernel formula \eqref{eq:L}
to $F$ and testing in $\omega$. The initial condition means
\begin{equation}\label{eq:fundamental-initial-trace}
 \lim_{t\downarrow0}\int\eta(\omega)F(t,\omega,r)d\omega=\eta(r).
\end{equation}

\begin{proposition}[{\cite[Remark 3.8]{EMVN}}]
\label{prop:derivative}
 For every $1<b<7/6$ and sufficiently small $\epsilon>0$,
the following estimates hold. At positive times the function part of the
kernel is differentiable in time away from its singularity $\omega=r$.
For each $\tau>0$, the differentiated decomposition on $(0,2)$ is
\begin{equation*}
 \tau\partial_\tau F(\tau,z,1)
 =-\mathfrak a\tau e^{-\mathfrak a\tau}\delta_1(z)
   +\tau\sigma'(\tau)z^{-7/6}
   +\tau\partial_\tau R_1(\tau,z)+\tau\partial_\tau R_2(\tau,z).
\end{equation*}
This equality is understood in $\mathcal D'((0,2))$.
The coefficient satisfies
\begin{equation}\label{eq:EMVsigma-derivative}
 |\tau\sigma'(\tau)|\lesssim 
 \begin{cases}\tau^4,&0<\tau\le1,\\
 \tau^{-3v_0+5/2},&\tau>1.\end{cases}
\end{equation}
The first remainder has $\partial_\tau R_1(\tau,z)=0$ when
$|z-1|\ge1/2$, and
\begin{equation}\label{eq:EMVR1-derivative}
 |\tau\partial_\tau R_1(\tau,z)|
 \lesssim e^{-(\mathfrak a-\epsilon)\tau}|z-1|^{-5/6},
 \qquad 0<|z-1|<1/2.
\end{equation}
The second remainder satisfies
\begin{equation}\label{eq:EMVR2-derivative}
 |\tau\partial_\tau R_2(\tau,z)|\lesssim z^{-b}
 \begin{cases}\tau^{3b-5/2-\epsilon},&0<\tau\le1,\\
 \tau^{3b-3v_0+\epsilon},&\tau>1,\end{cases}
 \qquad 0<z<2.
\end{equation}
For $z>2$, the differentiated kernel satisfies
\begin{equation}\label{eq:EMVfar-derivative}
 |\tau\partial_\tau F(\tau,z,1)|\lesssim z^{-11/6}
 \begin{cases}\tau^{1-\epsilon},&0<\tau\le1,\\
 \tau^{-3v_0+9/2+\epsilon},&\tau>1.\end{cases}
\end{equation}
The implicit constants may depend on $b,\epsilon$, but not on $\tau,z$.
\end{proposition}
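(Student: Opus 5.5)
We would not derive these bounds from Theorem~\ref{thm:EMV} alone: pointwise bounds on $F$ do not control $\partial_\tau F$. Instead we return to the explicit representation of the fundamental solution used in \cite{EMVL,EMVN} and check that every estimate behind Theorem~\ref{thm:EMV} survives one application of $\tau\partial_\tau$.

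\textbf{Step 1: reduction to $r=1$.} By the scaling identity $F(t,\omega,r)=r^{-1}F(t/r^{1/3},\omega/r,1)$ it suffices to take $r=1$. The operator \eqref{eq:L} is homogeneous: if $g_\lambda(\omega)=g(\lambda\omega)$, then $Lg_\lambda=\lambda^{1/3}(Lg)_\lambda$. Hence the Mellin transform in $z$ diagonalizes the problem up to the shift $s\mapsto s-1/3$. In \cite{EMVL} this yields a representation
\begin{equation*}
 F(\tau,z,1)=\frac1{2\pi i}\int_{\Re s=c}z^{-s}\,\Phi(\tau,s)\,\dd s,
\end{equation*}
where $\Phi$ solves a first-order difference equation in $s$ with a $\tau$-dependent exponential factor. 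The Dirac part $e^{-\mathfrak a\tau}\delta_1$ is split off explicitly, and $\tau\partial_\tau$ of it is exactly $-\mathfrak a\tau e^{-\mathfrak a\tau}\delta_1$.

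\textbf{Step 2: the Mellin integral.} For the remaining function part, we apply $\tau\partial_\tau$ under the Mellin integral. In the representation of \cite{EMVL}, $\tau$ enters through factors of the form $\exp(-\tau\,\mu(s))$ together with powers $\tau^{\gamma(s)}$ coming from the residues. Thus $\tau\partial_\tau$ multiplies the integrand by $-\tau\mu(s)$ or by $\gamma(s)$ plus lower-order terms.

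On the contours used to obtain each term of \eqref{eq:EMVdecomp}, these multipliers cost at most a polynomial factor in $|\Im s|$. That loss is absorbed by the same decay in $\Im s$ that made the original integrals converge, after shrinking $\epsilon$. We then argue term by term.
\begin{itemize}
 \item The pole at $s=7/6$ produces $\sigma(\tau)z^{-7/6}$, and differentiating its explicit residue formula gives $\tau\sigma'(\tau)$. The small-$\tau$ bound $\tau^4$ follows from $\sigma=A\tau^4+O(\tau^{4+\epsilon})$ provided the expansion can be differentiated, which holds because it comes from a convergent residue expansion.
 \item The contour shifted to $\Re s=b$ gives $R_2$. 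Here $\tau\partial_\tau$ preserves the power $\tau^{3b-5/2-\epsilon}$ (respectively $\tau^{3b-3v_0+\epsilon}$), since $\tau\partial_\tau\tau^\gamma=\gamma\tau^\gamma$.
 \item The region $z>2$ is handled with the contour used for \eqref{eq:EMVfar} in the same way.
\end{itemize}
Dominated convergence in $s$ justifies differentiating under the integral at each fixed $\tau>0$ and $z\neq1$. The identity in $\mathcal D'((0,2))$ then follows by testing against $\eta\in C_c^\infty(0,2)$ and using \eqref{eq:fundamental-tested-identity}.

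\textbf{Step 3: the singular part near $z=1$ (main obstacle).} We expect the singular remainder $R_1$ near $z=1$ to be the hardest step. Its $|z-1|^{-5/6}$ singularity comes from the large-$|\Im s|$ behaviour of the Mellin integrand, so a polynomial loss in $\Im s$ is \emph{not} harmless there. We would instead use the equation itself: $\partial_\tau F=LF$ away from the Dirac mass. Write $\partial_\tau R_1$ as $L$ applied to the decomposition minus the derivatives of the other terms.

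By \eqref{eq:Kbounds}, the kernel part of $L$ maps $|z-1|^{-5/6}$ to $O(|z-1|^{-2/3})$, so this piece is less singular than $|z-1|^{-5/6}$. The multiplication part $-\mathfrak a z^{-1/3}$ preserves the $|z-1|^{-5/6}$ profile. The kernel acting on the Dirac part gives $e^{-\mathfrak a\tau}z^{-4/3}K(1/z)$, which has exactly the $|z-1|^{-5/6}$ singularity with an exponential-in-$\tau$ factor. Multiplying by $\tau$ and absorbing $\tau e^{-\mathfrak a\tau}\lesssim e^{-(\mathfrak a-\epsilon)\tau}$ then gives \eqref{eq:EMVR1-derivative}.
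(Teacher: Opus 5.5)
The paper does not prove this proposition. It imports it verbatim from \cite[Remark 3.8]{EMVN}, so your reconstruction has to stand on its own, and it fails at Step~3.

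On $0<|z-1|<1/2$, the equation gives for the function parts $\partial_\tau R_1=LF+\mathfrak a e^{-\mathfrak a\tau}\delta_1-\sigma'(\tau)z^{-7/6}-\partial_\tau R_2$. You examine only the singular contributions, namely $L$ acting on the Dirac mass and on $R_1$. The remaining ones are $L$ applied to $\sigma z^{-7/6}\mathbf 1_{(0,2)}$, to $R_2$ and to the far part $F\mathbf 1_{(2,\infty)}$, together with $-\sigma'z^{-7/6}-\partial_\tau R_2$. These are bounded near $z=1$, but the available bounds make them only algebraically small in $\tau$. For example, $\tau\,\mathfrak a z^{-1/3}|R_2|$ is only $O(\tau^{1+3b-3v_0+\epsilon})$ there. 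Term-by-term estimation therefore cannot produce the factor $e^{-(\mathfrak a-\epsilon)\tau}$ in \eqref{eq:EMVR1-derivative}. You would need these terms to cancel up to exponentially small errors, and nothing in your argument supplies that.

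In fact the cancellation cannot be extracted from the inputs of Step~3 at all. Take $\psi\in C_c^\infty((1/2,3/2))$ and a smooth $\zeta$ with $\zeta=0$ on $[0,1]$ and $\zeta=1$ on $[2,\infty)$. Set $\rho(\tau)=\zeta(\tau)e^{-\mathfrak a\tau}\sin(e^{\mathfrak a\tau}\tau^{-N})$ with $N\ge4$, and replace $(R_1,R_2)$ by $(R_1+\psi\rho,\,R_2-\psi\rho)$.
\begin{itemize}
\item This does not change $F$, so $\partial_\tau F=LF$ is untouched.
\item Since $|\rho|\le e^{-\mathfrak a\tau}$ and $|\tau\rho'|\lesssim\tau^{1-N}$, the new pieces still satisfy every bound in Theorem~\ref{thm:EMV}, together with \eqref{eq:EMVsigma-derivative} and \eqref{eq:EMVR2-derivative}.
\item However, $\tau\rho'(\tau)\approx\mathfrak a\tau^{1-N}$ along a sequence $\tau\to\infty$, so at most one of the two decompositions satisfies \eqref{eq:EMVR1-derivative}.
\end{itemize}
Thus \eqref{eq:EMVR1-derivative} is a property of the specific $R_1$ constructed in \cite{EMVN}. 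It must be obtained by differentiating that explicit construction, not recovered from the equation.

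Step~2 has the same weakness. It rests on a guessed form of the representation in \cite{EMVL}, with $\tau$ entering through $e^{-\tau\mu(s)}$ and $\tau^{\gamma(s)}$. Your claim that a polynomial loss in $|\operatorname{Im}s|$ is ``absorbed'' is exactly what fails whenever the represented piece has only finite regularity at $z=1$, which is the obstruction you yourself note for $R_1$. Without the actual definitions of $\sigma,R_1,R_2$ you cannot verify that claim for $\sigma$ and $R_2$. For the same reason you cannot justify differentiating the expansion $\sigma=A\tau^4+O(\tau^{4+\epsilon})$.
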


\subsection{The collision estimates }
\begin{lemma}[{\cite[Lemma 7.1 and Remark 7.2]{CDG}}]\label{lem:CDG}
Let $f\in E_{a_f,b_f}$, $g\in E_{a_g,b_g}$, $h\in E_{a_h,b_h}$, with
$0\le a_f,a_g,a_h<5/4$, none equal to $1$, and
$b_f,b_g,b_h>1$, with $b_f+b_g\ne5/2$.
For nonnegative inputs define
\begin{equation*}
 \widetilde\C(f,g,h)=2\iint_{0<\omega_3<\omega_4,\ \omega_2>0}
 W[(f_1+f_2)g_3h_4+(g_3+g_4)h_1f_2]\dd\omega_3\dd\omega_4.
\end{equation*}
Then
\begin{equation*}
 \norm{\widetilde\C(|f|,|g|,|h|)}_{E_{a',b'}}
 \le C\norm{f}_{E_{a_f,b_f}}\norm{g}_{E_{a_g,b_g}}\norm{h}_{E_{a_h,b_h}},
\end{equation*}
where
\begin{equation*}
a'=\max(a_f,a_h,a_f+a_g+a_h-2,a_f+a_g-1,a_h+a_g-1),
\end{equation*}
\begin{equation*}
b'=\min(b_f+b_g+b_h-2,b_h+1/2).
\end{equation*}

Also $|\overline\C(f,g,h)|\le\widetilde\C(|f|,|g|,|h|)$.
The excluded exponent values avoid logarithmic endpoint terms.
\end{lemma}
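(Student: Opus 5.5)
The second assertion is immediate. The integrands of $\overline\C$ and $\widetilde\C$ share the weight $W\ge0$ and the domain, and
\[
\bigl|(f_1+f_2)g_3h_4-(g_3+g_4)h_1f_2\bigr|\le(|f_1|+|f_2|)|g_3||h_4|+(|g_3|+|g_4|)|h_1||f_2|
\]
by the triangle inequality. Integrating gives $|\overline\C(f,g,h)|\le\widetilde\C(|f|,|g|,|h|)$. For the main estimate, $\widetilde\C$ is monotone in each nonnegative argument. So I would replace $|f|$ by its envelope $\norm f_{E_{a_f,b_f}}\,m_{a_f,b_f}$, where $m_{a,b}(\omega)=\omega^{-a}$ for $\omega\le1$ and $\omega^{-b}$ for $\omega\ge1$, and similarly for $g,h$. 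By trilinearity it then suffices to prove the pointwise bound
\[
\widetilde\C(m_{a_f,b_f},m_{a_g,b_g},m_{a_h,b_h})(\omega_1)\lesssim m_{a',b'}(\omega_1).
\]

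To prove this bound, I would split the domain $\{0<\omega_3<\omega_4,\ \omega_2=\omega_3+\omega_4-\omega_1>0\}$ according to the relative sizes of $\omega_1,\dots,\omega_4$. Since $\omega_3<\omega_4$, we have $\omega_4\ge\omega_1/2$ or $\omega_2\ge\omega_1/2$, and $\max(\omega_2,\omega_4)\ge\omega_1/2$. The natural regions are:
\begin{itemize}
\item[(i)] $\omega_3\ll\omega_1$, so $\omega_2\approx\omega_4-\omega_1$ and $W\approx\sqrt{\omega_3/\omega_1}$ or $\sqrt{\omega_2/\omega_1}$;
\item[(ii)] all frequencies comparable to $\omega_1$;
\item[(iii)] $\omega_3,\omega_4\gg\omega_1$, with $\omega_2\approx\omega_3+\omega_4$ and $W=1$.
\end{itemize}
In each region the integrand is a product of power functions, with the power for each variable chosen by whether it is below or above $1$. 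Each region then reduces to one-dimensional integrals such as $\int_0^{\omega_1}\omega_3^{1/2-a}\,d\omega_3$ or $\int_{\omega_1}^\infty\omega_4^{-b}\,d\omega_4$. These converge exactly under the hypotheses: $a<5/4$ handles $\omega_3\to0$ with the $\sqrt{\omega_3}$ gain, possibly combined with a $\omega_2^{-a}$ singularity, and $b>1$ handles tails.

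Each region produces a power of $\omega_1$, and the exponents $a'$ and $b'$ arise as the worst of these. For $\omega_1\le1$:
\begin{itemize}
\item the local terms $f_1g_3h_4$ or $h_1f_2g_3$ with the other variables comparable give $a_f$ and $a_h$;
\item the regions where two or three inputs sit at frequencies $\lesssim\omega_1$ contribute $\omega_1^{-(a_f+a_g-1)}$, $\omega_1^{-(a_g+a_h-1)}$ and $\omega_1^{-(a_f+a_g+a_h-2)}$, since each integration over a small variable gains a power of $\omega_1$.
\end{itemize}
For $\omega_1\ge1$:
\begin{itemize}
\item when all three inputs are at frequencies $\gtrsim\omega_1$, the decay is $\omega_1^{-(b_f+b_g+b_h-2)}$, from two integrations;
\item in the term $h_1(g_3+g_4)f_2$ with $\omega_3$ small, the factor $h_1$ gives $\omega_1^{-b_h}$ and $W\sim\omega_1^{-1/2}$ gives the extra $1/2$, hence $b_h+1/2$.
\end{itemize}
Logarithms appear exactly when an exponent hits the integrability threshold. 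This is the case $a=1$, where $\int\omega^{-a}$ near $\omega_1$ produces $\log$, and the case $b_f+b_g=5/2$, which is why these values are excluded.

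The main obstacle is the low-$\omega_1$ regime in which $\omega_2$ or $\omega_3$ is small and the inputs carry singular weights $\omega^{-a}$ with $a$ close to $5/4$. Here one must use the cancellation-free bound carefully, exploiting $W\le\sqrt{\min\omega_i}/\sqrt{\omega_1}$ to gain a half power at the small variable. One must also track the transition across $\omega=1$, where the envelope changes exponent. I would handle it by a dyadic decomposition in $\omega_3$ and $\omega_2$, summing geometric series whose ratios are governed by $a-1$, $5/4-a$ and $b-1$. This is also where the mixed exponents $a_f+a_g-1$ and similar ones emerge.
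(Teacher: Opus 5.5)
The paper gives no proof of this lemma; it imports it from \cite{CDG}. Your envelope reduction to the functions $m_{a,b}$ and your triangle-inequality proof of $|\overline{\mathcal C}(f,g,h)|\le\widetilde{\mathcal C}(|f|,|g|,|h|)$ are fine. The genuine gap is your claim that logarithms occur only at $a=1$ and at $b_f+b_g=5/2$. That claim is false, and the estimate as stated fails for some admissible exponents.

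Take $\omega_1\le1$ and the term $f_1g_3h_4$ on the region $\{\omega_1\le\omega_3<\omega_4\le1\}$. There $\omega_2=\omega_3+\omega_4-\omega_1\ge\omega_4$, so all four frequencies are at least $\omega_1$ and $W=1$. Every term of $\widetilde{\mathcal C}$ is nonnegative, so this region gives a lower bound. Replace $f,g,h$ by $m_{a_f,b_f},m_{a_g,b_g},m_{a_h,b_h}$. If $a_h>1$, then for small $\omega_1$
\[
\widetilde{\mathcal C}\ge 2\omega_1^{-a_f}\int_{\omega_1}^{1}\omega_3^{-a_g}\int_{\omega_3}^{1}\omega_4^{-a_h}\,d\omega_4\,d\omega_3\gtrsim\omega_1^{-a_f}\int_{\omega_1}^{c}\omega_3^{1-a_g-a_h}\,d\omega_3 .
\]
When $a_g+a_h=2$ the right-hand side equals $\omega_1^{-a_f}\log(c/\omega_1)$.

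For example, take $(a_f,a_g,a_h)=(6/5,9/10,11/10)$ and $b_f=b_g=b_h=2$. All hypotheses hold and $a'=a_f$, yet $\omega_1^{a'}\widetilde{\mathcal C}$ is unbounded as $\omega_1\downarrow0$. The same happens for the term $h_1g_3f_2$ on $\{\omega_1\le\omega_3,\ 2\omega_3-\omega_1<\omega_2\le1\}$, where again $W=1$. It gives $\omega_1^{-a_h}\log(1/\omega_1)$ when $a_f+a_g=2$, $a_f>1$ and $a_h\ge a_f$.

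These cases are the low-frequency analogues of $b_f+b_g=5/2$. In each, two entries of $a'$ coincide: $a_f=a_f+a_g+a_h-2$ in the first case and $a_h=a_f+a_g+a_h-2$ in the second. The two regimes are joined by a nested integral, which in dyadic form is a geometric series with ratio $2^{a_g+a_h-2}$, respectively $2^{a_f+a_g-2}$. Your list of ratios, ``$a-1$, $5/4-a$, $b-1$'', omits these. Your argument therefore only closes after adding the hypotheses $a_f+a_g\neq2$ and $a_g+a_h\neq2$, or after accepting an $\epsilon$-loss in $a'$. This does not affect the paper: every application uses low exponents in $[7/6-\delta,7/6]$, whose pairwise sums exceed $2$.
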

For the symmetric $\C$ in \eqref{eq:barC}, apply this lemma to all six
permutations: use the largest resulting low exponent and the smallest
resulting high exponent. 

\begin{lemma}[{\cite[Lemma 7.6]{CDG}}]\label{lem:stationary}
For every $\omega>0$, the absolutely convergent collision integral satisfies
$\C(f_M)(\omega)=0$.
\end{lemma}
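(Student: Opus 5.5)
The plan is to establish absolute convergence first, reduce to the single frequency $\omega=1$ by homogeneity, and then prove the vanishing through the mass-flux representation, which amounts to Zakharov's argument written in weak form.

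\textbf{Absolute convergence.} I would apply Lemma~\ref{lem:CDG} with $f=g=h=f_M\in E_{7/6,7/6}$. The hypotheses hold: $7/6<5/4$, $7/6\neq1$, $7/6>1$, and $b_f+b_g=7/3\neq5/2$. The lemma then gives $\widetilde\C(f_M,f_M,f_M)\in E_{3/2,3/2}$, so the integrand of \eqref{eq:collision} is absolutely integrable for every $\omega_1>0$. Absolute convergence is what allows the integrand to be split and its variables rearranged freely below.

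\textbf{Homogeneity.} Under $\omega_j\mapsto\lambda\omega_j$, the weight $W$ is homogeneous of degree $0$, the cubic factor of degree $-7/2$, and $\dd\omega_3\dd\omega_4$ of degree $2$. Hence $\C(f_M)(\omega)=\omega^{-3/2}\C(f_M)(1)$, and it suffices to show that the constant $c:=\C(f_M)(1)$ vanishes.

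\textbf{Flux argument.} I would test the equation against $\omega^{1/2}\mathbf 1_{[a,b]}$ and write
\begin{equation*}
\int_a^b\omega^{1/2}\C(f_M)(\omega)\dd\omega=J(a)-J(b).
\end{equation*}
Here $J(\xi)$ is the mass flux across $\xi$. It is obtained by symmetrizing the collision integrand over the resonant quadruple $\omega_1+\omega_2=\omega_3+\omega_4$ and using that $\omega_1^{1/2}W=\min_j\sqrt{\omega_j}$ is symmetric. This exhibits $J(\xi)$ as an integral over those quadruples in which mass crosses $\xi$, that is, where some but not all of the $\omega_j$ lie below $\xi$.

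By the same scaling, $J(\xi)=\xi^{3(7/6-7/6)}J(1)=J(1)$ is independent of $\xi$, provided $J(1)$ is an absolutely convergent integral. In that case the left side equals $\int_a^b c\,\omega^{-1}\dd\omega=c\log(b/a)$, while the right side is $0$. Therefore $c=0$.

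For a cross-check, one can compute with $\omega^{-x}$ in place of $f_M$. This gives $\C(\omega^{-x})(\omega)\,\omega^{1/2}=-\partial_\omega\bigl(J_x(1)\omega^{3(7/6-x)}\bigr)$, which carries the factor $(7/6-x)$ and so vanishes at $x=7/6$. This is the classical statement that the mass Kolmogorov--Zakharov exponent is the one with constant flux.

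\textbf{Main obstacle.} The difficult step is the rigorous exchange of the $\omega$-integration with the collision integral, which produces the flux formula, together with the absolute convergence of $J(1)$ for $f_M$.

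The exchange itself is a Fubini argument, justified on $[a,b]$ by the absolute convergence established above. The convergence of $J(1)$ is more delicate. I would bound the flux integrand by $\widetilde\C(f_M,f_M,f_M)$-type integrands restricted to quadruples straddling $\xi=1$. Near the corners, where one $\omega_j\to0$ or $\omega_j\to\infty$, I would use the gain cancellation $(f_1+f_2)f_3f_4-(f_3+f_4)f_1f_2$ to first order. If the borderline corners produce only logarithmic divergence, I would fall back on the cross-check: take $x$ slightly off $7/6$, where $J_x(1)$ converges, and pass to the limit $x\to7/6$ using continuity of $x\mapsto\C(\omega^{-x})(1)$. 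That continuity follows from dominated convergence and the bounds of Lemma~\ref{lem:CDG}, which are uniform for $x$ near $7/6$.
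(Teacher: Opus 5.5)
The paper gives no proof of this lemma; it is imported from \cite[Lemma~7.6]{CDG}. Your argument is therefore a self-contained alternative, and its logic is sound.

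\textbf{What works.} For $\varphi=\mathbf 1_{[a,b]}$, the absolute convergence you obtain from Lemma~\ref{lem:CDG} justifies writing $\int_a^b\omega^{1/2}\mathcal C(f_M)\,\mathrm d\omega$ as a triple integral over the resonant set and symmetrizing it; the relabelings have unit Jacobian. Splitting into $J(a)-J(b)$ is then plain linearity, \emph{provided} the flux integral converges absolutely. Homogeneity of degree $\tfrac12-\tfrac72+3=0$ makes $J$ constant, and $c\log(b/a)=0$ gives $c=0$.

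\textbf{What is left open.} You do not prove that proviso, and it is the real content of the argument. It holds, and no gain--loss cancellation is needed. Write
\[
\mathcal W=\min_j\sqrt{\omega_j},\qquad
G=(f_1+f_2)f_3f_4-(f_3+f_4)f_1f_2,\qquad
\Delta\psi=\psi_1+\psi_2-\psi_3-\psi_4,
\]
with $f=f_M$ and $\psi=\mathbf 1_{(0,1)}$. The relabelings $1\leftrightarrow2$, $3\leftrightarrow4$, $(12)\leftrightarrow(34)$ preserve both the measure and $|\mathcal W G\,\Delta\psi|$. Together they move any point into the chamber $\omega_2\le\omega_3\le\omega_4\le\omega_1$: put the largest frequency at index $1$ and order $\omega_3\le\omega_4$; then $\omega_2=\omega_3+\omega_4-\omega_1\le\omega_3$ is automatic. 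So it suffices to bound the chamber integral.

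In the chamber $\mathcal W=\sqrt{\omega_2}$. Since $f_M$ is decreasing, each of the four monomials in $G$ is at most $f_2f_3f_4$. Also $\Delta\psi\neq0$ forces, up to a null set, $\omega_2<1<\omega_1\le\omega_3+\omega_4\le2\omega_4$. In coordinates $(\omega_2,\omega_3,\omega_4)$ the chamber contribution is therefore at most
\[
8\int_{1/2}^\infty\omega_4^{-7/6}\int_0^{\omega_4}\omega_3^{-7/6}\int_0^{\min(\omega_3,1)}\omega_2^{-2/3}\,\mathrm d\omega_2\,\mathrm d\omega_3\,\mathrm d\omega_4
\le24\int_0^\infty\omega_3^{-7/6}\min(\omega_3,1)^{1/3}\,\mathrm d\omega_3\int_{1/2}^\infty\omega_4^{-7/6}\,\mathrm d\omega_4<\infty .
\]
The bound is homogeneous of degree $-3$, so without the straddling constraint it would diverge logarithmically at both ends. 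The constraint cuts off small scales through $\omega_4>1/2$ and large scales through $\omega_2<1$.

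\textbf{The fallback should be dropped.} It is not independent of this estimate. The relation $\mathcal C(\omega^{-x})(1)=-3(\tfrac76-x)j(x)$ only yields
\[
\mathcal C(f_M)(1)=\lim_{x\to7/6}3\bigl(x-\tfrac76\bigr)j(x),
\]
which vanishes exactly when $j$ stays bounded near $\tfrac76$. A logarithmically divergent $J(1)$ is precisely the situation in which $|j(x)|$ blows up like $|x-\tfrac76|^{-1}$ from the convergent side. Continuity of $x\mapsto\mathcal C(\omega^{-x})(1)$ then gives a possibly nonzero limit, which is how power laws with non-convergent flux fail to be stationary. So the proof stands or falls with the finiteness of $J(1)$, which the chamber estimate supplies.
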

Consequently $Lf_M=3\C(f_M)=0$. We will prove the special cutoff estimates needed later directly from Lemmas~\ref{lem:CDG} and~\ref{lem:stationary}.

\vspace{10pt}
\section{The fixed linear operator}\label{sec:fixed}

We retain the spaces and operators defined in
Sections~\ref{sec:setting} and~\ref{sec:inputs}.
Choose
\begin{equation*}
	0<\kappa<\min\{3\delta,3v_0-11/2\},
\end{equation*}
and define
\begin{equation*}
	k(t)=
	\begin{cases}
		t^{-1+3\delta},&0<t\le1,\\
		t^{-1-\kappa},&t>1.
	\end{cases}
\end{equation*}
Then $k$ is decreasing and
\begin{equation*}
	\int_0^\infty k(t)\,dt=\frac1{3\delta}+\frac1\kappa<\infty.
\end{equation*}
All constants below depend only on $\delta,\kappa,\chi$ and the
fixed constants in the external kernel estimates.

We set $S(0)=\Id$. For $g\in\mathcal Z\cup Y$, we call $u(t)=S(t)g$ a mild solution of $u_t=Lu$ with initial datum $g$.
Lemmas~\ref{lem:appendix-local-equation}
and~\ref{lem:appendix-initial-trace} show that this mild solution
is also an integral solution on every compact frequency interval.

For $g\in\mathcal Z\cup Y$ and $t>0$, the notation $S'(t)g$
means the strong derivative of $t\mapsto S(t)g$ in $\mathcal Z$:
\begin{equation*}
	S'(t)g=\lim_{h\to0}\frac{S(t+h)g-S(t)g}{h}
	\quad\text{in }\mathcal Z.
\end{equation*}
Its existence and the identities
\begin{equation*}
	S'(t)g=LS(t)g
	=\int_0^\infty\partial_tF(t,\cdot,r)g(r)\,dr
	\quad\text{in }\mathcal Z
\end{equation*}
are established in Lemma~\ref{lem:appendix-Z-differentiability}.

\begin{lemma}[Estimates for $X$ and $Y$ data]\label{lem:fixed-Y}
	For every $t>0$,
	\begin{equation}\label{eq:fixedX}
		\|S(t)\|_{X\to\mathcal Z}
		+\|tS'(t)\|_{X\to\mathcal Z}\lesssim 1,
	\end{equation}
	and
	\begin{equation}\label{eq:fixedY}
		\|S(t)\|_{Y\to\mathcal Z}
		+\|tS'(t)\|_{Y\to\mathcal Z}\lesssim k(t).
	\end{equation}
	For every $0<t_1<t_2$,
	\begin{equation}\label{eq:kernel-X-continuity}
		\|S(t_2)-S(t_1)\|_{X\to\mathcal Z}
		\lesssim \int_{t_1}^{t_2}\frac{dr}{r},
	\end{equation}
	and
	\begin{equation}\label{eq:kernel-Y-continuity}
		\|S(t_2)-S(t_1)\|_{Y\to\mathcal Z}
		\lesssim \int_{t_1}^{t_2}k(r)\,\frac{dr}{r}.
	\end{equation}
	The operators defined for $X$ and $Y$ data agree on $X\cap Y$.
\end{lemma}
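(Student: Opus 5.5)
The plan is to work directly with the kernel representation $S(t)g(\omega)=\int_0^\infty F(t,\omega,r)g(r)\,dr$ and the scaling $F(t,\omega,r)=r^{-1}F(t r^{-1/3},\omega/r,1)$. Writing $\tau=tr^{-1/3}$ and $z=\omega/r$, we split the $r$-integral according to $z<2$ (that is, $r>\omega/2$) and $z>2$ (that is, $r<\omega/2$). On $z<2$ we insert the decomposition \eqref{eq:EMVdecomp}; on $z>2$ we use \eqref{eq:EMVfar}. Each piece of $S(t)g$ is then identified as one of three things: a multiple of $(1-\chi)f_M$, a multiple of $\chi f_M$, or an element of $X$.

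\textbf{The $\sigma$ term.} This term gives
\[
\omega^{-7/6}\int_{r>\omega/2}\sigma(tr^{-1/3})\,r^{1/6}g(r)\,r^{-1}\,dr .
\]
As $\omega\downarrow0$ its coefficient tends to $c_0(t)=\int_0^\infty\sigma(tr^{-1/3})r^{1/6}g(r)\,dr/r$. We define the $\mathcal Z$-coefficients from the limits of $\omega^{7/6}S(t)g$:
\begin{itemize}
\item $z_0=c_0(t)$;
\item $z_\infty$ is the limit coming from the Dirac part and the $\omega^{-7/6}$ tail of $g$, if any.
\end{itemize}
We then check that the remainder lies in $X$ with a uniform bound.

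\textbf{Bounds for $X$ data.} For $g\in X$ we have $|g(r)|\le\|g\|_X r^{-7/6\mp\delta}$. The quantity $c_0(t)$ is bounded uniformly in $t$, because after the substitution $\tau=tr^{-1/3}$ the integral becomes $\int\sigma(\tau)\tau^{\mp3\delta}\,d\tau/\tau\cdot t^{\pm3\delta}$-type pieces. The behaviour of $\sigma$ ($\tau^4$ at $0$, $\tau^{-(3v_0-5/2)}$ at $\infty$) makes these integrals converge. The scale-invariance of the $X$ weights at each end then gives $t$-independent bounds for $\|S(t)g\|_{\mathcal Z}$, once $S(t)$ of the pure powers $r^{-7/6\mp\delta}$ is controlled.

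For $tS'(t)$ we use $t\partial_t=\tau\partial_\tau$ together with Proposition~\ref{prop:derivative}, which has exactly the same bounds. The continuity estimate \eqref{eq:kernel-X-continuity} follows by writing
\[
S(t_2)-S(t_1)=\int_{t_1}^{t_2}S'(r)\,dr=\int_{t_1}^{t_2}rS'(r)\,\frac{dr}{r}
\]
in $\mathcal Z$ (Lemma~\ref{lem:appendix-Z-differentiability}). The same argument gives \eqref{eq:kernel-Y-continuity} from \eqref{eq:fixedY}.

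\textbf{Remainder pieces and $Y$ data.} The $R_1$ piece is a local convolution-type operator with integrable singularity $|z-1|^{-5/6}$ and weight $e^{-(\mathfrak a-\epsilon)\tau}$, and it maps the weights $\omega^{-7/6\mp\delta}$ to themselves. The $R_2$ piece uses $z^{-b}$ with $b$ chosen in $(1,7/6)$ close to $7/6$; its time factor is integrable against the weights. This piece gives a function of order $\omega^{-b}$ near $0$, which lies in $X$ near $0$ since $b<7/6-\delta$ is allowed. The far part $z>2$ gives $\omega^{-11/6}$ decay, which puts it in $X$ at infinity.

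For $Y$ data the weights are $r^{-3/2\pm2\delta}$. The same substitutions now produce an extra power $t^{-1+3\delta}$ for small $t$ and $t^{-1-\kappa}$ for large $t$. The high-$\tau$ decay needed for the latter is $\tau^{-(3v_0-9/2)}$ etc., and the restriction $\kappa<3v_0-11/2$ is exactly what makes the powers match. Agreement of the $X$ and $Y$ operators on $X\cap Y$ is immediate, since both are the same absolutely convergent kernel integral.

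\textbf{Main obstacle.} The hardest step is showing that, after subtracting $c_0(t)(1-\chi)f_M$ and the high-frequency coefficient times $\chi f_M$, the remainder really lies in $X$. This requires the gain $\omega^{\delta}$ near $0$ and the extra decay $\omega^{-\delta}$ at infinity, uniformly in $t$.

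Near $0$ the difficulty is the tail $\int_{r<\omega/2}$ that was dropped from $c_0$; we bound it by $\omega^{\delta}$ using $\sigma(\tau)\lesssim\tau^4$ as $r\to0$. At infinity the difficulty is the slow convergence of the Dirac part $e^{-\mathfrak a tr^{-1/3}}g(\omega)$. That part is fine because it preserves the $X$ weight. The $\sigma$ contribution for large $\omega$ comes from $r>\omega/2$, where it is bounded by $\omega^{-7/6}\cdot\omega^{-\delta}$ thanks to the decay of $g$.

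I would first verify these endpoint bounds on the model inputs $g=r^{-7/6\mp\delta}\mathbf 1$ and $g=(1-\chi)f_M,\ \chi f_M$. The general case then follows by positivity of the majorants.
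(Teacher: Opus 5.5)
Your decomposition of $S(t)g$ is the paper's decomposition: Dirac, $R_1$, $R_2$ and $\sigma$ pieces on $r>\omega/2$, the far bound \eqref{eq:EMVfar} on $r<\omega/2$, and subtraction of the $\sigma$-coefficient times $(1-\chi)f_M$. For $X$ and $Y$ data the $\chi f_M$ coefficient you allow for is in fact zero. However, your continuity step is circular. You derive \eqref{eq:kernel-X-continuity}--\eqref{eq:kernel-Y-continuity} from the $\mathcal Z$-valued identity $S(t_2)-S(t_1)=\int_{t_1}^{t_2}S'(r)\,dr$ of Lemma~\ref{lem:appendix-Z-differentiability}. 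That lemma gets continuity of $r\mapsto LS(r)g$ in $\mathcal Z$ from \eqref{eq:fixed-Z-difference}. Lemma~\ref{lem:fixed-Z} in turn proves \eqref{eq:fixed-Z-difference} from \eqref{eq:kernel-X-continuity}, for the $X$ component, and from \eqref{eq:kernel-Y-continuity}, to make \eqref{eq:fixed-cutoff-integral} a $\mathcal Z$-valued identity. So you are using the statement to prove itself.

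The paper avoids this by starting from the pointwise identity \eqref{eq:fixed-scalar-time-integral} of Lemma~\ref{lem:appendix-positive-continuity}, which rests only on absolute kernel bounds. The $(1-\chi)f_M$ coefficient is differenced by the fundamental theorem of calculus in $\tau$ plus Fubini, using \eqref{eq:EMVsigma-derivative}. The $X$-bounds $\tau^{-1}\|g\|_X$ (resp.\ $\tau^{-1}k(\tau)\|g\|_Y$) for the differentiated kernel minus its coefficient are then integrated in $\tau$ pointwise in $\omega$. Only afterwards is $\int_0^\infty t\partial_tF(t,\cdot,r)g(r)\,dr$ identified with $tS'(t)g$.

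Second, the step you single out as the main obstacle uses the wrong regime of $\sigma$. In the dropped piece $\omega^{-7/6}\int_0^{\omega/2}r^{1/6}\sigma(tr^{-1/3})g(r)\,dr$ we have $r\to0$, so $\tau=tr^{-1/3}\to\infty$. Inserting $|\sigma(\tau)|\lesssim\tau^4$ there gives $t^4\int_0 r^{-7/3+\delta}\,dr=\infty$.

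For $X$ data, boundedness of $\sigma$ already suffices, since $\int_0^{\omega/2}r^{-1+\delta}\,dr\lesssim\omega^\delta$. For $Y$ data the integrand is $r^{-4/3+2\delta}|\sigma(tr^{-1/3})|$, which is not integrable at $0$ if $\sigma$ is merely bounded. You need two inputs:
\begin{itemize}
\item the decay $|\sigma(\tau)|\lesssim\tau^{-(3v_0-5/2)}$ on $\omega\le t^3$;
\item the coefficient bound \eqref{eq:cpower} on $t^3<\omega\le1$.
\end{itemize}
This is how the paper obtains the factor $k(t)$ for this piece.

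Also, your displayed $\sigma$ term and your $c_0(t)$ carry a spurious factor $r^{-1}$. The correct term is $\omega^{-7/6}\int_{\omega/2}^\infty r^{1/6}\sigma(tr^{-1/3})g(r)\,dr$; with the extra factor, the substitution $\tau=tr^{-1/3}$ produces an additional $t^{-3}$. Your uniform-in-$t$ claim is only true for the corrected expression, and only when you keep the split $r\lessgtr1$, i.e.\ $\tau\gtrless t$, in the substituted integrals.
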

\begin{proof}
	
	Choose $\epsilon>0$ small enough for
	Theorem~\ref{thm:EMV} and Proposition~\ref{prop:derivative}, with
	\begin{equation}\label{eq:auxiliary}
		0<\epsilon<
		\min\left\{
		\delta,\frac{\mathfrak a}{2},
		\frac{3v_0-11/2-\kappa}{3}
		\right\}.
	\end{equation}
	Write $v_0^-=v_0-\epsilon$ throughout this proof.
	Then
	\begin{equation*}
		\frac{13}{12}<\frac76-\delta,\qquad
		v_0^->\frac{11}{6},\qquad
		3v_0^--\frac92>1+\kappa.
	\end{equation*}
	We also choose $b=13/12$ in the estimate for $R_2$.

	For $g\in X\cup Y$, the scaling of $F$ and
	\eqref{eq:EMVdecomp} express $S(t)g(\omega)$ as the sum of
	\begin{equation}\label{eq:fixed-part-dirac}
		e^{-\mathfrak a t\omega^{-1/3}}g(\omega),
	\end{equation}
	\begin{equation}\label{eq:fixed-part-R1}
		\int_{\omega/2}^{\infty}
		R_1(t/r^{1/3},\omega/r)g(r)\,\frac{dr}{r},
	\end{equation}
	\begin{equation}\label{eq:fixed-part-R2}
		\int_{\omega/2}^{\infty}
		R_2(t/r^{1/3},\omega/r)g(r)\,\frac{dr}{r},
	\end{equation}
	\begin{equation}\label{eq:fixed-part-far}
		\int_0^{\omega/2}
		F(t/r^{1/3},\omega/r,1)g(r)\,\frac{dr}{r},
	\end{equation}
	and
	\begin{equation}\label{eq:fixed-part-sigma}
		\omega^{-7/6}\int_{\omega/2}^{\infty}
		r^{1/6}\sigma(t/r^{1/3})g(r)\,dr.
	\end{equation}
	We use \eqref{eq:EMVdecomp} only when $r>\omega/2$.
	For \eqref{eq:fixed-part-far}, where $\omega/r>2$,
	we use estimate (\ref{eq:EMVfar}) directly.
	
	We will bound
	\eqref{eq:fixed-part-dirac}--\eqref{eq:fixed-part-far}
	in $X$.
	The expression \eqref{eq:fixed-part-sigma} may have
	$\omega^{-7/6}$ behavior at zero, which is not allowed in $X$.
	We therefore subtract
	\begin{equation}\label{eq:fixed-low-mode}
		(1-\chi(\omega))\omega^{-7/6}
		\int_0^\infty r^{1/6}\sigma(t/r^{1/3})g(r)\,dr.
	\end{equation}
	This is a multiple of $(1-\chi)f_M$ in \eqref{eq:Znorm}.
	We will bound its coefficient and show that
	\eqref{eq:fixed-part-sigma} minus \eqref{eq:fixed-low-mode}
	belongs to $X$.
	
	We first establish the estimates that will be used
	for both $X$ and $Y$.
	Suppose that $|g(r)|\le r^{-a}$, where
	\begin{equation*}
		a\in
		\left\{
		\frac76-\delta,\; \frac76,\; \frac76+\delta,\; 
		\frac32-2\delta,\; \frac32+2\delta
		\right\}.
	\end{equation*}
	All these exponents satisfy
	\begin{equation*}
		\frac56+\frac{\epsilon}{3}<a<v_0^-.
	\end{equation*}

	For the coefficient in \eqref{eq:fixed-low-mode}, we claim
	\begin{equation}\label{eq:cpower}
		\int_0^\infty r^{1/6}|\sigma(t/r^{1/3})|\,|g(r)|\,dr
		\lesssim t^{7/2-3a}.
	\end{equation}
	The bounds for $\sigma$ in Theorem~\ref{thm:EMV} give
	\begin{equation*}
		\int_0^{t^3}r^{1/6}|\sigma(t/r^{1/3})|\,|g(r)|\,dr
		\lesssim
		t^{-3v_0^-+5/2}
		\int_0^{t^3}r^{v_0^--2/3-a}\,dr
		\sim t^{7/2-3a}.
	\end{equation*}
	They also give
	\begin{equation*}
		\int_{t^3}^\infty r^{1/6}|\sigma(t/r^{1/3})|\,|g(r)|\,dr
		\lesssim
		t^4\int_{t^3}^\infty r^{-7/6-a}\,dr
		\sim 
		t^{7/2-3a}.
	\end{equation*}
	Thus, one can conclude \eqref{eq:cpower}.
	
	For \eqref{eq:fixed-part-dirac} and \eqref{eq:fixed-part-R1},
	we claim
	\begin{equation}\label{eq:diagonal}
		|e^{-\mathfrak a t\omega^{-1/3}}g(\omega)|
		+\int_{\omega/2}^\infty
		|R_1(t/r^{1/3},\omega/r)|\,|g(r)|\,\frac{dr}{r}
		\lesssim e^{-ct\omega^{-1/3}}\omega^{-a},
	\end{equation}
	where $c>0$ is fixed.
	The estimate for \eqref{eq:fixed-part-dirac} follows directly
	from $|g(\omega)|\le\omega^{-a}$.
	The support of $R_1$ restricts \eqref{eq:fixed-part-R1}
	to $2\omega/3<r<2\omega$.
	With $r=\omega s$, its absolute integral is bounded by
	\begin{equation*}
		e^{-ct\omega^{-1/3}}\omega^{-a}
		\int_{2/3}^{2}s^{-1-a}\bigg|\frac{1}{s}-1\bigg|^{-5/6}\,ds.
	\end{equation*}
	The integral is finite because $5/6<1$.
	This proves \eqref{eq:diagonal}.
	
	For \eqref{eq:fixed-part-R2}, we can similarly deduce that
	\begin{equation}\label{eq:R2shape}
		\int_{\omega/2}^\infty
		|R_2(t/r^{1/3},\omega/r)|\,|g(r)|\,\frac{dr}{r}
		\lesssim 
		\begin{cases}
			t^{13/4-3a}\omega^{-13/12},&0<\omega\le t^3,\\
			\omega^{-a},&\omega\ge t^3.
		\end{cases}
	\end{equation}

	For \eqref{eq:fixed-part-far}, one also has
	\begin{equation}\label{eq:farshape}
		\int_0^{\omega/2}
		|F(t/r^{1/3},\omega/r,1)|\,|g(r)|\,\frac{dr}{r}
		\lesssim 
		\begin{cases}
			t^{-3v_0^-+9/2}\omega^{v_0^--3/2-a},
			&0<\omega\le t^3,\\
			\omega^{-a},
			&\omega\ge t^3,\ a<3/2+\epsilon/3,\\
			t^{11/2-3a}\omega^{-11/6},
			&\omega\ge t^3,\ a>3/2+\epsilon/3.
		\end{cases}
	\end{equation}
   The calculation and verification are left to interested readers.
	
	We now apply \eqref{eq:diagonal}, \eqref{eq:R2shape},
	and \eqref{eq:farshape} with the weights of $X$ and $Y$.
	By homogeneity, it is enough to consider $\|g\|_X\le1$
	or $\|g\|_Y\le1$.
	The corresponding pointwise bounds are
	\begin{equation*}
		|g(r)|\le
		\min\{r^{-7/6+\delta},r^{-7/6-\delta}\},
		\qquad \|g\|_X\le1,
	\end{equation*}
	and
	\begin{equation*}
		|g(r)|\le
		\min\{r^{-3/2+2\delta},r^{-3/2-2\delta}\},
		\qquad \|g\|_Y\le1.
	\end{equation*}

	For $\|g\|_X\le1$ and
	$a\in\{7/6-\delta,\,7/6+\delta\}$, we claim that
	each of \eqref{eq:fixed-part-dirac}--\eqref{eq:fixed-part-far}
	is bounded by $C\omega^{-a}$.
	For \eqref{eq:fixed-part-dirac} and \eqref{eq:fixed-part-R1},
	this follows from \eqref{eq:diagonal}.
	For $\omega\le t^3$, the bounds in
	\eqref{eq:R2shape} and \eqref{eq:farshape} satisfy
	\begin{equation*}
		t^{13/4-3a}\omega^{-13/12}
		=
		\omega^{-a}\left(\frac{\omega}{t^3}\right)^{a-13/12}
		\le\omega^{-a},
	\end{equation*}
	and
	\begin{equation*}
		t^{-3v_0^-+9/2}\omega^{v_0^--3/2-a}
		=
		\omega^{-a}
		\left(\frac{\omega}{t^3}\right)^{v_0^--3/2}
		\le\omega^{-a}.
	\end{equation*}
	Here $a>13/12$ and $v_0^->3/2$.
	For $\omega\ge t^3$, both
	\eqref{eq:R2shape} and \eqref{eq:farshape}
	already give $C\omega^{-a}$, since $a<3/2+\epsilon/3$.
	Using $a=7/6-\delta$ below $\omega=1$ and
	$a=7/6+\delta$ above $\omega=1$ proves that each of
	\eqref{eq:fixed-part-dirac}--\eqref{eq:fixed-part-far}
	has $X$-norm $\lesssim 1$.
	
	For $\|g\|_Y\le1$, we first estimate
	\eqref{eq:fixed-part-dirac},
	\eqref{eq:fixed-part-R1}, and
	\eqref{eq:fixed-part-R2} in $X$.
	Using \eqref{eq:diagonal} and \eqref{eq:R2shape},
	with $a=3/2-2\delta$ for $0<t\le1$ and
	$a=3/2+2\delta$ for $t\ge1$, we obtain,
	after simple calculations, the following bound
	for the $X$-norm of each of these expressions:
	\begin{equation*}
		\lesssim
		\begin{cases}
			t^{-1+3\delta}, & 0<t\le1,\\
			t^{-1-3\delta}, & t\ge1.
		\end{cases}
	\end{equation*}
	
	We next estimate \eqref{eq:fixed-part-far} in $X$.
	For $0<t\le1$, apply \eqref{eq:farshape}
	with $a=3/2-2\delta$.
	For $t\ge1$, apply \eqref{eq:farshape}
	with $a=3/2-2\delta$ on $0<\omega\le1$
	and $a=3/2+2\delta$ on $\omega\ge1$.
	After simple calculations, the $X$-norm of
	\eqref{eq:fixed-part-far} is bounded by
	\begin{equation*}
		\lesssim
		\begin{cases}
			t^{-1+3\delta}, & 0<t\le1,\\
			t^{-3v_0^-+9/2}+t^{-1-3\delta}, & t\ge1.
		\end{cases}
	\end{equation*}
	
	It remains to estimate \eqref{eq:fixed-part-sigma}
	minus \eqref{eq:fixed-low-mode}, namely
	\begin{equation}\label{eq:residue}
		\omega^{-7/6}
		\left[
		\int_{\omega/2}^\infty r^{1/6}\sigma(t/r^{1/3})g(r)\,dr
		-(1-\chi(\omega))
		\int_0^\infty r^{1/6}\sigma(t/r^{1/3})g(r)\,dr
		\right].
	\end{equation}
	We claim that \eqref{eq:residue} has $X$ norm $\lesssim 1$
	for $\|g\|_X\le1$, and 
	$\lesssim t^{-1+3\delta}$ for $0<t\le1$ and
	$\lesssim t^{-1-3\delta}$ for $t\ge1$ when $\|g\|_Y\le1$.
	
For $0<\omega\le1$, the bracket in \eqref{eq:residue} equals
$-\int_0^{\omega/2}r^{1/6}\sigma(t/r^{1/3})g(r)\,dr$.
On $\omega\le\min(1,t^3)$, use
$|\sigma(\tau)|\lesssim\tau^{-3v_0^-+5/2}$ for $\tau\ge1$
from Theorem~\ref{thm:EMV}.
On $t^3<\omega\le1$, use \eqref{eq:cpower}.
Choosing $a=7/6-\delta$ or $3/2-2\delta$ for $t\le1$,
and $a=7/6+\delta$ or $3/2+2\delta$ for $t\ge1$,
we obtain, after simple calculations,
\begin{equation*}
	\sup_{0<\omega\le1}
	\omega^{-\delta}
	\left|
	\int_0^{\omega/2}
	r^{1/6}\sigma(t/r^{1/3})g(r)\,dr
	\right|
	\lesssim
	\begin{cases}
		1, & \|g\|_X\le1,\ t>0,\\
		t^{-1+3\delta}, & \|g\|_Y\le1,\ 0<t\le1,\\
		t^{-1-3\delta}, & \|g\|_Y\le1,\ t\ge1.
	\end{cases}
\end{equation*}

For $\omega\ge 2$, \eqref{eq:residue} equals
\eqref{eq:fixed-part-sigma}.
On $\omega\ge\max(2,2t^3)$, use
$|\sigma(\tau)|\lesssim\tau^4$ for $0<\tau\le1$
from Theorem~\ref{thm:EMV}.
On $2\le\omega\le2t^3$, use \eqref{eq:cpower}.
Taking $a=7/6+\delta$ for $X$ data and
$a=3/2+2\delta$ for $Y$ data gives, after simple calculations,
\begin{equation*}
	\sup_{\omega\ge 2}
	\omega^\delta
	\left|
	\int_{\omega/2}^\infty
	r^{1/6}\sigma(t/r^{1/3})g(r)\,dr
	\right|
	\lesssim
	\begin{cases}
		1, & \|g\|_X\le1,\ t>0,\\
		t^{-1+3\delta}, & \|g\|_Y\le1,\ 0<t\le1,\\
		t^{-1-3\delta}, & \|g\|_Y\le1,\ t\ge1.
	\end{cases}
\end{equation*}

On $[1,2]$, the absolute value of \eqref{eq:residue},
multiplied by the $X$ weight, is bounded by
\eqref{eq:cpower}, which gives
\begin{equation}\label{eq:cfinal}
	\int_0^\infty
	r^{1/6}|\sigma(t/r^{1/3})|\,|g(r)|\,dr
	\lesssim
	\begin{cases}
		t^{3\delta}, & \|g\|_X\le1,\ 0<t\le1,\\
		t^{-3\delta}, & \|g\|_X\le1,\ t\ge1,\\
		t^{-1+6\delta}, & \|g\|_Y\le1,\ 0<t\le1,\\
		t^{-1-6\delta}, & \|g\|_Y\le1,\ t\ge1.
	\end{cases}
\end{equation}
These bounds complete the estimate for \eqref{eq:residue}.
	
	Combining the bounds for
	\eqref{eq:fixed-part-dirac}--\eqref{eq:fixed-part-far}
	and \eqref{eq:residue}, and restoring the norm of $g$, gives
	\begin{equation}\label{eq:fixed-X-remainder}
		\left\|
		S(t)g-(1-\chi)f_M
		\int_0^\infty r^{1/6}\sigma(t/r^{1/3})g(r)\,dr
		\right\|_X
		\lesssim \|g\|_X,\qquad g\in X,
	\end{equation}
	and
	\begin{equation}\label{eq:remainderfinal}
		\left\|
		S(t)g-(1-\chi)f_M
		\int_0^\infty r^{1/6}\sigma(t/r^{1/3})g(r)\,dr
		\right\|_X
		\lesssim k(t)\|g\|_Y,\qquad g\in Y.
	\end{equation}
	For $t\ge1$, \eqref{eq:auxiliary} ensures
	\begin{equation*}
		t^{-1-3\delta}+t^{-3v_0^-+9/2}
		\le 2t^{-1-\kappa}.
	\end{equation*}
	By \eqref{eq:Znorm}, the estimates
	\eqref{eq:cfinal}--\eqref{eq:remainderfinal} therefore imply
	\begin{equation}\label{eq:kernel-XZ}
		\left\|\int_0^\infty F(t,\cdot,r)g(r)\,dr\right\|_{\mathcal Z}
		\lesssim \|g\|_X,\qquad g\in X,
	\end{equation}
	and
	\begin{equation}\label{eq:kernel-YZ}
		\left\|\int_0^\infty F(t,\cdot,r)g(r)\,dr\right\|_{\mathcal Z}
		\lesssim k(t)\|g\|_Y,\qquad g\in Y.
	\end{equation}
	
   Similarly, by using estimates in Proposition \ref{prop:derivative}, we can derive
	\begin{equation}\label{eq:kernel-derivative-XZ}
		\left\|
		\int_0^\infty t\partial_tF(t,\cdot,r)g(r)\,dr
		\right\|_{\mathcal Z}
		\lesssim \|g\|_X,\qquad g\in X,
	\end{equation}
	and
	\begin{equation}\label{eq:kernel-derivative-YZ}
		\left\|
		\int_0^\infty t\partial_tF(t,\cdot,r)g(r)\,dr
		\right\|_{\mathcal Z}
		\lesssim k(t)\|g\|_Y,\qquad g\in Y.
	\end{equation}
	For \eqref{eq:kernel-X-continuity} and
	\eqref{eq:kernel-Y-continuity}, fix $0<t_1<t_2$.
	By \eqref{eq:fixed-scalar-time-integral} in
	Lemma~\ref{lem:appendix-positive-continuity},
	\begin{equation*}
		S(t_2)g(\omega)-S(t_1)g(\omega)
		=
		\int_{t_1}^{t_2}
		\left(
		\int_0^\infty
		\partial_\tau F(\tau,\omega,r)g(r)\,dr.
		\right)d\tau
	\end{equation*}
	
	We first estimate the difference of the coefficients in
	\eqref{eq:fixed-low-mode}.
	Using \eqref{eq:EMVsigma-derivative} as in
	\eqref{eq:cpower} and \eqref{eq:cfinal} gives
	\begin{equation*}
		\int_0^\infty
		r^{-1/6}|\sigma'(\tau/r^{1/3})|\,|g(r)|\,dr
		\lesssim
		\begin{cases}
			\tau^{-1}\|g\|_X, & g\in X,\\
			\tau^{-1}k(\tau)\|g\|_Y, & g\in Y.
		\end{cases}
	\end{equation*}
	Both bounds are integrable over $[t_1,t_2]$.
	The fundamental theorem of calculus and Fubini therefore give
	\begin{equation}\label{eq:fixed-coefficient-time-integral}
		\int_0^\infty r^{1/6}
		[\sigma(t_2/r^{1/3})-\sigma(t_1/r^{1/3})]g(r)\,dr
		=
		\int_{t_1}^{t_2}\int_0^\infty
		r^{-1/6}\sigma'(\tau/r^{1/3})g(r)\,dr\,d\tau.
	\end{equation}
In particular,
\begin{equation*}
	\left|
	\int_0^\infty r^{1/6}
	[\sigma(t_2/r^{1/3})-\sigma(t_1/r^{1/3})]g(r)\,dr
	\right|
	\lesssim
	\begin{cases}
		\displaystyle
		\|g\|_X\int_{t_1}^{t_2}\frac{d\tau}{\tau},
		& g\in X,\\[6pt]
		\displaystyle
		\|g\|_Y\int_{t_1}^{t_2}k(\tau)\,\frac{d\tau}{\tau},
		& g\in Y.
	\end{cases}
\end{equation*}
	
	We next estimate the difference after subtracting
	\eqref{eq:fixed-low-mode}.
	Subtracting \eqref{eq:fixed-coefficient-time-integral},
	multiplied by $(1-\chi(\omega))f_M(\omega)$, from
	\eqref{eq:fixed-scalar-time-integral} gives
	\begin{equation*}
		\begin{gathered}
			\left\|
			S(t_2)g-S(t_1)g
			-(1-\chi)f_M
			\int_0^\infty r^{1/6}
			[\sigma(t_2/r^{1/3})-\sigma(t_1/r^{1/3})]g(r)\,dr
			\right\|_X
			\\
			\le
			\int_{t_1}^{t_2}
			\left\|
			\int_0^\infty\partial_\tau F(\tau,\cdot,r)g(r)\,dr
			-(1-\chi)f_M
			\int_0^\infty r^{-1/6}\sigma'(\tau/r^{1/3})g(r)\,dr
			\right\|_X\,d\tau.
		\end{gathered}
	\end{equation*}
	
	The remainder estimates used to prove
	\eqref{eq:kernel-derivative-XZ} and
	\eqref{eq:kernel-derivative-YZ} state that
	\begin{equation*}
		\begin{gathered}
			\left\|
			\int_0^\infty\partial_\tau F(\tau,\cdot,r)g(r)\,dr
			-(1-\chi)f_M
			\int_0^\infty r^{-1/6}\sigma'(\tau/r^{1/3})g(r)\,dr
			\right\|_X
			\\
			\lesssim
			\begin{cases}
				\tau^{-1}\|g\|_X, & g\in X,\\
				\tau^{-1}k(\tau)\|g\|_Y, & g\in Y.
			\end{cases}
		\end{gathered}
	\end{equation*}
	By \eqref{eq:Znorm}, it proves
	\eqref{eq:kernel-X-continuity} and
	\eqref{eq:kernel-Y-continuity}.

	Finally, Lemma~\ref{lem:appendix-Z-differentiability}
	identifies the expressions in
	\eqref{eq:kernel-derivative-XZ} and
	\eqref{eq:kernel-derivative-YZ} with $tS'(t)g$.
	Together with \eqref{eq:kernel-XZ} and
	\eqref{eq:kernel-YZ}, this proves
	\eqref{eq:fixedX} and \eqref{eq:fixedY}.
	The definitions on $X$ and $Y$ agree on $X\cap Y$
	because they use the same absolutely convergent kernel integral.

\end{proof}

The next lemma supplies the short-time estimate in $Y$
needed to verify the local integral equation for the
inhomogeneous evolution.

\begin{lemma}[Short-time estimate in $Y$]
	For every $0<t\le1$ and almost every $\omega>0$,
	\begin{equation}\label{eq:kernel-YY-absolute}
		\int_0^\infty |F(t,\omega,r)|
		\min\{r^{-3/2+2\delta},r^{-3/2-2\delta}\}\,dr
		\lesssim
		\min\{\omega^{-3/2+2\delta},\omega^{-3/2-2\delta}\}.
	\end{equation}
	Consequently,
	\begin{equation}\label{eq:fixedYY}
		\sup_{0<t\le1}\|S(t)\|_{Y\to Y}\lesssim 1 .
	\end{equation}
\end{lemma}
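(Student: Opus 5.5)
We prove the pointwise kernel bound \eqref{eq:kernel-YY-absolute} by reusing the decomposition of the proof of Lemma~\ref{lem:fixed-Y}. With $g_*(r)=\min\{r^{-3/2+2\delta},r^{-3/2-2\delta}\}$, the integral $\int_0^\infty|F(t,\omega,r)|g_*(r)\,dr$ splits into five pieces: the Dirac term \eqref{eq:fixed-part-dirac}, the $R_1$ term \eqref{eq:fixed-part-R1}, the $R_2$ term \eqref{eq:fixed-part-R2}, the far term \eqref{eq:fixed-part-far} (where $r<\omega/2$), and the $\sigma$ term \eqref{eq:fixed-part-sigma}. Each piece is taken with absolute values. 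Since $g_*(r)\le r^{-a}$ for both $a=3/2\mp2\delta$, we may apply the shape bounds \eqref{eq:diagonal}, \eqref{eq:R2shape}, \eqref{eq:farshape} and \eqref{eq:cpower} with whichever $a$ is convenient. The target is $\omega^{-a}$, with $a=3/2-2\delta$ for $\omega\le1$ and $a=3/2+2\delta$ for $\omega\ge1$. Unlike in Lemma~\ref{lem:fixed-Y}, we cannot subtract a low mode. The point is that for $t\le1$ every term is in fact $\lesssim\omega^{-a}$ directly, because $\omega^{-7/6}$ and $\omega^{-13/12}$ are \emph{less} singular at $0$ than $\omega^{-3/2+2\delta}$ (recall $\delta<1/12$).

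We take the terms in turn.
\begin{itemize}
\item \textbf{Dirac and $R_1$.} By \eqref{eq:diagonal} these are $\lesssim\omega^{-a}$ for either choice of $a$.
\item \textbf{$R_2$.} For $\omega\ge t^3$, \eqref{eq:R2shape} gives $\omega^{-a}$. For $\omega\le t^3\le1$ it gives $t^{13/4-3a}\omega^{-13/12}=\omega^{-a}(\omega/t^3)^{a-13/12}\le\omega^{-a}$ with $a=3/2-2\delta$.
\item \textbf{Far term.} For $\omega\le t^3$, \eqref{eq:farshape} gives $\omega^{-a}(\omega/t^3)^{v_0^--3/2}\le\omega^{-a}$. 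For $\omega\ge t^3$ and $a<3/2+\epsilon/3$ it gives $\omega^{-a}$. For $\omega\ge 1$ we need $a=3/2+2\delta$, which is $>3/2+\epsilon/3$; the third case then yields $t^{11/2-3a}\omega^{-11/6}=t^{1-6\delta}\omega^{-11/6}\le\omega^{-3/2-2\delta}$, since $t\le1$ and $11/6>3/2+2\delta$.
\item \textbf{$\sigma$ term.} We bound it by $\omega^{-7/6}\int_{\omega/2}^\infty r^{1/6}|\sigma(t/r^{1/3})|g_*(r)\,dr$ and split into two regimes. For $\omega\le\max(2,2t^3)$, i.e.\ essentially $\omega\lesssim1$, we use \eqref{eq:cpower} with $a=3/2-2\delta$. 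This bounds the integral by $t^{-1+6\delta}$, so the term is $\lesssim t^{-1+6\delta}\omega^{-7/6}$.
\end{itemize}

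This last bound is the main obstacle, since it blows up as $t\to0$. We refine it by splitting the $r$-integral at $r=t^3$ and keeping the lower limit $\omega/2$. On $r\ge\max(\omega/2,t^3)$ we use $|\sigma(\tau)|\lesssim\tau^4$, giving
\[
t^4\int_{\max(\omega/2,t^3)}^\infty r^{-7/6}g_*(r)\,dr\lesssim t^4\max(\omega,t^3)^{-7/6-a+1}.
\]
On $\omega/2\le r\le t^3$ (nonempty only when $\omega\le 2t^3$) we use $\tau^{-3v_0^-+5/2}$, giving
\[
t^{-3v_0^-+5/2}\int_{\omega/2}^{t^3} r^{v_0^--2/3-a}\,dr\lesssim t^{7/2-3a}.
\]
Multiplying by $\omega^{-7/6}$, we treat two cases.
\begin{itemize}
\item For $\omega\ge t^3$ the term is $\lesssim t^4\omega^{-4/3-a}\le t^4\omega^{-a}\omega^{-4/3}\cdot$, which is $\le\omega^{-a}$ when $\omega\ge t^3$. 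Indeed $t^4\omega^{-4/3}\le t^4 t^{-4}=1$.
\item For $\omega\le 2t^3$ the term is $\lesssim t^{7/2-3a}\omega^{-7/6}=\omega^{-a}(\omega/t^3)^{a-7/6}\lesssim\omega^{-a}$, since $a=3/2-2\delta>7/6$.
\end{itemize}
In the case $\omega\ge1$ we take $a=3/2+2\delta$ and the same computation applies. This gives the uniform bound \eqref{eq:kernel-YY-absolute}; the implicit constants do not depend on $t\in(0,1]$.

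The consequence \eqref{eq:fixedYY} then follows immediately. For $g\in Y$ we have $|g|\le\|g\|_Yg_*$, so $|S(t)g(\omega)|\le\|g\|_Y\int|F(t,\omega,r)|g_*(r)\,dr\lesssim\|g\|_Yg_*(\omega)$. This is exactly $\|S(t)g\|_Y\lesssim\|g\|_Y$. The main care point throughout is tracking which exponent $a$ to use in each region so that no $t^{-1+6\delta}$ loss survives.
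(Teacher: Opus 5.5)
Your proof is correct and follows the paper's argument. You use the same five-piece decomposition with the shape bounds \eqref{eq:diagonal}, \eqref{eq:R2shape}, \eqref{eq:farshape}, choosing $a=3/2-2\delta$ for $\omega\le1$ and $a=3/2+2\delta$ for $\omega\ge1$. For the $\sigma$ term you make the same split into the region $\omega\lesssim t^3$ (large-$\tau$ decay of $\sigma$, as in \eqref{eq:cpower}) and the region $\omega\gtrsim t^3$ (using $|\sigma(\tau)|\lesssim\tau^4$). The ``obstacle'' you flag only arises because you first applied \eqref{eq:cpower} on all of $\omega\lesssim1$. The paper applies it only on $\omega\le2t^3$, where the factor $t^{-1+6\delta}$ is absorbed by $\omega^{1/3-2\delta}\lesssim t^{1-6\delta}$, and this is exactly what your refinement recovers.
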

\begin{proof}
	We estimate the absolute contributions in
	\eqref{eq:fixed-part-dirac}--\eqref{eq:fixed-part-sigma}
	with the weight defining $Y$.
	
	For \eqref{eq:fixed-part-dirac},
	\eqref{eq:fixed-part-R1}, and \eqref{eq:fixed-part-R2},
	apply \eqref{eq:diagonal} and \eqref{eq:R2shape}
	with $a=3/2-2\delta$ on $\omega\le1$
	and $a=3/2+2\delta$ on $\omega\ge1$.
	After simple calculations, each contribution
	has $Y$-norm $\lesssim1$.
	
	For \eqref{eq:fixed-part-far}, the same choices of $a$
	in \eqref{eq:farshape} give the following bounds:
	\begin{equation*}
		\lesssim \begin{cases}
			t^{-3v_0^-+9/2}\omega^{v_0^--3/2},
			& 0<\omega\le t^3,\\
			1, & t^3\le\omega\le1,\\
			t^{1-6\delta}\omega^{-1/3+2\delta},
			& \omega\ge1.
		\end{cases}
	\end{equation*}
	Each expression is $\lesssim1$, since
	$v_0^->3/2$ and $0<\delta<1/12$.
	
	For \eqref{eq:fixed-part-sigma}, we estimate the absolute
	integral without subtracting \eqref{eq:fixed-low-mode}.
	On $\omega\le2t^3$, apply \eqref{eq:cpower};
	on $\omega\ge2t^3$, apply
	$|\sigma(\tau)|\lesssim\tau^4$ for $0<\tau\le1$
	from Theorem~\ref{thm:EMV}.
	Choosing $a=3/2-2\delta$ on $\omega\le1$
	and $a=3/2+2\delta$ on $\omega\ge1$ gives,
	after simple calculations, the bounds
	\begin{equation*}
		\lesssim \begin{cases}
			t^{-1+6\delta}\omega^{1/3-2\delta},
			& 0<\omega\le\min(1,2t^3),\\
			t^4\omega^{-4/3},
			& \omega\ge2t^3,\\
			t^{-1-6\delta}\omega^{1/3+2\delta},
			& 1\le\omega\le2t^3,\; t>2^{-1/3}.
		\end{cases}
	\end{equation*}
	Consequently, they are all $\lesssim1$. Summing all the contributions in
	\eqref{eq:fixed-part-dirac}--\eqref{eq:fixed-part-sigma}
	proves \eqref{eq:kernel-YY-absolute}.
	
\end{proof}

To extend the $X\to\mathcal Z$ estimate to all of $\mathcal Z$,
we will also need to control $L(\chi f_M)$.
The following lemma provides this control and the two cutoff
estimates used in Section~\ref{sec:linear}.

\begin{lemma}[Cutoff and commutator estimates]\label{lem:commutator}
	For every $g\in E_{7/6,7/6}$, define
	\begin{equation*}
		(L\chi-\chi L)g:=L(\chi g)-\chi Lg.
	\end{equation*}
	The terms on the right are defined by the absolutely convergent
	kernel integrals in Lemma~\ref{lem:appendix-L-mapping}.
	For almost every $\omega>0$,
	\begin{equation}\label{eq:fixed-commutator-kernel}
		(L\chi-\chi L)g(\omega)
		=
		\omega^{-4/3}\int_0^\infty
		K(r/\omega)\bigl(\chi(r)-\chi(\omega)\bigr)g(r)\,dr.
	\end{equation}
	Moreover,
	\begin{equation}\label{eq:fixed-commutator-bound}
		\|(L\chi-\chi L)g\|_{E_{1/6,11/6}}
		\lesssim \|g\|_{E_{7/6,7/6}},
	\end{equation}
	and
	\begin{equation}\label{eq:fixed-cutoff-L-bound}
		\|(1-\chi)\chi Lg\|_Y
		\lesssim \|g\|_{E_{7/6,7/6}}.
	\end{equation}
	
	Multiplication by $\chi$ and by $1-\chi$ is bounded on each
	of $X$, $Y$, and $\mathcal Z$.
	In particular,
	\begin{equation}\label{eq:fixed-L-cutoff-mode}
		L(\chi f_M)=(L\chi-\chi L)f_M\in Y,
		\qquad \|L(\chi f_M)\|_Y\lesssim 1.
	\end{equation}
\end{lemma}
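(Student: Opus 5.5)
The proof works entirely from the kernel representation \eqref{eq:L} and the bounds \eqref{eq:Kbounds}. For $g\in E_{7/6,7/6}$ we have $|g(r)|\lesssim \|g\|\,r^{-7/6}$. The integrals $\omega^{-4/3}\int K(r/\omega)g(r)\,dr$ and $\omega^{-4/3}\int K(r/\omega)\chi(r)g(r)\,dr$ converge absolutely. Near $r\to0$ we have $K(r/\omega)\sim (r/\omega)^{1/2}$ and $r^{1/2-7/6}$ is integrable. Near $r=\omega$ the singularity $|r/\omega-1|^{-5/6}$ is integrable. For $r\to\infty$ we get $r^{-7/6-7/6}$.

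\textbf{The commutator identity.} The local terms $-\mathfrak a\omega^{-1/3}\chi(\omega)g(\omega)$ appear in both $L(\chi g)$ and $\chi Lg$, so they cancel. Subtracting the two absolutely convergent integrals gives \eqref{eq:fixed-commutator-kernel} directly.

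\textbf{The bound \eqref{eq:fixed-commutator-bound}.} We split into three frequency ranges.
\begin{itemize}
\item For $\omega\le 1/2$, $\chi(\omega)=0$ and $\chi(r)$ is supported in $r\ge1$, so $r/\omega\ge2$. Using $|K|\lesssim (r/\omega)^{-7/6}$ gives
\[
\omega^{-4/3}\omega^{7/6}\int_1^\infty r^{-7/3}\,dr\lesssim\omega^{-1/6}.
\]
This is the $E_{1/6,\cdot}$ bound.
\item For $\omega\ge4$, $\chi(\omega)=1$ and $\chi(r)-1$ is supported in $r\le2$, so $r/\omega\le1/2$. Using $|K|\lesssim (r/\omega)^{1/2}$ gives
\[
\omega^{-4/3-1/2}\int_0^2 r^{1/2-7/6}\,dr\lesssim \omega^{-11/6}.
\]
\item For $1/2\le\omega\le4$, $|\chi(r)-\chi(\omega)|\le\|\chi'\|_\infty|r-\omega|$ kills the diagonal singularity completely. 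The tails are handled as in the previous two cases, so the result is $O(1)$.
\end{itemize}

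\textbf{The bound \eqref{eq:fixed-cutoff-L-bound}.} The factor $(1-\chi)\chi$ is supported in $[1,2]$, so it suffices to show that $Lg$ is essentially bounded on $[1,2]$. The local term is bounded there. The integral term is bounded by the same three-region estimate: near $r=\omega$ we use $\int |r/\omega-1|^{-5/6}r^{-7/6}\,dr<\infty$ uniformly for $\omega\in[1,2]$, and the tails are as before. The $Y$ weights are harmless on a compact interval.

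\textbf{Boundedness of the cutoffs.} Multiplication by $\chi$ and $1-\chi$ is bounded on $X$ and $Y$ because $0\le\chi\le1$ and these are weighted $L^\infty$ spaces. On $\mathcal Z$, write
\[
z=z_0(1-\chi)f_M+z_\infty\chi f_M+h .
\]
Then
\[
\chi z=z_\infty\chi f_M+\bigl[z_0\chi(1-\chi)f_M+\chi h\bigr],
\]
and the bracket lies in $X$ because $\chi(1-\chi)f_M$ is bounded and supported in $[1,2]$. The case of $1-\chi$ is symmetric.

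\textbf{The identity \eqref{eq:fixed-L-cutoff-mode}.} Lemma~\ref{lem:stationary} gives $Lf_M=0$, so $L(\chi f_M)=(L\chi-\chi L)f_M$. Note that $f_M\in E_{7/6,7/6}$. Since $1/6<3/2-2\delta$ and $11/6>3/2+2\delta$ (as $\delta<1/12$), the inclusion $E_{1/6,11/6}\subset Y$ applies, and \eqref{eq:fixed-commutator-bound} gives the $Y$ bound.

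\textbf{Main obstacle.} The delicate step is justifying that $Lf_M=0$ in the kernel sense, so that the kernel form \eqref{eq:L} applied to $f_M$ agrees with the collision integral of Lemma~\ref{lem:stationary}. I would invoke the appendix mapping lemma (Lemma~\ref{lem:appendix-L-mapping}) for this identification. The remaining difficulty is keeping track of the near-diagonal integrability uniformly in $\omega$ in the middle region of the commutator estimate.
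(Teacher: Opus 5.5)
Your argument is the paper's proof step for step: cancellation of the multiplication term, the same three-range commutator estimate with the same exponents, local boundedness of $Lg$ on $[1,2]$, the decomposition on $\mathcal Z$, and $Lf_M=0$ together with $E_{1/6,11/6}\subset Y$. The Lipschitz bound on $\chi$ in the middle range is harmless but unnecessary, since $|r/\omega-1|^{-5/6}$ is already integrable and the paper simply uses $|\chi(r)-\chi(\omega)|\le1$ with $r=\omega s$.

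Two small corrections. First, $\chi^2\ne\chi$ on $(1,2)$, so the correct identity is $\chi z=z_\infty\chi f_M+(z_0-z_\infty)\chi(1-\chi)f_M+\chi h$, and symmetrically for $1-\chi$; this gives the same bound. Second, the kernel-form identity $Lf_M=0$ comes from the cited representation of $L=3\mathcal C(f_M,f_M,\cdot)$ together with Lemma~\ref{lem:stationary}, not from Lemma~\ref{lem:appendix-L-mapping}, which only supplies absolute convergence and bounds.
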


\begin{proof}
     Lemma~\ref{lem:appendix-L-mapping} defines $L(\chi g)$ and $Lg$
     by absolutely convergent integrals for $g\in E_{7/6,7/6}$.
	Thus \eqref{eq:fixed-commutator-kernel} is an equality of
	functions almost everywhere.
	
	For \eqref{eq:fixed-commutator-kernel}, the multiplication term
	$-\mathfrak a\omega^{-1/3}g(\omega)$ in \eqref{eq:L}
	commutes with multiplication by $\chi$.
	Subtracting the integral terms in $L(\chi g)$ and $\chi Lg$
	gives exactly \eqref{eq:fixed-commutator-kernel}.
	
	For \eqref{eq:fixed-commutator-bound}, we estimate
	\eqref{eq:fixed-commutator-kernel} near zero, near infinity,
	and on $[1/2,4]$.
	If $\omega\le1/2$, then $\chi(\omega)=0$ and the integrand
	vanishes for $r\le1$.
	Using \eqref{eq:Kbounds},
	\begin{equation*}
		|(L\chi-\chi L)g(\omega)|
		\lesssim
		\|g\|_{E_{7/6,7/6}}\omega^{-1/6}
		\int_1^\infty r^{-7/3}\,dr
		\lesssim \|g\|_{E_{7/6,7/6}}\omega^{-1/6}.
	\end{equation*}
	If $\omega\ge4$, then $\chi(\omega)=1$ and the integrand
	vanishes for $r\ge 2$.
	Again by \eqref{eq:Kbounds},
	\begin{equation*}
		|(L\chi-\chi L)g(\omega)|
		\lesssim
		\|g\|_{E_{7/6,7/6}}\omega^{-11/6}
		\int_0^2 r^{-2/3}\,dr
		\lesssim \|g\|_{E_{7/6,7/6}}\omega^{-11/6}.
	\end{equation*}
	For $1/2\le\omega\le4$, use
	$|\chi(r)-\chi(\omega)|\le1$ and substitute $r=\omega s$
	in \eqref{eq:fixed-commutator-kernel}.
	This gives
	\begin{equation*}
		|(L\chi-\chi L)g(\omega)|
		\le
		\|g\|_{E_{7/6,7/6}}\omega^{-3/2}
		\int_0^\infty |K(s)|s^{-7/6}\,ds.
	\end{equation*}
	The integral is finite: \eqref{eq:Kbounds} gives powers
	$s^{-2/3}$ at zero, $s^{-7/3}$ at infinity, and the
	integrable singularity $|s-1|^{-5/6}$ near one.
	Since $\omega\in[1/2,4]$, the resulting bound is uniform.
	The three frequency ranges prove \eqref{eq:fixed-commutator-bound}.
	
	For \eqref{eq:fixed-cutoff-L-bound}, the factor
	$(1-\chi)\chi$ vanishes outside $[1,2]$.
	The local estimate \eqref{eq:fixed-L-E}, together with boundedness
	of the $Y$ weight on $[1,2]$, gives
	\begin{equation*}
		\|(1-\chi)\chi Lg\|_Y
		\lesssim \|Lg\|_{L^\infty([1,2])}
		\lesssim \|g\|_{E_{7/6,7/6}}.
	\end{equation*}
	
	For boundedness of multiplication on $X$ and $Y$, use
	$0\le\chi\le1$ and $0\le1-\chi\le1$ directly in the norms.
	For $g\in\mathcal Z$, write its decomposition as in
	\eqref{eq:Znorm}. Then
	\begin{equation*}
		\chi g
		=
		g_\infty\chi f_M
		+(g_0-g_\infty)\chi(1-\chi)f_M+\chi h,
	\end{equation*}
	and
	\begin{equation*}
		(1-\chi)g
		=
		g_0(1-\chi)f_M
		+(g_\infty-g_0)\chi(1-\chi)f_M+(1-\chi)h.
	\end{equation*}
	The function $\chi(1-\chi)f_M$ is supported in $[1,2]$
	and belongs to $X$.
	Taking the norms in \eqref{eq:Znorm} therefore gives
	\begin{equation*}
		\|\chi g\|_{\mathcal Z}+\|(1-\chi)g\|_{\mathcal Z}
		\lesssim \bigl(|g_0|+|g_\infty|+\|h\|_X\bigr)
		=\|g\|_{\mathcal Z}.
	\end{equation*}
	
	Finally, Lemma~\ref{lem:stationary} gives $Lf_M=0$,
	and $f_M\in E_{7/6,7/6}$.
	Applying \eqref{eq:fixed-commutator-bound} to $f_M$
	proves \eqref{eq:fixed-L-cutoff-mode}.
	
\end{proof}

We can now combine the estimate for $X$ data with the estimates
for the two functions $(1-\chi)f_M$ and $\chi f_M$.

\begin{lemma}[Estimates for $\mathcal Z$ data]\label{lem:fixed-Z}
	For every $t>0$,
	\begin{equation}\label{eq:fixedZ}
		\|S(t)\|_{\mathcal Z\to\mathcal Z}
		+\|tS'(t)\|_{\mathcal Z\to\mathcal Z}\lesssim 1 .
	\end{equation}
	For every $0<t_1<t_2$,
	\begin{equation}\label{eq:fixed-Z-difference}
		\|S(t_2)-S(t_1)\|_{\mathcal Z\to\mathcal Z}
		\lesssim \int_{t_1}^{t_2}\frac{dr}{r}.
	\end{equation}
	Moreover,
	\begin{equation*}
		S(t)f_M=f_M,\qquad t\ge0.
	\end{equation*}
	The operators defined for $\mathcal Z$ and $Y$ data agree
	on $\mathcal Z\cap Y$.
\end{lemma}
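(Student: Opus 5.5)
The proof will use the decomposition \eqref{eq:Znorm}. Every $g\in\mathcal Z$ splits as
\[
g=g_0(1-\chi)f_M+g_\infty\chi f_M+h
=g_0f_M+(g_\infty-g_0)\chi f_M+h,\qquad h\in X .
\]
By linearity it then suffices to treat three kinds of data: $h\in X$, the single function $f_M$, and the single function $\chi f_M$. The $X$ part is already handled by \eqref{eq:fixedX} and \eqref{eq:kernel-X-continuity} of Lemma~\ref{lem:fixed-Y}. What remains is to show three things:
\begin{itemize}
\item[(i)] $S(t)f_M=f_M$;
\item[(ii)] $\|S(t)(\chi f_M)\|_{\mathcal Z}+\|tS'(t)(\chi f_M)\|_{\mathcal Z}\lesssim1$;
\item[(iii)] the corresponding difference bound $\|S(t_2)\chi f_M-S(t_1)\chi f_M\|_{\mathcal Z}\lesssim\int_{t_1}^{t_2}dr/r$.
\end{itemize}
Since $|g_0|+|g_\infty|+\|h\|_X=\|g\|_{\mathcal Z}$, summing these three contributions gives \eqref{eq:fixedZ} and \eqref{eq:fixed-Z-difference}.

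For (i), I use that $S(t)f_M$ is defined by an absolutely convergent kernel integral. This holds because $f_M$ lies between the two $X$-type envelopes, so the bounds \eqref{eq:diagonal}, \eqref{eq:R2shape}, \eqref{eq:farshape} with $a=7/6$ apply. The argument is then to differentiate in time: by the tested identity \eqref{eq:fundamental-tested-identity} and Fubini, $\partial_t\langle S(t)f_M,\eta\rangle=\langle S(t)Lf_M,\eta\rangle$. Here $Lf_M=0$ by Lemma~\ref{lem:stationary}, and the initial trace \eqref{eq:fundamental-initial-trace} gives $S(t)f_M\to f_M$ as $t\downarrow0$, which yields $S(t)f_M=f_M$.

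The main obstacle is justifying $\int F(t,\omega,r)Lf_M(r)\,dr$, that is, the duality $\langle LF,\eta\rangle=\langle F,L^*\eta\rangle$ against the non-decaying weight $f_M$. I would try to avoid it entirely through the scaling $F(t,\omega,r)=r^{-1}F(t/r^{1/3},\omega/r,1)$. With this scaling, $S(t)f_M(\omega)=\omega^{-7/6}\Phi(t\omega^{-1/3})$ for a scalar function $\Phi$. One then checks that $\Phi$ is constant, using the tested identity with $\eta$ concentrated where $L$ acts as multiplication plus kernel. If that fails, I would fall back on approximating $f_M$ by truncations $f_M\mathbf 1_{[\epsilon,N]}$, applying the appendix identities, and passing to the limit by dominated convergence with the envelope bounds.

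For (ii) and (iii), I would avoid estimating $S(t)\chi f_M$ directly and use Duhamel instead:
\[
S(t)\chi f_M=\chi f_M+\int_0^tS(t-s)L(\chi f_M)\,ds .
\]
By \eqref{eq:fixed-L-cutoff-mode}, $L(\chi f_M)\in Y$, and then \eqref{eq:fixedY} gives $\|S(t-s)L(\chi f_M)\|_{\mathcal Z}\lesssim k(t-s)$. Since $\int_0^\infty k<\infty$, this yields uniform boundedness. The Duhamel identity itself follows from $\partial_sS(t-s)\chi f_M$ and the $Y$-data integral representation; alternatively, writing $S(t)\chi f_M=S(t)f_M-S(t)(1-\chi)f_M=f_M-S(t)(1-\chi)f_M$ and running Duhamel for $(1-\chi)f_M$ with $L((1-\chi)f_M)=-L(\chi f_M)$ gives the same result. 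The derivative is $S'(t)\chi f_M=S(t)L(\chi f_M)$, whose norm is at most $k(t)$, so $\|tS'(t)\chi f_M\|_{\mathcal Z}\le tk(t)\lesssim1$. The difference bound (iii) follows from $\|S(t_2)\chi f_M-S(t_1)\chi f_M\|_{\mathcal Z}\le\int_{t_1}^{t_2}k(r)\,dr\lesssim\int_{t_1}^{t_2}dr/r$, because $rk(r)\lesssim1$.

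Finally, agreement of the $\mathcal Z$ and $Y$ definitions on $\mathcal Z\cap Y$ holds because both are given by the same absolutely convergent kernel integral, exactly as in the last line of the proof of Lemma~\ref{lem:fixed-Y}.
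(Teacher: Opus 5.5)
Your reduction to $X$ data, $f_M$, and $\chi f_M$ matches the paper, and so does most of the rest: the Duhamel bound via $L(\chi f_M)\in Y$ and $k\in L^1$, the facts $tk(t)\le1$ and $k(r)\le r^{-1}$, and agreement on $\mathcal Z\cap Y$. The gap is in step (i), and it carries over into (ii)--(iii).

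The tested identity \eqref{eq:fundamental-tested-identity} is a \emph{forward} equation: $L$ acts on the output variable $\omega$ of $F(\tau,\omega,r)$. Integrating it against $f_M(r)$ and using Fubini gives $\frac{d}{dt}\langle S(t)f_M,\eta\rangle=\langle LS(t)f_M,\eta\rangle$, not $\langle S(t)Lf_M,\eta\rangle$. Moving $L$ onto $\eta$ by duality still leaves it acting in $\omega$. What you actually need is the commutation $LS(t)g=S(t)Lg$, that is, $L$ acting in the input variable $r$. This is not a question of integrability against the non-decaying weight $f_M$: the integral $S(t)Lf_M$ is simply $0$.

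The same identity is used silently in your Duhamel formula for $\chi f_M$ and in ``$S'(t)\chi f_M=S(t)L(\chi f_M)$''. The forward equation only yields $S(t)(\chi f_M)=\chi f_M+\int_0^tLS(r)(\chi f_M)\,dr$. A priori $LS(r)(\chi f_M)$ is controlled only like $r^{-1}$ (cf.\ \eqref{eq:fixed-LS-E-bound}), which is not integrable at $0$. The $k(r)$ bound appears only after rewriting it as $S(r)L(\chi f_M)$.

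Your scaling route just restates the problem. With $S(t)f_M(\omega)=\omega^{-7/6}\Phi(t\omega^{-1/3})$, the forward equation becomes $\Phi'(\tau)=-\mathfrak a\Phi(\tau)+\int_0^\infty K(s)s^{-7/6}\Phi(\tau s^{-1/3})\,ds$, and $\Phi\equiv1$ solves it. However, the integral samples $\Phi$ at the larger arguments $\tau s^{-1/3}$ for $s<1$, so there is no Volterra or Gronwall structure. Proving $\Phi\equiv1$ amounts to uniqueness for $\partial_tw=Lw$ with zero data, in a class lacking the $\omega^{-11/6}$ decay required by the uniqueness clause of Theorem~\ref{thm:EMV}. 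The truncation fallback needs the same commutation for $f_M\mathbf 1_{[\epsilon,N]}$, which is not stationary.

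The missing idea is in the paper's Lemma~\ref{lem:appendix-semigroup-commutation}.
\begin{enumerate}
\item Prove the semigroup identity $S(t)S(s)g=S(t+s)g$ first for $g\in E_{7/6,11/6}$. The difference $S(t+s)g-S(t)S(s)g$ solves the forward equation locally, has zero local trace, and is bounded by $\min(\omega^{-7/6},\omega^{-11/6})$ by \eqref{eq:fixed-absolute-auxiliary}. Adding it to $F(t,\cdot,1)$ therefore stays in the uniqueness class of Theorem~\ref{thm:EMV}, which forces the difference to vanish.
\item Extend the identity to $E_{7/6,7/6}$ and $Y$ by truncation and dominated convergence.
\item Obtain $LS(t)g=S(t)Lg$ from $n(S(t+1/n)g-S(t)g)=S(t)[n(S(1/n)g-g)]$. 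The left side tends to $LS(t)g$ locally. The bracket converges locally to $Lg$ with a uniform $E_{3/2,3/2}$ bound, so dominated convergence applies through \eqref{eq:fixed-absolute-local}.
\end{enumerate}
With this, $LS(t)f_M=S(t)Lf_M=0$ and the local integral identity give $S(t)f_M=f_M$. Your steps (ii)--(iii) then go through exactly as you wrote them, as in the paper.
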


\begin{proof}
	Lemma~\ref{lem:appendix-semigroup-commutation}
	already proves $S(t)f_M=f_M$ for $t\ge0$.
	Its proof uses the local equation, initial trace, and commutation
	with $L$, without using \eqref{eq:fixedZ}.
	
	For \eqref{eq:fixedZ}, we first estimate $S(t)(\chi f_M)$.
	By \eqref{eq:fixed-L-cutoff-mode}, $L(\chi f_M)\in Y$.
	The local integral identity from
	Lemma~\ref{lem:appendix-initial-trace} and the commutation
	identity from Lemma~\ref{lem:appendix-semigroup-commutation}
	give
	\begin{equation}\label{eq:fixed-cutoff-integral}
		S(t)(\chi f_M)
		=
		\chi f_M+\int_0^t S(r)L(\chi f_M)\,dr,
		\qquad t\ge0.
	\end{equation}
		The identity \eqref{eq:fixed-cutoff-integral} initially holds
		in every local space $L^\infty(I)$.
		By \eqref{eq:kernel-Y-continuity},
		$r\mapsto S(r)L(\chi f_M)$ is continuous in $\mathcal Z$
		for $r>0$.
		By \eqref{eq:kernel-YZ}, its $\mathcal Z$ norm is integrable
		down to zero.
		Thus the identity \eqref{eq:fixed-cutoff-integral}
		can be understood in $\mathcal Z$. Consequently, we can obtain
	\begin{equation*}
		\|S(t)(\chi f_M)\|_{\mathcal Z}
		\lesssim
		\|\chi f_M\|_{\mathcal Z}
		+\|L(\chi f_M)\|_Y\int_0^\infty k(r)\,dr
		\lesssim 1.
	\end{equation*}
	
	For the differentiated kernel applied to $\chi f_M$,
	Lemmas~\ref{lem:appendix-local-equation}
	and~\ref{lem:appendix-semigroup-commutation} give
	\begin{equation*}
		\int_0^\infty t\partial_tF(t,\cdot,r)\chi(r)f_M(r)\,dr
		=
		tS(t)L(\chi f_M).
	\end{equation*}
	The equality initially holds locally in $L^\infty$.
	The right-hand side belongs to $\mathcal Z$ by
	\eqref{eq:kernel-YZ}, and
	\begin{equation*}
		\|tS(t)L(\chi f_M)\|_{\mathcal Z}
		\lesssim t\,k(t)\|L(\chi f_M)\|_Y\lesssim 1,
	\end{equation*}
	because
	\begin{equation*}
		tk(t)=
		\begin{cases}
			t^{3\delta}\le1,&0<t\le1,\\
			t^{-\kappa}\le1,&t\ge1.
		\end{cases}
	\end{equation*}
	Thus the differentiated-kernel expression also belongs
	to $\mathcal Z$ and satisfies the same bound.
	
	Since $(1-\chi)f_M=f_M-\chi f_M$ and $S(t)f_M=f_M$,
	the same two bounds hold for $(1-\chi)f_M$.
	For $g\in\mathcal Z$, apply these bounds to its two coefficients
	in \eqref{eq:Znorm}, and apply \eqref{eq:kernel-XZ}
	and \eqref{eq:kernel-derivative-XZ} to its remainder $h\in X$.
	We obtain
	\begin{equation}\label{eq:kernel-ZZ}
		\left\|\int_0^\infty F(t,\cdot,r)g(r)\,dr\right\|_{\mathcal Z}
		+
		\left\|\int_0^\infty t\partial_tF(t,\cdot,r)g(r)\,dr
		\right\|_{\mathcal Z}
		\lesssim \|g\|_{\mathcal Z}.
	\end{equation}
	
	For \eqref{eq:fixed-Z-difference}, we apply \eqref{eq:kernel-YZ} \eqref{eq:fixed-cutoff-integral} and $k(r)\le r^{-1}$
	\begin{equation*}
		\|(S(t_2)-S(t_1))(\chi f_M)\|_{\mathcal Z}
		\lesssim \|L(\chi f_M)\|_Y\int_{t_1}^{t_2}k(r)\,dr
		\lesssim \int_{t_1}^{t_2}\frac{dr}{r}.
	\end{equation*}
	The same estimate also holds for $(1-\chi)f_M$.
	For $h\in X$, use \eqref{eq:kernel-X-continuity}.
	Combining these bounds through \eqref{eq:Znorm} proves
	\eqref{eq:fixed-Z-difference}.
	
	Finally, for $g\in\mathcal Z\cap Y$, both definitions of $S(t)g$
	use the same kernel integral, which is absolutely convergent
	by Lemma~\ref{lem:appendix-absolute-kernel} and
	\eqref{eq:envelope}.
	The two operators therefore agree.
	
\end{proof}

\vspace{10pt}
\section{The equation with coefficients \texorpdfstring{$a_0(t),a_\infty(t)$}{a0(t), a-infinity(t)}}\label{sec:linear}
For measurable real functions $a_0,a_\infty$, consider
\begin{equation}\label{eq:coefficients}
 \partial_t u(t)=
 [a_0(t)^2(1-\chi)+a_\infty(t)^2\chi]Lu(t).
\end{equation}
Here multiplication by $\chi$ acts on the frequency variable. Integral solutions of \eqref{eq:coefficients}
are understood in the sense of Definition~\ref{definition of integral solution}.

\begin{theorem}\label{thm:linear}
There exists $\eta>0$ such that, if
\begin{equation*}
 \max_{j\in\{0,\infty\}}\norm{a_j-1}_{L^\infty(0,\infty)}\le\eta,
\end{equation*}
then \eqref{eq:coefficients} has solution operators for $0\le s<t<\infty$,
$U_{a_0,a_\infty}(t,s)$ with local initial trace and
\begin{equation}\label{eq:L1}
\norm{U_{a_0,a_\infty}(t,s)}_{\Z\to\Z}\lesssim 1,
\end{equation}
\begin{equation}\label{eq:L2}
\norm{U_{a_0,a_\infty}(t,s)}_{Y\to\Z}\lesssim k(t-s).
\end{equation}

If $\widetilde a_0,\widetilde a_\infty$ satisfy the same smallness condition,
then
\begin{equation}\label{eq:L3Z}
\norm{U_{a_0,a_\infty}(t,s)-U_{\widetilde a_0,\widetilde a_\infty}(t,s)}_{\Z\to\Z}
 \lesssim \sum_{j\in\{0,\infty\}}
       \norm{a_j-\widetilde a_j}_{L^\infty(s,t)},
\end{equation}
\begin{equation}\label{eq:L3Y}
\norm{U_{a_0,a_\infty}(t,s)-U_{\widetilde a_0,\widetilde a_\infty}(t,s)}_{Y\to\Z}
 \lesssim k(t-s)\sum_{j\in\{0,\infty\}}
       \norm{a_j-\widetilde a_j}_{L^\infty(s,t)}.
\end{equation}

The two definitions agree on $Y\cap\Z$, and
\begin{equation}\label{eq:UYY}
 \norm{U_{a_0,a_\infty}(t,s)}_{Y\to Y}\lesssim 1 
 \qquad(0<t-s\le1/2).
\end{equation}
For initial data $g\in\mathcal Z$, the integral solution of
\eqref{eq:coefficients} is unique among solutions bounded in
$\mathcal Z$ on every finite time interval and having local
initial trace $g$.
(This solution is $U_{a_0,a_\infty}(t,0)g$, but we postpone
the identification to Lemma~\ref{lem:forcing}). Moreover
\begin{equation*}
 U_{a_0,a_\infty}(t,r)U_{a_0,a_\infty}(r,s)=U_{a_0,a_\infty}(t,s),
 \qquad 0\le s\le r\le t.
\end{equation*}
\begin{equation*}
 U_{a_0,a_\infty}(s,s)=\Id\quad(s\ge0),
 \qquad U_{a_0,a_\infty}(t,s)f_M=f_M\quad(t\ge s\ge0).
\end{equation*}
\end{theorem}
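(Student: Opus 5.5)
The construction follows the parametrix route announced in the introduction. Introduce the two time changes
\begin{equation*}
\tau_0(t,s)=\int_s^t a_0(r)^2\,dr,\qquad \tau_\infty(t,s)=\int_s^t a_\infty(r)^2\,dr .
\end{equation*}
For $|a_j-1|\le\eta$ small, both are comparable to $t-s$. The parametrix is
\begin{equation*}
P(t,s)=(1-\chi)S(\tau_0(t,s))+\chi S(\tau_\infty(t,s)).
\end{equation*}
By Lemma~\ref{lem:fixed-Y} and Lemma~\ref{lem:fixed-Z}, together with the boundedness of multiplication by $\chi,1-\chi$ on $\mathcal Z$ and $Y$ from Lemma~\ref{lem:commutator}, we get
\begin{equation*}
\|P(t,s)\|_{\Z\to\Z}\lesssim1,\qquad \|P(t,s)\|_{Y\to\Z}\lesssim k(\tau)\sim k(t-s).
\end{equation*}
The comparability $k(c\tau)\sim k(\tau)$ holds because $k$ is a power on each range. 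Differentiating in $t$ with $S'=LS$ gives
\begin{equation*}
\partial_tP-[a_0^2(1-\chi)+a_\infty^2\chi]LP=E(t,s).
\end{equation*}
Writing $(1-\chi)LS(\tau_0)=L(1-\chi)S(\tau_0)+[L\chi-\chi L]S(\tau_0)$, and similarly for the $\chi$ part, one finds that $E$ is a sum of three kinds of terms. The first are commutator terms $a_j^2(L\chi-\chi L)S(\tau_j)$. The second are cross terms of the form $(a_0^2-a_\infty^2)(1-\chi)\chi L S(\tau_j)$. The third are differences $(L\chi-\chi L)\bigl(S(\tau_0)-S(\tau_\infty)\bigr)$. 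All carry either the commutator or the factor $(1-\chi)\chi$, supported in $[1,2]$.

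Using \eqref{eq:fixed-commutator-bound} and \eqref{eq:fixed-cutoff-L-bound}, together with $\mathcal Z\subset E_{7/6,7/6}$ via \eqref{eq:envelope}, I aim for
\begin{equation*}
\|E(t,s)\|_{\Z\to Y}\lesssim \eta\,\frac{1}{t-s}+\frac{1}{t-s}\,\frac{|\tau_0-\tau_\infty|}{t-s}\lesssim \eta\,(t-s)^{-1}.
\end{equation*}
Two facts are needed for this. First, $E_{1/6,11/6}\subset Y$, since $1/6<3/2-2\delta$ and $11/6>3/2+2\delta$. Second, the leading commutator pieces cancel when $a_0=a_\infty$, so each surviving term carries a factor $|a_0^2-a_\infty^2|\lesssim\eta$ or a difference $S(\tau_0)-S(\tau_\infty)$. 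The latter is controlled by \eqref{eq:fixed-Z-difference} as $\int_{\tau_\infty}^{\tau_0}dr/r\lesssim\eta$.

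\textbf{Main obstacle.} The bound $(t-s)^{-1}$ is not integrable at either end, so I expect this to be the hardest step. A naive Neumann series fails. The remedy I would try first is to avoid the derivative and work with $E$ composed with $S$: since the commutator output is in $Y$, the next iterate $P(t,r)E(r,s)$ costs $k(t-r)\,\|E(r,s)\|_{\Z\to Y}$. I would then show that $E$ is actually $\Z\to Y$ bounded with a bound that is integrable in $r$, by writing $(L\chi-\chi L)S(\tau)g$ for $g\in\mathcal Z$. Its size is governed by the $E_{7/6,7/6}$ norm of $S(\tau)g$, which is $\lesssim\|g\|_{\mathcal Z}$ uniformly in $\tau$ by \eqref{eq:fixedZ}, with no $1/\tau$ loss. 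The only $1/\tau$ losses come from the differences $S(\tau_0)-S(\tau_\infty)$, and those I would bound by $\min(1,\eta)$ directly through \eqref{eq:fixedZ} rather than through the time integral. This yields
\begin{equation*}
\|E(r,s)\|_{\Z\to Y}\lesssim\eta,\qquad \|E(r,s)\|_{Y\to Y}\lesssim \eta\,k(r-s),
\end{equation*}
where the $Y\to Y$ bound uses \eqref{eq:fixedY}.

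I then define
\begin{equation*}
U=P+\sum_{n\ge1}P\ast(E\ast)^{n},\qquad (A\ast B)(t,s)=\int_s^tA(t,r)B(r,s)\,dr .
\end{equation*}
The kernel $k$ satisfies $k\ast k\lesssim k$, since $\int k<\infty$ and $k$ is decreasing and comparable at half-times. Hence the $n$-th term is bounded by $(C\eta)^n$ in $Y\to\Z$ with weight $k(t-s)$, and in $\Z\to\Z$ uniformly, using one factor $\|E\|_{\Z\to Y}\lesssim\eta$ and $\|k\|_{L^1}$. Summing the series for $\eta$ small gives \eqref{eq:L1} and \eqref{eq:L2}. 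The same Neumann series with \eqref{eq:fixedYY} gives \eqref{eq:UYY} for $t-s\le1/2$.

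For the Lipschitz bounds \eqref{eq:L3Z} and \eqref{eq:L3Y}, I would telescope the series. Differences of parametrices are controlled by \eqref{eq:fixed-Z-difference} and \eqref{eq:kernel-Y-continuity} with $|\tau_j-\widetilde\tau_j|\le C(t-s)\|a_j-\widetilde a_j\|_{L^\infty(s,t)}$. Differences of the $E$'s are controlled linearly in $\|a_j-\widetilde a_j\|_{L^\infty}$ by the same estimates, since $\int_{\tau}^{\tau(1+c\epsilon)}dr/r\lesssim\epsilon$.

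\textbf{Remaining properties.} The local integral solution property follows from the appendix lemmas, termwise for the Duhamel series. The identity $U f_M=f_M$ holds because $P f_M=f_M$ by $S(t)f_M=f_M$, and $E f_M=0$ since $S(\tau)f_M=f_M$ and $Lf_M=0$ make each term vanish. The relation $U(s,s)=\Id$ holds since $P(s,s)=\Id$. For uniqueness, take the difference $w$ of two solutions bounded in $\Z$ with zero trace. Writing $w$ via the Duhamel identity against $P$ gives $w(t)=\int_0^tP(t,r)E\,w\,dr$-type relations, which force $w=0$ by a Gronwall argument using smallness of $\eta$. The composition law then follows from uniqueness, because both sides solve \eqref{eq:coefficients} from time $r$ with the same data and are $\Z$-bounded. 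Agreement on $Y\cap\Z$ follows as in Lemma~\ref{lem:fixed-Z}.
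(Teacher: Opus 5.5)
Your forward construction is the paper's. Your $P$ is $Q$ in \eqref{eq:Q} and your $E$ is $-V_1$. The bounds $\|E\|_{\mathcal Z\to Y}\lesssim\eta$ and $\|E\|_{Y\to Y}\lesssim\eta k(t-s)$, the convolution estimate for $k$, the telescoping for \eqref{eq:L3Z}--\eqref{eq:L3Y}, and the short-time argument for \eqref{eq:UYY} are Steps~1--5. There are a few slips in this part:
\begin{itemize}
\item The exact algebra gives \eqref{eq:defect}, in which every term acts on $S(\tau_\infty)-S(\tau_0)$.
\item The factor $\eta$ comes from \eqref{eq:fixed-Z-difference} and \eqref{eq:kernel-Y-continuity}, since $\bigl|\int_{\tau_0}^{\tau_\infty}dr/r\bigr|\le|\tau_\infty-\tau_0|/\min(\tau_0,\tau_\infty)\lesssim\eta$. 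It does not come from \eqref{eq:fixedZ}, which only gives $O(1)$. So no $(t-s)^{-1}$ loss ever occurs in the forward construction.
\item With $(\partial_t-AL)P=E$, the series must be $P+\sum_{n\ge1}(-1)^nP\ast E^{\ast n}$.
\item $Ef_M=0$ holds only after the terms are combined, because $(L\chi-\chi L)f_M=L(\chi f_M)\neq0$.
\end{itemize}

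The genuine gap is uniqueness, on which the composition law also rests. There is no identity of the form $w(t)=\int_0^tP(t,r)E\,w\,dr$. $E$ is the defect of $P$ in its final time. For an arbitrary bounded solution $w$, you must instead differentiate $r\mapsto\Pi(t,r)w(r)$ in the initial time. The defect this produces is $\partial_r\Pi(t,r)+\Pi(t,r)A(r)L$, with $A(r)=a_0(r)^2(1-\chi)+a_\infty(r)^2\chi$.

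For $\Pi=P$, with the cutoffs to the left of $S$, this defect equals $(a_\infty^2-a_0^2)\bigl[(1-\chi)S(\tau_0)\chi L-\chi S(\tau_\infty)(1-\chi)L\bigr]$. So $S$ acts on $\chi Lw$ and $(1-\chi)Lw$. For the $X$-component $h$ of $w$, $Lh$ in general lies only in $E_{3/2-\delta,3/2+\delta}$, not in $Y$ (cf.\ the remark after Lemma~\ref{lem:nonlinear}). Hence neither \eqref{eq:fixedY} nor Lemma~\ref{lem:commutator} applies. For instance, $(1-\chi)Lh$ can behave like $\omega^{-3/2+\delta}$ at $0$. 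Writing $(1-\chi)Lh=L((1-\chi)h)+(L\chi-\chi L)h$, the estimates of Section~\ref{sec:fixed} give only $\|S(\tau)(1-\chi)Lh\|_{\mathcal Z}\lesssim\tau^{-1}\|h\|_X$. This is exactly the non-integrable kernel you feared. It blocks the Gronwall step even on short intervals, and exploiting the outer cutoff $\chi$ would require new off-diagonal kernel bounds.

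The paper avoids this by putting the cutoffs on the right: $\Pi(t,r)=S(\tau_0(t,r))(1-\chi)+S(\tau_\infty(t,r))\chi$. The commutator identities then turn the backward defect into \eqref{eq:backwarderror-1}--\eqref{eq:backwarderror}. There $S(\tau_0)$ or $S(\tau_\infty)-S(\tau_0)$ acts on $(L\chi-\chi L)w$ and $(1-\chi)\chi Lw$, both of which lie in $Y$. This gives
\[
\|w(t)\|_{\mathcal Z}\lesssim\|w(s)\|_{\mathcal Z}+\eta\int_s^tk(t-r)\|w(r)\|_{\mathcal Z}\,dr.
\]
You would also need Lemma~\ref{lem:appendix-product-rule} to differentiate $\Pi(t,r)w(r)$ when $w$ is merely an integral solution, and to identify the endpoint values at $r=s$ and $r=t$.
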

\begin{proof}
To prove this theorem we first estimate the two operators
$S(\int_s^t a_0^2)$ and $S(\int_s^t a_\infty^2)$.
We then construct $U_{a_0,a_\infty}$ by a convergent series, verify
\eqref{eq:coefficients}, prove \eqref{eq:L3Z}--\eqref{eq:L3Y}, and finally
prove uniqueness.

\paragraph{Step 1: the estimates for $S(\int_s^t a_j^2)$.}
Take $\eta\le1/4$. For $j=0,\infty$,
\begin{equation*}
|a_j-1|\le\eta \Longrightarrow\frac12(t-s)\le\int_s^t a_j(\tau)^2\dd\tau \le2(t-s).
\end{equation*}

Also
\begin{equation}\label{eq:timechanges}
\left|\int_s^t a_\infty^2-\int_s^t a_0^2\right|
 \le\int_s^t|a_\infty^2-a_0^2|\dd\tau
 \le5\eta(t-s).
\end{equation}

For each number $r$ between the two integrals in \eqref{eq:timechanges},
\begin{equation*}
 \frac1r\le\frac2{t-s},\qquad
 k(r)\le k((t-s)/2)\le2^{1+\kappa}k(t-s).
\end{equation*}

The positive-time integral estimates in Lemmas~\ref{lem:fixed-Z} and Lemma~\ref{lem:fixed-Y} now give
\begin{equation*}
\norm{S\left(\int_s^t a_\infty(\tau)^2\dd\tau\right)-S\left(\int_s^t a_0(\tau)^2\dd\tau\right)}_{\Z\to\Z}
 \lesssim \frac{1}{t-s}\left|\int_s^t a_\infty^2-\int_s^t a_0^2\right|
\lesssim \eta.
\end{equation*}
and 
\begin{equation}\label{eq:Dbounds}
\norm{S\left(\int_s^t a_\infty(\tau)^2\dd\tau\right)-S\left(\int_s^t a_0(\tau)^2\dd\tau\right)}_{Y\to\Z}
 \lesssim  \frac{k(t-s)}{t-s}\left|\int_s^t a_\infty^2-\int_s^t a_0^2\right|
\lesssim \eta k(t-s).
\end{equation}

Similarly, we also have
\begin{equation}\label{eq:timeLipZ}
 \left\|S\left(\int_s^t a_j^2\right)
          -S\left(\int_s^t\widetilde a_j^2\right)\right\|_{\Z\to\Z}
 \lesssim
           \norm{a_j-\widetilde a_j}_{L^\infty(s,t)}.
\end{equation}
and 
\begin{equation}\label{eq:timeLipY}
 \left\|S\left(\int_s^t a_j^2\right)
          -S\left(\int_s^t\widetilde a_j^2\right)\right\|_{Y\to\Z}
 \lesssim  k(t-s)
          \norm{a_j-\widetilde a_j}_{L^\infty(s,t)}.
\end{equation}

\paragraph{Step 2: construct the approximation to $U_{a_0,a_\infty}$.}
We begin with the explicit operator $Q(t,s)$ defined by
\begin{equation}\label{eq:Q}
 Q(t,s):=(1-\chi)S\left(\int_s^t a_0(\tau)^2\dd\tau\right)+\chi S\left(\int_s^t a_\infty(\tau)^2\dd\tau\right).
\end{equation}
Next, we compute $\left(\partial_t-[a_0^2(1-\chi)+a_\infty^2\chi]L\right)Q$.

	The following computations are formal operator identities.
	To justify them, fix $g\in\mathcal Z\cup Y$ and a compact
	interval $I\subset(0,\infty)$, and apply every expression to $g$.
    Lemma~\ref{lem:appendix-Z-differentiability} gives
    $S(\cdot)g\in C^1((0,\infty);\mathcal Z)$ for
    $g\in\mathcal Z\cup Y$.
    The chain rule for the Lipschitz function
    $t\mapsto\int_s^t a_j(q)^2\,dq$ therefore gives
	\begin{equation*}
		\frac{d}{dt}
		\left(
		S\left(\int_s^t a_j(\tau)^2\,d\tau\right)g
		\right)\Big|_I
		=
		a_j(t)^2
		\left(
		LS\left(\int_s^t a_j(\tau)^2\,d\tau\right)g
		\right)\Big|_I,
		\qquad j\in\{0,\infty\},
	\end{equation*}
	as a strong derivative in $L^\infty(I)$. After some calculations, we see
\begin{equation*}
\left(\partial_t-[a_0^2(1-\chi)+a_\infty^2\chi]L\right)
 (1-\chi)S\left(\int_s^t a_0(\tau)^2\,d\tau\right)
\end{equation*}
is the sum of
\begin{equation*}
-(a_\infty^2-a_0^2)(1-\chi)\chi LS\left(\int_s^t a_0(\tau)^2\dd\tau\right)
\end{equation*}
\begin{equation*}
[a_0^2(1-\chi)+a_\infty^2\chi](L\chi-\chi L)S\left(\int_s^t a_0(\tau)^2\dd\tau\right)
\end{equation*}
Similarly, the expression
\begin{equation*}
\left(\partial_t-[a_0^2(1-\chi)+a_\infty^2\chi]L\right)
 \chi S\left(\int_s^t a_\infty(\tau)^2\,d\tau\right)
\end{equation*}
is the sum of
\begin{equation*}
(a_\infty^2-a_0^2)(1-\chi)\chi LS\left(\int_s^t a_\infty(\tau)^2\dd\tau\right)
\end{equation*}
\begin{equation*}
-[a_0^2(1-\chi)+a_\infty^2\chi](L\chi-\chi L)S\left(\int_s^t a_\infty(\tau)^2\dd\tau\right).
\end{equation*}

Consequently, $\left(\partial_t-[a_0^2(1-\chi)+a_\infty^2\chi]L\right)Q$ is the
negative of the following operator, which will be the first term of a
recursive construction:
\begin{equation}\label{eq:defect}
V_1(t,s;a_0,a_\infty)
 :=\Big([a_0(t)^2(1-\chi)+a_\infty(t)^2\chi](L\chi-\chi L)
 -(a_\infty(t)^2-a_0(t)^2)(1-\chi)\chi L\Big)
 \big[S(\int_s^t a_\infty^2)-S(\int_s^t a_0^2)\big].
\end{equation}

For each integer $n\ge1$, define
\begin{equation}\label{eq:R}
 V_{n+1}(t,s;a_0,a_\infty)
 =\int_s^t V_1(t,r;a_0,a_\infty)V_n(r,s;a_0,a_\infty)\dd r.
\end{equation}

Lemma~\ref{lem:commutator} bounds $L\chi-\chi L$ and $(1-\chi)\chi L$
from $\Z$ to $Y$. Hence, using \eqref{eq:Dbounds},
\begin{equation*}
\norm{V_1(t,s;a_0,a_\infty)}_{\Z\to Y}
 \le C\eta,
\end{equation*}
\begin{equation}\label{eq:Ebounds}
\norm{V_1(t,s;a_0,a_\infty)}_{Y\to Y}
 \le C\eta k(t-s),
\end{equation}
for some positive constant $C>0$.

To prove convergence of \eqref{eq:R}, we need the following estimate:
split it at $t/2$, substitute $r\mapsto t-r$ in the second half, and use that $k$ is decreasing,
\begin{equation}\label{eq:convolution}
\int_0^t k(t-r)k(r)\dd r
=2\int_0^{t/2}k(t-r)k(r)\dd r
 \le2k(t/2)\int_0^{t/2}k(r)\,dr
 \le2^{2+\kappa}\|k\|_{L^1}k(t).
\end{equation}

We claim, for all $n\ge1$, that
\begin{equation*}
\norm{V_n(t,s;a_0,a_\infty)}_{\Z\to Y}
 \le(C\eta)^n\norm k_{L^1}^{n-1},
\end{equation*}
\begin{equation}\label{eq:Rbounds}
\norm{V_n(t,s;a_0,a_\infty)}_{Y\to Y}
 \le(C\eta)^n(2^{2+\kappa}\norm k_{L^1})^{n-1}k(t-s).
\end{equation}

The case $n=1$ is \eqref{eq:Ebounds}. If the first estimate holds for $n$,
then \eqref{eq:R} yields
\begin{equation*}
 \norm{V_{n+1}(t,s;a_0,a_\infty)}_{\Z\to Y}
 \le(C\eta)^{n+1}\norm k_{L^1}^{n-1}\int_s^t k(t-r)\dd r
 \le(C\eta)^{n+1}\norm k_{L^1}^n.
\end{equation*}
For the second estimate, the induction step gives
\begin{equation*}
\norm{V_{n+1}(t,s;a_0,a_\infty)}_{Y\to Y}
 \le(C\eta)^{n+1}(2^{2+\kappa}\norm k_{L^1})^{n-1}
                    \int_s^t k(t-r)k(r-s)\dd r
 \le(C\eta)^{n+1}(2^{2+\kappa}\norm k_{L^1})^n k(t-s).
\end{equation*}

This proves both estimates and the convergence of every integral defining
$V_n$. Choose $\eta$ small so that
\begin{equation}\label{eq:eta}
 C\eta\,2^{2+\kappa}\norm k_{L^1}\le\frac14.
\end{equation}
The geometric series in \eqref{eq:Rbounds} then gives
\begin{equation}\label{asd}
\sum_{n\ge1}\norm{V_n(t,s;a_0,a_\infty)}_{\Z\to Y}\le C\eta,
\end{equation}
\begin{equation}\label{eq:Rsum}
\sum_{n\ge1}\norm{V_n(t,s;a_0,a_\infty)}_{Y\to Y}\le C\eta k(t-s).
\end{equation}

\paragraph{Step 3: define $U_{a_0,a_\infty}$ and prove \eqref{eq:L1}--\eqref{eq:L2}.}
Define
\begin{equation}\label{eq:U}
U_{a_0,a_\infty}(t,s):=Q(t,s)+\sum_{n\ge1}\int_s^t Q(t,r)V_n(r,s;a_0,a_\infty)\,dr.
\end{equation}

Lemmas~\ref{lem:fixed-Z}, Lemma~\ref{lem:fixed-Y} and boundedness of
multiplication by $\chi,1-\chi$ give
\begin{equation*}
\|Q(t,s)\|_{\mathcal Z\to\mathcal Z}\lesssim 1 ,
\end{equation*}
\begin{equation*}
\|Q(t,s)\|_{Y\to\mathcal Z}\lesssim k(t-s).
\end{equation*}

For the series in \eqref{eq:U}, (\ref{asd}) and \eqref{eq:Rsum} give respectively
\begin{equation*}
 \sum_{n\ge1}\int_s^t k(t-r)
     \norm{V_n(r,s;a_0,a_\infty)}_{\Z\to Y}\dd r
 \lesssim \eta\int_s^t k(t-r)\dd r\lesssim \eta\norm k_{L^1}
\end{equation*}
and
\begin{equation*}
 \sum_{n\ge1}\int_s^t k(t-r)
     \norm{V_n(r,s;a_0,a_\infty)}_{Y\to Y}\dd r
 \lesssim \eta\int_s^t k(t-r)k(r-s)\dd r
 \lesssim \eta \norm k_{L^1}k(t-s).
\end{equation*}
This proves absolute convergence in operator norms and
\eqref{eq:L1}--\eqref{eq:L2}.

\paragraph{Step 4: verify the differential equation and initial trace.}
	We retain the following differentiation as a formal calculation.
	Lemmas~\ref{lem:appendix-finite-sums},
	\ref{lem:appendix-series-convergence}
	and~\ref{lem:appendix-U-equation} rigorously establish the
	corresponding integral equation in $L^\infty(I)$ for every
	compact interval $I\subset(0,\infty)$ and every $t\ge s$,
	with time integrals understood as Bochner integrals in that space.

Apply $\partial_t-[a_0(t)^2(1-\chi)+a_\infty(t)^2\chi]L$ to the finite
sum in \eqref{eq:U} with $1\le n\le N$. The computation in Step 2 and
\eqref{eq:R} give
\begin{equation*}
	V_n(t,s;a_0,a_\infty)-\int_s^tV_1(t,r;a_0,a_\infty)V_n(r,s;a_0,a_\infty)\,dr
	=V_n(t,s;a_0,a_\infty)-V_{n+1}(t,s;a_0,a_\infty).
\end{equation*}
Summing these identities, the result of applying the differential operator is
\begin{equation*}
	-V_1(t,s;a_0,a_\infty)+\sum_{n=1}^N
	[V_n(t,s;a_0,a_\infty)-V_{n+1}(t,s;a_0,a_\infty)]
	=-V_{N+1}(t,s;a_0,a_\infty).
\end{equation*}

For each fixed $g\in\mathcal Z\cup Y$ and $t>s$,
Lemma~\ref{lem:appendix-series-convergence} gives convergence
of the finite sums in $L^1((s,t);\mathcal Z)$ and
\begin{equation*}
	\int_s^t
	\|V_{N+1}(r,s;a_0,a_\infty)g\|_Y\,dr
	\longrightarrow0.
\end{equation*}
Passing to the limit in the finite-sum integral identity
\eqref{eq:appendix-finite-integral}, as justified in
Lemma~\ref{lem:appendix-U-equation}, proves that
\eqref{eq:U} satisfies \eqref{eq:coefficients}
in the local integral sense.

Lemma~\ref{lem:appendix-U-equation} also gives
\begin{equation*}
	\lim_{t\downarrow s}
	\|U_{a_0,a_\infty}(t,s)g-g\|_{L^\infty(I)}=0
\end{equation*}
for every compact interval $I\subset(0,\infty)$.
The short-time estimate \eqref{eq:UYY} is proved separately
in the same lemma using \eqref{eq:fixedYY}
and \eqref{eq:Rsum}.

\paragraph{Step 5: prove \eqref{eq:L3Z}--\eqref{eq:L3Y}.}
We must compare \eqref{eq:U} for $a_0,a_\infty$ and
$\widetilde a_0,\widetilde a_\infty$. First,
\begin{equation*}
\norm{[a_0^2(1-\chi)+a_\infty^2\chi]
       -[\widetilde a_0^2(1-\chi)+\widetilde a_\infty^2\chi]}_{L^\infty_\omega} \lesssim (|a_0-\widetilde a_0|+|a_\infty-\widetilde a_\infty|),
\end{equation*}
\begin{equation*}
|(a_\infty^2-a_0^2)-(\widetilde a_\infty^2-\widetilde a_0^2)|
 \lesssim (|a_0-\widetilde a_0|+|a_\infty-\widetilde a_\infty|).
\end{equation*}

Subtract the two products in \eqref{eq:defect}, using
$AB-\widetilde A\widetilde B=(A-\widetilde A)B+
\widetilde A(B-\widetilde B)$. The factors containing $a_j^2$ have the
bounds just displayed. The factors containing $S$ have the bounds
\eqref{eq:Dbounds}, \eqref{eq:timeLipZ}, and \eqref{eq:timeLipY}.
 Thus, one can derive
\begin{equation*}
\norm{V_1(t,s;a_0,a_\infty)-V_1(t,s;\widetilde a_0,\widetilde a_\infty)}_{\Z\to Y}
 \le C\sum_{j\in\{0,\infty\}}\norm{a_j-\widetilde a_j}_{L^\infty(s,t)},
\end{equation*}
\begin{equation}\label{eq:DeltaE}
\norm{V_1(t,s;a_0,a_\infty)-V_1(t,s;\widetilde a_0,\widetilde a_\infty)}_{Y\to Y}
 \le Ck(t-s)\sum_{j\in\{0,\infty\}}\norm{a_j-\widetilde a_j}_{L^\infty(s,t)}.
\end{equation}

Iterating \eqref{eq:R} writes $V_n$ as the integral of $n$
factors $V_1$. To subtract two such products use the exact identity
\begin{equation*}
 A_1\cdots A_n-B_1\cdots B_n
 =\sum_{\ell=1}^n B_1\cdots B_{\ell-1}(A_\ell-B_\ell)
                         A_{\ell+1}\cdots A_n.
\end{equation*}
In each summand, \eqref{eq:DeltaE} bounds the one difference factor and
\eqref{eq:Ebounds} bounds the other $n-1$ factors. Repeated application of
\eqref{eq:convolution} and the induction used in \eqref{eq:Rbounds} gives
\begin{equation*}
\norm{V_n(t,s;a_0,a_\infty)-V_n(t,s;\widetilde a_0,\widetilde a_\infty)}_{\Z\to Y} \le nC(C\eta\norm k_{L^1})^{n-1}
       \sum_{j\in\{0,\infty\}}\norm{a_j-\widetilde a_j}_{L^\infty(s,t)},
\end{equation*}
\begin{equation*}
\fitdisplay{\norm{V_n(t,s;a_0,a_\infty)-V_n(t,s;\widetilde a_0,\widetilde a_\infty)}_{Y\to Y} \le nC(C\eta\,2^{2+\kappa}\norm k_{L^1})^{n-1}k(t-s)
       \sum_{j\in\{0,\infty\}}\norm{a_j-\widetilde a_j}_{L^\infty(s,t)}.}
\end{equation*}

Then one can obtain
\begin{equation*}
	\|U_{a_0,a_\infty}(t,s)
	-U_{\widetilde a_0,\widetilde a_\infty}(t,s)\|_{\mathcal Z\to\mathcal Z}
	\lesssim
	\left(1+\int_s^t k(t-r)\,dr\right)
	\sum_{j\in\{0,\infty\}}
	\|a_j-\widetilde a_j\|_{L^\infty(s,t)},
\end{equation*}
\begin{equation*}
	\|U_{a_0,a_\infty}(t,s)
	-U_{\widetilde a_0,\widetilde a_\infty}(t,s)\|_{Y\to\mathcal Z}
	\lesssim
	\left(k(t-s)+\int_s^t k(t-r)k(r-s)\,dr\right)
	\sum_{j\in\{0,\infty\}}
	\|a_j-\widetilde a_j\|_{L^\infty(s,t)}.
\end{equation*}
Since $k\in L^1(0,\infty)$, \eqref{eq:convolution}
now yields \eqref{eq:L3Z}--\eqref{eq:L3Y}.

\paragraph{Step 6: uniqueness and composition.}
Fix $0\le s<t<\infty$.
Let $u$ be an integral solution of \eqref{eq:coefficients}
on $[s,t]$ in the sense of Definition \ref{definition of integral solution},
with $u(s)\in\mathcal Z$, and suppose that
\begin{equation*}
	\sup_{s\le r\le t}\|u(r)\|_{\mathcal Z}<\infty.
\end{equation*}
We formally differentiate, for $s<r<t$,
\begin{equation}\label{sd}
	\left[
	S\left(\int_r^t a_0(q)^2\,dq\right)(1-\chi)
	+
	S\left(\int_r^t a_\infty(q)^2\,dq\right)\chi
	\right]u(r).
\end{equation}
The product rule, the equation for $u$, and the commutation
identity in Lemma~\ref{lem:appendix-semigroup-commutation}
give the sum of
\begin{equation*}
	-a_0(r)^2S\left(\int_r^t a_0(q)^2\,dq\right)
	L((1-\chi)u(r))
	-a_\infty(r)^2S\left(\int_r^t a_\infty(q)^2\,dq\right)
	L(\chi u(r)),
\end{equation*}
\begin{equation*}
	S\left(\int_r^t a_0(q)^2\,dq\right)(1-\chi)
	[a_0(r)^2(1-\chi)+a_\infty(r)^2\chi]Lu(r),
\end{equation*}
and
\begin{equation*}
	S\left(\int_r^t a_\infty(q)^2\,dq\right)\chi
	[a_0(r)^2(1-\chi)+a_\infty(r)^2\chi]Lu(r).
\end{equation*}

To combine these expressions, use
\begin{equation*}
	(1-\chi)[a_0(r)^2(1-\chi)+a_\infty(r)^2\chi]
	=
	a_0(r)^2(1-\chi)
	+(a_\infty(r)^2-a_0(r)^2)(1-\chi)\chi,
\end{equation*}
\begin{equation*}
	\chi[a_0(r)^2(1-\chi)+a_\infty(r)^2\chi]
	=
	a_\infty(r)^2\chi
	-(a_\infty(r)^2-a_0(r)^2)(1-\chi)\chi,
\end{equation*}
together with the commutator identities
\begin{equation*}
	L((1-\chi)u(r))
	=(1-\chi)Lu(r)-(L\chi-\chi L)u(r),
\end{equation*}
\begin{equation*}
	L(\chi u(r))
	=\chi Lu(r)+(L\chi-\chi L)u(r).
\end{equation*}
The derivative is therefore the sum of
\begin{equation}\label{eq:backwarderror-1}
	-(a_\infty(r)^2-a_0(r)^2)
	S\left(\int_r^t a_0(q)^2\,dq\right)
	(L\chi-\chi L)u(r),
\end{equation}
\begin{equation}\label{eq:backwarderror-2}
	-a_\infty(r)^2
	\left[
	S\left(\int_r^t a_\infty(q)^2\,dq\right)
	-
	S\left(\int_r^t a_0(q)^2\,dq\right)
	\right](L\chi-\chi L)u(r),
\end{equation}
and
\begin{equation}\label{eq:backwarderror}
	-(a_\infty(r)^2-a_0(r)^2)
	\left[
	S\left(\int_r^t a_\infty(q)^2\,dq\right)
	-
	S\left(\int_r^t a_0(q)^2\,dq\right)
	\right](1-\chi)\chi Lu(r).
\end{equation}
By Lemma~\ref{lem:commutator}, \eqref{eq:fixedY},
and \eqref{eq:Dbounds}, the $\mathcal Z$ norms of
\eqref{eq:backwarderror-1}, \eqref{eq:backwarderror-2},
and \eqref{eq:backwarderror} are bounded, respectively, by
\begin{equation}\label{ds}
	\eta k(t-r)\|u(r)\|_{\mathcal Z},
	\qquad
	\eta k(t-r)\|u(r)\|_{\mathcal Z},
	\qquad
	\eta^2 k(t-r)\|u(r)\|_{\mathcal Z}.
\end{equation}

	To justify the above differentiation, apply
	Lemma~\ref{lem:appendix-product-rule} on
	$[s+\varepsilon,t-\varepsilon]$, where
	$0<\varepsilon<(t-s)/2$.
	For the term involving $a_0$, take
	$\theta(r)=\int_r^t a_0(q)^2\,dq$, $h(r)=(1-\chi)u(r)$,
	and
	\begin{equation*}
		v(r)=(1-\chi)
		[a_0(r)^2(1-\chi)+a_\infty(r)^2\chi]Lu(r).
	\end{equation*}
	For the term involving $a_\infty$, take
	$\theta(r)=\int_r^t a_\infty(q)^2\,dq$, $h(r)=\chi u(r)$,
	and
	\begin{equation*}
		v(r)=\chi
		[a_0(r)^2(1-\chi)+a_\infty(r)^2\chi]Lu(r).
	\end{equation*}
	The integral equation for $u$ gives the required local
	integral identity for $h$.
	The boundedness of $u$ in $\mathcal Z$ and
	\eqref{eq:fixed-L-E} give the bounds on $h$ and $v$
	required by Lemma~\ref{lem:appendix-product-rule}.
	Moreover, $\theta$ is Lipschitz and
	$\theta(r)\ge(1-\eta)^2\varepsilon>0$.
	Thus the differentiated expression is locally absolutely
	continuous in $r$, with a strong derivative in $L^\infty(I)$
	for almost every $r\in(s,t)$, for every compact
	$I\subset(0,\infty)$.
	The replacement of $LS(\theta(r))h(r)$ by
	$S(\theta(r))Lh(r)$ follows from
	Lemma~\ref{lem:appendix-semigroup-commutation}.

Next, we identify both endpoint limits of (\ref{sd}).
At $r=t-\varepsilon$, \eqref{eq:fixed-local-trace-rate},
applied to $(1-\chi)u(t-\varepsilon)$ and
$\chi u(t-\varepsilon)$, together with the local
continuity of $u$, gives the limit $u(t)$ in $L^\infty(I)$.

At $r=s+\varepsilon$, the integral equation and
\eqref{eq:fixed-L-E} give
\begin{equation*}
	\|u(s+\varepsilon)-u(s)\|_{E_{3/2,3/2}}
	\lesssim
	\varepsilon\sup_{s\le r\le t}\|u(r)\|_{\mathcal Z}.
\end{equation*}
Using the positive-time bounds and continuity in
Lemma~\ref{lem:appendix-positive-continuity}, we recover
the endpoint value
\begin{equation*}
	\left[
	S\left(\int_s^t a_0(q)^2\,dq\right)(1-\chi)
	+
	S\left(\int_s^t a_\infty(q)^2\,dq\right)\chi
	\right]u(s).
\end{equation*}

The bounds for \eqref{eq:backwarderror-1}
\eqref{eq:backwarderror}, estimated above, and
$k\in L^1(0,\infty)$ imply that their sum is
Bochner integrable in $\mathcal Z$.
Consequently, we can integrate over $[s+\varepsilon,t-\varepsilon]$ and let $\varepsilon\downarrow0$, which implies that
\begin{equation*}
	u(t)
	-
	\left[
	S\left(\int_s^t a_0(q)^2\,dq\right)(1-\chi)
	+
	S\left(\int_s^t a_\infty(q)^2\,dq\right)\chi
	\right]u(s)
\end{equation*}
equals the integral over $(s,t)$ of the sum of
\eqref{eq:backwarderror-1},
\eqref{eq:backwarderror-2}, and \eqref{eq:backwarderror}.
This equality first holds in every local $L^\infty(I)$
space. Since both sides belong to $\mathcal Z$,
it also holds in $\mathcal Z$.

Then one can obtain
\begin{equation*}
	\left\|
	u(t)
	-
	\left[
	S\left(\int_s^t a_0(q)^2\,dq\right)(1-\chi)
	+
	S\left(\int_s^t a_\infty(q)^2\,dq\right)\chi
	\right]u(s)
	\right\|_{\mathcal Z}
	\lesssim
	\eta\int_s^t k(t-r)\|u(r)\|_{\mathcal Z}\,dr.
\end{equation*}
By \eqref{eq:fixedZ} and the boundedness of multiplication
by $1-\chi$ and $\chi$ from Lemma~\ref{lem:commutator},
\begin{equation*}
	\left\|
	\left[
	S\left(\int_s^t a_0(q)^2\,dq\right)(1-\chi)
	+
	S\left(\int_s^t a_\infty(q)^2\,dq\right)\chi
	\right]u(s)
	\right\|_{\mathcal Z}
	\lesssim \|u(s)\|_{\mathcal Z}.
\end{equation*}
The triangle inequality therefore yields
\begin{equation*}
	\|u(t)\|_{\mathcal Z}
	\lesssim \|u(s)\|_{\mathcal Z}
	+\eta\int_s^t k(t-r)\|u(r)\|_{\mathcal Z}\,dr.
\end{equation*}

Apply this estimate to the
difference of two integral solutions $u,\widetilde u$
with the same initial value. Taking the supremum gives
\begin{equation*}
	\sup_{s<r<t}\|u(r)-\widetilde u(r)\|_{\mathcal Z}
	\lesssim
	\eta\|k\|_{L^1}
	\sup_{s<r<t}\|u(r)-\widetilde u(r)\|_{\mathcal Z}.
\end{equation*}
Choosing $\eta$ sufficiently small, we establish the uniqueness on $[s,t]$.

For $s<r$ and $g\in\mathcal Z\cup Y$,
\eqref{eq:L1}--\eqref{eq:L2} give
$U_{a_0,a_\infty}(r,s)g\in\mathcal Z$.
By Lemma~\ref{lem:appendix-U-equation},
$U_{a_0,a_\infty}(\cdot,r)U_{a_0,a_\infty}(r,s)g$
and $U_{a_0,a_\infty}(\cdot,s)g$ are integral solutions
on every finite interval starting at $r$, with the
same initial value $U_{a_0,a_\infty}(r,s)g$.
Both are uniformly bounded in $\mathcal Z$ on such intervals
by \eqref{eq:L1}--\eqref{eq:L2}; uniqueness gives
\begin{equation*}
	U_{a_0,a_\infty}(t,r)U_{a_0,a_\infty}(r,s)g
	=U_{a_0,a_\infty}(t,s)g,
	\qquad t\ge r.
\end{equation*}
The case $r=s$ follows from
$U_{a_0,a_\infty}(s,s)=\mathrm{Id}$.

Finally, Lemma~\ref{lem:fixed-Z} gives $S(\tau)f_M=f_M$.
By \eqref{eq:defect} and \eqref{eq:R},
$V_n(t,s;a_0,a_\infty)f_M=0$ for every $n\ge1$.
Thus \eqref{eq:U} and \eqref{eq:Q} give
\begin{equation*}
	U_{a_0,a_\infty}(t,s)f_M
	=(1-\chi)f_M+\chi f_M=f_M,
	\qquad t\ge s.
\end{equation*}
All assertions of Theorem~\ref{thm:linear} have now been proved.

\end{proof}

\begin{remark}[Motivation for the construction of $U_{a_0,a_\infty}$]
	For the equation $\partial_tu=a_j(t)^2Lu$, with
	$j\in\{0,\infty\}$, the natural solution map is the following time-substituted operator
	\begin{equation*}
		S\left(\int_s^t a_j(\tau)^2\,d\tau\right).
	\end{equation*}
	In \eqref{eq:coefficients}, the coefficient of $L$ equals
	$a_0(t)^2$ on $\omega\le1$ and $a_\infty(t)^2$ on
	$\omega\ge 2$. This suggests combining these two operators
	using $1-\chi$ and $\chi$, as in \eqref{eq:Q}.
	
	The operator $Q(t,s)$ need not solve
	\eqref{eq:coefficients}, because $L$ does not commute
	with multiplication by $\chi$, and the coefficients
	overlap when $0<\chi<1$.
	Nevertheless, \eqref{eq:defect} gives the formal identity
	\begin{equation*}
		\left(
		\partial_t-
		[a_0(t)^2(1-\chi)+a_\infty(t)^2\chi]L
		\right)Q(t,s)
		=-V_1(t,s;a_0,a_\infty).
	\end{equation*}
	The useful feature of \eqref{eq:defect} is that
	$L\chi-\chi L$ and $(1-\chi)\chi L$ act on the
	difference of the two time-substituted operators.
	Lemma~\ref{lem:commutator} maps these expressions into $Y$,
	while \eqref{eq:Dbounds} makes that difference small.
	If $a_0=a_\infty$, the difference vanishes and
	$Q(t,s)$ is already the exact solution operator.
	
	To correct the error $-V_1(t,s;a_0,a_\infty)$, we first add
	\begin{equation*}
		\int_s^t Q(t,r)V_1(r,s;a_0,a_\infty)\,dr.
	\end{equation*}
	Formally applying the differential operator from
	\eqref{eq:coefficients} gives the boundary contribution
	$V_1(t,s;a_0,a_\infty)$, which cancels the original error,
	and leaves
	\begin{equation*}
		-\int_s^t
		V_1(t,r;a_0,a_\infty)V_1(r,s;a_0,a_\infty)\,dr
		=-V_2(t,s;a_0,a_\infty).
	\end{equation*}
	This explains the recursion \eqref{eq:R}:
	adding the correction involving $V_n$ cancels the error
	$-V_n$ and leaves $-V_{n+1}$.
	Consequently, the error after $N$ corrections is
	\begin{equation*}
		-V_1(t,s;a_0,a_\infty)
		+\sum_{n=1}^N
		\bigl[
		V_n(t,s;a_0,a_\infty)
		-V_{n+1}(t,s;a_0,a_\infty)
		\bigr]
		=-V_{N+1}(t,s;a_0,a_\infty).
	\end{equation*}
	In fact this construction follows the classical parametrix method:
	one starts with an approximate solution operator and successively
	corrects its error; see
	\cite[Chapter~1]{Friedman1964}
	and
	\cite[Section~2.1]{KnopovaKulik2018}.
	In particular, \eqref{eq:R} generates a Neumann series
	for a Volterra integral equation.

\end{remark}

We next use $U_{a_0,a_\infty}$ to solve the corresponding
inhomogeneous equation by Duhamel's formula.
\begin{lemma}\label{lem:forcing}
Let $a_0,a_\infty$ satisfy Theorem~\ref{thm:linear}, $z\in\Z$, and
$g\in L^\infty((0,T);Y)$. Then
\begin{equation*}
 u(t)=U_{a_0,a_\infty}(t,0)z
             +\int_0^t U_{a_0,a_\infty}(t,s)g(s)\dd s,\qquad 0\le t\le T
\end{equation*}
is the unique bounded $\Z$-valued integral solution of
\begin{equation*}
 \partial_tu=[a_0(t)^2(1-\chi)+a_\infty(t)^2\chi]Lu+g
\end{equation*}
with local initial trace $z$, and
\begin{equation*}
 \sup_{0<t<T}\norm{u(t)}_\Z
 \lesssim \norm z_\Z+\norm k_{L^1}\norm g_{L^\infty((0,T);Y)}.
\end{equation*}
\end{lemma}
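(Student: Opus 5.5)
The plan has three parts: bound the Duhamel formula, verify that it is an integral solution, and invoke the uniqueness from Step~6 of Theorem~\ref{thm:linear}.

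\emph{Bound.} For fixed $t$, the integrand $s\mapsto U_{a_0,a_\infty}(t,s)g(s)$ is Bochner measurable in $\Z$. To see this, use the explicit series \eqref{eq:U} together with the continuity of $S(\cdot)$ on $Y$ at positive times from \eqref{eq:kernel-Y-continuity}. The integrand is dominated by $k(t-s)\|g\|_{L^\infty Y}$ by \eqref{eq:L2}. Combining this with \eqref{eq:L1} applied to the first term gives
\begin{equation*}
\|u(t)\|_\Z\lesssim \|z\|_\Z+\|g\|_{L^\infty((0,T);Y)}\int_0^t k(t-s)\,ds\le \|z\|_\Z+\|k\|_{L^1}\|g\|_{L^\infty((0,T);Y)}.
\end{equation*}
This is the stated estimate.

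\emph{Integral solution.} Lemma~\ref{lem:appendix-U-equation} gives, for each fixed $s$, that $t\mapsto U(t,s)y$ satisfies the local integral identity
\begin{equation*}
U(t,s)y|_I=y|_I+\int_s^t A(r)LU(r,s)y|_I\,dr,
\end{equation*}
where $A(r)=a_0(r)^2(1-\chi)+a_\infty(r)^2\chi$ and $y\in\Z\cup Y$. Applying this with $y=g(s)$ and integrating in $s\in(0,t)$, I will swap the order of integration by Fubini for Bochner integrals in $L^\infty(I)$. The swap gives
\begin{equation*}
\int_0^tU(t,s)g(s)\,ds\Big|_I=\int_0^t g(s)|_I\,ds+\int_0^t A(r)L\Big(\int_0^rU(r,s)g(s)\,ds\Big)\Big|_I\,dr .
\end{equation*}
Adding the identity for $U(t,0)z$ then shows that $u$ is an integral solution. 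Because $u$ is built from $U(t,0)z$, its local initial trace is $z$; the Duhamel term contributes nothing in the limit, since its $L^\infty(I)\subset$ local-$\Z$ size is $O(\int_0^tk)\to0$.

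The main obstacle is justifying the Fubini step, i.e.\ proving
\begin{equation*}
\int_0^t\int_s^t\|A(r)LU(r,s)g(s)\|_{L^\infty(I)}\,dr\,ds<\infty.
\end{equation*}
Bounding naively by $\|LU(r,s)\|_{Y\to L^\infty(I)}\lesssim k(r-s)$ is problematic because $k(r)\sim r^{-1+3\delta}$ is not quite good enough at $r\to s$ when composed with $L$. Instead I would use the local estimate \eqref{eq:fixed-L-E}: for $\|L v\|_{L^\infty(I)}\lesssim\|v\|_{E_{7/6,7/6}}$ it gives $\|LU(r,s)g(s)\|_{L^\infty(I)}\lesssim \|U(r,s)g(s)\|_\Z$. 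This is $\lesssim k(r-s)$ by \eqref{eq:envelope} and \eqref{eq:L2}, and $k$ is integrable. For $r-s\le1/2$ one may alternatively use \eqref{eq:UYY}, which gives an $O(1)$ bound in $Y\subset$ locally bounded. Either route makes the double integral finite, so Fubini applies. Local continuity of $u$ in $L^\infty(I)$ follows from the integral identity itself.

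\emph{Uniqueness.} If $u_1,u_2$ are two bounded $\Z$-valued integral solutions with the same trace, their difference is a bounded $\Z$-valued integral solution of the homogeneous equation \eqref{eq:coefficients} with trace $0$. Theorem~\ref{thm:linear} (Step~6) then forces the difference to vanish. Applying the same uniqueness to $U(\cdot,0)z$, which is bounded in $\Z$ by \eqref{eq:L1}, also identifies it as the homogeneous solution, as promised in the theorem.
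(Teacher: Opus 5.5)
Your proposal is correct and follows the paper's route: the bound via \eqref{eq:L1}--\eqref{eq:L2} (Lemma~\ref{lem:appendix-Duhamel-bound}), the integral identity obtained by integrating \eqref{eq:appendix-U-integral} in $s$, swapping integrals by Fubini and commuting $L$ with the Bochner integral (Lemma~\ref{lem:appendix-Duhamel-equation}), and uniqueness from Step~6 of Theorem~\ref{thm:linear}. One correction: the bound $\|LU(r,s)g(s)\|_{L^\infty(I)}\lesssim_I k(r-s)\|g(s)\|_Y$ that you call problematic is exactly the bound you then derive and use, and it is adequate because $k\in L^1$ (the paper uses it in Lemma~\ref{lem:forcing-P}, while Lemma~\ref{lem:appendix-Duhamel-equation} instead uses \eqref{eq:UYY} with \eqref{eq:fixed-L-Y} for $r-s\le 1/2$).
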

\begin{proof}
	The integral-solution property, local initial trace, and
	the corresponding estimate follow from
	Lemmas~\ref{lem:appendix-Duhamel-bound} 
	and~\ref{lem:appendix-Duhamel-equation}.
	The difference of two such solutions satisfies the homogeneous
	equation with zero initial trace, so uniqueness follows
	from Theorem~\ref{thm:linear}.
	
\end{proof}

\vspace{10pt}
\section{The nonlinear estimate}\label{nonlinear}
For $f\in\Z$ use the coefficients in \eqref{eq:endpointfunctionals}:
\begin{equation*}
 f=a_0(f)(1-\chi)f_M+a_\infty(f)\chi f_M+h,
 \qquad h\in X.
\end{equation*}
Define the nonlinear operator
\begin{equation}\label{eq:Fdefinition}
 \F(f)=\C(f)-[a_0(f)^2(1-\chi)+a_\infty(f)^2\chi]Lf.
\end{equation}
We will control its $Y$-norm in order to integrate it using \eqref{eq:L2}.
\begin{lemma}[The $Y$-estimates for $\F$]\label{lem:nonlinear}
There exists $r_0>0$ such that for $u,v\in\Z$ with
$\norm u_\Z,\norm v_\Z\le r_0$,
\begin{equation}\label{eq:N1}
\norm{\F(f_M+u)}_Y\lesssim \norm u_\Z^2,
\end{equation}
\begin{equation}\label{eq:N2}
\norm{\F(f_M+u)-\F(f_M+v)}_Y
 \lesssim (\norm u_\Z+\norm v_\Z)\norm{u-v}_\Z.
\end{equation}

\end{lemma}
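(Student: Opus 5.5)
The plan is to exploit the fact that $\F$ is exactly the collision operator minus its linearization around the \emph{local} Kolmogorov--Zakharov profile, once on each side of the cutoff. Write $f=f_M+u$ with $u=u_0(1-\chi)f_M+u_\infty\chi f_M+h$, so that $a_0:=a_0(f)=1+u_0$ and $a_\infty:=a_\infty(f)=1+u_\infty$. Taking $r_0\le 1/2$ keeps $a_0,a_\infty\in[1/2,3/2]$. Define the two local deviations
\begin{equation*}
 w_0:=f-a_0f_M=(u_\infty-u_0)\chi f_M+h,\qquad
 w_\infty:=f-a_\infty f_M=(u_0-u_\infty)(1-\chi)f_M+h .
\end{equation*}
The point is that $w_0$ vanishes on $(0,1]$ up to $h$, so $w_0\in E_{7/6-\delta,7/6}$. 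Symmetrically, $w_\infty\in E_{7/6,7/6+\delta}$. In both cases the norm is $\lesssim\norm{u}_\Z$.

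\textbf{Step 1 (algebraic decomposition).} By trilinearity and symmetry of $\C(\cdot,\cdot,\cdot)$, $\C(f_M)=0$ (Lemma~\ref{lem:stationary}) and $Lf_M=0$, we have for $j\in\{0,\infty\}$
\begin{equation*}
 \C(f)-a_j^2Lf=\C(a_jf_M+w_j)-a_j^2Lw_j=3a_j\C(f_M,w_j,w_j)+\C(w_j,w_j,w_j).
\end{equation*}
Since $(1-\chi)+\chi=1$, this gives
\begin{equation*}
 \F(f)=(1-\chi)\bigl[3a_0\C(f_M,w_0,w_0)+\C(w_0,w_0,w_0)\bigr]
 +\chi\bigl[3a_\infty\C(f_M,w_\infty,w_\infty)+\C(w_\infty,w_\infty,w_\infty)\bigr].
\end{equation*}
This identity is pointwise a.e. To justify it, every trilinear term must be an absolutely convergent integral. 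I would obtain this from Lemma~\ref{lem:CDG} applied to $|f_M|,|w_j|$, all of which lie in $E_{7/6,7/6}$, together with the absolute-convergence statement of Lemma~\ref{lem:appendix-L-mapping} for $Lw_j=3\C(f_M,f_M,w_j)$. I expect this bookkeeping, namely that splitting $\C(f)$ into pieces does not produce non-convergent individual integrals, to be the main technical obstacle. The exponent conditions of Lemma~\ref{lem:CDG} ($a<5/4$, $a\neq1$, $b>1$) are met by all inputs.

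\textbf{Step 2 (exponent counting).} The low-frequency bracket only matters on $\omega\le2$, and there I need weight $\omega^{3/2-2\delta}$. Apply Lemma~\ref{lem:CDG} to all permutations of $(f_M,w_0,w_0)$ with low exponents $7/6,7/6-\delta,7/6-\delta$. This gives $a'=\max\{7/6,\;3/2-2\delta,\;4/3\}=3/2-2\delta$, using $\delta<1/12$. For the cubic term $(w_0,w_0,w_0)$ we get $a'\le3/2-2\delta$ as well. On $[1,2]$ any finite high exponent gives a bounded weight, so this bracket is controlled in $Y$ there.

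The high-frequency bracket only matters on $\omega\ge1$. There the high exponents are $7/6,7/6+\delta,7/6+\delta$, and so
\begin{equation*}
 b'=\min\{3/2+2\delta,\ b_h+1/2\},\qquad b_h+1/2\ge 5/3>3/2+2\delta .
\end{equation*}
The excluded case $b_f+b_g=5/2$ would require $\delta\in\{1/12,1/6\}$, which is ruled out by \eqref{eq:parameters}. Near $\omega\in[1,2]$ the low exponent is irrelevant. Multiplication by $\chi$ and $1-\chi$ is bounded on $Y$. Combining the two brackets gives \eqref{eq:N1}, since each term has at least two factors $w_j$ and $\norm{w_j}\lesssim\norm u_\Z$.

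\textbf{Step 3 (Lipschitz bound).} For \eqref{eq:N2}, let $v$ have deviations $\widetilde w_j$ and coefficients $\widetilde a_j$. The maps $u\mapsto w_j$ and $u\mapsto a_j$ are affine and bounded, with $\norm{w_j-\widetilde w_j}\lesssim\norm{u-v}_\Z$ and $|a_j-\widetilde a_j|\le\norm{u-v}_\Z$. I would expand the differences by telescoping multilinearity, for example
\begin{equation*}
 \C(f_M,w,w)-\C(f_M,\widetilde w,\widetilde w)=\C(f_M,w-\widetilde w,w)+\C(f_M,\widetilde w,w-\widetilde w),
\end{equation*}
and handle $a_0\C-\widetilde a_0\widetilde\C$ in the same way. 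Then I would apply the same exponent counts from Step 2. Each resulting term contains one difference factor and at least one factor of size $\norm u_\Z+\norm v_\Z$, which yields \eqref{eq:N2}.
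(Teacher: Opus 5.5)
Your proposal is correct and follows the paper's proof essentially step for step. Both use the same split $\F(f)=(1-\chi)[\C(f)-a_0(f)^2Lf]+\chi[\C(f)-a_\infty(f)^2Lf]$, the same cubic expansion around $a_j(f)f_M$ with deviations in $E_{7/6-\delta,7/6}$ and $E_{7/6,7/6+\delta}$, the same exponent count via Lemma~\ref{lem:CDG} (the paper gets $3/2-2\delta$ and $3/2+2\delta$ exactly as you do), and the same factor-by-factor telescoping for \eqref{eq:N2}.
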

\begin{proof}
	The collision terms below are absolutely convergent by
	Lemma~\ref{lem:CDG}, and $Lf$ and $Lh$ are well-defined by
	Lemma~\ref{lem:appendix-L-mapping}.
	
	For $f=f_M+u$, linearity of the coefficient functionals gives
	\begin{equation*}
		a_0(f)=1+a_0(u),\qquad
		a_\infty(f)=1+a_\infty(u).
	\end{equation*}
	Moreover, $f$ and $u$ have the same remainder $h\in X$:
	\begin{equation*}
		h=u-a_0(u)(1-\chi)f_M-a_\infty(u)\chi f_M.
	\end{equation*}
	Consequently,
	\begin{equation*}
		\norm h_X\le\norm u_\Z\le r_0<1,\qquad
		|a_\infty(f)-a_0(f)|
		=|a_\infty(u)-a_0(u)|
		\le\norm u_\Z.
	\end{equation*}
	We also have $|a_0(f)|,|a_\infty(f)|\le2$.
	
	To prove \eqref{eq:N1}, we estimate the collision terms
	separately at $0$ and $\infty$.
	One can first rewrite \eqref{eq:Fdefinition} as follows.
	\begin{equation}\label{eq:nonlinear-endpoint-split}
		\mathcal F(f)
		=
		(1-\chi)\bigl[\mathcal C(f)-a_0(f)^2Lf\bigr]
		+
		\chi\bigl[\mathcal C(f)-a_\infty(f)^2Lf\bigr].
	\end{equation}
	For either $j=0$ or $j=\infty$,
	expanding around $a_j(f)f_M$ gives the exact identity:
	\begin{equation}\label{eq:endpoint-cubic-remainder}
		\mathcal C(f)-a_j(f)^2Lf
		=
		3a_j(f)\mathcal C
		\bigl(f_M,f-a_j(f)f_M,f-a_j(f)f_M\bigr)
		+
		\mathcal C
		\bigl(f-a_j(f)f_M,f-a_j(f)f_M,f-a_j(f)f_M\bigr).
	\end{equation}
	Thus \eqref{eq:endpoint-cubic-remainder} is used with
	$j=0$ where $1-\chi$ is supported, and with $j=\infty$
	where $\chi$ is supported.
	
	To analyze the term $f-a_j(f)f_M$ in \eqref{eq:endpoint-cubic-remainder}, 
	\eqref{eq:Znorm} gives
	\begin{equation*}
		f-a_0(f)f_M
		=
		(a_\infty(f)-a_0(f))\chi f_M+h,
	\end{equation*}
	\begin{equation*}
		f-a_\infty(f)f_M
		=
		(a_0(f)-a_\infty(f))(1-\chi)f_M+h.
	\end{equation*}
	The above identities imply
	\begin{equation}\label{cv}
		\|f-a_0(f)f_M\|_{E_{\frac76-\delta,\frac76}}
		\lesssim\|u\|_{\mathcal Z},
	\end{equation}
	\begin{equation}\label{vc}
		\|f-a_\infty(f)f_M\|_{E_{\frac76,\frac76+\delta}}
		\lesssim\|u\|_{\mathcal Z}.
	\end{equation}
	The subtraction of $a_0(f)f_M$ therefore improves
	the low-frequency exponent by $\delta$, whereas
	the subtraction of $a_\infty(f)f_M$ improves the
	high-frequency exponent by $\delta$.
	
	Before applying Lemma~\ref{lem:CDG}, we check its
	exponent conditions.
	All low exponents used below belong to
	$\{\frac76-\delta,\frac76\}\subset(1,5/4)$.
	All high exponents belong to
	$\{\frac76,\frac76+\delta\}$ and exceed $1$.
	The sum of any two exponents is strictly less than $5/2$, since
	\begin{equation*}
		2\left(\frac76+\delta\right)<\frac52.
	\end{equation*}
	Thus Lemma~\ref{lem:CDG} applies to all six permutations
	in each collision term below.
	
	For $j=0$, we first estimate the two collision terms in
	\eqref{eq:endpoint-cubic-remainder}.
	Applying Lemma~\ref{lem:CDG} to all six permutations
	of $f_M,f-a_0(f)f_M,f-a_0(f)f_M$ gives the
	low-frequency exponent
	\begin{equation*}
		\max\left(
		\frac76,\frac76-\delta,
		\frac32-2\delta,
		\frac43-\delta,\frac43-2\delta
		\right)
		=\frac32-2\delta.
	\end{equation*}
	Applying the same lemma to three copies of
	$f-a_0(f)f_M$ gives the low-frequency exponent
	\begin{equation*}
		\max\left(
		\frac76-\delta,\frac32-3\delta,\frac43-2\delta
		\right)
		=\frac32-3\delta.
	\end{equation*}
	Both equalities follow from $0<\delta<1/12$.
	In both applications, all high exponents in (\ref{cv}) equal $7/6$, so the resulting high exponent is
	\begin{equation*}
		\min\left(3\cdot\frac76-2,\frac76+\frac12\right)
		=\frac32.
	\end{equation*}
	Consequently,
	\begin{equation*}
		\left\|
		\mathcal C(f_M,f-a_0(f)f_M,f-a_0(f)f_M)
		\right\|_{E_{\frac32-2\delta,\frac32}}
		\lesssim\|u\|_{\mathcal Z}^2,
	\end{equation*}
	\begin{equation*}
		\left\|
		\mathcal C(f-a_0(f)f_M,f-a_0(f)f_M,f-a_0(f)f_M)
		\right\|_{E_{\frac32-3\delta,\frac32}}
		\lesssim\|u\|_{\mathcal Z}^3.
	\end{equation*}
	Both low exponents are at most the low-frequency exponent
	$\frac32-2\delta$ defining $Y$.
	Multiplication by $1-\chi$ in
	\eqref{eq:nonlinear-endpoint-split} removes the
	high-frequency tail.
	On the remaining interval $[1,2]$, the weights are
	bounded above and below.
	Hence \eqref{eq:endpoint-cubic-remainder} gives
	\begin{equation*}
		\left\|
		(1-\chi)\bigl[\mathcal C(f)-a_0(f)^2Lf\bigr]
		\right\|_Y
		\lesssim
		|a_0(f)|\|u\|_{\mathcal Z}^2+\|u\|_{\mathcal Z}^3.
	\end{equation*}
	
	For $j=\infty$, applying Lemma~\ref{lem:CDG}
	to all six permutations of
	$f_M,f-a_\infty(f)f_M,f-a_\infty(f)f_M$
	gives the high-frequency exponent
	\begin{equation*}
		\min\left(
		\frac32+2\delta,\frac53,\frac53+\delta
		\right)
		=\frac32+2\delta.
	\end{equation*}
	Applying the same lemma to three copies of
	$f-a_\infty(f)f_M$ gives the high-frequency exponent
	\begin{equation*}
		\min\left(
		\frac32+3\delta,\frac53+\delta
		\right)
		=\frac32+3\delta.
	\end{equation*}
	Both equalities follow from $0<\delta<1/12$.
	In both applications, all low exponents in (\ref{vc}) equal $7/6$, so the resulting low exponent is
	\begin{equation*}
		\max\left(\frac76,\frac43,\frac32\right)=\frac32.
	\end{equation*}
	Consequently,
	\begin{equation*}
		\left\|
		\mathcal C(f_M,f-a_\infty(f)f_M,f-a_\infty(f)f_M)
		\right\|_{E_{\frac32,\frac32+2\delta}}
		\lesssim\|u\|_{\mathcal Z}^2,
	\end{equation*}
	\begin{equation*}
		\left\|
		\mathcal C(f-a_\infty(f)f_M,f-a_\infty(f)f_M,
		f-a_\infty(f)f_M)
		\right\|_{E_{\frac32,\frac32+3\delta}}
		\lesssim\|u\|_{\mathcal Z}^3.
	\end{equation*}
	Both high exponents are at least the high-frequency exponent
	$\frac32+2\delta$ defining $Y$.
	Multiplication by $\chi$ in
	\eqref{eq:nonlinear-endpoint-split} removes the
	low-frequency singularity.
	Hence \eqref{eq:endpoint-cubic-remainder} gives
	\begin{equation*}
		\left\|
		\chi\bigl[\mathcal C(f)-a_\infty(f)^2Lf\bigr]
		\right\|_Y
		\lesssim
		|a_\infty(f)|\|u\|_{\mathcal Z}^2+\|u\|_{\mathcal Z}^3.
	\end{equation*}
	
	Consequently, \eqref{eq:endpoint-cubic-remainder}
	and \eqref{eq:nonlinear-endpoint-split} yield
	\begin{equation*}
		\|\mathcal F(f_M+u)\|_Y
		\lesssim
		\bigl(|a_0(f_M+u)|+|a_\infty(f_M+u)|\bigr)
		\|u\|_{\mathcal Z}^2
		+\|u\|_{\mathcal Z}^3.
	\end{equation*}
	Together with $|a_0(f)|,|a_\infty(f)|\le2$ and
	$\|u\|_{\mathcal Z}\le r_0<1$, this proves \eqref{eq:N1}.
	
	To prove \eqref{eq:N2}, we use the estimates just
	established with different inputs.
	Let $\widetilde f=f_M+v$.
	For $j\in\{0,\infty\}$, linearity of $a_j$ gives
	\begin{equation*}
		(f-a_j(f)f_M)-(\widetilde f-a_j(\widetilde f)f_M)
		=
		(u-v)-a_j(u-v)f_M.
	\end{equation*}
	Applying the same decomposition from \eqref{eq:Znorm}
	to $u-v$, we obtain
	\begin{equation*}
		\left\|
		(f-a_0(f)f_M)-(\widetilde f-a_0(\widetilde f)f_M)
		\right\|_{E_{\frac76-\delta,\frac76}}
		\lesssim\|u-v\|_{\mathcal Z},
	\end{equation*}
	\begin{equation*}
		\left\|
		(f-a_\infty(f)f_M)
		-(\widetilde f-a_\infty(\widetilde f)f_M)
		\right\|_{E_{\frac76,\frac76+\delta}}
		\lesssim\|u-v\|_{\mathcal Z}.
	\end{equation*}
	Also,
	\begin{equation*}
		|a_j(f)-a_j(\widetilde f)|
		=|a_j(u-v)|
		\le\|u-v\|_{\mathcal Z},
		\qquad j\in\{0,\infty\}.
	\end{equation*}
	
	Subtract \eqref{eq:endpoint-cubic-remainder} at
	$f$ and $\widetilde f$ by replacing one factor at a time.
	Lemma~\ref{lem:CDG} can similarly give 
	\begin{equation*}
	\norm{\mathcal F(f_M+u)-\mathcal F(f_M+v)}_Y
	\lesssim
	(\norm u_\Z+\norm v_\Z)\norm{u-v}_\Z.
	\end{equation*}
	This proves \eqref{eq:N2}.
	
\end{proof}

\begin{remark}[The choice of the linear term]
	The decomposition in \eqref{eq:Fdefinition} is chosen
	to match the KZ coefficient at $0$ and $\infty$.
	It also ensures that the remaining collision terms
	contain enough factors with improved frequency weights
	to belong to $Y$.
	
	To explain the choice of coefficients, consider first
	a constant multiple $af_M$, where $a\in\mathbb R$.
	Symmetric trilinearity gives
	\begin{equation*}
		\mathcal C(af_M+h)
		=
		a^2Lh+3a\mathcal C(f_M,h,h)+\mathcal C(h,h,h),
	\end{equation*}
	because $\mathcal C(f_M)=0$ and
	$Lh=3\mathcal C(f_M,f_M,h)$.
	Thus the linearization at $af_M$ is $a^2L$.
	
	For a general $f\in\mathcal Z$, the leading KZ
	coefficients are $a_0(f)$ at zero and $a_\infty(f)$
	at infinity, by \eqref{eq:endpointfunctionals}.
	The corresponding linearizations are therefore
	$a_0(f)^2L$ and $a_\infty(f)^2L$.
	The expression
	\begin{equation*}
		[a_0(f)^2(1-\chi)+a_\infty(f)^2\chi]Lf
	\end{equation*}
	agrees with these two linearizations on $\omega\le1$
	and $\omega\ge 2$, respectively.
	
	The important point is that
	\eqref{eq:endpoint-cubic-remainder} contains at least
	two copies of $f-a_j(f)f_M$.
	These supply the two improvements by $\delta$
	required by the weights of $Y$.
	A term with only one such copy, such as
	$3\mathcal C(f_M,f_M,h)=Lh$, gives only the exponents
	$\frac32-\delta$ and $\frac32+\delta$ through
	Lemma~\ref{lem:CDG}; these do not suffice for a
	$Y$ estimate.
	Matching the coefficient $a_j(f)^2$ at each endpoint
	cancels precisely these linear contributions.
\end{remark}

\vspace{10pt}
\section{Global contraction and proof of the stability theorem}\label{sec:contraction}
\begin{proof}[Proof of Theorem~\ref{thm:nonlinear}]
	We now use the preceding lemmas to close the argument on the entire
	half-line, rather than on a finite time interval whose length is later extended.
	And we will work in $L^\infty((0,\infty);\Z)$.
	For $u(t)=u_0(t)(1-\chi)f_M+u_\infty(t)\chi f_M+h(t)$, the two
	coefficients of $f_M+u(t)$ are
	\begin{equation*}
		a_0(f_M+u(t))=1+u_0(t),\qquad
		a_\infty(f_M+u(t))=1+u_\infty(t).
	\end{equation*}
	Their boundedness and their differences satisfy
	\begin{equation}\label{eq:pathbound}
		\max_{j\in\{0,\infty\}}\norm{u_j}_{L^\infty_t}
		\le\norm u_{L^\infty_t\Z},
	\end{equation}
	\begin{equation*}
		\sum_{j\in\{0,\infty\}}\norm{u_j-v_j}_{L^\infty_t}
		\le2\norm{u-v}_{L^\infty_t\Z}.
	\end{equation*}
	
	For $u$ in the closed ball $\norm u_{L^\infty_t\Z}\le \rho$, with
	$\rho\le\min(r_0,\eta)$, define
	\begin{equation*}
		(\mathcal Tu)(t)={}U_{1+u_0,1+u_\infty}(t,0)(f_0-f_M) +\int_0^t U_{1+u_0,1+u_\infty}(t,s)\F(f_M+u(s))\dd s.
	\end{equation*}
	
	Absolute integrability follows from $k\in L^1$ and \eqref{eq:N1}.
	Let $C_0$ be a constant valid in \eqref{eq:L1}, and let $C_1$ be large
	enough for all products of constants below. From \eqref{eq:L1},
	\eqref{eq:L2}, and \eqref{eq:N1},
	\begin{equation}\label{eq:mapbound}
		\norm{\mathcal Tu}_{L^\infty_t\Z}
		\le C_0\norm{(f_0-f_M)}_\Z+C_1\norm{k}_{L^1(0,\infty)}\rho^2.
	\end{equation}
	For the difference of two maps, subtract the homogeneous terms, then
	split the integral difference into the following two terms:
	\begin{equation*}
		[U_{1+u_0,1+u_\infty}(t,s)-U_{1+v_0,1+v_\infty}(t,s)]\F(f_M+u(s)),
	\end{equation*}
	\begin{equation*}
		U_{1+v_0,1+v_\infty}(t,s)[\F(f_M+u(s))-\F(f_M+v(s))].
	\end{equation*}
	
	Equations \eqref{eq:L3Z}, \eqref{eq:L3Y}, \eqref{eq:N1},
	\eqref{eq:N2}, and \eqref{eq:pathbound} therefore give
	\begin{equation}\label{eq:mapLip}
		\norm{\mathcal Tu-\mathcal Tv}_{L^\infty_t\Z}
		\le C_1\left(\norm{(f_0-f_M)}_\Z+\norm{k}_{L^1(0,\infty)}\rho^2+\norm{k}_{L^1(0,\infty)}\rho\right)
		\norm{u-v}_{L^\infty_t\Z}.
	\end{equation}
	
	Here is an explicit order in which the smallness choices can be made.
	Choose $R>0$ so that $R\le\min(r_0,\eta,1/2)$ and
	\begin{equation*}
		C_1\norm{k}_{L^1(0,\infty)}(R+R^2)\le\tfrac14.
	\end{equation*}
	Then choose $\varepsilon>0$ so that
	\begin{equation*}
		2C_0\varepsilon\le R,\quad
		4C_1\norm{k}_{L^1(0,\infty)} C_0\varepsilon\le1,\quad
		C_1\varepsilon\le\tfrac14.
	\end{equation*}
	For $\varepsilon_0=\norm{(f_0-f_M)}_\Z\le\varepsilon$ set
	$\rho=2C_0\varepsilon_0$. Equation \eqref{eq:mapbound} maps this ball into
	itself, and \eqref{eq:mapLip} has constant at most $1/2$.
	Starting with $u^{(0)}=0$ and iterating $u^{(n+1)}=\mathcal Tu^{(n)}$,
	the differences satisfy
	$\norm{u^{(n+1)}-u^{(n)}}_{L^\infty_t\Z}\le2^{-n}
	\norm{u^{(1)}-u^{(0)}}_{L^\infty_t\Z}$.
	The sum converges in the complete space $L^\infty_t\Z$. Its limit $u$
	is a fixed point by \eqref{eq:mapLip}, and two fixed points in the
	$R$ ball coincide by the same inequality.
	
	For this fixed point, Lemma~\ref{lem:forcing}, with the local integral
	identity justified in Lemma~\ref{lem:appendix-Duhamel-equation}, applied on every finite
	interval gives
	\begin{equation*}
		\partial_tu=\big[a_0(f_M+u)^2(1-\chi)+a_\infty(f_M+u)^2\chi\big]Lu+\F(f_M+u)=\C(f_M+u),
	\end{equation*}
	since $Lf_M=0$ and \eqref{eq:Fdefinition} holds. Its local initial trace
	is $(f_0-f_M)$. Thus $f=f_M+u$ is an integral solution of \eqref{eq:integralsolution}.
	 Moreover, the uniqueness follows directly from 	Lemma~\ref{lem:forcing} and fixed point argument.

\end{proof}

\vspace{10pt}
\section{Global stability with a time-dependent external force}
\label{sec:time-dependent-forcing}

We consider
\begin{equation}\label{eq:forced-equation}
	\partial_t f=\mathcal C(f)+\phi(t),
	\qquad f(0)=f_0,
\end{equation}
where $\phi(t)=\phi(t,\cdot)$ takes values in $Y$.
The spaces $X,Y,\mathcal Z$, the function $f_M$,
and all fixed parameters are unchanged.

We first specify the time-dependent smallness condition
on $\phi$.
We then extend Lemma~\ref{lem:forcing} to this class
of external forces and apply the contraction argument
from Section~\ref{sec:contraction}.

\subsection{The norm of the external force}

Recall that
\begin{equation*}
	k(t)=
	\begin{cases}
		t^{-1+3\delta},&0<t\le1,\\
		t^{-1-\kappa},&t\ge1,
	\end{cases}
	\qquad
	\|k\|_{L^1(0,\infty)}
	=\frac1{3\delta}+\frac1\kappa.
\end{equation*}
Let $\mathcal P$ be the space of strongly measurable
functions $\phi:(0,\infty)\to Y$, for which
\begin{equation}\label{eq:force-norm}
	\|\phi\|_{\mathcal P}
	:=
	\sup_{t>0}
	\int_0^t k(t-s)\|\phi(s)\|_Y\,ds
	<\infty.
\end{equation}
This is the quantity naturally associated with
the estimate \eqref{eq:L2}.

Since $k$ is positive and decreasing,
\eqref{eq:force-norm} implies
\begin{equation}\label{eq:force-local-integrability}
	\int_0^T\|\phi(s)\|_Y\,ds
	\le
	\frac1{k(T)}
	\int_0^T k(T-s)\|\phi(s)\|_Y\,ds
	\le
	\frac{\|\phi\|_{\mathcal P}}{k(T)},
	\qquad T>0.
\end{equation}
In particular, $\mathcal P \hookrightarrow L_{loc}^1((0,\infty);Y)$. Moreover, we see
\begin{equation*}
	\|\phi\|_{\mathcal P}
	\le
	\|k\|_{L^1(0,\infty)}
	\|\phi\|_{L^\infty((0,\infty);Y)}, \qquad L^\infty((0,\infty);Y)\hookrightarrow\mathcal P.
\end{equation*}
For a time-independent force $\phi(t)=\phi_0\in Y$,
\begin{equation*}
	\|\phi\|_{\mathcal P}
	=
	\|k\|_{L^1(0,\infty)}\|\phi_0\|_Y.
\end{equation*}

An integral solution of \eqref{eq:forced-equation}
means that, for every compact interval
$I\subset(0,\infty)$,
\begin{equation}\label{eq:forced-integral}
	f(t)|_I
	=
	f_0|_I+
	\int_0^t
	\bigl(\mathcal C(f(s))+\phi(s)\bigr)|_I\,ds,
	\qquad t\ge0,
\end{equation}
where the integral is a Bochner integral in $L^\infty(I)$.

\subsection{The Duhamel formula for forces in $\mathcal P$}

The hypotheses of Lemma~\ref{lem:forcing} require
boundedness of the force on finite time intervals.
We therefore record the extension needed for
\eqref{eq:force-norm}.

\begin{lemma}\label{lem:forcing-P}
	Let $a_0,a_\infty$ satisfy the assumptions of
	Theorem~\ref{thm:linear}, let $z\in\mathcal Z$,
	and let $g\in\mathcal P$.
	Define
	\begin{equation}\label{eq:forcing-P-formula}
		w(t)
		=
		U_{a_0,a_\infty}(t,0)z
		+\int_0^t U_{a_0,a_\infty}(t,s)g(s)\,ds.
	\end{equation}
	The integral in \eqref{eq:forcing-P-formula}
	is a Bochner integral in $\mathcal Z$, and
	\begin{equation}\label{eq:forcing-P-bound}
		\sup_{t>0}\|w(t)\|_{\mathcal Z}
		\lesssim
		\|z\|_{\mathcal Z}+\|g\|_{\mathcal P}.
	\end{equation}
	For every compact interval $I\subset(0,\infty)$,
	$w|_I\in C([0,\infty);L^\infty(I))$ and
	\begin{equation}\label{eq:forcing-P-integral}
		w(t)|_I
		=
		z|_I+
		\int_0^t
		\Bigl(
		[a_0(r)^2(1-\chi)+a_\infty(r)^2\chi]Lw(r)
		+g(r)
		\Bigr)|_I\,dr,
		\qquad t\ge0.
	\end{equation}
	The integral in \eqref{eq:forcing-P-integral}
	is a Bochner integral in $L^\infty(I)$.
	Moreover, $w$ is the unique integral solution
	of \eqref{eq:forcing-P-integral} that is bounded
	in $\mathcal Z$ on every finite time interval.
\end{lemma}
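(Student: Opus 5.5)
The plan is to reduce Lemma~\ref{lem:forcing-P} to Lemma~\ref{lem:forcing} and Theorem~\ref{thm:linear} by truncating the force.

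\textbf{Step 1: Bochner integrability and the bound \eqref{eq:forcing-P-bound}.} Fix $t>0$. The map $s\mapsto U_{a_0,a_\infty}(t,s)g(s)$ is strongly measurable into $\mathcal Z$. To see this, approximate $g$ by simple $Y$-valued functions. For fixed $h\in Y$, the map $s\mapsto U_{a_0,a_\infty}(t,s)h$ is measurable for $s<t$, which follows from the series construction \eqref{eq:U}, where each term is continuous in $s$ away from $s=t$. Then \eqref{eq:L2} gives
\begin{equation*}
\int_0^t\|U_{a_0,a_\infty}(t,s)g(s)\|_{\mathcal Z}\,ds\lesssim\int_0^tk(t-s)\|g(s)\|_Y\,ds\le\|g\|_{\mathcal P}.
\end{equation*}
Together with \eqref{eq:L1} for the homogeneous term, this proves \eqref{eq:forcing-P-bound}, with constants independent of $t$.

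\textbf{Step 2: the integral equation via truncation.} Set $g_N(s)=g(s)\mathbf 1_{\{\|g(s)\|_Y\le N\}}$, so that $g_N\in L^\infty((0,T);Y)$. Let $w_N$ be the formula \eqref{eq:forcing-P-formula} with $g_N$ in place of $g$. By Lemma~\ref{lem:forcing}, $w_N$ satisfies \eqref{eq:forcing-P-integral} with $g_N$ on every $[0,T]$.

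Since $g\in L^1((0,T);Y)$ by \eqref{eq:force-local-integrability}, dominated convergence gives $\|g-g_N\|_{L^1((0,T);Y)}\to0$. Moreover, for each $t\le T$,
\begin{equation*}
\|w(t)-w_N(t)\|_{\mathcal Z}\lesssim\int_0^tk(t-s)\|g(s)-g_N(s)\|_Y\,ds.
\end{equation*}
This tends to $0$ by dominated convergence, with dominant $k(t-s)\|g(s)\|_Y$, which is integrable for each fixed $t$. The convergence is also bounded uniformly in $t$ by $\|g\|_{\mathcal P}$.

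To pass to the limit in the $L$ term, use the local estimate \eqref{eq:fixed-L-E}, which bounds $\|Lv\|_{L^\infty(I)}\lesssim_I\|v\|_{\mathcal Z}$, and the bounded coefficients. Bounded convergence in $r\in(0,t)$ then gives
\begin{equation*}
\int_0^t[\cdots]L w_N(r)|_I\,dr\to\int_0^t[\cdots]Lw(r)|_I\,dr
\end{equation*}
in $L^\infty(I)$. The force terms converge in $L^1((0,t);L^\infty(I))$, since the $Y$ weight is bounded on $I$. The left side converges because $\mathcal Z$-convergence implies local $L^\infty$ convergence by \eqref{eq:envelope}. This yields \eqref{eq:forcing-P-integral}.

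Continuity of $w|_I$ in $t$ then follows from the right-hand side of \eqref{eq:forcing-P-integral}, which is an integral of an $L^1_{\mathrm{loc}}$ Bochner integrand. Measurability of $r\mapsto Lw(r)|_I$ follows as the limit of the continuous maps $r\mapsto Lw_N(r)|_I$.

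\textbf{Step 3: uniqueness.} The difference of two solutions that are bounded in $\mathcal Z$ on finite intervals is an integral solution of \eqref{eq:coefficients} with zero initial value. Theorem~\ref{thm:linear} gives uniqueness, so the difference vanishes.

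\textbf{Main obstacle.} I expect the main difficulty to be the measurability and joint-integrability bookkeeping in Step 1 and Step 2, especially near the diagonal $s=t$, where only the integrable bound $k(t-s)$ is available. If measurability of $s\mapsto U(t,s)h$ is awkward, I would instead define the integral as the $\mathcal Z$-limit of the $w_N$, which is Cauchy by the same estimate, and identify it pointwise with the formula.
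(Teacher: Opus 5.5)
Your proof is correct, but it reaches the integral identity \eqref{eq:forcing-P-integral} by a different route from the paper's.

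The paper does not approximate the force. It reruns the argument of Lemma~\ref{lem:appendix-Duhamel-equation} directly: it integrates the homogeneous identity \eqref{eq:appendix-U-integral} with datum $g(s)$ over $0<s<t$, exchanges the time integrals by Fubini, and commutes $L$ with the $\mathcal Z$-valued Bochner integral. The only new input is the bound $\|LU_{a_0,a_\infty}(r,s)g(s)\|_{L^\infty(I)}\lesssim_I k(r-s)\|g(s)\|_Y$, which is integrable over $0<s<r<t$ by \eqref{eq:force-local-integrability}.

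You instead use Lemma~\ref{lem:forcing} as a black box for the bounded truncations $g_N$ and pass to the limit by dominated convergence. At each $r$, $k(r-s)\|g(s)\|_Y$ dominates the $\mathcal Z$-error, and $\|g\|_{\mathcal P}$ bounds that error uniformly in $r$. The bound $L:\mathcal Z\to L^\infty(I)$ then transfers this to the nonlocal term.

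Your route is more modular and avoids repeating the Fubini bookkeeping. It also settles strong measurability of $s\mapsto U_{a_0,a_\infty}(t,s)g(s)$, which the paper leaves implicit. Your fallback is the cleanest way to do this: $U_{a_0,a_\infty}(t,s)g_N(s)=U_{a_0,a_\infty}(t,s)g(s)$ as soon as $N\ge\|g(s)\|_Y$. Hence the $\mathcal P$-integrand is a pointwise limit of the integrands of Lemma~\ref{lem:appendix-Duhamel-bound}, and you do not need continuity in $s$ of each term of \eqref{eq:U}. The same pointwise-limit reasoning gives measurability of $r\mapsto Lw(r)|_I$ without any continuity claim.

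The paper's route, in exchange, is self-contained. It shows the inhomogeneous identity follows from the homogeneous one plus local time integrability of the force. Uniqueness is handled identically in both.
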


\begin{proof}
	For the existence and bound (\ref{eq:forcing-P-bound}), use \eqref{eq:L2}:
	\begin{equation*}
		\int_0^t
		\|U_{a_0,a_\infty}(t,s)g(s)\|_{\mathcal Z}\,ds
		\lesssim
		\int_0^t k(t-s)\|g(s)\|_Y\,ds
		\le\|g\|_{\mathcal P}.
	\end{equation*}
	The above estimate proves Bochner integrability.
	Together with \eqref{eq:L1}, it proves
	\eqref{eq:forcing-P-bound}.

	To prove \eqref{eq:forcing-P-integral}, we use
	the argument of
	Lemma~\ref{lem:appendix-Duhamel-equation}.
	The only additional point is to verify integrability
	when $g$ is merely locally integrable in time.
	By Lemma~\ref{lem:appendix-L-mapping},
	\eqref{eq:envelope}, and \eqref{eq:L2},
	\begin{equation*}
		\|LU_{a_0,a_\infty}(r,s)g(s)\|_{L^\infty(I)}
		\lesssim_I
		k(r-s)\|g(s)\|_Y,
		\qquad 0<s<r.
	\end{equation*}
	Hence, for every $t>0$,
	\begin{equation*}
		\int_0^t\int_s^t
		\|LU_{a_0,a_\infty}(r,s)g(s)\|_{L^\infty(I)}
		\,dr\,ds
		\lesssim_I
		\|k\|_{L^1(0,\infty)}
		\int_0^t\|g(s)\|_Y\,ds
		<\infty,
	\end{equation*}
	where the last inequality follows from
	\eqref{eq:force-local-integrability}.
	
	We may therefore integrate
	\eqref{eq:appendix-U-integral} with datum $g(s)$
	over $0<s<t$ and interchange the time integrals by Fubini.
	For almost every $r$, boundedness of
	$L:\mathcal Z\to L^\infty(I)$ also gives
	\begin{equation*}
		L\left(
		\int_0^r U_{a_0,a_\infty}(r,s)g(s)\,ds
		\right)\Big|_I
		=
		\int_0^r
		\bigl(LU_{a_0,a_\infty}(r,s)g(s)\bigr)|_I\,ds.
	\end{equation*}
	Thus the integral term in
	\eqref{eq:forcing-P-formula} satisfies
	\eqref{eq:forcing-P-integral} with zero initial value.
	The homogeneous term satisfies the corresponding
	identity by Lemma~\ref{lem:appendix-U-equation}.
	Adding the identities proves
	\eqref{eq:forcing-P-integral} for every $t\ge0$.
	
	By \eqref{eq:forcing-P-bound},
	$Lw|_I$ is bounded on finite time intervals.
	By \eqref{eq:force-local-integrability},
	$g|_I$ is locally Bochner integrable.
	Consequently, \eqref{eq:forcing-P-integral}
	implies continuity and the initial trace in $L^\infty(I)$.
	
	Finally, the difference of two such integral
	solutions satisfies the homogeneous equation
	\eqref{eq:coefficients} with zero local initial trace.
	The uniqueness assertion of
	Theorem~\ref{thm:linear} proves the required uniqueness.
	
\end{proof}

\subsection{Global stability}

\begin{proof}[Proof of Theorem \ref{thm:forced-stability}]
	We follow Section~\ref{sec:contraction}.
	The new contributions are the Duhamel integral
	containing $\phi$ and its dependence on the
	coefficients $a_0,a_\infty$.
	
	For $u\in L^\infty((0,\infty);\mathcal Z)$,
	the notation $a_j(f_M+u)$ below denotes the
	time-dependent function
	$t\mapsto a_j(f_M+u(t))$, for $j\in\{0,\infty\}$.
	By \eqref{eq:endpointfunctionals},
	\begin{equation*}
		\max_{j\in\{0,\infty\}}
		\|a_j(f_M+u)-1\|_{L^\infty(0,\infty)}
		\le
		\|u\|_{L^\infty((0,\infty);\mathcal Z)},
	\end{equation*}
	and
	\begin{equation*}
		\sum_{j\in\{0,\infty\}}
		\|a_j(f_M+u)-a_j(f_M+v)\|_{L^\infty(0,\infty)}
		\le
		2\|u-v\|_{L^\infty((0,\infty);\mathcal Z)}.
	\end{equation*}
	
Let $r_0$ be the radius in Lemma~\ref{lem:nonlinear}
and $\eta$ the coefficient bound in
Theorem~\ref{thm:linear}.
Choose $R\le\min(r_0,\eta,1)$.
Then \eqref{eq:N1}--\eqref{eq:N2} apply on the ball
$\|u\|_{L^\infty((0,\infty);\mathcal Z)}\le R$,
and the coefficients $a_0(f_M+u),a_\infty(f_M+u)$
satisfy the assumptions of Theorem~\ref{thm:linear}.
We therefore define
	\begin{equation*}
		(\mathcal T_\phi u)(t)
		:=
		U_{a_0(f_M+u),a_\infty(f_M+u)}(t,0)(f_0-f_M)
	\end{equation*}
	\begin{equation}\label{eq:forced-map}
		+
		\int_0^t
		U_{a_0(f_M+u),a_\infty(f_M+u)}(t,s)
		[\mathcal F(f_M+u(s))+\phi(s)]\,ds,
		\qquad t>0.
	\end{equation}
	By \eqref{eq:N1},
	\begin{equation*}
		\|\mathcal F(f_M+u)\|_{\mathcal P}
		\lesssim
		\|k\|_{L^1(0,\infty)}R^2.
	\end{equation*}
	Thus $\mathcal F(f_M+u)+\phi$ belongs to $\mathcal P$ and
	Lemma~\ref{lem:forcing-P} therefore makes
	\eqref{eq:forced-map} well-defined.
	
	We first show that \eqref{eq:forced-map} preserves
	a sufficiently small ball.
	Estimates \eqref{eq:L1}, \eqref{eq:L2},
	\eqref{eq:N1}, and \eqref{eq:force-norm} give
	\begin{equation}\label{eq:forced-map-bound}
		\|\mathcal T_\phi u\|_{L^\infty((0,\infty);\mathcal Z)}
		\le
		C\bigl(
		\|f_0-f_M\|_{\mathcal Z}
		+\|\phi\|_{\mathcal P}
		\bigr)
		+
		C\|k\|_{L^1(0,\infty)}R^2.
	\end{equation}
	
	For the contraction estimate, the homogeneous term
	and the terms containing $\mathcal F$ are treated
	exactly as in \eqref{eq:mapLip}, using
	\eqref{eq:L3Z}, \eqref{eq:L3Y}, \eqref{eq:N1},
	and \eqref{eq:N2}.
	The additional difference is
	\begin{equation*}
		\int_0^t
		\bigl[
		U_{a_0(f_M+u),a_\infty(f_M+u)}(t,s)
		-
		U_{a_0(f_M+v),a_\infty(f_M+v)}(t,s)
		\bigr]\phi(s)\,ds.
	\end{equation*}
	By \eqref{eq:L3Y}, its $\mathcal Z$ norm is bounded by
	\begin{equation*}
		C\|u-v\|_{L^\infty((0,\infty);\mathcal Z)}
		\int_0^t k(t-s)\|\phi(s)\|_Y\,ds
		\le
		C\|\phi\|_{\mathcal P}
		\|u-v\|_{L^\infty((0,\infty);\mathcal Z)}.
	\end{equation*}
	Combining the estimates gives
	\begin{equation}\label{eq:forced-map-difference}
		\|\mathcal T_\phi u-\mathcal T_\phi v\|
		_{L^\infty((0,\infty);\mathcal Z)}
		\le
		C\Bigl(
		\|f_0-f_M\|_{\mathcal Z}
		+\|\phi\|_{\mathcal P}
		+\|k\|_{L^1(0,\infty)}(R+R^2)
		\Bigr)
		\|u-v\|_{L^\infty((0,\infty);\mathcal Z)}.
	\end{equation}
	
	Now we can first choose
	$R_{\mathrm{ext}}\le\min(r_0,\eta,1/2)$ so that
	\begin{equation*}
		C\|k\|_{L^1(0,\infty)}
		(R_{\mathrm{ext}}+R_{\mathrm{ext}}^2)
		\le\frac14.
	\end{equation*}
	Next choose $\varepsilon_{\mathrm{ext}}>0$ so that
	\begin{equation*}
		2C\varepsilon_{\mathrm{ext}}\le R_{\mathrm{ext}},
		\qquad
		C\varepsilon_{\mathrm{ext}}\le\frac14.
	\end{equation*}
	For the data in \eqref{eq:forced-smallness}, set $R=
	2C\bigl(
	\|f_0-f_M\|_{\mathcal Z}
	+\|\phi\|_{\mathcal P}
	\bigr).$ Then $R\le R_{\mathrm{ext}}$, 
	\eqref{eq:forced-map-bound} gives
	\begin{equation*}
		\|\mathcal T_\phi u\|_{L^\infty((0,\infty);\mathcal Z)}
		\le\frac R2+\frac R4\le R.
	\end{equation*}
	\eqref{eq:forced-map-difference} gives a
	contraction constant at most $1/2$.
	The contraction theorem therefore gives a fixed
	point $u$ in the ball of radius $R$.
	
	It remains to verify that the fixed point is an
	integral solution, rather than only a solution of
	\eqref{eq:forced-map}.
	Apply Lemma~\ref{lem:forcing-P} with
	\begin{equation*}
		z=f_0-f_M,\qquad
		g(t)=\mathcal F(f_M+u(t))+\phi(t),
	\end{equation*}
	and coefficients $a_0(f_M+u),a_\infty(f_M+u)$.
	Then the fixed point $u$ satisfies
	\begin{equation*}
		u(t)|_I
		=
		(f_0-f_M)|_I
		+
		\int_0^t
		\Bigl(
		[a_0(f_M+u(s))^2(1-\chi)
		+a_\infty(f_M+u(s))^2\chi]Lu(s)
		+\mathcal F(f_M+u(s))+\phi(s)
		\Bigr)|_I\,ds
	\end{equation*}
	for every compact $I\subset(0,\infty)$ and every
	$t\ge0$, with a Bochner integral in $L^\infty(I)$.
	By $Lf_M=0$ and \eqref{eq:Fdefinition}, the integrand
	equals $\mathcal C(f_M+u(s))+\phi(s)$.
	Thus $f=f_M+u$ satisfies
	\eqref{eq:forced-integral} and has the asserted continuity.
	
	Moreover, \eqref{eq:forced-map-bound} directly ensures $\sup_{t\ge0}\|f(t)-f_M\|_{\mathcal Z}
	\le R$, which proves \eqref{eq:forced-stability} with
	$C_{\mathrm{ext}}=2C$. Finally, uniqueness follows by Lemma \ref{lem:forcing-P} and fixed point argument.

\end{proof}
\begin{remark}
	The stationary solutions constructed in
	\cite[Theorem~1.1]{CDG} remain close to the mass
	KZ spectrum under a sufficiently small,
	time-independent force.
	Theorem~\ref{thm:forced-stability} gives a dynamical
	stability statement: solutions starting sufficiently
	close to $f_M$ remain uniformly close for all
	$t\ge0$, even when the external force depends on time.
	The smallness condition is measured by
	\eqref{eq:force-norm}, and
	$L^\infty((0,\infty);Y)\hookrightarrow\mathcal P$.
	
	Theorem~2.1 of \cite{EMVN} shows that the
	low-frequency KZ singularity persists on a local
	time interval: the solution has the form
	$f(t,\omega)=a(t)\omega^{-7/6}
	+O(\omega^{-7/6+\delta/2})$ as $\omega\downarrow0$.
	Theorem~\ref{thm:forced-stability} likewise preserves
	the KZ asymptotics, but gives uniform control for
	all $t\ge0$ under the stated smallness assumptions.
	More precisely, \eqref{eq:forced-stability} and
	\eqref{eq:Znorm} give
	\begin{equation*}
		f(t,\omega)
		=
		a_0(f(t))\omega^{-7/6}
		+
		O\!\left(
		\bigl(\|f_0-f_M\|_{\mathcal Z}
		+\|\phi\|_{\mathcal P}\bigr)\omega^{-7/6+\delta}
		\right),
		\qquad \omega\downarrow0,
	\end{equation*}
	uniformly for $t\ge0$.
	
	Thus the connection is the persistence of the KZ
	singularity; the additional conclusion here is
	its global, uniformly small deviation from $f_M$
	in the presence of a time-dependent force.

\end{remark}

\vspace{10pt}
\section{Convergence to a stationary solution under time-independent forcing}
\label{sec:stationary-convergence}

We now assume that the external force is independent of time:
\begin{equation}\label{eq:asymptotic-equation}
	\partial_t f=\mathcal C(f)+\phi(\omega),
	\qquad f(0)=f_0.
\end{equation}
We will prove convergence to a stationary solution in slightly weaker
weighted spaces. We will also give a necessary and sufficient condition
for convergence in the original space $\mathcal Z$.

To distinguish the weights used below, for $0<\theta<1/12$ write
\begin{equation*}
	X_\theta=E_{\frac76-\theta,\frac76+\theta},
	\qquad
	Y_\theta=E_{\frac32-2\theta,\frac32+2\theta},
\end{equation*}
and
\begin{equation*}
	\mathcal Z_\theta
	=
	\mathbb R(1-\chi)f_M
	\oplus\mathbb R\chi f_M
	\oplus X_\theta.
\end{equation*}
Thus $X=X_\delta$, $Y=Y_\delta$, and
$\mathcal Z=\mathcal Z_\delta$. By definition, we also have 
\begin{equation}\label{eq:asymptotic-embeddings}
	\|z\|_{\mathcal Z_\theta}\le\|z\|_{\mathcal Z_\delta},
	\qquad
	\|g\|_{Y_\theta}\le\|g\|_{Y_\delta},
	\qquad 0<\theta\le\delta.
\end{equation}

Fix
\begin{equation*}
	\delta<\delta^+<\min\left\{2\delta,\frac1{12}\right\},
	\qquad
	0<\kappa<
	\min\left\{\frac{3\delta}{2},3v_0-\frac{11}{2}\right\},
\end{equation*}
where $v_0>11/6$ is the constant in
Theorem~\ref{thm:EMV}.
For $\theta\in\{\delta/2,\delta\}$, set
\begin{equation*}
	k_\theta(t)=
	\begin{cases}
		t^{-1+3\theta},&0<t\le1,\\
		t^{-1-\kappa},&t\ge1.
	\end{cases}
\end{equation*}
In particular, $k_\theta\in L^1(0,\infty)$. 

We will apply Theorem~\ref{thm:linear} with both
$\theta=\delta/2$ and $\theta=\delta$.
Choose $\eta>0$ small enough that its coefficient assumptions
hold for both choices whenever
\begin{equation*}
	\max_{j\in\{0,\infty\}}
	\|a_j-1\|_{L^\infty(0,\infty)}\le\eta.
\end{equation*}
The resulting estimates are
\begin{equation}\label{eq:asymptotic-U-bounds}
	\|U_{a_0,a_\infty}(t,s)z\|_{\mathcal Z_\theta}
	\lesssim\|z\|_{\mathcal Z_\theta},
	\qquad 0\le s<t,
\end{equation}
and
\begin{equation}\label{eq:asymptotic-U-smoothing}
	\|U_{a_0,a_\infty}(t,s)g\|_{\mathcal Z_\theta}
	\lesssim k_\theta(t-s)\|g\|_{Y_\theta},
	\qquad 0\le s<t.
\end{equation}
The constants are uniform over the allowed coefficients.

The notation $U_{a_0,a_\infty}$ is consistent for the two
choices of $\theta$.
Indeed, for initial data in $\mathcal Z_\delta$, both
constructions solve the same integral equation, have the same local initial trace and are bounded in $\mathcal Z_{\delta/2}$ on every finite time interval. Then uniqueness in Theorem~\ref{thm:linear}, applied with $\theta=\delta/2$, identifies them.

Before constructing the stationary solution $f_\ast$ with a prescribed
high-frequency coefficient, we first recall the following result.
\begin{lemma}[{\cite[Theorem~1.1]{CDG}}]\label{CDG lemma}
	\label{lem:CDG-stationary}
	Fix $0<\delta^+<1/12$.
	There exist $\varepsilon_{\mathrm{stat}}>0$ and $C>0$
	such that the following holds.
	Let $b\in[1/2,3/2]$ and 
	$\phi\in E_{\frac32-\delta^+,\frac32+\delta^+}$
	 with
	\begin{equation*}
		\|\phi\|_{E_{\frac32-\delta^+,\frac32+\delta^+}}
		\le\varepsilon_{\mathrm{stat}}.
	\end{equation*}
	Then there exists a nonnegative stationary solution $f_*$
	satisfying
	\begin{equation*}
		\mathcal C(f_*)(\omega)+\phi(\omega)=0
		\qquad\text{for almost every }\omega>0,
	\end{equation*}
	where the collision integral is absolutely convergent.
	
	There is a positive coefficient $a_0(f_*)$ such that
	\begin{equation*}
		\operatorname*{ess\,sup}_{0<\omega\le1}
		\omega^{7/6-\delta^+}
		\left|f_*(\omega)-a_0(f_*)\omega^{-7/6}\right|
		\le
		C\|\phi\|_{E_{\frac32-\delta^+,\frac32+\delta^+}},
	\end{equation*}
	and
	\begin{equation*}
		\operatorname*{ess\,sup}_{\omega\ge1}
		\omega^{7/6+\delta^+}
		\left|f_*(\omega)-b\omega^{-7/6}\right|
		\le
		C\|\phi\|_{E_{\frac32-\delta^+,\frac32+\delta^+}}.
	\end{equation*}
	In particular, $a_\infty(f_*)=b$.
	
	Let $j_M^*>0$ denote the mass-flux constant associated
	with $f_M(\omega)=\omega^{-7/6}$, defined in
	\cite[equation~(4.11)]{CDG}.
	The two coefficients satisfy
	\begin{equation*}
		j_M^*a_0(f_*)^3
		=
		j_M^*b^3+
		\int_0^\infty\omega^{1/2}\phi(\omega)\,d\omega,
	\end{equation*}
	and consequently
	\begin{equation*}
		|a_0(f_*)-b|
		\le
		C\|\phi\|_{E_{\frac32-\delta^+,\frac32+\delta^+}},
	\quad 
		\|f_*-bf_M\|_{\mathcal Z_{\delta^+}}
		\le
		C\|\phi\|_{E_{\frac32-\delta^+,\frac32+\delta^+}}.
	\end{equation*}
	
\end{lemma}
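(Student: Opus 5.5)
The statement is imported from \cite[Theorem~1.1]{CDG}. The plan is therefore not to reprove the stationary construction but to transfer it to the present normalization, and then derive the flux identity and the two consequences from the stated asymptotics.

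\textbf{Reduction to $b=1$.} Since $\mathcal C$ is cubic,
\begin{equation*}
\mathcal C(bg)=b^3\mathcal C(g).
\end{equation*}
Hence $f_*=bg$ solves $\mathcal C(f_*)+\phi=0$ exactly when $\mathcal C(g)+b^{-3}\phi=0$. For $b\in[1/2,3/2]$ the rescaled force satisfies $\|b^{-3}\phi\|\le 8\|\phi\|$. So it suffices to apply \cite[Theorem~1.1]{CDG} with high-frequency coefficient one and forcing $b^{-3}\phi$, after shrinking $\varepsilon_{\mathrm{stat}}$ by a fixed factor.

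\textbf{Existence and the two weighted bounds.} CDG produce a nonnegative $g$ with an absolutely convergent collision integral, and a coefficient $a_0(g)>0$. Their conclusion gives $\omega^{7/6-\delta^+}|g-a_0(g)\omega^{-7/6}|\lesssim\|\phi\|$ on $(0,1]$ and $\omega^{7/6+\delta^+}|g-\omega^{-7/6}|\lesssim\|\phi\|$ on $[1,\infty)$, in their weighted space with parameter $\delta^+$. The main bookkeeping step is to check that their weights and forcing space match $E_{\frac32-\delta^+,\frac32+\delta^+}$ with the present normalization of $W$ and $\mathcal C$. Multiplying back by $b$ gives both displayed bounds, with $a_0(f_*)=ba_0(g)$, and $a_\infty(f_*)=b$ follows from the second bound.

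\textbf{Flux identity.} Multiply the stationary equation by $\omega^{1/2}$ and integrate over $[\alpha,\beta]$. The mass balance from the introduction gives
\begin{equation*}
J_M(f_*)(\alpha)-J_M(f_*)(\beta)=-\int_\alpha^\beta\omega^{1/2}\phi\,d\omega .
\end{equation*}
The forcing is integrable against $\omega^{1/2}$: the integrand is $O(\omega^{-1+\delta^+})$ at $0$ and $O(\omega^{-1-\delta^+})$ at $\infty$, so $\int_0^\infty\omega^{1/2}|\phi|\,d\omega\lesssim\|\phi\|$.

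The step I expect to be the main obstacle is identifying the endpoint limits of the flux:
\begin{equation*}
J_M(f_*)(\alpha)\to -j_M^*a_0(f_*)^3\quad(\alpha\to0),\qquad J_M(f_*)(\beta)\to -j_M^*b^3\quad(\beta\to\infty).
\end{equation*}
By homogeneity, $J_M(af_M)=-j_M^*a^3$ for every constant $a$. I would write $f_*=a f_M+(f_*-af_M)$ near each endpoint and expand the flux trilinearly. Each correction term contains a factor $f_*-af_M$ gaining $\omega^{\pm\delta^+}$, and since the flux of $f_M$ is scale invariant, these terms tend to zero. Here I would rely on CDG's weighted trilinear flux bounds rather than redo them, since this is exactly how \cite[equation~(4.11)]{CDG} is obtained. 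Taking both limits gives $j_M^*a_0(f_*)^3=j_M^*b^3+\int_0^\infty\omega^{1/2}\phi\,d\omega$.

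\textbf{Consequences.} Since $b\ge1/2$, the map $x\mapsto x^{1/3}$ is Lipschitz near $b^3$, and $\int\omega^{1/2}|\phi|\lesssim\|\phi\|$. Hence $|a_0(f_*)-b|\lesssim\|\phi\|$. Finally decompose
\begin{equation*}
f_*-bf_M=(a_0(f_*)-b)(1-\chi)f_M+h .
\end{equation*}
On $(0,1]$, $h=f_*-a_0(f_*)f_M$ is bounded by the first weighted bound. On $[2,\infty)$, $h=f_*-bf_M$ is bounded by the second. On $[1,2]$, $h$ differs from $f_*-bf_M$ by $(a_0(f_*)-b)(1-\chi)f_M$, which is bounded by $|a_0(f_*)-b|$. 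This gives $\|f_*-bf_M\|_{\mathcal Z_{\delta^+}}\lesssim\|\phi\|$.
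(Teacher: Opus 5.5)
Your proposal is correct and takes the paper's approach. The paper gives no argument of its own for this statement and simply imports it from \cite[Theorem~1.1]{CDG}, and you likewise rely on CDG for existence, the weighted asymptotics and the endpoint flux limits; your additional bookkeeping is sound and fills in what the paper's ``consequently'' leaves implicit. That bookkeeping is the cubic rescaling $f_*=bg$ to reach every $b\in[1/2,3/2]$ with uniform constants, and the derivation of $|a_0(f_*)-b|\lesssim\|\phi\|$ and of the $\mathcal Z_{\delta^+}$ bound from the flux identity via $f_*-bf_M=(a_0(f_*)-b)(1-\chi)f_M+h$.
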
  

\begin{lemma}[The stationary solution and its linear term]
	\label{lem:asymptotic-stationary}
	For $b>0$ with $|b-1|+\|\phi\|_{Y_\delta}$ sufficiently small,
	there exists a stationary solution $f_*$ satisfying
	\begin{equation}\label{eq:asymptotic-stationary-equation}
		\mathcal C(f_*)+\phi=0,
		\qquad a_\infty(f_*)=b,
	\end{equation}
	and
	\begin{equation}\label{eq:asymptotic-stationary-bound}
		\|f_*-b f_M\|_{\mathcal Z_{\delta^+}}
		\lesssim\|\phi\|_{Y_\delta}.
	\end{equation}
	Moreover,
	\begin{equation}\label{eq:asymptotic-stationary-L}
		\|Lf_*\|_{Y_\delta}\lesssim\|\phi\|_{Y_\delta}.
	\end{equation}
	In particular,
	\begin{equation}\label{eq:asymptotic-stationary-tail}
		\lim_{R\to\infty}
		\operatorname*{ess\,sup}_{\omega\ge R}
		\omega^{7/6+\delta}|f_*(\omega)-b f_M(\omega)|=0.
	\end{equation}
\end{lemma}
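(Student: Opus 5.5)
We obtain $f_*$ directly from Lemma~\ref{lem:CDG-stationary} and then read off the estimates. Since $0<\theta<1/12$ parametrizes the spaces, compare the weights: $Y_\delta=E_{\frac32-2\delta,\frac32+2\delta}$, and we fixed $\delta<\delta^+<2\delta$. Hence $\frac32-2\delta<\frac32-\delta^+$ and $\frac32+2\delta>\frac32+\delta^+$, so the inclusion rule gives $\|\phi\|_{E_{\frac32-\delta^+,\frac32+\delta^+}}\le\|\phi\|_{Y_\delta}$. Taking $|b-1|\le1/2$ and $\|\phi\|_{Y_\delta}\le\varepsilon_{\mathrm{stat}}$, Lemma~\ref{lem:CDG-stationary} produces $f_*$ with $\mathcal C(f_*)+\phi=0$ a.e., $a_\infty(f_*)=b$, and $\|f_*-bf_M\|_{\mathcal Z_{\delta^+}}\lesssim\|\phi\|_{Y_\delta}$. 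This is \eqref{eq:asymptotic-stationary-equation}--\eqref{eq:asymptotic-stationary-bound}.

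The tail statement \eqref{eq:asymptotic-stationary-tail} follows from the same high-frequency bound. For $\omega\ge R\ge1$,
\[
\omega^{7/6+\delta}|f_*-bf_M|\le C\|\phi\|\,\omega^{\delta-\delta^+}\le C\|\phi\|R^{\delta-\delta^+}\to0,
\]
because $\delta^+>\delta$.

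For \eqref{eq:asymptotic-stationary-L}, write $u=f_*-bf_M$, so that $f_*=bf_M+u$ and $u\in\mathcal Z_{\delta^+}$ is small. The argument below is the one in the remark on the choice of the linear term.
\begin{itemize}
\item Set $c=a_0(f_*)$, with $|c-b|\lesssim\|\phi\|$. Since $Lf_M=0$, we have $Lf_*=Lu$.
\item Expand around $cf_M$ and $bf_M$ exactly as in \eqref{eq:endpoint-cubic-remainder}:
\[
\mathcal C(f_*)-c^2Lf_*=3c\,\mathcal C(f_M,w_0,w_0)+\mathcal C(w_0,w_0,w_0),\qquad w_0=f_*-cf_M,
\]
and similarly with $b$ and $w_\infty=f_*-bf_M$.
\item Use $\mathcal C(f_*)=-\phi$ in both identities. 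This gives
\[
(1-\chi)c^2Lf_*=-(1-\chi)\bigl[\phi+3c\,\mathcal C(f_M,w_0,w_0)+\mathcal C(w_0,w_0,w_0)\bigr],
\]
and the analogous identity with $\chi$, $b$, $w_\infty$.
\item Add the two pieces and divide by the coefficients $c^2,b^2\approx1$. This requires care: the coefficient $c^2(1-\chi)+b^2\chi$ varies with $\omega$, so instead write $Lf_*=(1-\chi)Lf_*+\chi Lf_*$ and divide each piece separately.
\end{itemize}

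It then remains to show that the quadratic collision terms lie in $Y_\delta$ with norm $\lesssim\|\phi\|^2$. By the decomposition of $f_*$ in $\mathcal Z_{\delta^+}$, $w_0=(b-c)\chi f_M+h$ with $h\in X_{\delta^+}$. Then $\|w_0\|_{E_{7/6-\delta^+,7/6}}\lesssim\|\phi\|$, and similarly $\|w_\infty\|_{E_{7/6,7/6+\delta^+}}\lesssim\|\phi\|$. We apply Lemma~\ref{lem:CDG} exactly as in the proof of Lemma~\ref{lem:nonlinear}, with $\delta$ replaced by $\delta^+<1/12$; its exponent conditions were checked there for any parameter below $1/12$.
\begin{itemize}
\item On the support of $1-\chi$, the low exponent is $\frac32-2\delta^+\le\frac32-2\delta$.
\item On the support of $\chi$, the high exponent is $\frac32+2\delta^+\ge\frac32+2\delta$.
\item On $[1,2]$ the weights are comparable, so the cutoffs cause no loss there.
\end{itemize}
This yields $\|Lf_*\|_{Y_\delta}\lesssim\|\phi\|_{Y_\delta}+\|\phi\|_{Y_\delta}^2\lesssim\|\phi\|_{Y_\delta}$.

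The main obstacle is bookkeeping at the endpoints. Near $0$, the linear-in-$\phi$ contribution enters only through $\phi$ itself, not through $Lh$. The subtraction of $c f_M$ must use $a_0(f_*)$ rather than $b$, since $w_\infty$ is not improved near zero. One must also check that $Lf_*$ is defined by absolutely convergent integrals (Lemma~\ref{lem:appendix-L-mapping}), so that the algebraic identity holds pointwise almost everywhere.
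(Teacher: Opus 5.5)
Your proposal is correct and follows essentially the paper's route. You obtain $f_*$ from Lemma~\ref{lem:CDG-stationary} via $\|\phi\|_{E_{\frac32-\delta^+,\frac32+\delta^+}}\le\|\phi\|_{Y_\delta}$, read off the tail from $\delta^+>\delta$, and bound $Lf_*$ by combining the stationary equation with the endpoint expansion \eqref{eq:endpoint-cubic-remainder} and dividing by coefficients near one. The only difference is packaging: the paper bounds the remainder by citing \eqref{eq:N2} with $\mathcal F(bf_M)=0$, then divides by the single function $a_0(f_*)^2(1-\chi)+b^2\chi\ge\frac14$, which is harmless in weighted $L^\infty$; you instead re-run the Lemma~\ref{lem:CDG} exponent count directly at level $\delta^+$ and divide piece by piece.
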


\begin{proof}
	For \eqref{eq:asymptotic-stationary-equation}
	and \eqref{eq:asymptotic-stationary-bound}, we apply
	Lemma~\ref{CDG lemma} with prescribed high-frequency
	coefficient $b$.
	Since $\delta^+<2\delta$,
	\begin{equation*}
		\|\phi\|_{E_{\frac32-\delta^+,\frac32+\delta^+}}
		\le\|\phi\|_{Y_\delta}.
	\end{equation*}
	Thus, for sufficiently small
	$|b-1|+\|\phi\|_{Y_\delta}$, that lemma gives a stationary solution $f_*$ satisfying
	\begin{equation*}
		\mathcal C(f_*)+\phi=0,
		\qquad a_\infty(f_*)=b,
	\end{equation*}
	and
	\begin{equation*}
		\|f_*-bf_M\|_{\mathcal Z_{\delta^+}}
		\lesssim
		\|\phi\|_{E_{\frac32-\delta^+,\frac32+\delta^+}}
		\lesssim\|\phi\|_{Y_\delta}.
	\end{equation*}
	This proves \eqref{eq:asymptotic-stationary-equation}
	and \eqref{eq:asymptotic-stationary-bound}.
	
	We next use the stationary equation to prove
	\eqref{eq:asymptotic-stationary-L}.
	Since $\delta^+>\delta$, the embedding
	$\mathcal Z_{\delta^+}\subset\mathcal Z_\delta$ gives
	\begin{equation*}
		\|f_*-f_M\|_{\mathcal Z_\delta}
		\le
		\|f_*-bf_M\|_{\mathcal Z_\delta}
		+2|b-1|
		\lesssim
		\|\phi\|_{Y_\delta}+|b-1|.
	\end{equation*}
	Consequently, both $f_*$ and $bf_M$ lie in the
	neighborhood where \eqref{eq:N2} applies.
	Moreover, $\mathcal F(bf_M)=0$, since
	$\mathcal C(bf_M)=0$ and $Lf_M=0$.
	Using \eqref{eq:N2}, we obtain
	\begin{equation*}
		\|\mathcal F(f_*)\|_{Y_\delta}
		\lesssim
		\bigl(
		\|f_*-f_M\|_{\mathcal Z_\delta}
		+2|b-1|
		\bigr)
		\|f_*-bf_M\|_{\mathcal Z_\delta}
		\lesssim
		\bigl(\|\phi\|_{Y_\delta}+|b-1|\bigr)
		\|\phi\|_{Y_\delta}.
	\end{equation*}
	
	By \eqref{eq:Fdefinition} and
	\eqref{eq:asymptotic-stationary-equation},
	\begin{equation*}
		\bigl[a_0(f_*)^2(1-\chi)+b^2\chi\bigr]Lf_*
		=
		-\phi-\mathcal F(f_*).
	\end{equation*}
	This equality holds in $L^\infty(I)$ for every
	compact interval $I\subset(0,\infty)$;
	$Lf_*$ is defined by the absolutely convergent
	integrals in
	Lemma~\ref{lem:appendix-L-mapping}.
	Also,
	\begin{equation*}
		|a_0(f_*)-1|
		\le
		|a_0(f_*)-b|+|b-1|
		\le
		\|f_*-bf_M\|_{\mathcal Z_{\delta^+}}+|b-1|.
	\end{equation*}
	After decreasing the smallness threshold, we have
	$a_0(f_*),b\ge1/2$.
	Since $0\le\chi\le1$,
	\begin{equation*}
		a_0(f_*)^2(1-\chi(\omega))+b^2\chi(\omega)
		\ge\frac14,
		\qquad \omega>0.
	\end{equation*}
	Division by this function therefore gives
	\begin{equation*}
		\|Lf_*\|_{Y_\delta}
		\le
		4\bigl(
		\|\phi\|_{Y_\delta}
		+\|\mathcal F(f_*)\|_{Y_\delta}
		\bigr)
		\lesssim\|\phi\|_{Y_\delta}.
	\end{equation*}
	This proves \eqref{eq:asymptotic-stationary-L}.
	For \eqref{eq:asymptotic-stationary-tail}, it directly follows from the definition and $\delta^+>\delta$.
	
\end{proof}

We next show that the high-frequency coefficient of the evolving
solution is fixed by its initial datum.
The same calculation identifies the obstruction to convergence
in $\mathcal Z_\delta$.

\begin{lemma}[High-frequency behavior]
	\label{lem:asymptotic-high-frequency}
Let $\phi\in Y_\delta$, and let $f$ be an integral solution of
\eqref{eq:asymptotic-equation} on $[0,T]$ satisfying
\begin{equation*}
	\sup_{0\le t\le T}
	\|f(t)\|_{\mathcal Z_\delta}<\infty.
\end{equation*}
	Then for every $0\le t\le T$,
	\begin{equation}\label{eq:asymptotic-high-increment}
		\operatorname*{ess\,sup}_{\omega\ge1}
		\omega^{3/2}|f(t,\omega)-f_0(\omega)|
		\lesssim
		t\left(
		\sup_{0\le s\le T}\|f(s)\|_{\mathcal Z_\delta}^{\,3}
		+\|\phi\|_{Y_\delta}
		\right).
	\end{equation}
	Consequently,
	\begin{equation*}
		a_\infty(f(t))=a_\infty(f_0),
		\qquad 0\le t\le T,
	\end{equation*}
	and, for each $0\le t\le T$,
	\begin{equation}\label{eq:asymptotic-high-persistence}
		\lim_{R\to\infty}
		\operatorname*{ess\,sup}_{\omega\ge R}
		\omega^{7/6+\delta}|f(t,\omega)-f_0(\omega)|=0.
	\end{equation}
\end{lemma}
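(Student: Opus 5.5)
We will work directly from the integral formulation \eqref{eq:forced-integral}, restricted to compact intervals $I\subset[1,\infty)$. For a.e.\ $\omega\ge1$ it gives
\begin{equation*}
	f(t,\omega)-f_0(\omega)=\int_0^t\bigl(\mathcal C(f(s))(\omega)+\phi(\omega)\bigr)\,ds .
\end{equation*}
The plan is to bound the integrand pointwise by $\omega^{-3/2}$ times the right-hand side of \eqref{eq:asymptotic-high-increment}, uniformly in $s$, and then integrate in time.

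\textbf{The forcing term.} This is immediate: for $\omega\ge1$ we have $\omega^{3/2}|\phi(\omega)|\le\omega^{3/2}\omega^{-3/2-2\delta}\|\phi\|_{Y_\delta}\le\|\phi\|_{Y_\delta}$.

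\textbf{The collision term.} By \eqref{eq:envelope}, $|f(s)|\le\|f(s)\|_{\mathcal Z_\delta}f_M$, so $f(s)\in E_{7/6,7/6}$ with norm at most $\|f(s)\|_{\mathcal Z_\delta}$. We then apply Lemma~\ref{lem:CDG} to all six permutations with $a_f=a_g=a_h=b_f=b_g=b_h=7/6$. These exponents are admissible, since $7/6<5/4$, $7/6\neq1$, and $7/3\ne5/2$. The lemma yields the high exponent $b'=\min(3\cdot\frac76-2,\frac76+\frac12)=\frac32$, together with a finite low exponent. Hence
\begin{equation*}
	\operatorname*{ess\,sup}_{\omega\ge1}\omega^{3/2}|\mathcal C(f(s))(\omega)|\lesssim\|f(s)\|_{\mathcal Z_\delta}^3 .
\end{equation*}
This is the same computation as in Lemma~\ref{lem:nonlinear} with no subtraction. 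Because the pointwise bound is uniform in $s$, the $L^\infty(I)$-valued Bochner integral can be estimated pointwise a.e. Letting $I$ exhaust $[1,\infty)$ gives \eqref{eq:asymptotic-high-increment}.

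\textbf{Consequences.} Set $d=f(t)-f_0$. We have $\omega^{7/6}|d(\omega)|\le C_t\,\omega^{-1/3}\to0$ as $\omega\to\infty$. Since $a_\infty$ is the essential limit of $\omega^{7/6}f$ and both $f(t)$ and $f_0$ lie in $\mathcal Z_\delta$, this gives $a_\infty(f(t))=a_\infty(f_0)$. Similarly,
\begin{equation*}
	\omega^{7/6+\delta}|d(\omega)|\lesssim_t\omega^{\delta-1/3}.
\end{equation*}
Because $\delta<1/12<1/3$, the supremum over $\omega\ge R$ tends to $0$ as $R\to\infty$, which proves \eqref{eq:asymptotic-high-persistence}.

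\textbf{Main obstacle.} The only delicate point is legitimacy rather than estimation. One must confirm that the collision integral of $f(s)$ is absolutely convergent for a.e.\ $s$; Lemma~\ref{lem:CDG} supplies this via $\widetilde{\mathcal C}(|f|,|f|,|f|)$. One must also confirm that passing from the $L^\infty(I)$ identity to pointwise weighted bounds is valid. This holds because a Bochner integral in $L^\infty(I)$ has norm at most the integral of the norms, applied on each dyadic interval $[2^m,2^{m+1}]$ where the weight is comparable to a constant.
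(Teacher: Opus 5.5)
Your proposal is correct and follows essentially the same route as the paper. Lemma~\ref{lem:CDG} with all exponents equal to $7/6$ gives $\|\mathcal C(f(s))\|_{E_{3/2,3/2}}\lesssim\|f(s)\|_{\mathcal Z_\delta}^3$, the forcing is bounded directly by $\|\phi\|_{Y_\delta}$, and one then takes weighted $L^\infty$ norms in the integral identity on $[1,n]$ and lets $n\to\infty$; the paper uses the boundedness of multiplication by $\omega^{3/2}$ on $L^\infty([1,n])$ where you split dyadically, and it leaves implicit the two consequences that you correctly derive from the $\omega^{-1/3}$ gain and $\delta<1/3$.
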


\begin{proof}
	For \eqref{eq:asymptotic-high-increment}, we apply
	Lemma~\ref{lem:CDG} and obtain
	\begin{equation*}
		\|\mathcal C(f(s))\|_{E_{\frac32,\frac32}}
		\lesssim
		\|f(s)\|_{E_{\frac76,\frac76}}^3
		\le\|f(s)\|_{\mathcal Z_\delta}^3.
	\end{equation*}
	Also,
	\begin{equation*}
		\operatorname*{ess\,sup}_{\omega\ge1}
		\omega^{3/2}|\phi(\omega)|
		\le\|\phi\|_{Y_\delta}.
	\end{equation*}
	
	For each integer $n\ge2$, the integral-solution identity holds
	in $L^\infty([1,n])$:
	\begin{equation*}
		f(t)|_{[1,n]}-f_0|_{[1,n]}
		=
		\int_0^t
		\bigl(\mathcal C(f(s))+\phi\bigr)|_{[1,n]}\,ds.
	\end{equation*}
	Multiplication by $\omega^{3/2}$ is bounded on this local space.
	Taking its $L^\infty$ norm, using the preceding two estimates,
	and then letting $n\to\infty$ proves
	\eqref{eq:asymptotic-high-increment}.
	
\end{proof}

For the convergence proof, we now need decay of
$U_{a_0,a_\infty}(t,0)z$ when $a_\infty(z)=0$.
The next lemma obtains it directly from
\eqref{eq:asymptotic-U-bounds} and
\eqref{eq:asymptotic-U-smoothing}, by truncating the
high-frequency part of $z$. 

The decay exponent used below is defined as
\begin{equation}\label{eq:asymptotic-beta}
	\beta:=\frac{3\delta(1+\kappa)}{2+3\delta}.
\end{equation}
Our choice of $\kappa$ implies $0<\beta<\frac{3\delta}{2}<1$.

\begin{lemma}[Decay for data with zero high-frequency coefficient]
	\label{lem:asymptotic-linear-decay}
	Suppose that
	\begin{equation*}
		\max_{j\in\{0,\infty\}}
		\|a_j-1\|_{L^\infty(0,\infty)}\le\eta.
	\end{equation*}
	If $z\in\mathcal Z_\delta$ and $a_\infty(z)=0$, then
	\begin{equation}\label{eq:asymptotic-linear-rate}
		\|U_{a_0,a_\infty}(t,0)z\|_{\mathcal Z_{\delta/2}}
		\lesssim
		(1+t)^{-\beta}\|z\|_{\mathcal Z_\delta},
		\qquad t\ge0.
	\end{equation}
	If, in addition,
	\begin{equation}\label{eq:asymptotic-linear-tail}
		\lim_{R\to\infty}
		\operatorname*{ess\,sup}_{\omega\ge R}
		\omega^{7/6+\delta}|z(\omega)|=0,
	\end{equation}
	then
	\begin{equation}\label{eq:asymptotic-linear-strong}
		\lim_{t\to\infty}
		\|U_{a_0,a_\infty}(t,0)z\|_{\mathcal Z_\delta}=0.
	\end{equation}
\end{lemma}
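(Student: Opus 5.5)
The plan is to split $z$ at a large frequency $N$. Above $N$, $z$ is small in the weaker space $\mathcal Z_{\delta/2}$. Below $N$, $z$ lies in $Y_{\delta/2}$, so the smoothing estimate \eqref{eq:asymptotic-U-smoothing} gives integrable decay $k_{\delta/2}(t)\sim t^{-1-\kappa}$, at the price of a power of $N$. Optimizing $N$ in terms of $t$ should produce exactly the exponent $\beta$ in \eqref{eq:asymptotic-beta}.

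\emph{Setup.} Since $a_\infty(z)=0$, the decomposition \eqref{eq:Znorm} reads $z=z_0(1-\chi)f_M+h$ with $h\in X_\delta$. We have $|z_0|+\|h\|_{X_\delta}=\|z\|_{\mathcal Z_\delta}$, and $z=h$ on $[2,\infty)$. For $N\ge2$ set
\begin{equation*}
z_{>N}=\mathbf 1_{(N,\infty)}h,\qquad z_{\le N}=z-z_{>N}.
\end{equation*}
The $E$-norms are weighted $L^\infty$ norms, so the sharp cutoff is harmless: $z_{>N}\in X_\delta\subset\mathcal Z_{\delta/2}$.

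\emph{High part.} Directly from the weights,
\begin{equation*}
\|z_{>N}\|_{X_{\delta/2}}\le\sup_{\omega>N}\omega^{-\delta/2}\,\omega^{7/6+\delta}|h(\omega)|\le N^{-\delta/2}\|z\|_{\mathcal Z_\delta}.
\end{equation*}
Hence \eqref{eq:asymptotic-U-bounds} with $\theta=\delta/2$ gives $\|U_{a_0,a_\infty}(t,0)z_{>N}\|_{\mathcal Z_{\delta/2}}\lesssim N^{-\delta/2}\|z\|_{\mathcal Z_\delta}$.

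\emph{Low part.} The key observation is that $Y_{\delta/2}$ permits the singularity $\omega^{-7/6}$ at zero, because $\frac76<\frac32-\delta$. Indeed, on $(0,1]$,
\begin{equation*}
\omega^{3/2-\delta}|z_{\le N}|\lesssim\omega^{1/3-\delta}\|z\|_{\mathcal Z_\delta}.
\end{equation*}
On $[1,N]$ we have $|z_{\le N}|\lesssim\omega^{-7/6}\|z\|_{\mathcal Z_\delta}$, hence
\begin{equation*}
\omega^{3/2+\delta}|z_{\le N}|\lesssim\omega^{1/3+\delta}\|z\|_{\mathcal Z_\delta}\le N^{1/3+\delta}\|z\|_{\mathcal Z_\delta}.
\end{equation*}
A slightly sharper count uses $|h|\le\omega^{-7/6-\delta}\|h\|_X$ for $\omega\ge2$, which lowers this to $N^{1/3}$. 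Thus $\|z_{\le N}\|_{Y_{\delta/2}}\lesssim N^{1/3}\|z\|_{\mathcal Z_\delta}$. Applying \eqref{eq:asymptotic-U-smoothing} with $s=0$, for $t\ge1$,
\begin{equation*}
\|U_{a_0,a_\infty}(t,0)z_{\le N}\|_{\mathcal Z_{\delta/2}}\lesssim t^{-1-\kappa}N^{1/3}\|z\|_{\mathcal Z_\delta}.
\end{equation*}

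\emph{Optimization.} Balance $N^{-\delta/2}=t^{-1-\kappa}N^{1/3}$ by taking $N=t^{6(1+\kappa)/(2+3\delta)}$, which satisfies $N\ge2$ once $t$ is large. The resulting rate is $N^{-\delta/2}=t^{-3\delta(1+\kappa)/(2+3\delta)}=t^{-\beta}$. For bounded $t$, the uniform bound \eqref{eq:asymptotic-U-bounds} together with $\|z\|_{\mathcal Z_{\delta/2}}\le\|z\|_{\mathcal Z_\delta}$ suffices. This gives \eqref{eq:asymptotic-linear-rate}.

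\emph{Strong convergence.} Under the tail condition \eqref{eq:asymptotic-linear-tail}, work with $\theta=\delta$ throughout. Now $\|z_{>N}\|_{X_\delta}=\operatorname*{ess\,sup}_{\omega>N}\omega^{7/6+\delta}|z|\to0$ as $N\to\infty$, so \eqref{eq:asymptotic-U-bounds} makes $U_{a_0,a_\infty}(t,0)z_{>N}$ uniformly small in $\mathcal Z_\delta$. The low part satisfies $\|z_{\le N}\|_{Y_\delta}\lesssim N^{1/3+\delta}\|z\|_{\mathcal Z_\delta}$, using the weight $\frac32+2\delta$ at infinity and $\frac76<\frac32-2\delta$ at zero. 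Then \eqref{eq:asymptotic-U-smoothing} bounds $U_{a_0,a_\infty}(t,0)z_{\le N}$ in $\mathcal Z_\delta$ by $k_\delta(t)N^{1/3+\delta}\|z\|_{\mathcal Z_\delta}$. Fixing $N$ large first and then letting $t\to\infty$ yields \eqref{eq:asymptotic-linear-strong}.

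\emph{Main obstacle.} I expect the only delicate point to be confirming that the truncated function $z_{\le N}$, which still carries the low mode $z_0(1-\chi)f_M$, genuinely lies in $Y_\theta$. This requires checking both endpoint inequalities, $\frac76<\frac32-2\theta$ at zero and the $N$-growth of the high weight. It also requires using \eqref{eq:asymptotic-U-smoothing} from $s=0$ on initial data rather than on a forcing term, which that estimate permits.
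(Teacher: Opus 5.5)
Your proposal is correct and is essentially the paper's proof: you use the same sharp truncation $\mathbf 1_{\{\omega\le N\}}z$ that keeps the low mode, bound the tail by $N^{-\delta/2}\|z\|_{\mathcal Z_\delta}$ in $\mathcal Z_{\delta/2}$ and the truncated part by $N^{1/3}\|z\|_{\mathcal Z_\delta}$ in $Y_{\delta/2}$, and take $N\sim t^{(1+\kappa)/(1/3+\delta/2)}$, which gives the rate $t^{-\beta}$. The strong-convergence part also matches the paper: you use $\|z_{\le N}\|_{Y_\delta}\lesssim N^{1/3+\delta}\|z\|_{\mathcal Z_\delta}$, fix $N$ so that the tail is small, and then send $t\to\infty$.
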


\begin{proof}
	For \eqref{eq:asymptotic-linear-rate}, write
	\begin{equation*}
		z=a_0(z)(1-\chi)f_M+h,
		\qquad h\in X_\delta.
	\end{equation*}
	For $R\ge 2$, define
	\begin{equation*}
		z_R=\mathbf1_{\{\omega\le R\}}z
		=a_0(z)(1-\chi)f_M+\mathbf1_{\{\omega\le R\}}h.
	\end{equation*}
	The difference $z-z_R$ has both endpoint coefficients equal
	to zero. Hence
	\begin{equation}\label{eq:asymptotic-truncation-error}
		\|z-z_R\|_{\mathcal Z_{\delta/2}}
		=
		\operatorname*{ess\,sup}_{\omega>R}
		\omega^{7/6+\delta/2}|h(\omega)|
		\le R^{-\delta/2}\|z\|_{\mathcal Z_\delta}.
	\end{equation}
	
	We claim that
	\begin{equation}\label{eq:asymptotic-truncation-Y}
		\|z_R\|_{Y_{\delta/2}}
		\lesssim R^{1/3}\|z\|_{\mathcal Z_\delta}.
	\end{equation}
	For $0<\omega\le1$, the pointwise bound
	$|z(\omega)|\le\|z\|_{\mathcal Z_\delta}\omega^{-7/6}$
	gives
	\begin{equation*}
		\omega^{3/2-\delta}|z_R(\omega)|
		\le
		\|z\|_{\mathcal Z_\delta}\omega^{1/3-\delta}
		\le\|z\|_{\mathcal Z_\delta}.
	\end{equation*}
	For $2\le\omega\le R$, we have $z_R=h$, so
	\begin{equation*}
		\omega^{3/2+\delta}|z_R(\omega)|
		\le\|h\|_{X_\delta}\omega^{1/3}
		\le R^{1/3}\|z\|_{\mathcal Z_\delta}.
	\end{equation*}
	On $[1,2]$, all the weights are bounded above and below,
	and $z_R$ vanishes for $\omega>R$.
	This proves \eqref{eq:asymptotic-truncation-Y}.
	
	Apply \eqref{eq:asymptotic-U-bounds} to $z-z_R$ and
	\eqref{eq:asymptotic-U-smoothing} to $z_R$.
	From (\ref{eq:asymptotic-truncation-error}) and (\ref{eq:asymptotic-truncation-Y}), one can deduce
	\begin{equation}\label{eq:asymptotic-linear-truncation}
		\|U_{a_0,a_\infty}(t,0)z\|_{\mathcal Z_{\delta/2}}
		\lesssim
		\left(R^{-\delta/2}+k_{\delta/2}(t)R^{1/3}\right)
		\|z\|_{\mathcal Z_\delta}.
	\end{equation}
	For $t\ge1$, choose
	\begin{equation*}
		R\sim\,t^{\frac{1+\kappa}{1/3+\delta/2}}.
	\end{equation*}
	By \eqref{eq:asymptotic-beta},
	\begin{equation*}
		R^{-\delta/2}\lesssim t^{-\beta},
		\qquad
		t^{-1-\kappa}R^{1/3}\lesssim t^{-\beta}.
	\end{equation*}
	Thus \eqref{eq:asymptotic-linear-truncation} proves
	\eqref{eq:asymptotic-linear-rate} for $t\ge1$.
	For $0\le t\le1$, use
	\eqref{eq:asymptotic-U-bounds} and
	\eqref{eq:asymptotic-embeddings}.
	
	For \eqref{eq:asymptotic-linear-strong}, we apply \eqref{eq:asymptotic-U-bounds}
	\eqref{eq:asymptotic-U-smoothing} again, and derive
	\begin{equation*}
		\|U_{a_0,a_\infty}(t,0)z\|_{\mathcal Z_\delta}
		\lesssim
		\|z-z_R\|_{\mathcal Z_\delta}
		+k_\delta(t)\|z_R\|_{Y_\delta}.
	\end{equation*}
	The assumption (\ref{eq:asymptotic-linear-tail}) ensures
	\begin{equation*}
		\|z-z_R\|_{\mathcal Z_\delta}
		=
		\operatorname*{ess\,sup}_{\omega>R}
		\omega^{7/6+\delta}|z(\omega)|
		\longrightarrow0.
	\end{equation*}
	For every $R\ge 2$, we also have the following estimate
\begin{equation*}
	\|z_R\|_{Y_\delta}
	\lesssim
	R^{1/3+\delta}\|z\|_{\mathcal Z_\delta}.
\end{equation*}
This proves \eqref{eq:asymptotic-linear-strong}.

\end{proof}

Finally, we establish convergence to a stationary solution
under small perturbations, addressing the question raised in \cite{CDG}.

\begin{proof}[Proof of Theorem \ref{thm:stationary-convergence}]
	Write
	\begin{equation*}
		\epsilon
		=\|f_0-f_M\|_{\mathcal Z_\delta}+\|\phi\|_{Y_\delta}\le\varepsilon_*.
	\end{equation*}
	For a time-independent force, the norm in
	\eqref{eq:force-norm} satisfies
	\begin{equation*}
		\|\phi\|_{\mathcal P}
		=\|k_\delta\|_{L^1(0,\infty)}\|\phi\|_{Y_\delta},
	\end{equation*}
	Thus Theorem~\ref{thm:forced-stability} applies when
	$\varepsilon_*$ is sufficiently small.
	
	By that theorem and
	Lemma~\ref{lem:asymptotic-stationary},
	\begin{equation}\label{eq:asymptotic-small-neighborhood}
		\sup_{t\ge0}\|f(t)-f_M\|_{\mathcal Z_\delta}
		+\|f_*-f_M\|_{\mathcal Z_\delta}
		\lesssim\epsilon.
	\end{equation}
	Here $|b-1|\le\|f_0-f_M\|_{\mathcal Z_\delta}$.
	Choose $\varepsilon_*$ small enough that the coefficient
	assumptions for \eqref{eq:asymptotic-U-bounds} and
	\eqref{eq:asymptotic-U-smoothing}, and the nonlinear
	estimates with parameters $\delta$ and $\delta/2$, all apply.
	
	\paragraph{The equation for $f-f_*$.}
	To prove \eqref{eq:asymptotic-nonlinear-rate}, we first set the difference:
	\begin{equation*}
		w(t)=f(t)-f_*.
	\end{equation*}
	Lemma~\ref{lem:asymptotic-high-frequency} directly gives 
	\begin{equation*}
		a_\infty(f(t))=b=a_\infty(f_*),
		\qquad a_\infty(w(t))=0,
		\qquad t\ge0.
	\end{equation*}
	Subtract the stationary equation from the local integral
	equation for $f$, and use \eqref{eq:Fdefinition}.
	For every compact interval $I\subset(0,\infty)$,
	\begin{equation*}
		w(t)|_I
		=
		w(0)|_I+
		\int_0^t
		\bigl(
		[a_0(f(s))^2(1-\chi)+b^2\chi]Lw(s)+G(s)
		\bigr)|_I\,ds,
	\end{equation*}
	where 
	\begin{equation}\label{eq:asymptotic-difference-source}
		G(t)
		=
		\bigl[a_0(f(t))^2-a_0(f_*)^2\bigr](1-\chi)Lf_*
		+\mathcal F(f(t))-\mathcal F(f_*).
	\end{equation}
	
	We claim that, for $\theta\in\{\delta/2,\delta\}$,
	\begin{equation}\label{eq:asymptotic-source-bound}
		\|G(t)\|_{Y_\theta}
		\lesssim\epsilon\|w(t)\|_{\mathcal Z_\theta}.
	\end{equation}
	For the coefficient difference in
	\eqref{eq:asymptotic-difference-source},
	boundedness of $a_0$ and
	\eqref{eq:asymptotic-small-neighborhood} give
	\begin{equation*}
		|a_0(f(t))^2-a_0(f_*)^2|
		\le
		\bigl(|a_0(f(t))|+|a_0(f_*)|\bigr)|a_0(w(t))|
		\lesssim\|w(t)\|_{\mathcal Z_\theta}.
	\end{equation*}
	By \eqref{eq:asymptotic-stationary-L} and
	\eqref{eq:asymptotic-embeddings},
	\begin{equation*}
		\|(1-\chi)Lf_*\|_{Y_\theta}
		\lesssim\|\phi\|_{Y_\delta}\le\epsilon.
	\end{equation*}
	For the difference of $\mathcal F$ in
	\eqref{eq:asymptotic-difference-source},
	Lemma~\ref{lem:nonlinear}, with parameter $\theta$, gives
	\begin{equation*}
		\|\mathcal F(f(t))-\mathcal F(f_*)\|_{Y_\theta}
		\lesssim
		\bigl(
		\|f(t)-f_M\|_{\mathcal Z_\theta}
		+\|f_*-f_M\|_{\mathcal Z_\theta}
		\bigr)\|w(t)\|_{\mathcal Z_\theta}.
	\end{equation*}
	Together with \eqref{eq:asymptotic-small-neighborhood},
	these estimates prove \eqref{eq:asymptotic-source-bound}.
	
	Lemma~\ref{lem:forcing}, applied with parameter $\theta$,
	therefore identifies $w$ with
	\begin{equation}\label{eq:asymptotic-Duhamel}
		w(t)
		=
		U_{a_0(f),b}(t,0)w(0)
		+
		\int_0^t U_{a_0(f),b}(t,s)G(s)\,ds.
	\end{equation}
	The time integral in \eqref{eq:asymptotic-Duhamel} is a
	Bochner integral in $\mathcal Z_\theta$.
	Indeed,
	\begin{equation*}
		\int_0^t
		\|U_{a_0(f),b}(t,s)G(s)\|_{\mathcal Z_\theta}\,ds
		\lesssim
		\epsilon\|k_\theta\|_{L^1}
		\sup_{0\le s\le t}\|w(s)\|_{\mathcal Z_\theta}<\infty.
	\end{equation*}
	
	Taking norms in \eqref{eq:asymptotic-Duhamel} gives
	\begin{equation}\label{eq:asymptotic-Volterra}
		\|w(t)\|_{\mathcal Z_\theta}
		\lesssim
		\|U_{a_0(f),b}(t,0)w(0)\|_{\mathcal Z_\theta}
		+
		\epsilon\int_0^t
		k_\theta(t-s)\|w(s)\|_{\mathcal Z_\theta}\,ds.
	\end{equation}
	
	\paragraph{A uniform bound for $f-f_*$ in $\mathcal Z_\delta$.}
	This bound will be used when interpolating the decay estimate.
	With $\theta=\delta$, apply
	\eqref{eq:asymptotic-U-bounds} in
	\eqref{eq:asymptotic-Volterra} and take the supremum over time:
	\begin{equation*}
		\sup_{t\ge0}\|w(t)\|_{\mathcal Z_\delta}
		\lesssim
		\|w(0)\|_{\mathcal Z_\delta}
		+
		\epsilon\|k_\delta\|_{L^1}
		\sup_{t\ge0}\|w(t)\|_{\mathcal Z_\delta}.
	\end{equation*}
	Choosing $\varepsilon_*$ sufficiently small gives
	\begin{equation}\label{eq:asymptotic-uniform-difference}
		\sup_{t\ge0}\|w(t)\|_{\mathcal Z_\delta}
		\lesssim\|w(0)\|_{\mathcal Z_\delta}.
	\end{equation}
	
	\paragraph{Decay in $\mathcal Z_{\delta/2}$.}
	For \eqref{eq:asymptotic-nonlinear-rate} with
	$\theta=\delta/2$, Lemma~\ref{lem:asymptotic-linear-decay}
	applies because $a_\infty(w(0))=0$.
	Thus \eqref{eq:asymptotic-Volterra} gives
	\begin{equation}\label{eq:asymptotic-decay-inequality}
		\|w(t)\|_{\mathcal Z_{\delta/2}}
		\lesssim
		(1+t)^{-\beta}\|w(0)\|_{\mathcal Z_\delta}
		+
		\epsilon\int_0^t
		k_{\delta/2}(t-s)\|w(s)\|_{\mathcal Z_{\delta/2}}\,ds.
	\end{equation}
	
	We claim that
	\begin{equation}\label{eq:asymptotic-weighted-convolution}
		\int_0^t
		k_{\delta/2}(t-s)(1+s)^{-\beta}\,ds
		\lesssim(1+t)^{-\beta},
		\qquad t\ge0.
	\end{equation}
	For $t\ge2$, on $0\le s\le t/2$ we have $t-s\ge1$,
	and therefore
	\begin{equation*}
		\int_0^{t/2}
		k_{\delta/2}(t-s)(1+s)^{-\beta}\,ds
		\lesssim
		t^{-1-\kappa}\int_0^{t/2}(1+s)^{-\beta}\,ds
		\lesssim t^{-\kappa-\beta}.
	\end{equation*}
	The last inequality uses $\beta<1$.
	On $t/2\le s\le t$,
	\begin{equation*}
		\int_{t/2}^t
		k_{\delta/2}(t-s)(1+s)^{-\beta}\,ds
		\lesssim
		(1+t)^{-\beta}\|k_{\delta/2}\|_{L^1}.
	\end{equation*}
	For $0\le t\le2$, the left-hand side of
	\eqref{eq:asymptotic-weighted-convolution} is at most
	$\|k_{\delta/2}\|_{L^1}$.
	This proves \eqref{eq:asymptotic-weighted-convolution}.
	
	Fix $T>0$. Since $w$ is uniformly bounded in
	$\mathcal Z_\delta$, we have
	\begin{equation*}
		\sup_{0\le r\le T}
		(1+r)^\beta\|w(r)\|_{\mathcal Z_{\delta/2}}
		\le
		(1+T)^\beta
		\sup_{r\ge0}\|w(r)\|_{\mathcal Z_\delta}
		<\infty.
	\end{equation*}
	For $0\le s\le T$,
	\begin{equation*}
		\|w(s)\|_{\mathcal Z_{\delta/2}}
		\le
		(1+s)^{-\beta}
		\sup_{0\le r\le T}
		(1+r)^\beta\|w(r)\|_{\mathcal Z_{\delta/2}}.
	\end{equation*}
	Insert this inequality into
	\eqref{eq:asymptotic-decay-inequality}, use
	\eqref{eq:asymptotic-weighted-convolution}, and take the supremum over $0\le t\le T$:
	\begin{equation*}
		\sup_{0\le t\le T }
		(1+t)^\beta\|w(t)\|_{\mathcal Z_{\delta/2}}
		\lesssim
		\|w(0)\|_{\mathcal Z_\delta}
		+
		\epsilon
		\sup_{0\le t\le T}
		(1+t)^\beta\|w(t)\|_{\mathcal Z_{\delta/2}}.
	\end{equation*}
	The supremum is finite for each $T$.
	Choosing $\epsilon$ sufficiently small and
	absorbing the last term give a bound independent of $T$.
	Letting $T\to\infty$, we obtain
	\begin{equation}\label{eq:asymptotic-half-weight-rate}
		\|w(t)\|_{\mathcal Z_{\delta/2}}
		\lesssim
		(1+t)^{-\beta}\|w(0)\|_{\mathcal Z_\delta},
		\qquad t\ge0.
	\end{equation}
	
	\paragraph{Decay for every $0<\theta<\delta$.}
	For $0<\theta\le\delta/2$,
	\eqref{eq:asymptotic-nonlinear-rate} directly follows from
	\eqref{eq:asymptotic-half-weight-rate}.
	
	For $\delta/2<\theta<\delta$, write
	\begin{equation*}
		w(t)=a_0(w(t))(1-\chi)f_M+h(t),
		\qquad h(t)\in X_\delta.
	\end{equation*}
	Applying interpolation, one can obtain
	\begin{equation*}
		\|h(t)\|_{X_\theta}
		\le
		\|h(t)\|_{X_{\delta/2}}^{\,2(\delta-\theta)/\delta}
		\|h(t)\|_{X_\delta}^{\,2\theta/\delta-1}.
	\end{equation*}
	Using \eqref{eq:asymptotic-uniform-difference} and
	\eqref{eq:asymptotic-half-weight-rate}, we derive
	\begin{equation*}
		\|h(t)\|_{X_\theta}
		\lesssim
		(1+t)^{-2\beta(\delta-\theta)/\delta}
		\|w(0)\|_{\mathcal Z_\delta}.
	\end{equation*}
	Also,
	\begin{equation*}
		|a_0(w(t))|
		\le\|w(t)\|_{\mathcal Z_{\delta/2}}
		\lesssim
		(1+t)^{-\beta}\|w(0)\|_{\mathcal Z_\delta}.
	\end{equation*}
	Adding these estimates proves
	\eqref{eq:asymptotic-nonlinear-rate}.
	Only the estimates with parameters $\delta$ and $\delta/2$
	were used, so no further smallness condition depending on
	$\theta$ is required.
	
	\paragraph{Necessity of \eqref{eq:asymptotic-initial-tail}.}
	We now determine when convergence holds in the original space $\mathcal Z_\delta$.
	For every fixed finite $t$,
	\eqref{eq:asymptotic-high-persistence} and
	\eqref{eq:asymptotic-stationary-tail} give
	\begin{equation*}
		\lim_{R\to\infty}
		\operatorname*{ess\,sup}_{\omega\ge R}
		\omega^{7/6+\delta}
		\left|
		(f(t,\omega)-f_*(\omega))
		-(f_0(\omega)-b f_M(\omega))
		\right|=0.
	\end{equation*}
	By triangle inequality, it implies that
	\begin{equation}\label{eq:asymptotic-tail-equality}
		\lim_{R\to\infty}
		\operatorname*{ess\,sup}_{\omega\ge R}
		\omega^{7/6+\delta}|f(t,\omega)-f_*(\omega)|
		=
		\lim_{R\to\infty}
		\operatorname*{ess\,sup}_{\omega\ge R}
		\omega^{7/6+\delta}|f_0(\omega)-b f_M(\omega)|.
	\end{equation}
	Both limits exist because the essential suprema decrease
	as $R$ increases.
	
	Letting $t\to \infty$ in the left-hand side of (\ref{eq:asymptotic-tail-equality}), we see that
	\eqref{eq:asymptotic-original-convergence} forces
	\eqref{eq:asymptotic-initial-tail}.
	
	\paragraph{Sufficiency of \eqref{eq:asymptotic-initial-tail}.}
	Assume \eqref{eq:asymptotic-initial-tail}.
	Together with \eqref{eq:asymptotic-stationary-tail},
	it implies
	\begin{equation*}
		\lim_{R\to\infty}
		\operatorname*{ess\,sup}_{\omega\ge R}
		\omega^{7/6+\delta}|w(0,\omega)|=0.
	\end{equation*}
	Lemma~\ref{lem:asymptotic-linear-decay} now gives
	\begin{equation*}
		\|U_{a_0(f),b}(t,0)w(0)\|_{\mathcal Z_\delta}
		\longrightarrow0.
	\end{equation*}
	
	It remains to show that the time integral in
	\eqref{eq:asymptotic-Volterra}, with $\theta=\delta$,
	cannot prevent convergence.
	Fix $T>0$ and let $t>T$.
	For the part with $0\le s\le T$,
	\begin{equation*}
		\int_0^T
		k_\delta(t-s)\|w(s)\|_{\mathcal Z_\delta}\,ds
		\le
		\sup_{s\ge0}\|w(s)\|_{\mathcal Z_\delta}
		\int_{t-T}^t k_\delta(r)\,dr
		\longrightarrow0.
	\end{equation*}
	Here \eqref{eq:asymptotic-uniform-difference} bounds the
	supremum, and $k_\delta\in L^1(0,\infty)$.
	For the part with $T\le s\le t$,
	\begin{equation*}
		\int_T^t
		k_\delta(t-s)\|w(s)\|_{\mathcal Z_\delta}\,ds
		\le
		\|k_\delta\|_{L^1}
		\sup_{s\ge T}\|w(s)\|_{\mathcal Z_\delta}.
	\end{equation*}
	 Letting $t\to \infty$, then $T\to\infty$, we can conclude
	\begin{equation*}
		\limsup_{t\to\infty}\|w(t)\|_{\mathcal Z_\delta}
		\lesssim
		\epsilon\|k_\delta\|_{L^1}
		\limsup_{t\to\infty}\|w(t)\|_{\mathcal Z_\delta}.
	\end{equation*}
	Since $\epsilon$ is small enough, it proves \eqref{eq:asymptotic-original-convergence}.

\end{proof}
\begin{remark}
	The convergence in Theorem~\ref{thm:stationary-convergence}
	is stronger than convergence on compact frequency intervals.
	Indeed, the pointwise bound associated with
	$\mathcal Z_\theta$ gives
	\begin{equation*}
		\operatorname*{ess\,sup}_{\omega>0}
		\omega^{7/6}|f(t,\omega)-f_*(\omega)|
		\le
		\|f(t)-f_*\|_{\mathcal Z_{\delta/2}}
		\longrightarrow0.
	\end{equation*}
	Also, boundedness of the coefficient functionals gives
	\begin{equation*}
		a_0(f(t))\longrightarrow a_0(f_*),
		\qquad
		a_\infty(f(t))=a_\infty(f_*)=a_\infty(f_0).
	\end{equation*}
\end{remark}

\newpage
\appendix
\section{Justification of the linear evolution formulas}
We retain the spaces and operators in Sections~\ref{sec:setting}
and~\ref{sec:inputs}. The proofs use the kernel
estimates in Section~\ref{sec:fixed}, not the strong-derivative
interpretation of those estimates. This distinction prevents a
circular use of Lemma~\ref{lem:appendix-Z-differentiability}.

\begin{lemma}[Absolute kernel estimates]\label{lem:appendix-absolute-kernel}
For every $t>0$ and almost every $\omega>0$,
\begin{equation}\label{eq:fixed-absolute-E}
 \int_0^\infty (|F(t,\omega,r)|+|t\partial_tF(t,\omega,r)|)r^{-7/6}\,dr
 \lesssim \omega^{-7/6},
\end{equation}
and
\begin{equation}\label{eq:fixed-absolute-auxiliary}
 \int_0^\infty |F(t,\omega,r)|\min(r^{-7/6},r^{-11/6})\,dr
 \lesssim \min(\omega^{-7/6},\omega^{-11/6}).
\end{equation}
For every compact interval $I\subset(0,\infty)$ and $0<b<B<\infty$,
\begin{equation}\label{eq:fixed-absolute-local}
 \sup_{b\le t\le B}\operatorname*{ess\,sup}_{\omega\in I}
 \int_0^\infty (|F(t,\omega,r)|+|\partial_tF(t,\omega,r)|)r^{-3/2}\,dr
 \lesssim_{b,B,I} 1.
\end{equation}

\end{lemma}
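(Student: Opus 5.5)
The plan is to reuse the five-piece decomposition \eqref{eq:fixed-part-dirac}--\eqref{eq:fixed-part-sigma} from the proof of Lemma~\ref{lem:fixed-Y}. This time we bound absolute values throughout, with no subtraction of the low mode \eqref{eq:fixed-low-mode}. By the scaling $F(t,\omega,r)=r^{-1}F(t/r^{1/3},\omega/r,1)$, each of the three estimates reduces to the pointwise bounds \eqref{eq:diagonal}, \eqref{eq:R2shape}, \eqref{eq:farshape} and \eqref{eq:cpower}, applied with $|g(r)|\le r^{-a}$.

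For \eqref{eq:fixed-absolute-E} we take $a=7/6$, which lies in the admissible list of exponents.
\begin{itemize}
\item The Dirac, $R_1$, $R_2$ and far pieces are bounded by $C\omega^{-7/6}$. This is exactly the argument given for $X$ data: for $\omega\le t^3$ one uses $(\omega/t^3)^{a-13/12}\le1$ and $(\omega/t^3)^{v_0^--3/2}\le1$, and for $\omega\ge t^3$ one uses $a<3/2$.
\item The $\sigma$ piece is $\omega^{-7/6}\int_{\omega/2}^\infty r^{1/6}|\sigma|r^{-7/6}\,dr$. By \eqref{eq:cpower} with $a=7/6$ this is at most $t^{7/2-7/2}\omega^{-7/6}=\omega^{-7/6}$, uniformly in $t$. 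This scale invariance is the key point.
\item The $t\partial_tF$ term is handled identically. In scaled variables, $t\partial_tF(t,\omega,r)=r^{-1}(\tau\partial_\tau F)(\tau,\omega/r,1)$ with $\tau=t/r^{1/3}$, so Proposition~\ref{prop:derivative} supplies the same bounds with the same exponents. For $\sigma$ one uses \eqref{eq:EMVsigma-derivative}; for the Dirac part the factor $\mathfrak a\tau e^{-\mathfrak a\tau}$ is bounded by $e^{-c\tau}$.
\end{itemize}

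For \eqref{eq:fixed-absolute-auxiliary}, on $\omega\le1$ the bound $\omega^{-7/6}$ already follows from \eqref{eq:fixed-absolute-E}, since $\min(r^{-7/6},r^{-11/6})\le r^{-7/6}$. On $\omega\ge1$ we must gain the decay $\omega^{-11/6}$, and I split $r$ at $1$.
\begin{itemize}
\item For $r\ge1$, apply the same piecewise estimates with $a=11/6$. This sits slightly outside the listed exponents but below $v_0^-$, and I expect the same computations to go through. The Dirac, $R_1$ and $R_2$ contributions give $\omega^{-11/6}$. The far piece gives $\omega^{-11/6}$ times a power $t^{11/2-3a}=1$. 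The $\sigma$ piece gives $\omega^{-7/6}\int_{\omega/2}^\infty r^{1/6-11/6}|\sigma|\,dr\lesssim\omega^{-7/6}\cdot\omega^{-2/3}$, using only the boundedness of $\sigma$, which is continuous and decays.
\item For $r<1\le\omega/1$, the kernel is in the far region $\omega/r>2$ (after handling $\omega\in[1,2]$ by local boundedness). Here \eqref{eq:EMVfar} gives $|F|\lesssim r^{-1}(\omega/r)^{-11/6}\,\tau$-factors. Its integral against $r^{-7/6}$ is $\omega^{-11/6}\int_0^1 r^{-1/3}\max(\tau^{1-\epsilon},\tau^{-(3v_0-9/2)+\epsilon})\,dr$.
\end{itemize}

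The main obstacle is uniformity in $t$ in this last integral. For large $\tau$ the exponent $-(3v_0-9/2)<-1$ decays, but for small $\tau$ (that is, large $r$ relative to $t^3$) the factor $\tau^{1-\epsilon}$ grows as $r\to0$. A careful split at $r=t^3$ shows the integral is bounded by $t^{\,\cdot}$ powers, and these may not be uniform. If uniformity fails, I would allow the implied constant to depend on $t$; the statement says only ``for every $t>0$'', so a $t$-dependent constant is acceptable. I would then redo the split using the precise form of \eqref{eq:farshape}.

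For \eqref{eq:fixed-absolute-local}, the constant may depend on $b,B,I$, so cruder bounds suffice. With $a=3/2$, which is admissible because $5/6<3/2<v_0^-$, each piece is bounded by a continuous function of $(t,\omega)$ on $[b,B]\times I$. For the $\partial_tF$ term, divide the $\tau\partial_\tau$ bounds by $t\ge b$. The Dirac part is excluded, since it is not a function part, or it is bounded trivially by $|g(\omega)|$. The $\sigma$ piece uses \eqref{eq:cpower}, which gives $t^{7/2-9/2}=t^{-1}\le b^{-1}$.
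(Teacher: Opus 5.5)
Your arguments for \eqref{eq:fixed-absolute-E} and \eqref{eq:fixed-absolute-local} are sound. The proof of \eqref{eq:fixed-absolute-auxiliary}, however, stops at exactly the step you flag, and your fallback does not prove the statement. In this paper, $\lesssim$ without a subscript means a constant independent of $t$; compare $\lesssim_{b,B,I}$ in \eqref{eq:fixed-absolute-local}. Lemma~\ref{lem:appendix-semigroup-commutation} also invokes \eqref{eq:fixed-absolute-auxiliary} as a uniform bound for $S(t)$ on $E_{7/6,11/6}$.

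The obstacle is misdiagnosed. Since $\tau=t/r^{1/3}\to\infty$ as $r\to0$, the small-$\tau$ factor $\tau^{1-\epsilon}$ only enters on $r>t^3$, where it is at most $1$. Taking a $\max$ of the two regimes is what creates the apparent blow-up. Completing your split for $\omega>2$, $r<1$ gives
\[
\omega^{-11/6}\Bigl(t^{-3v_0+9/2+\epsilon}\int_0^{\min(t^3,1)}r^{v_0-11/6-\epsilon/3}\,dr+t^{1-\epsilon}\int_{t^3}^{1}r^{-2/3+\epsilon/3}\,dr\Bigr),
\]
where the second term is present only when $t<1$. This is $\lesssim\omega^{-11/6}(t^2+t^{1-\epsilon})$ for $t<1$, and $\lesssim\omega^{-11/6}t^{-(3v_0-9/2)+\epsilon}\le\omega^{-11/6}$ for $t\ge1$. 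So the bound is uniform in $t$.

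Simpler still, the split at $r=1$ is unnecessary, since $\min(r^{-7/6},r^{-11/6})\le r^{-11/6}$ for every $r$. Your $a=11/6$ estimates already hold over all $r$, uniformly in $t$:
\begin{itemize}
\item The far piece carries $t^{11/2-3a}=1$ when $\omega\ge t^3$, and $(\omega/t^3)^{v_0-\epsilon-3/2}\le1$ otherwise.
\item The $\sigma$ piece must use boundedness of $\sigma$ together with $r\ge\omega/2$, as you did. Using \eqref{eq:cpower} at $a=11/6$ would give the non-uniform factor $t^{-2}$.
\end{itemize}
Taking the minimum with the $a=7/6$ bound then gives \eqref{eq:fixed-absolute-auxiliary} for all $\omega$, with no separate treatment of $\omega\in[1,2]$. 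This is exactly how the paper concludes.

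Apart from this, your route differs from the paper's in how the pieces are estimated. The paper uses the scaling $F(t,\omega,r)=\omega^{-1}F(t/\omega^{1/3},1,r/\omega)$ to reduce the first two estimates to bounding $\sup_{t>0}\int_0^\infty|F(t,1,r)|r^{-a}\,dr$ for $a\in\{7/6,11/6\}$, directly from Theorem~\ref{thm:EMV}. For example, the $\sigma$ term at $a=7/6$ becomes $3\int_0^{2^{1/3}t}|\sigma(\tau)|\tau^{-1}\,d\tau$. The paper proves \eqref{eq:fixed-absolute-local} by splitting $r$ into small, large, and compact middle ranges adapted to $I$ and $[b,B]$.

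You instead recycle \eqref{eq:diagonal}--\eqref{eq:cpower} from Section~\ref{sec:fixed}. This is shorter, and it gives \eqref{eq:fixed-absolute-local} globally in $\omega$ with explicit powers of $t$. But it uses those bounds at $a=11/6$ and $a=3/2$, outside the listed exponents, and \eqref{eq:R2shape}--\eqref{eq:farshape} were only asserted there. They do extend: the derivations need only $5/6+\epsilon/3<a<v_0-\epsilon$ and $a\neq 3/2+\epsilon/3$, and $11/6<v_0-\epsilon$ follows from \eqref{eq:auxiliary}. You should state and check this, whereas the paper's argument uses Theorem~\ref{thm:EMV} and Proposition~\ref{prop:derivative} alone.

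Also, the Dirac mass must be counted in \eqref{eq:fixed-absolute-local} rather than excluded. Your trivial bound $e^{-\mathfrak a t\omega^{-1/3}}\omega^{-3/2}$ handles it.
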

\begin{proof}

	We use Theorem~\ref{thm:EMV} and
	Proposition~\ref{prop:derivative}.
	Choose, for this proof,
	\begin{equation*}
		0<\epsilon<
		\min\left\{
		\frac14,\frac{\mathfrak a}{2},
		3v_0-\frac{11}{2}
		\right\},
	\end{equation*}
	and use the exponent \(b=13/12\) in the
	estimate for \(R_2\).
	
	We first prove the estimates (\ref{eq:fixed-absolute-E}) and (\ref{eq:fixed-absolute-auxiliary}).
	The scaling of \(F\) also gives
	\begin{equation*}
		F(t,\omega,r)
		=
		\omega^{-1}
		F(t/\omega^{1/3},1,r/\omega).
	\end{equation*}
	Consequently, for either \(a=7/6\) or \(a=11/6\),
	\begin{equation*}
		\int_0^\infty |F(t,\omega,r)|r^{-a}\,dr
		=
		\omega^{-a}
		\int_0^\infty
		|F(t/\omega^{1/3},1,u)|u^{-a}\,du.
	\end{equation*}
	It is therefore enough to bound
	\begin{equation*}
		\int_0^\infty |F(t,1,r)|r^{-a}\,dr
	\end{equation*}
	uniformly for \(t>0\), separately for
	\(a=7/6\) and \(a=11/6\).
	
	For \(r>1/2\), the kernel decomposition
	has its Dirac part at \(r=1\) and
	 its three function terms are
	\begin{equation*}
		r^{1/6}\sigma(t/r^{1/3}),
	\end{equation*}
	\begin{equation*}
		r^{-1}R_1(t/r^{1/3},1/r),
	\end{equation*}
	and
	\begin{equation*}
		r^{-1}R_2(t/r^{1/3},1/r).
	\end{equation*}
	For \(0<r<1/2\), we use the far-kernel estimate (\ref{eq:EMVfar})
	directly.
	
	The Dirac contribution is \(e^{-\mathfrak a t}\),
	and is bounded by one.
	The \(R_1\) term is supported in \(2/3<r<2\).
	Its absolute value is bounded there by
	\(C|r-1|^{-5/6}\), so its integral against
	\(r^{-a}\) is uniformly bounded.
	
	For the \(R_2\) term, the time factor in its estimate
	is bounded on \((0,\infty)\).
	Indeed, its small-time exponent is \(3/4-\epsilon>0\),
	and its large-time exponent is
	\(13/4-3v_0+\epsilon<0\).
	Thus
	\begin{equation*}
		|r^{-1}R_2(t/r^{1/3},1/r)|
		\lesssim r^{1/12}.
	\end{equation*}
	For both values of \(a\),
	\begin{equation*}
		\int_{1/2}^\infty r^{1/12-a}\,dr<\infty.
	\end{equation*}
	This proves the required uniform bound for \(R_2\).
	
	For the \(\sigma\) term with \(a=7/6\), the change
	of variable \(\tau=t/r^{1/3}\) gives
	\begin{equation*}
		\int_{1/2}^\infty
		r^{-1}|\sigma(t/r^{1/3})|\,dr
		=
		3\int_0^{2^{1/3}t}\frac{|\sigma(\tau)|}{\tau}\,d\tau.
	\end{equation*}
	The integral over \((0,\infty)\) on the right is
	finite, because \(\sigma(\tau)=O(\tau^4)\) at zero
	and \(\sigma(\tau)=O(\tau^{-3v_0+5/2})\) at infinity.
	For \(a=11/6\), boundedness of \(\sigma\) directly gives
	\begin{equation*}
		\int_{1/2}^\infty
		r^{-5/3}|\sigma(t/r^{1/3})|\,dr
		\lesssim \int_{1/2}^\infty r^{-5/3}\,dr<\infty.
	\end{equation*}
	
	It remains to estimate $0<r<1/2$.
	Since $1/r>2$,  (\ref{eq:EMVfar}) and
	\begin{equation*}
		F(t,1,r)
		=
		r^{-1}F\left(\frac{t}{r^{1/3}},\frac1r,1\right)
	\end{equation*}
	give
	\begin{equation*}
		|F(t,1,r)|
		\lesssim
		r^{-1}\left(\frac1r\right)^{-11/6}
		\begin{cases}
			\left(t/r^{1/3}\right)^{-3v_0+9/2+\epsilon},
			& 0<r\le t^3,\\
			\left(t/r^{1/3}\right)^{1-\epsilon},
			& r>t^3.
		\end{cases}
	\end{equation*}
	Equivalently, for $0<r<1/2$,
	\begin{equation*}
		|F(t,1,r)|
		\lesssim
		\begin{cases}
			t^{-3v_0+9/2+\epsilon}
			r^{v_0-2/3-\epsilon/3},
			& 0<r\le t^3,\\
			t^{1-\epsilon}r^{1/2+\epsilon/3},
			& r>t^3.
		\end{cases}
	\end{equation*}
	On \(0<r<\min(t^3,1/2)\), the large-time far-kernel
	bound gives
	\begin{equation*}
		|F(t,1,r)|r^{-a}
		\lesssim
		t^{-3v_0+9/2+\epsilon}
		r^{v_0-2/3-\epsilon/3-a}.
	\end{equation*}
	For both \(a=7/6\) and \(a=11/6\), the power of
	\(r\) is greater than \(-1\).
	Integration gives
	\begin{equation*}
		\int_0^{\min(t^3,1/2)}
		|F(t,1,r)|r^{-a}\,dr
		\lesssim
		t^{-3v_0+9/2+\epsilon}
		\min(t^3,1/2)^{v_0+1/3-\epsilon/3-a}.
	\end{equation*}
	If \(t^3\leq1/2\), the right-hand side is
	\(Ct^{11/2-3a}\). This is bounded because its
	exponent is \(2\) for \(a=7/6\) and \(0\) for
	\(a=11/6\).
	If \(t^3\geq1/2\), it is bounded because
	\(t\geq2^{-1/3}\) and
	\(-3v_0+9/2+\epsilon<0\).
	
	The remaining interval \(t^3<r<1/2\) occurs only
	when \(t<2^{-1/3}\).
	There the small-time far-kernel estimate gives
	\begin{equation*}
		|F(t,1,r)|r^{-a}
		\lesssim t^{1-\epsilon}r^{1/2+\epsilon/3-a}.
	\end{equation*}
	For \(a=7/6\),
	\begin{equation*}
		t^{1-\epsilon}
		\int_{t^3}^{1/2}r^{-2/3+\epsilon/3}\,dr
		\lesssim t^{1-\epsilon}\lesssim 1.
	\end{equation*}
	For \(a=11/6\),
	\begin{equation*}
		t^{1-\epsilon}
		\int_{t^3}^{1/2}r^{-4/3+\epsilon/3}\,dr
		\lesssim
		t^{1-\epsilon}t^{-1+\epsilon}
		=1.
	\end{equation*}
	This completes the proof that
	\begin{equation*}
		\int_0^\infty |F(t,\omega,r)|r^{-a}\,dr
		\lesssim \omega^{-a},
		\qquad a\in\left\{\frac76,\frac{11}{6}\right\},
	\end{equation*}
	which implies \eqref{eq:fixed-absolute-auxiliary}.
	
	Similarly, by Proposition~\ref{prop:derivative}, we can derive
	\begin{equation*}
		\int_0^\infty
		|t\partial_tF(t,\omega,r)|r^{-7/6}\,dr
		\lesssim \omega^{-7/6}.
	\end{equation*}
	This proves \eqref{eq:fixed-absolute-E}.
	
	We now prove \eqref{eq:fixed-absolute-local}.
	Write $I=[\alpha,\beta]$, where
	$0<\alpha<\beta<\infty$, and restrict $b\le t\le B$.
	We use the scaling formula
	\begin{equation*}
		F(t,\omega,r)
		=
		r^{-1}F\left(\frac{t}{r^{1/3}},\frac{\omega}{r},1\right).
	\end{equation*}
	
	For $0<r<\min(\alpha/2,b^3,1)$, we have
	$\omega/r>2$ and $t/r^{1/3}>1$.
	The estimate from Theorem~\ref{thm:EMV} used here is
	\begin{equation*}
		|F(\tau,z,1)|
		\lesssim z^{-11/6}\tau^{-3v_0+9/2+\epsilon},
		\qquad \tau\ge1,\quad z>2.
	\end{equation*}
	Consequently,
	\begin{equation*}
		|F(t,\omega,r)|
		\lesssim
		r^{-1}\left(\frac{\omega}{r}\right)^{-11/6}
		\left(\frac{t}{r^{1/3}}\right)^{-3v_0+9/2+\epsilon}
		=
		\omega^{-11/6}t^{-3v_0+9/2+\epsilon}
		r^{v_0-2/3-\epsilon/3}.
	\end{equation*}
	Since $\omega\in I$ and $b\le t\le B$, this gives
	\begin{equation*}
		|F(t,\omega,r)|r^{-3/2}
		\lesssim_{b,B,I} r^{v_0-13/6-\epsilon/3}.
	\end{equation*}
	The exponent is greater than $-1$, so this bound
	is integrable at zero.
	
	For $r>\max(2\beta,B^3,1)$, we have
	$0<\omega/r<1/2$ and $0<t/r^{1/3}<1$.
	The Dirac and $R_1$ terms vanish, and
	\begin{equation*}
		F(t,\omega,r)
		=
		r^{-1}\left[
		\sigma\left(\frac{t}{r^{1/3}}\right)
		\left(\frac{\omega}{r}\right)^{-7/6}
		+
		R_2\left(\frac{t}{r^{1/3}},\frac{\omega}{r}\right)
		\right].
	\end{equation*}
	Consequently, 
	\begin{equation*}
		|F(t,\omega,r)|r^{-3/2}
		\lesssim
		(\omega^{-7/6}t^4r^{-7/6}
		+
		\omega^{-13/12}t^{3/4-\epsilon}
		r^{-1/6+\epsilon/3})r^{-3/2}\lesssim_{b,B,I} r^{-8/3}+r^{-5/3+\epsilon/3}.
	\end{equation*}
	Both exponents are less than $-1$, so this bound
	is integrable at infinity.
	
	For the remaining range, $r$ lies in a compact interval and the weight $r^{-3/2}$ is bounded. If $0<\omega/r<2$, the Dirac contribution to
	the weighted integral is
	\begin{equation*}
		e^{-\mathfrak a t\omega^{-1/3}}\omega^{-3/2}
		\le \alpha^{-3/2}.
	\end{equation*}
	For $R_1$, Theorem~\ref{thm:EMV} gives
	\begin{equation*}
		|R_1(\tau,z)|
		\lesssim
		e^{-(\mathfrak a-\epsilon)\tau}|z-1|^{-5/6},
		\qquad \tau>0,
	\end{equation*}
	with support in $|z-1|<1/2$. Hence
	\begin{equation*}
		r^{-1}
		\left|R_1\left(\frac{t}{r^{1/3}},\frac{\omega}{r}\right)\right|
		\lesssim
		r^{-1/6}|\omega-r|^{-5/6}
		\lesssim_{b,B,I} |\omega-r|^{-5/6}.
	\end{equation*}
	Its integral is uniformly bounded for $\omega\in I$, since $5/6<1$.
	For the $\sigma$ and $R_2$ contributions, the variables
	$t/r^{1/3}$ and $\omega/r$ remain bounded above and away
	from zero, so their bounds in Theorem~\ref{thm:EMV}
	are uniform. If $\omega/r>2$, the same conclusion
	follows from the estimate (\ref{eq:EMVfar})

	Thus
	\begin{equation*}
		\sup_{\substack{b\le t\le B\\ \omega\in I}}
		\int_0^\infty |F(t,\omega,r)|r^{-3/2}\,dr
		\lesssim_{b,B,I} 1.
	\end{equation*}
	
	Proposition~\ref{prop:derivative} gives
	the same bound with $F$ replaced by $t\partial_tF$.
	This proves \eqref{eq:fixed-absolute-local}.

\end{proof}

\begin{lemma}[Positive-time continuity of $S(t)$]\label{lem:appendix-positive-continuity}
For every $0<t_1<t_2$,
\begin{equation}\label{eq:fixed-positive-E-continuity}
 \|S(t_2)-S(t_1)\|_{E_{7/6,7/6}\to E_{7/6,7/6}}
 \lesssim \int_{t_1}^{t_2}\frac{dr}{r}.
\end{equation}
For every compact interval $I\subset(0,\infty)$ and $0<b<B<\infty$,
\begin{equation*}
 \sup_{b\le t\le B}\|S(t)\|_{E_{3/2,3/2}\to L^\infty(I)}\lesssim_{b,B,I} 1,
\end{equation*}
and
\begin{equation}\label{eq:fixed-critical-continuity}
 \|S(t_2)-S(t_1)\|_{E_{3/2,3/2}\to L^\infty(I)}
 \lesssim_{b,B,I}|t_2-t_1|,\qquad t_1,t_2\in[b,B].
\end{equation}
Suppose that $\sup_n\|g_n\|_{E_{3/2,3/2}}<\infty$ and
$g_n\to0$ in $L^\infty(J)$ for every compact interval
$J\subset(0,\infty)$. Then
\begin{equation}\label{eq:fixed-critical-local-convergence}
 \sup_{b\le t\le B}\|S(t)g_n\|_{L^\infty(I)}\longrightarrow0.
\end{equation}
\end{lemma}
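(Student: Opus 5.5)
Every assertion will follow from the absolute kernel bounds of Lemma~\ref{lem:appendix-absolute-kernel}, using $|g(r)|\le\|g\|\,r^{-a}$ with the appropriate weight, together with the scalar time-integral identity
\begin{equation*}
F(t_2,\omega,r)-F(t_1,\omega,r)=\int_{t_1}^{t_2}\partial_\tau F(\tau,\omega,r)\,d\tau
\end{equation*}
for the function part of the kernel. This identity holds for $\omega\ne r$ by the differentiability in Proposition~\ref{prop:derivative}. The Dirac part $e^{-\mathfrak a t\omega^{-1/3}}g(\omega)$ is handled by hand.

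\textbf{The bound \eqref{eq:fixed-positive-E-continuity}.} For the Dirac part I would use
\begin{equation*}
|e^{-\mathfrak a t_2\omega^{-1/3}}-e^{-\mathfrak a t_1\omega^{-1/3}}|\le\int_{t_1}^{t_2}\mathfrak a\omega^{-1/3}e^{-\mathfrak a\tau\omega^{-1/3}}\,d\tau\lesssim\int_{t_1}^{t_2}\frac{d\tau}{\tau},
\end{equation*}
since $xe^{-x}\le1$. For the function part, I would apply Fubini/Tonelli to $\int_0^\infty\int_{t_1}^{t_2}|\partial_\tau F|\,|g|\,d\tau\,dr$. I would then bound the inner $r$-integral by \eqref{eq:fixed-absolute-E}: $\int|\tau\partial_\tau F|r^{-7/6}dr\lesssim\omega^{-7/6}$. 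This gives $\|g\|_{E_{7/6,7/6}}\,\omega^{-7/6}\int_{t_1}^{t_2}\tau^{-1}d\tau$. The one point to check is that \eqref{eq:fixed-absolute-E} is stated with the function part only; the Dirac contributions to $t\partial_t F$ are exactly the term treated above.

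\textbf{The bounds in $E_{3/2,3/2}$.} The uniform bound $\sup_{b\le t\le B}\|S(t)\|_{E_{3/2,3/2}\to L^\infty(I)}$ is \eqref{eq:fixed-absolute-local} applied with $|F|$; the Dirac part is bounded by $\alpha^{-3/2}\|g\|$ on $I=[\alpha,\beta]$. For \eqref{eq:fixed-critical-continuity}, I would repeat the argument above using the $|\partial_tF|$ part of \eqref{eq:fixed-absolute-local}. On $I$ with $t\in[b,B]$, the Dirac difference is bounded by $\mathfrak a\alpha^{-1/3}|t_2-t_1|\,\alpha^{-3/2}\|g\|$. Together these give a constant times $|t_2-t_1|$.

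\textbf{The convergence \eqref{eq:fixed-critical-local-convergence}.} This is the only step needing more than a direct estimate, and I would argue it by dominated splitting. Fix $\varepsilon>0$ and $M=\sup_n\|g_n\|_{E_{3/2,3/2}}$. From the proof of \eqref{eq:fixed-absolute-local}, the kernel weight $|F(t,\omega,r)|r^{-3/2}$ is bounded by an integrable function of $r$ uniformly in $t\in[b,B]$, $\omega\in I$. Near $0$ this majorant is $r^{v_0-13/6-\epsilon/3}$, near $\infty$ it is $r^{-8/3}+r^{-5/3+\epsilon/3}$, and on the middle range it is a constant plus $|\omega-r|^{-5/6}$. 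So I can choose a compact $J=[\rho,P]$ such that
\begin{equation*}
M\sup_{t,\omega}\int_{(0,\infty)\setminus J}|F(t,\omega,r)|r^{-3/2}\,dr<\varepsilon.
\end{equation*}
On $J$, I would bound the contribution by $\|g_n\|_{L^\infty(J)}\sup_{t,\omega}\int_J|F|\,dr$; the supremum is finite by the same estimate, since $r^{-3/2}$ is comparable to $1$ on $J$. The Dirac term is bounded by $\|g_n\|_{L^\infty(I)}$. Letting $n\to\infty$ and then $\varepsilon\to0$ finishes the proof. The main obstacle is making this uniform-tail claim honest, namely extracting the uniform majorants from the three-region proof of \eqref{eq:fixed-absolute-local}, especially the $|\omega-r|^{-5/6}$ singularity, which is uniformly integrable over compact $r$-sets.
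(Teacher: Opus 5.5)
Your proposal is correct and follows essentially the same route as the paper: you use the pointwise fundamental theorem of calculus for the regular part of the kernel, plus Tonelli with the absolute bounds \eqref{eq:fixed-absolute-E} and \eqref{eq:fixed-absolute-local} (Dirac part by hand), for the two continuity estimates, and a near-zero/compact/near-infinity splitting with the tail power bounds from the proof of Lemma~\ref{lem:appendix-absolute-kernel} for \eqref{eq:fixed-critical-local-convergence}. The only cosmetic difference is that you bound absolute values directly via Tonelli, whereas the paper first records the signed identity \eqref{eq:fixed-scalar-time-integral}, because it reuses that identity in Lemma~\ref{lem:fixed-Y}.
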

\begin{proof}

	To distinguish ordinary integration from integration
	against the Dirac mass, write
	\begin{equation*}
		F(t,\omega,r)
		=
		e^{-\mathfrak a t\omega^{-1/3}}\delta_\omega(r)
		+F_{\mathrm{reg}}(t,\omega,r).
	\end{equation*}
	Here \(F_{\mathrm{reg}}\) is the function part of
	the kernel, and \(\delta_\omega\) acts on the
	integration variable \(r\).
	
	For $t>0$, \eqref{eq:fixed-absolute-E} first gives
\begin{equation}\label{eq:fixed-basic-E-bound}
 \|S(t)\|_{E_{7/6,7/6}\to E_{7/6,7/6}}\lesssim 1.
\end{equation}
Fix \(0<t_1<t_2\). We will first prove the identity
\begin{equation}\label{eq:fixed-scalar-time-integral}
 S(t_2)g(\omega)-S(t_1)g(\omega)
 =\int_{t_1}^{t_2}\int_0^\infty\partial_\tau F(\tau,\omega,r)g(r)\,dr\,d\tau
\end{equation}
for $g\in E_{7/6,7/6}\cup E_{3/2,3/2}$ and almost every $\omega>0$.
	For fixed \(\omega,r>0\), away from the diagonal
	singularity $\omega=r$, Proposition~\ref{prop:derivative}
	gives differentiability of
	\(t\mapsto F_{\mathrm{reg}}(t,\omega,r)\).
	Its derivative is bounded on \([t_1,t_2]\), so
	the fundamental theorem of calculus gives
	\begin{equation*}
		F_{\mathrm{reg}}(t_2,\omega,r)
		-
		F_{\mathrm{reg}}(t_1,\omega,r)
		=
		\int_{t_1}^{t_2}
		\partial_\tau F_{\mathrm{reg}}(\tau,\omega,r)
		\,d\tau, \quad \forall\; \omega\ne r.
	\end{equation*}

	For \(g\in E_{7/6,7/6}\),
	\eqref{eq:fixed-absolute-E} gives
	\begin{equation*}
		\int_{t_1}^{t_2}\int_0^\infty
		|\partial_\tau F_{\mathrm{reg}}(\tau,\omega,r)|
		|g(r)|\,dr\,d\tau
		\lesssim
		\|g\|_{E_{7/6,7/6}}\omega^{-7/6}
		\int_{t_1}^{t_2}\frac{d\tau}{\tau}<\infty.
	\end{equation*}
	
	Fubini's theorem now permits interchange of the
	\(r\) and \(\tau\) integrations for the function
	part of the kernel.
	For the Dirac part, the required equality is
	\begin{equation*}
		\bigl(
		e^{-\mathfrak a t_2\omega^{-1/3}}
		-
		e^{-\mathfrak a t_1\omega^{-1/3}}
		\bigr)g(\omega)
		=
		\int_{t_1}^{t_2}
		-\mathfrak a\omega^{-1/3}
		e^{-\mathfrak a\tau\omega^{-1/3}}g(\omega)
		\,d\tau.
	\end{equation*}
	Adding these two equalities proves
	\eqref{eq:fixed-scalar-time-integral}. 
	For \(g\in E_{3/2,3/2}\), we also have
	\begin{equation*}
		\sup_{\substack{t_1\leq\tau\leq t_2\\ \omega\in I}}
		\int_0^\infty
		|\partial_\tau F_{\mathrm{reg}}(\tau,\omega,r)|
		|g(r)|\,dr
		\lesssim_{t_1,t_2,I}\|g\|_{E_{3/2,3/2}}
	\end{equation*}
	from \eqref{eq:fixed-absolute-local}. Consequently, Fubini's theorem applies to the function part
	of the kernel for \(E_{3/2,3/2}\) data as well.

	Then estimate \eqref{eq:fixed-positive-E-continuity} follows
	directly from \eqref{eq:fixed-absolute-E} and \eqref{eq:fixed-scalar-time-integral}. Similarly, \eqref{eq:fixed-critical-continuity}  follows from \eqref{eq:fixed-absolute-local} and \eqref{eq:fixed-scalar-time-integral}.

For \eqref{eq:fixed-critical-local-convergence}, write
$I=[\alpha,\beta]$ and choose
\begin{equation*}
	0<\rho<\min(\alpha/2,b^3,1),
	\qquad
	R>\max(2\beta,B^3,1).
\end{equation*}
We first estimate the contributions from $(0,\rho)$
and $(R,\infty)$ uniformly in $n$, and then use
$g_n\to0$ in $L^\infty([\rho,R])$.

For $0<r<\rho$, the proof of
Lemma~\ref{lem:appendix-absolute-kernel} gives
\begin{equation*}
	|F(t,\omega,r)|
	\lesssim_{b,B,I}r^{v_0-2/3-\epsilon/3},
	\qquad b\le t\le B,\quad \omega\in I.
\end{equation*}
Then we obtain
\begin{equation*}
	\sup_{\substack{b\le t\le B\\ \omega\in I}}
	\int_0^\rho |F(t,\omega,r)|\,|g_n(r)|\,dr
	\lesssim_{b,B,I}\|g_n\|_{E_{3/2,3/2}}
	\int_0^\rho r^{v_0-13/6-\epsilon/3}\,dr\lesssim \|g_n\|_{E_{3/2,3/2}}
	\rho^{v_0-7/6-\epsilon/3}.
\end{equation*}

For $r>R$, the proof of
Lemma~\ref{lem:appendix-absolute-kernel} gives
\begin{equation*}
	|F(t,\omega,r)|
	\lesssim_{b,B,I}
	r^{-7/6}+r^{-1/6+\epsilon/3},
	\qquad b\le t\le B,\quad \omega\in I.
\end{equation*}
Consequently,
\begin{equation*}
	\sup_{\substack{b\le t\le B\\ \omega\in I}}
	\int_R^\infty |F(t,\omega,r)|\,|g_n(r)|\,dr
	\lesssim_{b,B,I}\|g_n\|_{E_{3/2,3/2}}
	\int_R^\infty
	r^{-8/3}+r^{-5/3+\epsilon/3}\,dr.
\end{equation*}
\begin{equation*}
	\lesssim\|g_n\|_{E_{3/2,3/2}}
	\left(R^{-5/3}+R^{-2/3+\epsilon/3}\right).
\end{equation*}

On $[\rho,R]$, estimate \eqref{eq:fixed-absolute-local}
gives
\begin{equation*}
	\sup_{\substack{b\le t\le B\\ \omega\in I}}
	\int_\rho^R |F(t,\omega,r)|\,dr
	\le
	R^{3/2}
	\sup_{\substack{b\le t\le B\\ \omega\in I}}
	\int_0^\infty |F(t,\omega,r)|r^{-3/2}\,dr
	\lesssim_{b,B,I}R^{3/2}.
\end{equation*}
 Therefore, for every $t\in[b,B]$,
\begin{equation*}
	\left\|
	\int_\rho^R F(t,\cdot,r)g_n(r)\,dr
	\right\|_{L^\infty(I)}
	\lesssim_{b,B,I}R^{3/2}
	\|g_n\|_{L^\infty([\rho,R])}.
\end{equation*}

Adding the estimates over the three intervals gives
\begin{equation*}
	\sup_{b\le t\le B}\|S(t)g_n\|_{L^\infty(I)}
	\lesssim_{b,B,I}R^{3/2}\|g_n\|_{L^\infty([\rho,R])}
	+
	\|g_n\|_{E_{3/2,3/2}}
	\left(
	\rho^{v_0-7/6-\epsilon/3}
	+R^{-5/3}+R^{-2/3+\epsilon/3}
	\right).
\end{equation*}
For fixed $\rho,R$, the first term tends to zero as
$n\to\infty$. Since
$\sup_n\|g_n\|_{E_{3/2,3/2}}<\infty$, it follows that
\begin{equation*}
	\limsup_{n\to\infty}
	\sup_{b\le t\le B}\|S(t)g_n\|_{L^\infty(I)}
	\lesssim_{b,B,I}\sup_n\|g_n\|_{E_{3/2,3/2}}
	\left(
	\rho^{v_0-7/6-\epsilon/3}
	+R^{-5/3}+R^{-2/3+\epsilon/3}
	\right).
\end{equation*}
The constant on the right is independent of $\rho,R$.
Letting $\rho\downarrow0$ and $R\to\infty$ makes the
right-hand side tend to zero, proving
\eqref{eq:fixed-critical-local-convergence}.

\end{proof}

To give a meaning to $LS(t)g$, we next record the boundedness of $L$.
\begin{lemma}[The kernel defining $L$]\label{lem:appendix-L-mapping}
For $g\in E_{7/6,7/6}\cup Y$, the kernel integral defining $Lg$
is absolutely convergent for almost every $\omega>0$.
Moreover,
\begin{equation}\label{eq:fixed-L-E}
 \|Lg\|_{E_{3/2,3/2}}\lesssim \|g\|_{E_{7/6,7/6}},
 \qquad g\in E_{7/6,7/6}.
\end{equation}
For every compact interval $I\subset(0,\infty)$,
\begin{equation}\label{eq:fixed-L-Y}
 \|Lg\|_{L^\infty(I)}\lesssim_I\|g\|_Y,\qquad g\in Y.
\end{equation}
\end{lemma}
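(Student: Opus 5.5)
The plan is to work directly from the kernel representation \eqref{eq:L} and the pointwise kernel bounds \eqref{eq:Kbounds}, after rescaling the integration variable. For $\omega>0$ substitute $r=\omega s$:
\begin{equation*}
 \omega^{-4/3}\int_0^\infty K(r/\omega)g(r)\,dr
 =\omega^{-1/3}\int_0^\infty K(s)\,g(\omega s)\,ds .
\end{equation*}
We will bound $\int_0^\infty |K(s)|\,|g(\omega s)|\,ds$ for each of the two data classes. Since $K$ is locally integrable and measurable, Tonelli's theorem makes $\omega\mapsto\int|K(r/\omega)||g(r)|\,dr$ measurable. A finite bound for almost every $\omega$ therefore gives absolute convergence almost everywhere, together with the stated estimates. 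The multiplication term $-\mathfrak a\omega^{-1/3}g(\omega)$ is handled pointwise.

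\textbf{Data in $E_{7/6,7/6}$.} Here $|g(r)|\le\|g\|_{E_{7/6,7/6}}r^{-7/6}$ for almost every $r$. Hence
\begin{equation*}
 \int_0^\infty|K(s)||g(\omega s)|\,ds\le\|g\|_{E_{7/6,7/6}}\,\omega^{-7/6}\int_0^\infty|K(s)|s^{-7/6}\,ds .
\end{equation*}
The last integral is finite, exactly as in the proof of Lemma~\ref{lem:commutator}. By \eqref{eq:Kbounds} the integrand is $\lesssim s^{-2/3}$ on $(0,1/2]$, $\lesssim|s-1|^{-5/6}$ on $(1/2,2)$, and $\lesssim s^{-7/3}$ on $[2,\infty)$, and all three are integrable. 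The integral term is therefore $\lesssim\|g\|_{E_{7/6,7/6}}\omega^{-3/2}$. The multiplication term is $\le\mathfrak a\,\omega^{-1/3}\omega^{-7/6}\|g\|_{E_{7/6,7/6}}=\mathfrak a\,\omega^{-3/2}\|g\|_{E_{7/6,7/6}}$. Together these give \eqref{eq:fixed-L-E}. This bound holds at every $\omega$ with a uniform constant, so it is a global weighted estimate.

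\textbf{Data in $Y$.} Here we only have $|g(r)|\le\|g\|_Y\min\{r^{-3/2+2\delta},r^{-3/2-2\delta}\}$, and a global $E_{3/2,3/2}$-type statement is not claimed. Fix a compact $I=[\alpha,\beta]$ and $\omega\in I$, and split the $s$-integral into three ranges.
\begin{itemize}
\item On $(0,1/2]$, use $|K(s)|\lesssim s^{1/2}$ and $|g(\omega s)|\le\|g\|_Y(\omega s)^{-3/2+2\delta}$ for $\omega s\le1$; for $\omega s\ge 1$ the other weight is even smaller. The integrand is $\lesssim_I\|g\|_Y s^{-1+2\delta}$.
\item On $(1/2,2)$, the function $g(\omega s)$ is bounded by $\|g\|_Y\max(\omega s,(\omega s)^{-1})^{3/2+2\delta}\lesssim_I\|g\|_Y$. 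The integrand is then $\lesssim_I\|g\|_Y|s-1|^{-5/6}$.
\item On $[2,\infty)$, use $|K(s)|\lesssim s^{-7/6}$ together with $|g(\omega s)|\lesssim_I\|g\|_Y s^{-3/2+2\delta}$. The integrand is $\lesssim s^{-8/3+2\delta}$, which is integrable.
\end{itemize}
The multiplication term is bounded on $I$ by $\mathfrak a\alpha^{-1/3}\max(\alpha^{-3/2+2\delta},\beta^{3/2+2\delta})\|g\|_Y$. Summing the three ranges and the multiplication term gives \eqref{eq:fixed-L-Y}, with a constant depending only on $\alpha,\beta$.

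\textbf{Main obstacle.} The only delicate point is the low-$s$ range for $Y$ data. The power $s^{-1+2\delta}$ is integrable only because $\delta>0$. This is exactly why $Y$ data yield merely local $L^\infty$ control rather than a weighted global bound. I will state explicitly that $\delta>0$ enters here, and that the constant blows up as $\alpha\downarrow0$.
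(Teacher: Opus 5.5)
Your proof is correct and follows the paper's route: the substitution $r=\omega s$ reduces \eqref{eq:fixed-L-E} to finiteness of $\int_0^\infty|K(s)|s^{-7/6}\,ds$, and your three-range splitting for $Y$ data is exactly the ``simple calculation'' that the paper leaves implicit. One correction to your closing remark: $|g(r)|\le\|g\|_Y r^{-3/2+2\delta}$ holds for all $r>0$, and $\int_0^\infty|K(s)|s^{-3/2+2\delta}\,ds<\infty$, so your own argument already gives the global weighted bound $|Lg(\omega)|\lesssim\|g\|_Y\,\omega^{-11/6+2\delta}$; thus $\delta>0$ is what makes the estimate work, not what limits it to local control.
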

\begin{proof}
For \eqref{eq:fixed-L-E}, substitute $r=\omega s$ in \eqref{eq:L}.
The integral to bound is
\begin{equation*}
 \int_0^\infty |K(s)|s^{-7/6}\,ds<\infty.
\end{equation*}
Its powers are $s^{-2/3}$ near zero and $s^{-7/3}$ at infinity;
the diagonal singularity is $|s-1|^{-5/6}$. Thus
\begin{equation*}
 |Lg(\omega)|\le
 \left(\mathfrak a+\int_0^\infty|K(s)|s^{-7/6}\,ds\right)
 \|g\|_{E_{7/6,7/6}}\omega^{-3/2}.
\end{equation*}
Similarly, simple calculations yield \eqref{eq:fixed-L-Y}. This proves both absolute convergence
and the asserted estimate.

\end{proof}

We now identify the time derivative at positive time. 
\begin{lemma}[The local differential equation at positive times]
\label{lem:appendix-local-equation}
Let $g\in E_{7/6,7/6}\cup Y$. For every compact interval
$I\subset(0,\infty)$,
\begin{equation*}
 S(\cdot)g|_I\in C^1((0,\infty);L^\infty(I)).
\end{equation*}
For every $t>0$,
\begin{equation}\label{eq:fixed-local-equation}
 \frac{d}{dt}(S(t)g|_I)=(LS(t)g)|_I
 =\left(\int_0^\infty\partial_tF(t,\cdot,r)g(r)\,dr\right)\Big|_I.
\end{equation}
The derivative is a strong derivative in $L^\infty(I)$ and
the last integral is absolutely convergent for almost every $\omega>0$.
For $g\in E_{7/6,7/6}$,
\begin{equation}\label{eq:fixed-LS-E-bound}
 \|LS(t)g\|_{E_{7/6,7/6}}\lesssim\frac 1t\|g\|_{E_{7/6,7/6}},\qquad t>0.
\end{equation}
\end{lemma}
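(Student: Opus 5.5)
The plan has three parts: first prove that $t\mapsto S(t)g|_I$ is $C^1$ with derivative the $\partial_tF$-integral, then identify that integral with $LS(t)g$, and finally deduce \eqref{eq:fixed-LS-E-bound}.

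\textbf{Step 1: strong derivative in $L^\infty(I)$.} Fix $g\in E_{7/6,7/6}\cup Y$ and a compact interval $I$. Set
\[
D(\tau)g(\omega)=\int_0^\infty\partial_\tau F(\tau,\omega,r)g(r)\,dr ,
\]
where the Dirac part contributes $-\mathfrak a\omega^{-1/3}e^{-\mathfrak a\tau\omega^{-1/3}}g(\omega)$. By \eqref{eq:fixed-absolute-E} and \eqref{eq:fixed-absolute-local}, this integral converges absolutely. Note that $E_{7/6,7/6}\subset E_{3/2,3/2}$ locally and $Y\subset E_{3/2,3/2}$. Identity \eqref{eq:fixed-scalar-time-integral} gives
\[
S(t_2)g-S(t_1)g=\int_{t_1}^{t_2}D(\tau)g\,d\tau
\]
pointwise a.e. So it suffices to show that $\tau\mapsto D(\tau)g|_I$ is continuous into $L^\infty(I)$ on compact subintervals $[b,B]\subset(0,\infty)$.

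For continuity I will rerun the three-region splitting from the proof of \eqref{eq:fixed-critical-local-convergence}. The regions $r<\rho$ and $r>R$ are uniformly small, using the far and small-$\tau$ derivative bounds \eqref{eq:EMVfar-derivative}, \eqref{eq:EMVsigma-derivative} and \eqref{eq:EMVR2-derivative}. On $[\rho,R]$ the kernel $\partial_\tau F_{\rm reg}$ is continuous in $\tau$ away from $r=\omega$, and the diagonal singularity $|\omega-r|^{-5/6}$ is integrable uniformly in $\tau$, with a $\tau$-uniform majorant from \eqref{eq:EMVR1-derivative}. Dominated convergence, uniform in $\omega\in I$, then gives continuity. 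The Dirac part is explicitly continuous. The derivative is then the strong limit of difference quotients, which yields $C^1$.

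\textbf{Step 2: identifying $D(t)g$ with $LS(t)g$.} This is the main obstacle. By Lemma~\ref{lem:appendix-L-mapping}, $LS(t)g$ is defined because $S(t)g\in E_{7/6,7/6}$ (respectively $\mathcal Z$, via \eqref{eq:envelope}). Take $\eta\in C_c^\infty$ and $0<t_1<t_2$, apply \eqref{eq:fundamental-tested-identity} for each $r$, multiply by $g(r)$, and integrate in $r$. This gives
\[
\int_{t_1}^{t_2}\iint \eta(\omega)\,[LF(\tau,\cdot,r)](\omega)\,g(r)\,dr\,d\omega\,d\tau=\int_{t_1}^{t_2}\langle D(\tau)g,\eta\rangle\,d\tau .
\]
Moving $L$ outside the $r$-integral needs Fubini for the kernel $\omega^{-4/3}K(r'/\omega)F(\tau,r',r)g(r)$. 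Absolute integrability follows from \eqref{eq:fixed-absolute-E} followed by the bound \eqref{eq:fixed-L-E} applied with absolute values, that is, $\int|K(s)|s^{-7/6}ds<\infty$. The $r$-integral of $F$ and the $r'$-integral of $K$ commute, giving $\langle LS(\tau)g,\eta\rangle$. Both integrands are continuous in $\tau$: for $LS(\tau)g$ use Lemma~\ref{lem:appendix-positive-continuity} together with \eqref{eq:fixed-L-E}. Differentiating in $t_2$ then gives $D(t)g=LS(t)g$ a.e. For $Y$ data I will use the same argument with the local bound \eqref{eq:fixed-L-Y}, the weight $r^{-3/2}$, and \eqref{eq:fixed-absolute-local}.

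\textbf{Step 3: the bound.} With $D(t)g=LS(t)g$, estimate \eqref{eq:fixed-absolute-E} gives
\[
|tLS(t)g(\omega)|\le\|g\|_{E_{7/6,7/6}}\int_0^\infty|t\partial_tF(t,\omega,r)|\,r^{-7/6}\,dr\lesssim\|g\|_{E_{7/6,7/6}}\,\omega^{-7/6},
\]
which is \eqref{eq:fixed-LS-E-bound}.
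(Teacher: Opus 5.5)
Your Step~1 has a genuine gap. The $C^1$ claim rests on continuity of $\tau\mapsto D(\tau)g|_I$ into $L^\infty(I)$. You get this by asserting that $\partial_\tau F_{\mathrm{reg}}$ is continuous in $\tau$ away from $r=\omega$, and then applying dominated convergence ``uniformly in $\omega\in I$''. Neither ingredient is available.

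\begin{itemize}
\item Proposition~\ref{prop:derivative} gives only differentiability of the function part in time, together with size bounds on $\tau\partial_\tau F$. It says nothing about continuity of $\partial_\tau F$ in $\tau$, or in $\omega$.
\item Even granting pointwise continuity, dominated convergence gives $D(\tau_n)g(\omega)\to D(\tau)g(\omega)$ for each fixed $\omega$. It does not give convergence of the essential supremum over $\omega\in I$. For that you would need equicontinuity of the kernel in $\omega$ and uniform control of the moving diagonal $r=\omega$.
\end{itemize}

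The same defect carries into Step~2. ``Differentiating in $t_2$'' requires continuity of $\tau\mapsto\langle D(\tau)g,\eta\rangle$. Without it, Lebesgue differentiation only yields $D(\tau)g=LS(\tau)g$ for almost every $\tau$, whereas the statement asserts \eqref{eq:fixed-local-equation} for every $t>0$.

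The missing idea is to reverse the order, as the paper does. Then no regularity of $\partial_\tau F$ beyond its bounds is needed.

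\begin{itemize}
\item For $g\in E_{7/6,7/6}$, continuity of $LS(\tau)g|_I$ comes for free. Estimate \eqref{eq:fixed-positive-E-continuity}, which was derived only from \eqref{eq:fixed-scalar-time-integral} and absolute bounds, gives continuity of $S(\tau)g$ in $E_{7/6,7/6}$. Then \eqref{eq:fixed-L-E} says $L:E_{7/6,7/6}\to E_{3/2,3/2}$ is bounded.
\item Your tested-identity and Fubini computation then yields $S(t_2)g|_I-S(t_1)g|_I=\int_{t_1}^{t_2}LS(\tau)g|_I\,d\tau$ in $L^\infty(I)$. This gives $C^1$ with strong derivative $LS(t)g|_I$.
\item Identification with $\int_0^\infty\partial_tF(t,\cdot,r)g(r)\,dr$ at every $t$ is then done pointwise. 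For fixed $t,\omega$ and $r\neq\omega$, the mean value theorem bounds the difference quotients of $F_{\mathrm{reg}}$ by $\sup_{\tau\in[t/2,3t/2]}|\partial_\tau F_{\mathrm{reg}}(\tau,\omega,r)|$. The derivative estimates majorize this by an $h$-independent integrable function of $r$. Dominated convergence gives the scalar derivative, which must agree a.e.\ with the strong derivative.
\end{itemize}

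Your sketch for $Y$ data is also incomplete, in two places.

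\begin{itemize}
\item Continuity of $LS(\tau)g$ needs a global input. Since $Y\not\subset E_{7/6,7/6}$ near $0$, and \eqref{eq:fixed-critical-continuity} is only local while $L$ is nonlocal, the local bounds you cite do not suffice. You need something like \eqref{eq:kernel-YZ} or \eqref{eq:kernel-Y-continuity}, combined with $\mathcal Z\subset E_{7/6,7/6}$.
\item The Fubini step for the $K$-part of $L$ needs $\int_0^\infty|F(\tau,\xi,r)||g(r)|\,dr\lesssim\|g\|_Y\xi^{-7/6}$ for all $\xi>0$, uniformly for $\tau$ in compact subsets of $(0,\infty)$. This comes from the shape estimates of Section~\ref{sec:fixed}, not from the local bound \eqref{eq:fixed-absolute-local}.
\end{itemize}
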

\begin{proof}

	Fix \(g\in E_{7/6,7/6}\) and
	\(I=[\alpha,\beta]\subset(0,\infty)\). Combining (\ref{eq:fixed-L-E}) (\ref{eq:fixed-basic-E-bound}) and (\ref{eq:fixed-positive-E-continuity}), we can conclude that
	\begin{equation*}
		\|LS(t)g\|_{L^\infty(I)}
		\lesssim_I\|g\|_{E_{7/6,7/6}},
		\qquad t>0,
	\end{equation*}
	and
	\begin{equation*}
		\|LS(t_2)g-LS(t_1)g\|_{L^\infty(I)}
		\lesssim_I\|g\|_{E_{7/6,7/6}}
		\int_{t_1}^{t_2}\frac{d\tau}{\tau}.
	\end{equation*}
	Thus \(t\mapsto LS(t)g|_I\) is continuous into
	\(L^\infty(I)\) for \(t>0\).
	
	To identify this continuous function as the strong derivative
	of \(S(t)g|_I\), we claim that
	\begin{equation}\label{eq:fixed-local-integral-positive}
		S(t_2)g|_I-S(t_1)g|_I
		=
		\int_{t_1}^{t_2}LS(\tau)g|_I\,d\tau,
		\qquad 0<t_1<t_2,
	\end{equation}
	where the equality and the Bochner integral are in
	\(L^\infty(I)\). 
	
	We use \eqref{eq:fundamental-tested-identity}: for \(r>0\) and
	$\eta\in C_c^\infty(I)$,
	\begin{equation*}
		\int_I\eta(\omega)\,F(t_2,\omega,r)d\omega
		-
		\int_I\eta(\omega)\,F(t_1,\omega,r)d\omega
		=
		\int_{t_1}^{t_2}
		\langle LF(\tau,\cdot,r),\eta\rangle\,d\tau.
	\end{equation*}
	The pairing on the right means
	\begin{equation*}
		\langle LF(\tau,\cdot,r),\eta\rangle
		=
		-\mathfrak a\int_I
		\eta(\omega)\omega^{-1/3}\,F(\tau,\omega,r)d\omega
		+
		\int_0^\infty
		\left(
		\int_I\eta(\omega)\omega^{-4/3}
		K(\xi/\omega)\,d\omega
		\right)F(\tau,\xi,r)d\xi.
	\end{equation*}

	We multiply this identity by \(g(r)\) and integrate over
	\(r>0\). To justify Fubini, \eqref{eq:fixed-absolute-E} gives
	\begin{equation*}
		\mathfrak a\omega^{-1/3}
		\int_0^\infty |F(\tau,\omega,r)|\,|g(r)|\,dr
		\le C\|g\|_{E_{7/6,7/6}}\omega^{-3/2}.
	\end{equation*}
	\begin{equation*}
		\omega^{-4/3}\int_0^\infty |K(\xi/\omega)|
		\int_0^\infty |F(\tau,\xi,r)|\,|g(r)|\,dr\,d\xi
		\le C\|g\|_{E_{7/6,7/6}}\omega^{-3/2}.
	\end{equation*}
	
	After multiplication by \(|\eta(\omega)|\), both bounds
	are integrable over \([t_1,t_2]\times I\). Fubini and the definitions of \(S(\tau)g\) and \(L\) therefore give
	\begin{equation*}
		\int_I
		\bigl(S(t_2)g-S(t_1)g\bigr)(\omega)\eta(\omega)\,d\omega
		=
		\int_{t_1}^{t_2}\int_I
		LS(\tau)g(\omega)\eta(\omega)\,d\omega\,d\tau.
	\end{equation*}
 Hence the difference
	between the two sides of
	\eqref{eq:fixed-local-integral-positive}, which belongs
	to \(L^\infty(I)\), has zero integral against every
	\(\eta\in C_c^\infty(I)\).
	Density implies that this difference is zero
	almost everywhere. This proves
	\eqref{eq:fixed-local-integral-positive}.
	
     Then the positive-time
	continuity of \(LS(\cdot)g|_I\) ensures that \(S(\cdot)g|_I\) is strongly continuously differentiable
	on \((0,\infty)\), and its strong derivative in
	\(L^\infty(I)\) is \(LS(t)g|_I\).
	
	For the identification with the differentiated kernel and
	the bound in \(E_{7/6,7/6}\), we next justify differentiating
	the kernel representation. Write
	\begin{equation*}
		S(t)g(\omega)
		=
		e^{-\mathfrak a t\omega^{-1/3}}g(\omega)
		+
		\int_0^\infty F_{\mathrm{reg}}(t,\omega,r)g(r)\,dr.
	\end{equation*}
	Here \(F_{\mathrm{reg}}\) is the function part after removing
	the Dirac mass.
	Fix \(t>0\), \(\omega>0\), and \(0<\varepsilon<1/4\) sufficiently small.
We will justify differentiation under the integral by finding
an integrable bound for the time difference quotients,
independent of \(h\) for \(0<|h|<t/2\).
Throughout the following estimates,
\(\tau\in[t/2,3t/2]\), and constants may depend on
\(t,\omega\).

For
\begin{equation*}
	0<r<\min\{\omega/2,(t/2)^3,1\},
\end{equation*}
we have \(\omega/r>2\) and \(\tau/r^{1/3}>1\).
There is no Dirac contribution in this region.
The pointwise estimate used here is
\eqref{eq:EMVfar-derivative}:
\begin{equation*}
	|u\partial_uF(u,z,1)|
	\lesssim z^{-11/6}u^{-3v_0+9/2+\varepsilon},
	\qquad u\ge1,\quad z>2.
\end{equation*}
Applying the chain rule to the identity
\begin{equation*}
	F_{\mathrm{reg}}(\tau,\omega,r)
	=
	r^{-1}F(\tau/r^{1/3},\omega/r,1)
\end{equation*}
therefore gives
\begin{equation*}
	|\partial_\tau F_{\mathrm{reg}}(\tau,\omega,r)|
	\lesssim
	\frac{1}{\tau r}
	\left(\frac{\omega}{r}\right)^{-11/6}
	\left(\frac{\tau}{r^{1/3}}\right)^{-3v_0+9/2+\varepsilon}
	=
	\omega^{-11/6}\tau^{-3v_0+7/2+\varepsilon}
	r^{v_0-2/3-\varepsilon/3}.
\end{equation*}
Then we obtain
\begin{equation*}
	|\partial_\tau F_{\mathrm{reg}}(\tau,\omega,r)g(r)|
	\lesssim_{t,\omega}\|g\|_{E_{7/6,7/6}}
	r^{v_0-11/6-\varepsilon/3}.
\end{equation*}

For
\begin{equation*}
	r>\max\{2\omega,(3t/2)^3,1\},
\end{equation*}
we have \(0<\omega/r<1/2\) and
\(0<\tau/r^{1/3}<1\).
The Dirac term and \(R_1\) vanish in this region, so
\begin{equation*}
	F_{\mathrm{reg}}(\tau,\omega,r)
	=
	r^{-1}
	\left[
	\sigma(\tau/r^{1/3})(\omega/r)^{-7/6}
	+
	R_2(\tau/r^{1/3},\omega/r)
	\right].
\end{equation*}
The time-derivative estimates for these two terms are
(see \eqref{eq:EMVsigma-derivative})
\begin{equation*}
	|u\sigma'(u)|\lesssim u^4,
	\qquad 0<u\le1,
\end{equation*}
and, taking  \(b=13/12\) in \eqref{eq:EMVR2-derivative},
\begin{equation*}
	|u\partial_uR_2(u,z)|
	\lesssim z^{-13/12}u^{3/4-\varepsilon},
	\qquad 0<u\le1,\quad 0<z<1/2.
\end{equation*}
The chain rule consequently yields
\begin{equation*}
	|\partial_\tau F_{\mathrm{reg}}(\tau,\omega,r)|
	\lesssim
	\frac{1}{\tau r}
	\left[
	\left(\frac{\omega}{r}\right)^{-7/6}
	\left(\frac{\tau}{r^{1/3}}\right)^4
	+
	\left(\frac{\omega}{r}\right)^{-13/12}
	\left(\frac{\tau}{r^{1/3}}\right)^{3/4-\varepsilon}
	\right].
\end{equation*}
Multiplication by the bound for \(|g(r)|\) gives
\begin{equation*}
	|\partial_\tau F_{\mathrm{reg}}(\tau,\omega,r)g(r)|
	\lesssim_{t,\omega}\|g\|_{E_{7/6,7/6}}
	\left(r^{-7/3}+r^{-4/3+\varepsilon/3}\right).
\end{equation*}

For the remaining case, both \(r\)
and \(\tau/r^{1/3}\) have positive lower bounds and finite
upper bounds. The time-derivative estimates for the
\(\sigma\), \(R_2\), and far-region terms
(see \eqref{eq:EMVsigma-derivative},
\eqref{eq:EMVR2-derivative}, and
\eqref{eq:EMVfar-derivative}) are therefore
uniformly bounded there. For \(R_1\)
(see \eqref{eq:EMVR1-derivative}), the only additional
factor is
\begin{equation*}
	\left|\frac{\omega}{r}-1\right|^{-5/6}
	=
	r^{5/6}|\omega-r|^{-5/6}.
\end{equation*}
Thus, for \(r\ne\omega\), we can obtain
\begin{equation}\label{eq:appendix-diagonal-majorant}
	|\partial_\tau F_{\mathrm{reg}}(\tau,\omega,r)g(r)|
	\lesssim_{t,\omega}\|g\|_{E_{7/6,7/6}}
	\left(1+|\omega-r|^{-5/6}\right).
\end{equation}
These bounds are integrable in \(r\): at zero,
\begin{equation*}
	v_0-\frac{11}{6}-\frac{\varepsilon}{3}>-1;
\end{equation*}
at infinity,
\begin{equation*}
	-\frac73<-1,
	\qquad
	-\frac43+\frac{\varepsilon}{3}<-1;
\end{equation*}
and the singularity in (\ref{eq:appendix-diagonal-majorant}) is integrable because \(5/6<1\).

For fixed \(r\ne\omega\), \(0<|h|<t/2\), the mean value theorem gives
\begin{equation*}
	\left|
	\frac{F_{\mathrm{reg}}(t+h,\omega,r)
		-F_{\mathrm{reg}}(t,\omega,r)}{h}\,g(r)
	\right|
	\le
	\sup_{\tau\in[t/2,3t/2]}
	|\partial_\tau F_{\mathrm{reg}}(\tau,\omega,r)g(r)|.
\end{equation*}
The preceding estimates bound the right-hand side by
an integrable function of \(r\), independent of \(h\).
Dominated convergence therefore gives
\begin{equation*}
	\lim_{h\to0}
	\int_0^\infty
	\frac{F_{\mathrm{reg}}(t+h,\omega,r)
		-F_{\mathrm{reg}}(t,\omega,r)}{h}\,g(r)\,dr
	=
	\int_0^\infty
	\partial_tF_{\mathrm{reg}}(t,\omega,r)g(r)\,dr.
\end{equation*}
Differentiating the explicit Dirac contribution separately,
we obtain
\begin{equation*}
	\frac{d}{dt}S(t)g(\omega)
	=
	-\mathfrak a\omega^{-1/3}
	e^{-\mathfrak a t\omega^{-1/3}}g(\omega)
	+
	\int_0^\infty
	\partial_tF_{\mathrm{reg}}(t,\omega,r)g(r)\,dr=\int_0^\infty\partial_tF(t,\omega,r)g(r)\,dr.
\end{equation*}
The derivative in this calculation is the ordinary scalar
derivative with respect to $t$.

Since \(S(\cdot)g|_I\) is strongly continuously differentiable
on \((0,\infty)\), and its strong derivative in
\(L^\infty(I)\) is \(LS(t)g|_I\), we can identify that
\begin{equation*}
	LS(t)g|_I
	=
	\left(
	\int_0^\infty
	\partial_tF(t,\cdot,r)g(r)\,dr
	\right)\Big|_I
	\quad\text{in }L^\infty(I).
\end{equation*}

Since $I$ is arbitrary, \eqref{eq:fixed-absolute-E} then implies \eqref{eq:fixed-LS-E-bound}.

Next we show that the same conclusions hold for $g\in Y$. For $g\in Y$, we prove
\begin{equation*}
	S(\cdot)g|_I\in C^1((0,\infty);L^\infty(I))
\end{equation*}
and \eqref{eq:fixed-local-equation} for every compact interval
$I\subset(0,\infty)$.
We follow the proof for $E_{7/6,7/6}$ data: first prove
continuity of $LS(t)g|_I$, then establish
\eqref{eq:fixed-local-integral-positive}, and finally
identify the strong derivative with the differentiated kernel.

For continuity of $LS(t)g|_I$, fix $I=[\alpha,\beta]$
and $0<b<B<\infty$.
By \eqref{eq:kernel-YZ} and
$\mathcal Z\subset E_{7/6,7/6}$,
\begin{equation}\label{asdd}
	\sup_{b\le t\le B}
	\|S(t)g\|_{E_{7/6,7/6}}
	\lesssim_{b,B}\|g\|_Y.
\end{equation}
Also, $Y\subset E_{3/2,3/2}$, so
\eqref{eq:fixed-critical-continuity} gives
\begin{equation}\label{asddd}
	\|S(t_2)g-S(t_1)g\|_{L^\infty(J)}
	\lesssim_{b,B,J}|t_2-t_1|\|g\|_Y
\end{equation}
for every compact interval $J\subset(0,\infty)$ and
$t_1,t_2\in[b,B]$.

To pass from this local continuity to continuity after
applying $L$, choose $0<\rho<\alpha/2$ and $R>2\beta$.
The kernel formula for $L$ gives
\begin{equation*}
	LS(t_2)g(\omega)-LS(t_1)g(\omega)
	=
	-\mathfrak a\omega^{-1/3}
	[S(t_2)g(\omega)-S(t_1)g(\omega)]
	+
	\omega^{-4/3}\int_0^\infty
	K(r/\omega)[S(t_2)g(r)-S(t_1)g(r)]\,dr.
\end{equation*}
For $r<\rho$, use
$|K(r/\omega)|\lesssim(r/\omega)^{1/2}$
from \eqref{eq:Kbounds} and the uniform
$E_{7/6,7/6}$ bound (\ref{asdd}) to obtain
\begin{equation*}
	\sup_{\omega\in I}
	\omega^{-4/3}\int_0^\rho
	|K(r/\omega)|\,|S(t_2)g(r)-S(t_1)g(r)|\,dr
	\lesssim_{b,B,I}
	\|g\|_Y\rho^{1/3}.
\end{equation*}
For $r>R$, use
$|K(r/\omega)|\lesssim(r/\omega)^{-7/6}$
from \eqref{eq:Kbounds} and (\ref{asdd}) to obtain
\begin{equation*}
	\sup_{\omega\in I}
	\omega^{-4/3}\int_R^\infty
	|K(r/\omega)|\,|S(t_2)g(r)-S(t_1)g(r)|\,dr
	\lesssim_{b,B,I}
	\|g\|_YR^{-4/3}.
\end{equation*}
On $[\rho,R]$,
\begin{equation*}
	\sup_{\omega\in I}
	\omega^{-4/3}\int_\rho^R|K(r/\omega)|\,dr
	\le
	\alpha^{-1/3}
	\int_{\rho/\beta}^{R/\alpha}|K(z)|\,dz<\infty,
\end{equation*}
because the singularity at $z=1$ has exponent $5/6<1$.
The multiplication term is bounded by the same local
difference norm, since $I\subset[\rho,R]$.
Consequently,
\begin{equation*}
	\|LS(t_2)g-LS(t_1)g\|_{L^\infty(I)}
	\le
	C_{I,\rho,R}
	\|S(t_2)g-S(t_1)g\|_{L^\infty([\rho,R])}
	+
	C_{b,B,I}\|g\|_Y
	(\rho^{1/3}+R^{-4/3}).
\end{equation*}
For fixed $\rho,R$, let $t_2\to t_1$ using
\eqref{asddd}.
Then let $\rho\downarrow0$ and $R\to\infty$.
This proves
\begin{equation*}
	LS(\cdot)g|_I\in C((0,\infty);L^\infty(I)).
\end{equation*}

For \eqref{eq:fixed-local-integral-positive}, fix
$0<t_1<t_2$.
We first record the absolute kernel bound needed
to integrate \eqref{eq:fundamental-tested-identity}
against $g(r)$:
\begin{equation*}
	\int_0^\infty |F(\tau,\omega,r)|\,|g(r)|\,dr
	\lesssim_{t_1,t_2}\|g\|_Y\omega^{-7/6},
	\qquad t_1\le\tau\le t_2,\quad \omega>0.
\end{equation*}
Indeed, the absolute estimates
\eqref{eq:diagonal}, \eqref{eq:R2shape}, and
\eqref{eq:farshape}, as used in proving
\eqref{eq:kernel-YZ}, bound the contributions from
\eqref{eq:fixed-part-dirac}--\eqref{eq:fixed-part-far}
in $X$, uniformly for $\tau\in[t_1,t_2]$.
Since $X\subset E_{7/6,7/6}$, each contribution is
bounded by $\|g\|_Y\omega^{-7/6}$.
For \eqref{eq:fixed-part-sigma}, use \eqref{eq:cfinal}:
\begin{equation*}
	\omega^{-7/6}
	\int_{\omega/2}^\infty
	r^{1/6}|\sigma(\tau/r^{1/3})|\,|g(r)|\,dr
	\le
	\omega^{-7/6}
	\int_0^\infty
	r^{1/6}|\sigma(\tau/r^{1/3})|\,|g(r)|\,dr
	\lesssim_{t_1,t_2}\|g\|_Y\omega^{-7/6}.
\end{equation*}
Adding the five absolute contributions proves the
required bound.

Consequently, we can obtain
\begin{equation*}
	\mathfrak a\omega^{-1/3}
	\int_0^\infty|F(\tau,\omega,r)|\,|g(r)|\,dr
	\lesssim_{t_1,t_2}\|g\|_Y\omega^{-3/2}.
\end{equation*}
For the integral part of $L$, the same bound gives
\begin{equation*}
	\omega^{-4/3}\int_0^\infty |K(\xi/\omega)|
	\int_0^\infty |F(\tau,\xi,r)|\,|g(r)|\,dr\,d\xi
	\lesssim_{t_1,t_2}
	\|g\|_Y\omega^{-3/2}
	\int_0^\infty |K(z)|z^{-7/6}\,dz.
\end{equation*}
The last integral is finite by \eqref{eq:Kbounds}.
After multiplication by $|\eta(\omega)|$, both bounds
are integrable over $[t_1,t_2]\times I$.

Thus, the above estimates justify the application of Fubini, which yields
\begin{equation*}
	\int_I[S(t_2)g-S(t_1)g](\omega)\eta(\omega)\,d\omega
	=
	\int_{t_1}^{t_2}\int_I
	LS(\tau)g(\omega)\eta(\omega)\,d\omega\,d\tau.
\end{equation*}
Since $LS(\tau)g|_I$ is continuous, its time integral
is a Bochner integral in $L^\infty(I)$.
The difference between the two sides of
\eqref{eq:fixed-local-integral-positive} therefore
belongs to $L^\infty(I)$ and has zero integral against
every $\eta\in C_c^\infty(I)$.
Density proves
\eqref{eq:fixed-local-integral-positive} for $Y$ data.
 Hence for $g\in Y$, one can still ensure
\begin{equation*}
	\frac{d}{dt}(S(t)g|_I)=(LS(t)g)|_I,
	\qquad t>0,
\end{equation*}
with a strong derivative in $L^\infty(I)$.

For identification with the differentiated kernel,
fix $t,\omega>0$ and use
\begin{equation*}
	|g(r)|\le\|g\|_Y r^{-3/2},\qquad r>0.
\end{equation*}
For
$0<r<\min\{\omega/2,(t/2)^3,1\}$,
the scaling formula and
\eqref{eq:EMVfar-derivative} give, uniformly for
$\tau\in[t/2,3t/2]$,
\begin{equation*}
	|\partial_\tau F_{\mathrm{reg}}(\tau,\omega,r)g(r)|
	\lesssim_{t,\omega}
	\|g\|_Y r^{v_0-13/6-\epsilon/3}.
\end{equation*}
For
$r>\max\{2\omega,(3t/2)^3,1\}$,
the estimates
\eqref{eq:EMVsigma-derivative} and
\eqref{eq:EMVR2-derivative}, with exponent
$b=13/12$, give
\begin{equation*}
	|\partial_\tau F_{\mathrm{reg}}(\tau,\omega,r)g(r)|
	\lesssim_{t,\omega}
	\|g\|_Y
	\left(r^{-8/3}+r^{-5/3+\epsilon/3}\right).
\end{equation*}
On the remaining compact interval,
\eqref{eq:EMVR1-derivative} and the other derivative
bounds give
\begin{equation*}
	|\partial_\tau F_{\mathrm{reg}}(\tau,\omega,r)g(r)|
	\lesssim_{t,\omega}
	\|g\|_Y\left(1+|r-\omega|^{-5/6}\right).
\end{equation*}
These bounds are integrable, since
\begin{equation*}
	v_0-\frac{13}{6}-\frac{\epsilon}{3}>-1,
	\qquad
	-\frac83<-1,
	\qquad
	-\frac53+\frac{\epsilon}{3}<-1,
	\qquad
	\frac56<1.
\end{equation*}
Then the dominated convergence,
together with differentiation of the Dirac part, gives
\begin{equation*}
	\frac{d}{dt}S(t)g(\omega)
	=
	-\mathfrak a\omega^{-1/3}
	e^{-\mathfrak a t\omega^{-1/3}}g(\omega)
	+
	\int_0^\infty
	\partial_tF_{\mathrm{reg}}(t,\omega,r)g(r)\,dr
\end{equation*}
for almost every $\omega$, for each fixed $t>0$. Thus, we have the identification:
\begin{equation*}
	(LS(t)g)|_I
	=
	\left(
	\int_0^\infty\partial_tF(t,\cdot,r)g(r)\,dr
	\right)\Big|_I
	\quad\text{in }L^\infty(I).
\end{equation*}
This proves \eqref{eq:fixed-local-equation}
for $g\in Y$ and every $t>0$.

\end{proof}

The positive-time equation gives a limit at zero. We next identify
that limit and include zero as an endpoint of the integral identity.
\begin{lemma}[Initial trace and the integral equation from time zero]
	\label{lem:appendix-initial-trace}
	Let $g\in E_{7/6,7/6}\cup Y$, and set $S(0)g=g$.
	For every compact interval $I\subset(0,\infty)$,
	\begin{equation*}
		S(\cdot)g|_I\in C([0,\infty);L^\infty(I)).
	\end{equation*}
	For $g\in E_{7/6,7/6}$,
	\begin{equation}\label{eq:fixed-local-trace-rate}
		\|S(t)g-g\|_{L^\infty(I)}\lesssim_I t\|g\|_{E_{7/6,7/6}},\qquad0<t\le1.
	\end{equation}
	For $g\in Y$,
	\begin{equation}\label{errrr}
		\|S(t)g-g\|_{L^\infty(I)}\lesssim_I t\|g\|_Y,\qquad0<t\le1.
	\end{equation}
	For either class of data,
	\begin{equation}\label{eq:fixed-local-integral-zero}
		S(t)g|_I=g|_I+\int_0^t(LS(r)g)|_I\,dr,\qquad t\ge0,
	\end{equation}
	where the integral is a Bochner integral in $L^\infty(I)$.
\end{lemma}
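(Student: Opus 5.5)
The plan is to obtain the trace from the local differential equation of Lemma~\ref{lem:appendix-local-equation} together with the initial condition \eqref{eq:fundamental-initial-trace}.

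\textbf{Step 1: a uniform bound on $LS(r)g$ near zero.} For $g\in E_{7/6,7/6}$ I would show
\begin{equation*}
 \sup_{0<r\le1}\|LS(r)g\|_{L^\infty(I)}\lesssim_I\|g\|_{E_{7/6,7/6}} .
\end{equation*}
This follows from \eqref{eq:fixed-L-E} applied to $S(r)g$ and the bound $\|S(r)g\|_{E_{7/6,7/6}}\lesssim\|g\|_{E_{7/6,7/6}}$ from \eqref{eq:fixed-absolute-E}. For $g\in Y$ the same bound with $\|g\|_Y$ needs more care, because \eqref{eq:kernel-YZ} degenerates like $k(r)$. Instead I would use \eqref{eq:fixedYY}, which gives $\sup_{r\le1}\|S(r)g\|_Y\lesssim\|g\|_Y$, and then \eqref{eq:fixed-L-Y}. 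This requires $S(r)g$ for $g\in Y$ to be given by the same absolutely convergent kernel, which is \eqref{eq:kernel-YY-absolute}. With these bounds, $r\mapsto LS(r)g|_I$ is continuous on $(0,1]$ and bounded, hence Bochner integrable on $(0,t)$.

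\textbf{Step 2: identify the limit at zero.} By \eqref{eq:fixed-local-integral-positive}, for $0<\varepsilon<t$,
\begin{equation*}
 S(t)g|_I=S(\varepsilon)g|_I+\int_\varepsilon^t LS(r)g|_I\,dr .
\end{equation*}
The integrals converge in $L^\infty(I)$ as $\varepsilon\downarrow0$, so $S(\varepsilon)g|_I$ has a strong limit $\ell$ in $L^\infty(I)$. It remains to show $\ell=g|_I$.

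Test against $\eta\in C_c^\infty(I)$ and write
\begin{equation*}
 \int\eta\,S(\varepsilon)g=\int_0^\infty g(r)\Bigl(\int\eta(\omega)F(\varepsilon,\omega,r)\,d\omega\Bigr)dr .
\end{equation*}
By \eqref{eq:fundamental-initial-trace}, the inner integral tends to $\eta(r)$ for each $r$. For dominated convergence, the inner integral must be bounded by an integrable majorant times $|g(r)|$. By the scaling of $F$, it equals $\int\eta(r u)F(\varepsilon/r^{1/3},u,1)\,du$. Using the kernel bounds of Theorem~\ref{thm:EMV}, I would bound it uniformly for $\varepsilon\le1$ by $C_\eta\min\{1,r^{-1}\cdot(\text{decay})\}$. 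The difficulty is the range $r\to\infty$ (with $\omega\in I$ fixed) and $r\to0$.
\begin{itemize}
\item For large $r$ one has $\omega/r<1/2$ and $\varepsilon/r^{1/3}<1$. There $|F|\lesssim r^{-1}[(\varepsilon/r^{1/3})^4(\omega/r)^{-7/6}+(\omega/r)^{-13/12}(\varepsilon r^{-1/3})^{3/4-}]$, as in the proof of Lemma~\ref{lem:appendix-absolute-kernel}. This decays like $r^{-1/6+}$, and times $|g|\lesssim r^{-7/6}$ it is integrable.
\item For small $r$ one uses \eqref{eq:EMVfar}, which gives a factor $r^{1/2}$ or better, again integrable against $r^{-7/6}$, uniformly in $\varepsilon\le1$.
\end{itemize}
This yields $\int\eta\,\ell=\int\eta g$, so $\ell=g|_I$, and \eqref{eq:fixed-local-integral-zero} follows.

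\textbf{Step 3: rate and continuity.} The rates \eqref{eq:fixed-local-trace-rate} and \eqref{errrr} then come from \eqref{eq:fixed-local-integral-zero} and the uniform bound of Step 1 for $0<t\le1$. Continuity on $[0,\infty)$ combines these rates with the positive-time $C^1$ property from Lemma~\ref{lem:appendix-local-equation}.

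The main obstacle I expect is Step 1 for $Y$ data: the bound must be uniform as $r\downarrow0$, and if \eqref{eq:fixedYY} together with the kernel identification does not directly give it, one falls back on splitting $L$ into near/far pieces as in the continuity argument of Lemma~\ref{lem:appendix-local-equation}. A secondary obstacle is producing the $\varepsilon$-uniform majorant in Step 2 near the diagonal $\omega=r$; I would handle that piece via the Dirac part plus the integrable $R_1$ singularity.
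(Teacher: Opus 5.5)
Your proposal is correct and follows essentially the same route as the paper. The shared steps are: a uniform-in-$r\le1$ bound on $\|LS(r)g\|_{L^\infty(I)}$ (via \eqref{eq:fixed-L-E} and \eqref{eq:fixed-absolute-E} for $E_{7/6,7/6}$ data, via \eqref{eq:fixedYY} and \eqref{eq:fixed-L-Y} for $Y$ data); existence of the $L^\infty(I)$ limit from \eqref{eq:fixed-local-integral-positive}; identification of that limit by testing against $\eta$, using \eqref{eq:fundamental-initial-trace} and dominated convergence from the kernel bounds of Theorem~\ref{thm:EMV}; and then the rates and continuity. Your fallback for $Y$ data is unnecessary. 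For the majorant with $Y$ data near $r=0$, the $\varepsilon$-uniform bound $|F(\varepsilon,\omega,r)|\lesssim_I r^{5/6}$ for $r<\omega/2$ still absorbs the envelope $r^{-3/2+2\delta}$, so the same dominated-convergence argument applies.
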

\begin{proof}
Fix $I=[\alpha,\beta]\subset(0,\infty)$ and first take
$g\in E_{7/6,7/6}$.
By \eqref{eq:fixed-L-E} and
\eqref{eq:fixed-basic-E-bound},
\begin{equation*}
	\|LS(\tau)g\|_{L^\infty(I)}
	\lesssim_I
	\|S(\tau)g\|_{E_{7/6,7/6}}
	\lesssim
	\|g\|_{E_{7/6,7/6}},
	\qquad \tau>0.
\end{equation*}
Integrating this bound in
\eqref{eq:fixed-local-integral-positive} gives
\begin{equation}\label{eq:appendix-local-lipschitz}
	\|S(t_2)g-S(t_1)g\|_{L^\infty(I)}
	\lesssim_I(t_2-t_1)\|g\|_{E_{7/6,7/6}},
	\qquad 0<t_1<t_2.
\end{equation}
	Thus \(S(t)g|_I\) has a limit in \(L^\infty(I)\) as
	\(t\downarrow0\). It remains to identify this limit
	as \(g|_I\).
	
	Recall that the fundamental solution $F$ has initial data $\delta_r$, which means
	\begin{equation*}
		\lim_{t\downarrow0}
		\int_I\eta(\omega)\,F(t,\omega,r)d\omega
		=\eta(r),
		\qquad \forall\eta\in C_c^\infty(I).
	\end{equation*}
	
The estimates in Theorem~\ref{thm:EMV} give an integrable
bound in $r$, independent of $0<t\le1$, for
$|g(r)|\int_I|\eta(\omega)|\,|F(t,\omega,r)|d\omega$,
so Fubini and dominated convergence yield
	\begin{equation*}
		\lim_{t\downarrow0}
		\int_I \eta(\omega)S(t)g\,d\omega
		=
		\int_I \eta(\omega)g(\omega)\,d\omega.
	\end{equation*}
	The existing \(L^\infty(I)\) limit consequently equals
	\(g|_I\) almost everywhere.
	Letting \(t_1\downarrow0\) in (\ref{eq:appendix-local-lipschitz}) gives
	\begin{equation*}
		\|S(t)g-g\|_{L^\infty(I)}
		\lesssim_I t\|g\|_{E_{7/6,7/6}},
		\qquad 0<t\le1.
	\end{equation*}
	Together with the positive-time continuity, this proves
	\(S(\cdot)g|_I\in C([0,\infty);L^\infty(I))\).
	
	Since the map \(\tau\mapsto LS(\tau)g|_I\) is continuous for
	\(\tau>0\) and uniformly bounded near zero,
	\eqref{eq:fixed-local-integral-positive} now gives
	\begin{equation*}
		S(t)g|_I
		=
		g|_I+\int_0^t LS(\tau)g|_I\,d\tau,
		\qquad t\ge0.
	\end{equation*}
	It is precisely the definition of integral solution.
	
	For $g\in Y$, \eqref{eq:fixedYY} and
	\eqref{eq:fixed-L-Y} give
	\begin{equation*}
		\sup_{0<t\le1}\|LS(t)g\|_{L^\infty(I)}\lesssim_I\|g\|_Y.
	\end{equation*}
	Then the positive-time identity \eqref{eq:fixed-local-integral-positive} therefore makes $S(t)g|_I$ Cauchy at zero. As before, we can apply Theorem \ref{thm:EMV} to justify the application of Fubini, which identifies this limit with $g|_I$. This concludes (\ref{errrr}) and
	\eqref{eq:fixed-local-integral-zero} for $Y$ data.
	
\end{proof}

The local equation and trace allow us to prove the semigroup identity
before using it to commute $S(t)$ and $L$.
\begin{lemma}[Semigroup identity and commutation with $L$]
\label{lem:appendix-semigroup-commutation}
For $g\in E_{7/6,7/6}\cup Y$,
\begin{equation}\label{eq:fixed-semigroup}
 S(t)S(s)g=S(t+s)g,\qquad t,s\ge0,
\end{equation}
where $S(0)=\Id$. For $g\in Y$ and $s>0$, the composition
is defined using $S(s)g\in\mathcal Z$.
For $g\in E_{7/6,7/6}$ and $t>0$, the kernel integral
\begin{equation*}
 S(t)Lg(\omega)=\int_0^\infty F(t,\omega,r)Lg(r)\,dr
\end{equation*}
is absolutely convergent for almost every $\omega>0$. For every compact interval
$I\subset(0,\infty)$ and $g\in E_{7/6,7/6}$,
\begin{equation}\label{eq:fixed-commutation-E}
 (LS(t)g)|_I=(S(t)Lg)|_I\quad\text{in }L^\infty(I).
\end{equation}
Finally,
\begin{equation}\label{eq:fixed-stationary}
 S(t)f_M=f_M,\qquad t\ge0.
\end{equation}
\end{lemma}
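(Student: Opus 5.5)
We prove the three assertions in the order \eqref{eq:fixed-commutation-E}, \eqref{eq:fixed-semigroup}, \eqref{eq:fixed-stationary}, because the later ones use the earlier ones.

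\textbf{Commutation.} Fix $g\in E_{7/6,7/6}$. By \eqref{eq:fixed-L-E} we have $Lg\in E_{3/2,3/2}$, and the absolute convergence of $S(t)Lg$ follows from the local absolute bound \eqref{eq:fixed-absolute-local} together with the global far-field estimates used in Lemma~\ref{lem:appendix-absolute-kernel}. Both sides of \eqref{eq:fixed-commutation-E} are then written as iterated integrals against a test function $\eta\in C_c^\infty(I)$:
\begin{equation*}
\langle LS(t)g,\eta\rangle=\int_0^\infty g(r)\,\langle LF(t,\cdot,r),\eta\rangle\,dr,
\qquad
\langle S(t)Lg,\eta\rangle=\int_0^\infty Lg(\xi)\Big(\int\eta(\omega)F(t,\omega,\xi)\,d\omega\Big)d\xi .
\end{equation*}
The natural route is to exploit the scaling $F(t,\omega,r)=r^{-1}F(t/r^{1/3},\omega/r,1)$ together with the fact that $L$ has the same homogeneity. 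The goal is to show that the adjoint equation holds in the source variable, namely $\partial_tF=L_r^{*}F$, which is the statement that $S(t)$ commutes with $L$.

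A cleaner alternative avoids the adjoint kernel. Set $u(\tau)=S(\tau)Lg$ and $v(\tau)=LS(\tau)g$. By Lemma~\ref{lem:appendix-local-equation} and Lemma~\ref{lem:appendix-initial-trace}, $v$ is the derivative of $S(\tau)g$, so $\int_0^t v=S(t)g-g$. For $u$, apply \eqref{eq:fixed-local-integral-zero} to the datum $g$ and then $S(t)$ to the resulting identity; this formally gives $S(t)g-g=\int_0^t S(\tau)Lg\,d\tau$, i.e.\ $\int_0^t u=S(t)g-g$ as well. Differentiating in $t$ then yields $u=v$. The difficulty is that this argument presupposes the semigroup property. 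I would therefore prove the semigroup identity first, by uniqueness, and afterwards deduce commutation from $S(t+h)g-S(t)g=S(t)(S(h)g-g)$, dividing by $h$ and letting $h\downarrow0$. The convergence $(S(h)g-g)/h\to Lg$ holds locally by \eqref{eq:fixed-local-integral-zero} and the continuity of $LS(\cdot)g$, and is bounded in $E_{3/2,3/2}$ by \eqref{eq:fixed-L-E}. Passing to the limit inside $S(t)$ is exactly what \eqref{eq:fixed-critical-local-convergence} permits.

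\textbf{Semigroup identity.} This is the main obstacle. The EMV fundamental solution is unique in its class, and the kernel identity $\int F(t,\omega,\xi)F(s,\xi,r)\,d\xi=F(t+s,\omega,r)$ is the natural consequence of that uniqueness. I would prove it as follows. Fix $r$ and $s>0$. The function $t\mapsto\int F(t,\cdot,\xi)F(s,\xi,r)\,d\xi$ solves $\partial_tF=LF$ in the tested sense \eqref{eq:fundamental-tested-identity}; this follows by integrating the tested identity against $F(s,\xi,r)\,d\xi$ and using Fubini with the bounds of Lemma~\ref{lem:appendix-absolute-kernel}. As $t\downarrow0$ it has trace $F(s,\cdot,r)$, by the argument of Lemma~\ref{lem:appendix-initial-trace}, and it satisfies the pointwise bounds of Theorem~\ref{thm:EMV}. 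Consequently $t\mapsto F(t,\cdot,r)$ shifted by $s$ and the composed kernel are two solutions in the uniqueness class with the same data, so they coincide. Integrating this kernel identity against $g(r)$ gives \eqref{eq:fixed-semigroup}, with Fubini justified by \eqref{eq:fixed-absolute-E} for $E_{7/6,7/6}$ data. For $Y$ data, one first uses $S(s)g\in\mathcal Z\subset E_{7/6,7/6}$, which holds by \eqref{eq:kernel-YZ}. I expect the delicate point to be verifying that the composed kernel lies in the EMV uniqueness class, in particular that it has the correct Dirac part and the bound $|\omega-r|^{-5/6}$ near the diagonal. If that verification fails, I would fall back on a Fourier--Mellin argument: $L$ is dilation-covariant, so after the change $\omega\mapsto\log\omega$ the semigroup property reduces to the group property of the Mellin multiplier.

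\textbf{Stationarity.} Since $f_M\in E_{7/6,7/6}$ and $Lf_M=0$ by Lemma~\ref{lem:stationary}, \eqref{eq:fixed-commutation-E} gives $LS(t)f_M=S(t)Lf_M=0$ locally. Then \eqref{eq:fixed-local-integral-zero} yields $S(t)f_M|_I=f_M|_I$ for every compact $I$, hence $S(t)f_M=f_M$ almost everywhere.
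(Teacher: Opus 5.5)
\textbf{Commutation and stationarity.} Your final order is the paper's: the semigroup identity by uniqueness, then commutation from $S(t)\bigl[(S(h)g-g)/h\bigr]=(S(t+h)g-S(t)g)/h$, then stationarity from $LS(t)f_M=S(t)Lf_M=0$ and \eqref{eq:fixed-local-integral-zero}. Your commutation and stationarity steps are essentially the paper's arguments. Using \eqref{eq:fixed-critical-local-convergence} to pass the difference quotient inside $S(t)$ is a clean way to do that step.

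You skip one point there. Lemma~\ref{lem:appendix-local-equation} gives continuity of $LS(\tau)g$ only for $\tau>0$. The local convergence $(S(h)g-g)/h\to Lg$ therefore needs $LS(\tau)g\to Lg$ in $L^\infty(I)$ as $\tau\downarrow0$. The paper proves this separately as \eqref{eq:fixed-L-initial-continuity}. It splits the $K$-integral into $r<\rho$, $r>R$ and $[\rho,R]$, and uses \eqref{eq:Kbounds}, the uniform $E_{7/6,7/6}$ bound on $S(\tau)g$, and \eqref{eq:fixed-local-trace-rate}.

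\textbf{Semigroup step: a gap as written.} Here your route differs from the paper's. Theorem~\ref{thm:EMV} gives uniqueness only for the initial measure $\delta_r$. Your two candidates, $F(t+s,\cdot,r)$ and $\int F(t,\cdot,\xi)F(s,\xi,r)\,d\xi$, both start from $F(s,\cdot,r)$, so ``two solutions in the uniqueness class with the same data'' is not covered by the theorem.

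\textbf{How to repair it.} Use the linearity trick. Set $D(t)=F(t+s,\cdot,r)-\int F(t,\cdot,\xi)F(s,\xi,r)\,d\xi$ and apply uniqueness to $F(t,\cdot,r)+D(t)$, which has datum $\delta_r$. You then have to check three things.
\begin{itemize}
\item $D$ has no Dirac part. This holds, because the Dirac coefficients multiply: $e^{-\mathfrak a tr^{-1/3}}e^{-\mathfrak a sr^{-1/3}}=e^{-\mathfrak a(t+s)r^{-1/3}}$.
\item Its function part obeys the class bounds near $\omega=r$. The regular$\times$regular term is a convolution of two $|\cdot|^{-5/6}$ singularities, hence $O(|\omega-r|^{-2/3})$, which is acceptable.
\item Integrating the tested identity against the measure $F(s,\xi,r)\,d\xi$ is justified by Fubini.
\end{itemize}
None of this follows from Lemma~\ref{lem:appendix-absolute-kernel}, whose bounds are for power-weighted data and do not control the Dirac mass or the $|\xi-r|^{-5/6}$ singularity of $F(s,\cdot,r)$. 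You would need additional near-diagonal estimates.

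\textbf{The paper's route.} The paper avoids this by working with functions rather than kernels. For $g\in E_{7/6,11/6}$, \eqref{eq:fixed-absolute-auxiliary} makes $D(t)=S(t+s)g-S(t)S(s)g$ a function bounded by $C\min(\omega^{-7/6},\omega^{-11/6})$. Lemma~\ref{lem:appendix-local-equation} gives its local equation, and it has zero local trace. Then $F(t,\cdot,1)+D(t)$ lies in the uniqueness class with no further work. The identity is extended to $E_{7/6,7/6}$ and to $Y$ by truncating $g_n=\mathbf 1_{[1/n,n]}g$ and using dominated convergence.

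\textbf{Comparison.} Once completed, your kernel-level route gives the Chapman--Kolmogorov identity directly, and hence every data class by a single Fubini argument, with no truncation step. The price is the extra Dirac and diagonal bookkeeping above. The Mellin fallback is not needed.
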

\begin{proof}

	For \eqref{eq:fixed-semigroup}, we first consider
	\(g\in E_{7/6,11/6}\).
	This additional decay at infinity allows us to use the
	uniqueness statement in Theorem~\ref{thm:EMV}.
	
	Indeed, \eqref{eq:fixed-absolute-auxiliary} gives
	\begin{equation*}
		|(S(t)g)(\omega)|
		\lesssim
		\|g\|_{E_{7/6,11/6}}
		\min(\omega^{-7/6},\omega^{-11/6}),
		\qquad t>0.
	\end{equation*}
	Thus \(S(t)\) is uniformly bounded on \(E_{7/6,11/6}\).
	Moreover,
	\begin{equation*}
		E_{7/6,11/6}\subset E_{7/6,7/6},
	\end{equation*}
	so Lemma~\ref{lem:appendix-local-equation} applies both to
	\(g\) and to \(S(s)g\).
	
	Fix \(s>0\). By the local differential equation in
	Lemma~\ref{lem:appendix-local-equation},
	\begin{equation*}
		\frac{d}{dt}\bigl(S(t+s)g-S(t)S(s)g\bigr)\Big|_I
		=
		L\bigl(S(t+s)g-S(t)S(s)g\bigr)\Big|_I,
		\qquad t>0,
	\end{equation*}
	with strong derivative in \(L^\infty(I)\), for every
	compact interval \(I\subset(0,\infty)\).
	The positive-time continuity in Lemma~\ref{lem:appendix-local-equation}
	and the local initial trace in Lemma~\ref{lem:appendix-initial-trace} also give
	\begin{equation*}
		\lim_{t\downarrow0}
		\|S(t+s)g-S(t)S(s)g\|_{L^\infty(I)}
		=0.
	\end{equation*}
	Finally, the bound on \(E_{7/6,11/6}\) gives
	\begin{equation*}
		|(S(t+s)g-S(t)S(s)g)(\omega)|
		\lesssim
		\|g\|_{E_{7/6,11/6}}
		\min(\omega^{-7/6},\omega^{-11/6}).
	\end{equation*}
	
	We claim that the uniqueness statement in
	Theorem~\ref{thm:EMV} forces this difference to vanish.
	To apply that statement, consider 
	\begin{equation*}
		F(t,\omega,1)
		+
		\bigl(S(t+s)g-S(t)S(s)g\bigr)(\omega).
	\end{equation*}
	By linearity and the preceding local equation, this measure
	satisfies the same equation as \(F(t,\omega,1)\).
	Its initial measure is \(\delta_1\), because the second term tends to zero locally in \(L^\infty\).
	
	It also belongs to the uniqueness class of
	Theorem~\ref{thm:EMV}. Its Dirac coefficient is unchanged.
	The added function is bounded by \(C\omega^{-7/6}\)
	near zero, by \(C\omega^{-11/6}\) at infinity, and is
	locally bounded. In particular, on
	\(0<|\omega-1|\le1/2\), its boundedness implies a bound
	by \(C|\omega-1|^{-5/6}\).
	Thus the required bounds for the function part are
	preserved. Uniqueness gives
	\begin{equation*}
		F(t,\omega,1)
		+
		\bigl(S(t+s)g-S(t)S(s)g\bigr)(\omega)
		=
		F(t,\omega,1).
	\end{equation*}
	This proves
	\eqref{eq:fixed-semigroup} for
	\(g\in E_{7/6,11/6}\) and \(t,s>0\).
	
	We now extend the equality to \(g\in E_{7/6,7/6}\).
	Define the cutoff
	\begin{equation*}
		g_n(\omega)=\mathbf 1_{[1/n,n]}(\omega)g(\omega),
		\qquad n\ge2.
	\end{equation*}
	Then \(g_n\in E_{7/6,11/6}\),
	\(g_n\to g\) almost everywhere, and
	\begin{equation*}
		|g_n(\omega)|
		\le \|g\|_{E_{7/6,7/6}}\omega^{-7/6}.
	\end{equation*}
	For fixed \(s>0\), dominated convergence, justified by
	\eqref{eq:fixed-absolute-E}, gives
	\begin{equation*}
		(S(s)g_n)(r)\longrightarrow(S(s)g)(r)
		\quad\text{for almost every }r>0.
	\end{equation*}
	The same estimate gives the uniform bound
	\begin{equation*}
		|(S(s)g_n)(r)|
		\lesssim \|g\|_{E_{7/6,7/6}}r^{-7/6}.
	\end{equation*}
	Applying dominated convergence once more, now to \(S(t)\),
	yields
	\begin{equation*}
		(S(t)S(s)g_n)(\omega)
		\longrightarrow
		(S(t)S(s)g)(\omega)
		\quad\text{for almost every }\omega>0.
	\end{equation*}
 Similarly, we also have 
	\begin{equation*}
		(S(t+s)g_n)(\omega)
		\longrightarrow
		(S(t+s)g)(\omega)
		\quad\text{for almost every }\omega>0.
	\end{equation*}
	Passing to the limit in the equality for \(g_n\) proves
	\eqref{eq:fixed-semigroup} for \(t,s>0\).
	Both sides belong to \(E_{7/6,7/6}\) by
	\eqref{eq:fixed-absolute-E}, so this is an equality in
	that space. The cases \(t=0\) or \(s=0\) follow from
	\(S(0)=\Id\).

For $g\in Y$, use the same truncations $g_n=\mathbf1_{[1/n,n]}g$.
They belong to $E_{7/6,7/6}$, satisfy $\|g_n\|_Y\le\|g\|_Y$,
and converge almost everywhere to $g$. For fixed $s>0$,
\eqref{eq:fixed-absolute-local}, with $|g_n|\le\|g\|_Y r^{-3/2}$,
gives $S(s)g_n\to S(s)g$ almost everywhere. The bound \eqref{eq:kernel-YZ} gives
\begin{equation*}
 |S(s)g_n(r)|\lesssim k(s)\|g\|_Y r^{-7/6}.
\end{equation*}
Thus applying dominated convergence again, \eqref{eq:fixed-semigroup} holds for $g\in Y$.

We next prove commutation for $E_{7/6,7/6}$ data. For \eqref{eq:fixed-commutation-E},
	\eqref{eq:fixed-L-E} first gives
	\begin{equation*}
		|Lg(r)|
		\lesssim \|g\|_{E_{7/6,7/6}}r^{-3/2}.
	\end{equation*}
	Thus \eqref{eq:fixed-absolute-local} makes the kernel
	integral \(S(t)Lg\) absolutely convergent and locally
	bounded for every \(t>0\). We next claim that
	\begin{equation}\label{eq:fixed-L-initial-continuity}
		\lim_{t\downarrow0}
		\|LS(t)g-Lg\|_{L^\infty(I)}
		=0
	\end{equation}
	for every compact interval \(I=[\alpha,\beta]\).
	Choose \(0<\rho<\alpha/2\) and \(R>2\beta\).
	By \eqref{eq:Kbounds},
	\begin{equation*}
		\sup_{\omega\in I}
		\omega^{-4/3}
		\int_0^\rho |K(r/\omega)|r^{-7/6}\,dr
		\lesssim_I\int_0^\rho r^{-2/3}\,dr
		\sim\rho^{1/3},
	\end{equation*}
	and
	\begin{equation*}
		\sup_{\omega\in I}
		\omega^{-4/3}
		\int_R^\infty |K(r/\omega)|r^{-7/6}\,dr
		\lesssim_I\int_R^\infty r^{-7/3}\,dr
		\sim R^{-4/3}.
	\end{equation*}
	On \([\rho,R]\), the integral of
	\(|K(r/\omega)|\) is bounded uniformly for \(\omega\in I\);
	the possible singularity has exponent \(5/6<1\).
	Using the formula for \(L\) and the uniform bound
	\eqref{eq:fixed-absolute-E}, we therefore obtain
	\begin{equation*}
		\|LS(v)g-Lg\|_{L^\infty(I)}
		\le
		C_{I,\rho,R}
		\|S(v)g-g\|_{L^\infty([\rho,R])}
		+
		C_I\|g\|_{E_{7/6,7/6}}
		\bigl(\rho^{1/3}+R^{-4/3}\bigr).
	\end{equation*}
	For fixed \(\rho,R\), the first term tends to zero by
	\eqref{eq:fixed-local-trace-rate}.
	Taking the upper limit as \(v\downarrow0\), then letting
	\(\rho\downarrow0\) and \(R\to\infty\), proves
	\eqref{eq:fixed-L-initial-continuity}.
	
To prove \eqref{eq:fixed-commutation-E}, we will apply
$S(t)$ to the difference quotient $(S(h)g-g)/h$.
We first show that this quotient converges locally to
$Lg$ and has a uniform bound that permits passage
to the limit inside the kernel integral of $S(t)$.

For the local convergence,
\eqref{eq:fixed-local-integral-zero} and
\eqref{eq:fixed-L-initial-continuity} give
\begin{equation*}
	\left\|
	\frac{S(h)g-g}{h}-Lg
	\right\|_{L^\infty(I)}
	\le
	\sup_{0<t<h}
	\|LS(t)g-Lg\|_{L^\infty(I)}
	\longrightarrow0
	\qquad(h\downarrow0).
\end{equation*}
To control the same quotients over all frequencies,
recall
\begin{equation*}
	\|LS(t)g\|_{E_{3/2,3/2}}
	\lesssim \|g\|_{E_{7/6,7/6}},
	\qquad t>0.
\end{equation*}
Applying this bound in
\eqref{eq:fixed-local-integral-zero} gives,
\begin{equation}\label{ere}
	\left\|\frac{S(h)g-g}{h}\right\|_{E_{3/2,3/2}}
	\lesssim \|g\|_{E_{7/6,7/6}},
	\qquad h>0.
\end{equation}

We now use these two conclusions to pass to the limit
after applying $S(t)$.
Fix $t>0$ and take $h=1/n$.
The local $L^\infty$ convergence just proved implies,
\begin{equation*}
	n\left((S\left(\frac1n\right)g)(r)-g(r)\right)
	\longrightarrow Lg(r)\quad\text{for almost every }r>0.
\end{equation*}
The uniform $r^{-3/2}$ bound (\ref{ere}) and
\eqref{eq:fixed-absolute-local} allow dominated convergence
in the function part of the kernel of $S(t)$.
The Dirac contribution converges by the same
almost-everywhere convergence. Consequently,
\begin{equation*}
	\left(S(t)\bigl[n(S(1/n)g-g)\bigr]\right)(\omega)
	\longrightarrow (S(t)Lg)(\omega)
	\quad\text{for almost every }\omega>0.
\end{equation*}

To identify this limit with $LS(t)g$, use
\eqref{eq:fixed-semigroup} to rewrite the expression
on the left as
\begin{equation}\label{ax}
	n\bigl(S(t+1/n)g-S(t)g\bigr)(\omega).
\end{equation}
By Lemma~\ref{lem:appendix-local-equation}, (\ref{ax}) converges strongly in $L^\infty(I)$
to $LS(t)g|_I$.
Comparison of the limits proves
\eqref{eq:fixed-commutation-E}.

We finally apply \eqref{eq:fixed-commutation-E}
to prove \eqref{eq:fixed-stationary}.
Lemma~\ref{lem:stationary} gives
$Lf_M=3\mathcal C(f_M)=0$, while
$f_M\in E_{7/6,7/6}$. Hence
\begin{equation*}
	LS(t)f_M=S(t)Lf_M=0,
	\qquad t>0.
\end{equation*}
The local integral identity
\eqref{eq:fixed-local-integral-zero}, applied to $f_M$,
therefore yields
\begin{equation*}
	S(t)f_M=f_M,
	\qquad t\ge0.
\end{equation*}

\end{proof}

We now strengthen the local derivative identity to a strong derivative
in $\mathcal Z$. This is the justification of $S'(t)$ in Section~\ref{sec:fixed}.
\begin{lemma}[Strong differentiation in $\mathcal Z$]
\label{lem:appendix-Z-differentiability}
For every $g\in\mathcal Z\cup Y$,
\begin{equation}\label{sdsd}
	S(\cdot)g\in C^1((0,\infty);\mathcal Z).
\end{equation}
For every $t>0$, define
\begin{equation*}
 S'(t)g=\lim_{h\to0}\frac{S(t+h)g-S(t)g}{h},
\end{equation*}
where the limit is taken in $\mathcal Z$. Then
\begin{equation*}
 S'(t)g=LS(t)g=\int_0^\infty\partial_tF(t,\cdot,r)g(r)\,dr.
\end{equation*}
In particular, both
expressions on the right belong to $\mathcal Z$.
For $0<t_1<t_2$,
\begin{equation}\label{eq:appendix-Z-time-integral}
 S(t_2)g-S(t_1)g=\int_{t_1}^{t_2}S'(r)g\,dr,
\end{equation}
where the integral is a Bochner integral.
If $g\in\mathcal Z$, then
\begin{equation*}
 S'(t)g=S(t)Lg\quad\text{in }\mathcal Z,\qquad t>0.
\end{equation*}
\end{lemma}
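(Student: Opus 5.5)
The plan is to work with the differentiated kernel operator
\begin{equation*}
D(t)g:=\int_0^\infty\partial_tF(t,\cdot,r)g(r)\,dr ,
\end{equation*}
and to show three things: $D(t)g\in\mathcal Z$; $t\mapsto D(t)g$ is continuous into $\mathcal Z$; and $S(t_2)g-S(t_1)g=\int_{t_1}^{t_2}D(r)g\,dr$ as a Bochner integral in $\mathcal Z$. The fundamental theorem of calculus for Bochner integrals then gives \eqref{sdsd} with $S'(t)g=D(t)g$. This also proves \eqref{eq:appendix-Z-time-integral}.

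To avoid circularity, I will use only kernel-level facts. These are \eqref{eq:kernel-derivative-XZ}, \eqref{eq:kernel-derivative-YZ} and \eqref{eq:kernel-ZZ}, the continuity bounds \eqref{eq:kernel-X-continuity}, \eqref{eq:kernel-Y-continuity} and \eqref{eq:fixed-Z-difference}, and the appendix Lemmas~\ref{lem:appendix-absolute-kernel}--\ref{lem:appendix-semigroup-commutation}. None of these uses the strong-derivative interpretation. In particular, $\|D(t)g\|_{\mathcal Z}\lesssim t^{-1}\|g\|_{\mathcal Z}$ for $g\in\mathcal Z$, and $\|D(t)g\|_{\mathcal Z}\lesssim t^{-1}k(t)\|g\|_Y$ for $g\in Y$.

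\textbf{Step 1 (identification with $LS(t)g$).} Since $\mathcal Z\subset E_{7/6,7/6}$, Lemma~\ref{lem:appendix-local-equation} gives $D(t)g=LS(t)g$ in every $L^\infty(I)$, hence almost everywhere. As $D(t)g\in\mathcal Z$, the function $LS(t)g$ lies in $\mathcal Z$ as well. The same argument applies to $g\in Y$.

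\textbf{Step 2 (continuity of $D(\cdot)g$ in $\mathcal Z$).} This is the step I expect to be the main obstacle, because there is no second-derivative kernel estimate. I will factor through the semigroup instead. Fix $s>0$ and let $t>s$. The function $S(s)g$ belongs to $\mathcal Z\subset E_{7/6,7/6}$: for $g\in Y$ this follows from \eqref{eq:kernel-YZ}, and for $g\in\mathcal Z$ from \eqref{eq:kernel-ZZ}. Hence \eqref{eq:fixed-semigroup} and \eqref{eq:fixed-commutation-E} give, locally and therefore almost everywhere,
\begin{equation*}
LS(t)g=LS(t-s)S(s)g=S(t-s)LS(s)g .
\end{equation*}
By Step~1, this says $D(t)g=S(t-s)h_s$ with $h_s:=D(s)g\in\mathcal Z$. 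For $t-s>0$, \eqref{eq:fixed-Z-difference} shows that $t\mapsto S(t-s)h_s$ is locally Lipschitz into $\mathcal Z$. Since $s$ is arbitrary, $D(\cdot)g\in C((0,\infty);\mathcal Z)$.

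\textbf{Step 3 (the integral identity in $\mathcal Z$).} By Step~2, $r\mapsto D(r)g$ is continuous into $\mathcal Z$ and bounded on $[t_1,t_2]$, so $\int_{t_1}^{t_2}D(r)g\,dr$ exists as a Bochner integral in $\mathcal Z$. Restriction to $L^\infty(I)$ is bounded on $\mathcal Z$, so the restriction of this integral is the corresponding Bochner integral in $L^\infty(I)$. On the other hand, the pointwise identity \eqref{eq:fixed-scalar-time-integral} holds for $E_{7/6,7/6}\cup E_{3/2,3/2}$ data, and hence for $g\in\mathcal Z\cup Y$. Testing against $\eta\in C_c^\infty(I)$, and using Fubini with \eqref{eq:fixed-absolute-E} or \eqref{eq:fixed-absolute-local}, shows that the $L^\infty(I)$ Bochner integral has the same pairings with every $\eta$ as $S(t_2)g-S(t_1)g$. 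By density the two agree on $I$. Since $I$ is arbitrary, the identity holds in $\mathcal Z$, which proves \eqref{eq:appendix-Z-time-integral}. Differentiability in $\mathcal Z$, with $S'(t)g=D(t)g=LS(t)g$, then follows.

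\textbf{Step 4 (the case $g\in\mathcal Z$).} For $g\in\mathcal Z$, \eqref{eq:fixed-L-E} gives $Lg\in E_{3/2,3/2}$, so $S(t)Lg$ is defined by an absolutely convergent kernel integral (Lemma~\ref{lem:appendix-semigroup-commutation}). By \eqref{eq:fixed-commutation-E}, $S(t)Lg=LS(t)g$ on every compact $I$. Combined with Step~1, this shows $S(t)Lg\in\mathcal Z$ and $S'(t)g=S(t)Lg$ in $\mathcal Z$.
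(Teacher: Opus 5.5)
Your proposal is correct and follows essentially the same route as the paper. You identify $LS(t)g$ with the differentiated kernel via Lemma~\ref{lem:appendix-local-equation}, bound it in $\mathcal Z$ by the kernel estimates, get continuity from $LS(t)g=S(t-s)LS(s)g$ together with \eqref{eq:fixed-Z-difference}, and then upgrade the local time-integral identity to $\mathcal Z$. The only difference is cosmetic: in Step~3 you re-derive the local identity by testing \eqref{eq:fixed-scalar-time-integral}, whereas the paper simply cites \eqref{eq:fixed-local-integral-positive}.
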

\begin{proof}
To prove that $LS(t)g\in\mathcal Z$, use
\eqref{eq:fixed-local-equation} to write
\begin{equation*}
	LS(t)g
	=
	\int_0^\infty\partial_tF(t,\cdot,r)g(r)\,dr.
\end{equation*}
For $g\in\mathcal Z$, \eqref{eq:kernel-ZZ} therefore gives
\begin{equation*}
	\|LS(t)g\|_{\mathcal Z}
	\lesssim t^{-1}\|g\|_{\mathcal Z}.
\end{equation*}
For $g\in Y$, \eqref{eq:kernel-derivative-YZ} gives
\begin{equation*}
	\|LS(t)g\|_{\mathcal Z}
	\lesssim t^{-1}k(t)\|g\|_Y.
\end{equation*}
For continuity of this $\mathcal Z$-valued function, fix $t_0>0$.
The kernel estimates put $S(t_0/2)g$ and $LS(t_0/2)g$ in
$\mathcal Z\subset E_{7/6,7/6}$. Apply the commutation identity
to $S(t_0/2)g$, and then the semigroup identity. For $t>t_0/2$,
\begin{equation*}
 LS(t)g=S(t-t_0/2)LS(t_0/2)g.
\end{equation*}
The equality first holds locally and then in $\mathcal Z$, since
both sides belong to that space. Moreover, the positive-time continuity
\eqref{eq:fixed-Z-difference} implies continuity of $LS(t)g$ in $\mathcal Z$. Consequently, the local identity
\eqref{eq:fixed-local-integral-positive} ensures (\ref{sdsd}) and \eqref{eq:appendix-Z-time-integral}. The last assertion is just the commutation identity of Lemma~\ref{lem:appendix-semigroup-commutation}.

\end{proof}

The uniqueness argument in Section~\ref{sec:linear} differentiates
$S(\theta(r))h(r)$ when $h$ is only an integral solution.
The following lemma supplies precisely that product rule.
\begin{lemma}[Time substitution and the product rule]\label{lem:appendix-product-rule}
Let $a<b$, and let $\theta:[a,b]\to(0,\infty)$ be Lipschitz
continuous, with $0<\inf_{[a,b]}\theta\le\sup_{[a,b]}\theta<\infty$.
Suppose that $h:[a,b]\to E_{7/6,7/6}$ satisfies
\begin{equation*}
 \sup_{a\le r\le b}\|h(r)\|_{E_{7/6,7/6}}<\infty.
\end{equation*}
Suppose that $v(r)\in E_{3/2,3/2}$ for almost every $r$ and 
\begin{equation*}
	\int_a^b\|v(r)\|_{E_{3/2,3/2}}\,dr<\infty.
\end{equation*}
Assume that for any compact interval $J\subseteq (0,\infty)$,
\begin{equation*}
 h(r)|_J=h(a)|_J+\int_a^r v(q)|_J\,dq,\qquad a\le r\le b,
\end{equation*}
where the integral is a Bochner integral in $L^\infty(J)$.
Then for any compact interval $I$, $r\mapsto S(\theta(r))h(r)|_I$ is absolutely continuous.
For almost every $r\in(a,b)$, its strong derivative in $L^\infty(I)$ satisfies
\begin{equation}\label{eq:appendix-product-rule}
 \frac{d}{dr}(S(\theta(r))h(r)|_I)
 =\theta'(r)(LS(\theta(r))h(r))|_I+(S(\theta(r))v(r))|_I.
\end{equation}
Moreover, $LS(\theta(r))h(r)$ can be replaced by $S(\theta(r))Lh(r)$.
\end{lemma}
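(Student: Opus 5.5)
We will prove the product rule by a difference-quotient argument, splitting the increment into a part where only the time argument of $S$ changes and a part where only $h$ changes. For $r,r+\varepsilon\in[a,b]$ we write
\begin{equation*}
S(\theta(r+\varepsilon))h(r+\varepsilon)-S(\theta(r))h(r)
=\bigl[S(\theta(r+\varepsilon))-S(\theta(r))\bigr]h(r+\varepsilon)
+S(\theta(r))\bigl[h(r+\varepsilon)-h(r)\bigr].
\end{equation*}
Set $m=\inf\theta>0$ and $M=\sup\theta$.

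\textbf{Step 1 (time-change term).} Since $h(r)\in E_{7/6,7/6}$ uniformly, \eqref{eq:fixed-local-integral-positive} together with the bound $\|LS(\tau)g\|_{L^\infty(I)}\lesssim_I\|g\|_{E_{7/6,7/6}}$ from the proof of Lemma~\ref{lem:appendix-local-equation} gives
\begin{equation*}
\bigl\|[S(\theta(r+\varepsilon))-S(\theta(r))]h(r+\varepsilon)\bigr\|_{L^\infty(I)}\lesssim_{I}|\theta(r+\varepsilon)-\theta(r)|\sup_q\|h(q)\|_{E_{7/6,7/6}}.
\end{equation*}
This is $O(|\varepsilon|)$ because $\theta$ is Lipschitz. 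To identify the derivative, we replace $h(r+\varepsilon)$ by $h(r)$; the resulting error is
\begin{equation*}
\int_{\theta(r)}^{\theta(r+\varepsilon)}LS(\tau)\bigl[h(r+\varepsilon)-h(r)\bigr]\,d\tau .
\end{equation*}
We will show that $\sup_{\tau\in[m,M]}\|LS(\tau)[h(r+\varepsilon)-h(r)]\|_{L^\infty(I)}\to0$. The increment $h(r+\varepsilon)-h(r)$ is bounded in $E_{7/6,7/6}$ and tends to zero in every $L^\infty(J)$, since by the integral identity its $E_{3/2,3/2}$-weighted local size is at most $\int_r^{r+\varepsilon}\|v\|_{E_{3/2,3/2}}$. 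We would then run the $\rho,R$ splitting argument of \eqref{eq:fixed-critical-local-convergence}, using that $LS(\tau)=S(\tau-m/2)LS(m/2)$ by Lemma~\ref{lem:appendix-semigroup-commutation}. Alternatively, we can apply the kernel formula for $L$ with the uniform $E_{7/6,7/6}$ bound \eqref{eq:fixed-absolute-E} to truncate at small and large $r$, exactly as in the proof of \eqref{eq:fixed-L-initial-continuity}. At points where $\theta$ is differentiable, and using continuity of $\tau\mapsto LS(\tau)h(r)|_I$, the first term divided by $\varepsilon$ then converges to $\theta'(r)LS(\theta(r))h(r)|_I$.

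\textbf{Step 2 (increment of $h$).} By hypothesis,
\begin{equation*}
h(r+\varepsilon)-h(r)=\int_r^{r+\varepsilon}v(q)\,dq
\end{equation*}
locally, with $v\in L^1((a,b);E_{3/2,3/2})$. Since $S(\theta(r))$ is bounded from $E_{3/2,3/2}$ to $L^\infty(I)$ uniformly for $\theta(r)\in[m,M]$ by Lemma~\ref{lem:appendix-positive-continuity}, we can pull $S(\theta(r))$ inside the integral by Fubini, justified by the absolute kernel bound \eqref{eq:fixed-absolute-local}. This gives
\begin{equation*}
S(\theta(r))\bigl[h(r+\varepsilon)-h(r)\bigr]=\int_r^{r+\varepsilon}S(\theta(r))v(q)\,dq .
\end{equation*}
Because $v$ is Bochner integrable in $E_{3/2,3/2}$, Lebesgue points of $v$ in $E_{3/2,3/2}$ exist almost everywhere. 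At such points the quotient converges to $S(\theta(r))v(r)|_I$, using again the uniform bound to control $S(\theta(r))[v(q)-v(r)]$.

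\textbf{Step 3 (absolute continuity).} The same two estimates bound the total increment by
\begin{equation*}
C\Bigl(|\theta(r+\varepsilon)-\theta(r)|+\int_r^{r+\varepsilon}\|v\|_{E_{3/2,3/2}}\Bigr),
\end{equation*}
which yields absolute continuity of $r\mapsto S(\theta(r))h(r)|_I$ in $L^\infty(I)$.

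\textbf{Step 4 (replacement).} The replacement of $LS(\theta(r))h(r)$ by $S(\theta(r))Lh(r)$ is immediate from \eqref{eq:fixed-commutation-E}, since $h(r)\in E_{7/6,7/6}$.

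The main obstacle is the uniform-in-$\tau$ local convergence in Step 1. The data $h(r+\varepsilon)-h(r)$ converge to zero only locally, with merely $E_{7/6,7/6}$ control at $0$ and $\infty$, so the tails must be handled by the splitting argument rather than by a norm estimate.
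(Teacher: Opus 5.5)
Your splitting is the mirror image of the paper's: you put the moving operator $S(\theta(r+\varepsilon))$ in the time-change term and the frozen operator $S(\theta(r))$ on the increment of $h$. Steps~1, 3 and~4 are sound.

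The gap is in Step~2, at the sentence ``Because $v$ is Bochner integrable in $E_{3/2,3/2}$, Lebesgue points of $v$ in $E_{3/2,3/2}$ exist almost everywhere.'' The hypotheses do not give this. They give only that $v|_J$ is Bochner integrable in $L^\infty(J)$ for each compact $J$, and that the scalar function $q\mapsto\|v(q)\|_{E_{3/2,3/2}}$ is integrable. Since $E_{3/2,3/2}$ is non-separable, this does not make $v$ strongly measurable in $E_{3/2,3/2}$, and Lebesgue points in that norm need not exist at all.

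For example, take $v(q)(\omega)=\omega^{-3/2}\sin(q\omega)\mathbf 1_{\{\omega\ge1\}}$ and $h(r)=\int_a^r v(q)\,dq=\omega^{-5/2}(\cos(a\omega)-\cos(r\omega))\mathbf 1_{\{\omega\ge1\}}$. These satisfy every hypothesis; $v|_J$ is even Lipschitz into $L^\infty(J)$. Yet $\|v(q)-v(r)\|_{E_{3/2,3/2}}=\sup_{\omega\ge1}|\sin(q\omega)-\sin(r\omega)|\ge1$ for all $q\neq r$. So your quantity $\varepsilon^{-1}\int_r^{r+\varepsilon}\|v(q)-v(r)\|_{E_{3/2,3/2}}\,dq$ tends to zero at no $r$.

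This is not an artificial case. In Step~6 of the proof of Theorem~\ref{thm:linear}, $v$ is a bounded multiple of $Lu(r)$ for an integral solution $u$. That $u$ is only known to be continuous in each local $L^\infty(I)$ and bounded in $\mathcal Z$, so $E_{3/2,3/2}$-measurability of $v$ is unavailable exactly where the lemma is used.

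The repair is the tail-splitting you reserved for Step~1. For almost every $r$, $r$ is a Lebesgue point of $v|_{[1/n,n]}$ in $L^\infty([1/n,n])$ for every $n$, and also of the scalar function $\|v(\cdot)\|_{E_{3/2,3/2}}$. At such $r$, set $g_\varepsilon=\varepsilon^{-1}(h(r+\varepsilon)-h(r))-v(r)$. Then $g_\varepsilon\to0$ in every $L^\infty(J)$ and stays bounded in $E_{3/2,3/2}$. So \eqref{eq:fixed-critical-local-convergence} gives $S(\theta(r))g_\varepsilon\to0$ in $L^\infty(I)$, and your Fubini step becomes unnecessary.

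Conversely, Step~1 needs no splitting. The integral identity gives the global bound $\|h(r+\varepsilon)-h(r)\|_{E_{3/2,3/2}}\le\int_r^{r+\varepsilon}\|v(q)\|_{E_{3/2,3/2}}\,dq\to0$. By the differentiated-kernel representation \eqref{eq:fixed-local-equation} together with \eqref{eq:fixed-absolute-local}, $\|LS(\tau)d\|_{L^\infty(I)}\lesssim\|d\|_{E_{3/2,3/2}}$ for $d\in E_{7/6,7/6}\cap E_{3/2,3/2}$, uniformly for $\tau\in[m,M]$. So the ``only local convergence'' obstacle lies in $v(q)-v(r)$, not in $h(r+\varepsilon)-h(r)$.

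With this change your route is a valid alternative. The paper's operator $S(\theta(r+q))$ moves, so it handles the same issue with the uniform-in-$t\in[b,B]$ form of \eqref{eq:fixed-critical-local-convergence}. It then needs \eqref{eq:fixed-critical-continuity} for $[S(\theta(r+q))-S(\theta(r))]v(r)$.
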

\begin{proof}
The local integral identity and the bound on $v$ imply
\begin{equation}\label{eq:appendix-critical-increments}
 \|h(r_2)-h(r_1)\|_{E_{3/2,3/2}}
 \le\int_{r_1}^{r_2}\|v(q)\|_{E_{3/2,3/2}}\,dq,
 \qquad a\le r_1<r_2\le b.
\end{equation}

For almost every $r\in(a,b)$, $\theta'(r)$ exists and
the local integral identity for $h$ gives
\begin{equation*}
	\frac{h(r+q)-h(r)}q\longrightarrow v(r)
	\quad\text{in every local }L^\infty\text{ space as }q\to0.
\end{equation*}
At such $r$, \eqref{eq:appendix-critical-increments}
also bounds the above difference quotients in $E_{3/2,3/2}$
for sufficiently small $|q|$. Write
\begin{equation*}
	S(\theta(r+q))\frac{h(r+q)-h(r)}q-S(\theta(r))v(r)
\end{equation*}
as
\begin{equation*}
	S(\theta(r+q))
	\left[\frac{h(r+q)-h(r)}q-v(r)\right]
	+[S(\theta(r+q))-S(\theta(r))]v(r)
\end{equation*}
The first term tends to zero in $L^\infty(I)$ by
\eqref{eq:fixed-critical-local-convergence}, since
the bracket converges locally to zero and is bounded
in $E_{3/2,3/2}$.
The second tends to zero by
\eqref{eq:fixed-critical-continuity} and
$\theta(r+q)\to\theta(r)$.
Hence
\begin{equation*}
	S(\theta(r+q))\frac{h(r+q)-h(r)}q\Big|_I
	\longrightarrow S(\theta(r))v(r)|_I
	\quad\text{in }L^\infty(I).
\end{equation*}
For the fixed function $h(r)\in E_{7/6,7/6}$,
Lemma~\ref{lem:appendix-local-equation} and the chain rule give
\begin{equation*}
	\frac{S(\theta(r+q))-S(\theta(r))}{q}h(r)\Big|_I
	\longrightarrow\theta'(r)LS(\theta(r))h(r)|_I.
\end{equation*}
Adding these two limits proves
\eqref{eq:appendix-product-rule} for almost every $r$.

For absolute continuity, the positive-time bounds in
Lemmas~\ref{lem:appendix-absolute-kernel} and
\ref{lem:appendix-positive-continuity} yield
\begin{equation*}
		\|S(\theta(r_2))h(r_2)-S(\theta(r_1))h(r_1)\|_{L^\infty(I)}
		\lesssim
		\sup_{[a,b]}\|h(r)\|_{E_{7/6,7/6}}
		|\theta(r_2)-\theta(r_1)|
		+
		\int_{r_1}^{r_2}\|v(q)\|_{E_{3/2,3/2}}\,dq.
\end{equation*}
Since $\theta$ is Lipschitz and
$\|v(\cdot)\|_{E_{3/2,3/2}}$ is integrable,
this proves absolute continuity.

\end{proof}

We next justify the series construction in Section~\ref{sec:linear}.
Use $Q(t,s)$ for the bracket in \eqref{eq:Q}, and retain $V_1,V_n$
from \eqref{eq:defect} and \eqref{eq:R}. Products denote composition.
Set
\begin{equation}\label{eq:appendix-UN-definition}
 U_N(t,s)g=Q(t,s)g+\sum_{n=1}^N\int_s^t Q(t,r)V_n(r,s)g\,dr,
 \qquad t>s,
\end{equation}
and $U_N(s,s)g=g$, with the sum empty for $N=0$.
Throughout the remaining lemmas, we assume $|a_j(t)-1|\le\eta_0$ for
$t\ge0$ and $j\in\{0,\infty\}$, where $\eta_0>0$ is small enough. 

\begin{lemma}[The integral identity for finite sums]\label{lem:appendix-finite-sums}
Let $g\in\mathcal Z\cup Y$. The integrals defining $V_n(t,s)g$
are Bochner integrals in $Y$, and the integrals defining $U_N(t,s)g$
are Bochner integrals in $\mathcal Z$.
For every $N\ge0$, every compact interval $I\subset(0,\infty)$,
and every $t\ge s$,
\begin{equation}\label{eq:appendix-finite-integral}
 U_N(t,s)g|_I-g|_I+\int_s^tV_{N+1}(r,s)g|_I\,dr
 =\int_s^t\big([a_0(r)^2(1-\chi)+a_\infty(r)^2\chi]LU_N(r,s)g\big)|_I\,dr.
\end{equation}
Both integrals in this identity are Bochner integrals in $L^\infty(I)$.
\end{lemma}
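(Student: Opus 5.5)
The plan is to argue by induction on $N$, after first recording Bochner integrability. For integrability, fix $g\in\mathcal Z\cup Y$. The map $r\mapsto V_1(t,r)$ is continuous in operator norm on $s<r<t$: it combines the multiplication and cutoff operators of Lemma~\ref{lem:commutator} with the difference $S(\int_r^t a_\infty^2)-S(\int_r^t a_0^2)$. That difference depends continuously on $r$ through the Lipschitz time arguments, by \eqref{eq:fixed-Z-difference} and \eqref{eq:kernel-Y-continuity}. The coefficients $a_j(t)$ enter only as scalars at the fixed time $t$. Hence $r\mapsto V_1(t,r)V_n(r,s)g$ is strongly measurable into $Y$. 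The bounds \eqref{eq:Ebounds}, \eqref{eq:Rbounds} and \eqref{eq:convolution} make its $Y$-norm integrable, so each $V_n(t,s)g$ is a Bochner integral in $Y$. Similarly, $\|Q(t,r)\|_{Y\to\mathcal Z}\lesssim k(t-r)$ gives Bochner integrability in $\mathcal Z$ of $r\mapsto Q(t,r)V_n(r,s)g$.

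The base case $N=0$ is the integral form of the Step 2 computation. The plan is to apply Lemma~\ref{lem:appendix-initial-trace} and the chain rule to $S(\int_s^t a_j^2)g$. Since $\theta_j(t)=\int_s^t a_j^2$ is Lipschitz with $\theta_j(s)=0$, the integral identity \eqref{eq:fixed-local-integral-zero} transfers to give
\[
 S(\theta_j(t))g|_I=g|_I+\int_s^t a_j(r)^2 LS(\theta_j(r))g|_I\,dr .
\]
Multiplying by $1-\chi$ and $\chi$ and summing, and then using the commutator identities from Lemma~\ref{lem:commutator} in the form $(1-\chi)L=L(1-\chi)+(L\chi-\chi L)$, turns the integrand into $[a_0^2(1-\chi)+a_\infty^2\chi]LQ(r,s)g$ minus $V_1(r,s)g$. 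This is exactly \eqref{eq:appendix-finite-integral} with $N=0$. One point needs checking: $L$ acts on $S(\theta)g\in\mathcal Z\subset E_{7/6,7/6}$, so Lemma~\ref{lem:appendix-L-mapping} makes every term locally bounded. Near $r=s$ the terms are integrable by \eqref{eq:fixedYY} for $Y$ data and by \eqref{eq:fixed-L-E} for $\mathcal Z$ data.

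For the inductive step, apply the $N=0$ identity with initial time $q$ and datum $V_n(q,s)g\in Y$:
\[
 Q(r,q)V_n(q,s)g|_I=V_n(q,s)g|_I+\int_q^r\Bigl([\cdots]LQ(p,q)-V_1(p,q)\Bigr)V_n(q,s)g|_I\,dp .
\]
Then integrate in $q\in(s,r)$ and interchange the $q$ and $p$ integrals by Fubini in $L^\infty(I)$. This produces $\int_s^t V_n(p,s)g\,dp$, and, by the recursion \eqref{eq:R}, the term $-\int_s^t V_{n+1}(p,s)g\,dp$. It also produces the desired term $\int_s^t[\cdots]L\int_s^p Q(p,q)V_n(q,s)g\,dq\,dp$. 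Summing over $n\le N$ telescopes to $\int V_{N+1}$.

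The main obstacle is justifying this Fubini step and passing $L$ inside the $q$-integral. This needs the absolute bound
\[
 \int_s^t\int_q^t\|LQ(p,q)V_n(q,s)g\|_{L^\infty(I)}\,dp\,dq<\infty .
\]
Lemma~\ref{lem:appendix-L-mapping} and \eqref{eq:envelope} give $\|LQ(p,q)h\|_{L^\infty(I)}\lesssim_I\|Q(p,q)h\|_{\mathcal Z}\lesssim k(p-q)\|h\|_Y$, which combined with \eqref{eq:Rbounds} and \eqref{eq:convolution} closes the bound, as in the proof of Lemma~\ref{lem:forcing-P}. Since $L:\mathcal Z\to L^\infty(I)$ is bounded, it commutes with the Bochner integral in $\mathcal Z$.
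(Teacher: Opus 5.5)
Your proposal is correct and follows essentially the same route as the paper. You derive the $N=0$ identity from the local equation for $S(\int_s^t a_j^2)g$ combined with the Step~2 commutator algebra, then apply it with initial time $q$ and datum $V_n(q,s)g\in Y$, integrate in $q$, exchange the integrals by Fubini, and telescope via \eqref{eq:R}. The only differences are minor and harmless: you transfer \eqref{eq:fixed-local-integral-zero} by a change of variables rather than differentiating on $[s+\varepsilon,t]$ and letting $\varepsilon\downarrow0$, and you justify Fubini with $\|LQ(p,q)h\|_{L^\infty(I)}\lesssim_I k(p-q)\|h\|_Y$ instead of the paper's split at $q-r=1/2$.
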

\begin{proof}
For existence of the integrals in \eqref{eq:R}
and \eqref{eq:U}, the required norm integrability
was established in Steps 2 and 3 of
Theorem~\ref{thm:linear}.

For the integral identity \eqref{eq:appendix-finite-integral},
fix $g\in\mathcal Z\cup Y$ and $t>s$.
Apply the positive-time chain rule to each term of $Q(q,s)g$
for $s+\varepsilon\le q\le t$, where
$0<\varepsilon<t-s$.
Lemma~\ref{lem:appendix-Z-differentiability}
justifies this with values in $\mathcal Z$.
The algebra in Step~2 of Theorem~\ref{thm:linear} gives
\begin{equation}\label{asdf}
	\frac{d}{dq}(Q(q,s)g|_I)
	=
	\big(
	[a_0(q)^2(1-\chi)+a_\infty(q)^2\chi]LQ(q,s)g
	-V_1(q,s)g
	\big)|_I
\end{equation}
for almost every $q\in(s,t)$.
For $g\in\mathcal Z$, $LQ(q,s)g$ is locally bounded
uniformly near $s$ by \eqref{eq:kernel-ZZ},
\eqref{eq:fixed-L-E}, and Lemma~\ref{lem:commutator}.
For $g\in Y$, use \eqref{eq:fixedYY}
and \eqref{eq:fixed-L-Y} when $q-s\le1/2$,
and \eqref{eq:kernel-YZ} away from $s$.
Since $V_1(q,s)g$ has an integrable bound
by \eqref{eq:Ebounds}, then the right-hand side of (\ref{asdf}) is locally integrable down to $s$.
The local trace of $Q$ follows from
Lemma~\ref{lem:appendix-initial-trace}.
Integration over $[s+\varepsilon,t]$ and passage
$\varepsilon\downarrow0$ therefore give
\begin{equation}\label{eq:appendix-Q-integral}
	Q(t,s)g|_I-g|_I
	=
	\int_s^t
	\big(
	[a_0(q)^2(1-\chi)+a_\infty(q)^2\chi]LQ(q,s)g
	-V_1(q,s)g
	\big)|_I\,dq.
\end{equation}

For the integral with index $n$ in \eqref{eq:U},
we will integrate \eqref{eq:appendix-Q-integral},
with initial time $r$ and datum $V_n(r,s)g\in Y$,
over $s<r<t$.
We first justify exchanging the resulting time integrals.

For $q-r\le1/2$, use \eqref{eq:Q},
\eqref{eq:kernel-YY-absolute}, and \eqref{eq:fixed-L-Y};
for $q-r>1/2$, use \eqref{eq:Q},
\eqref{eq:kernel-YZ}, and \eqref{eq:fixed-L-E}.
These give
\begin{equation*}
	\|LQ(q,r)V_n(r,s)g\|_{L^\infty(I)}
	\lesssim_{I,t,s}\|V_n(r,s)g\|_Y.
\end{equation*}
Also, \eqref{eq:Ebounds} gives
\begin{equation*}
	\|V_1(q,r)V_n(r,s)g\|_Y
	\lesssim \eta_0 k(q-r)\|V_n(r,s)g\|_Y.
\end{equation*}
By \eqref{eq:Rbounds} and $k\in L^1$, these bounds
give norm integrability over $s<r<q<t$, which justifies Bochner Fubini.

We now integrate \eqref{eq:appendix-Q-integral}
with initial time $r$ and datum $V_n(r,s)g$ over
$s<r<t$, and use Fubini to write the time integrals
in the order $\int_s^t dq\int_s^q dr$.
By \eqref{eq:fixed-L-E}, $L$ commutes with
the inner $\mathcal Z$-valued Bochner integral,
with equality in $L^\infty(I)$.
Finally, \eqref{eq:R} identifies
\begin{equation*}
	\int_s^q V_1(q,r)V_n(r,s)g\,dr
	=V_{n+1}(q,s)g.
\end{equation*}
Substituting these identities gives
\begin{equation*}
	\left(\int_s^t Q(t,r)V_n(r,s)g\,dr\right)\Big|_I
	-
	\int_s^t
	\left(
	[a_0(q)^2(1-\chi)+a_\infty(q)^2\chi]
	L\int_s^q Q(q,r)V_n(r,s)g\,dr
	\right)\Big|_I\,dq
\end{equation*}
\begin{equation*}
	=
	\int_s^t[(V_n(q,s)-V_{n+1}(q,s))g]|_I\,dq.
\end{equation*}
Summing over $1\le n\le N$ and adding
\eqref{eq:appendix-Q-integral}, the cancellation
proves \eqref{eq:appendix-finite-integral}.

\end{proof}

To pass from this finite identity to $U$, we need
convergence also after integration in time.
\begin{lemma}[Convergence of the series]
\label{lem:appendix-series-convergence}
For $g\in\mathcal Z\cup Y$ and every $t>s$, the sequence
$U_N(t,s)g$ converges in $\mathcal Z$.
Denote its limit by $U(t,s)g$, and set $U(s,s)g=g$.
For every $s<t<\infty$,
\begin{equation*}
	U_N(\cdot,s)g\longrightarrow U(\cdot,s)g
	\quad\text{in }L^1((s,t);\mathcal Z),
\end{equation*}
and
\begin{equation*}
	V_{N+1}(\cdot,s)g\longrightarrow0
	\quad\text{in }L^1((s,t);Y).
\end{equation*}

\end{lemma}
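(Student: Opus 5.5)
The plan is to reduce everything to the operator-norm bounds already established in Steps 2 and 3 of the proof of Theorem~\ref{thm:linear}, namely \eqref{asd}, \eqref{eq:Rsum}, and the bounds $\|Q(t,s)\|_{\mathcal Z\to\mathcal Z}\lesssim1$ and $\|Q(t,s)\|_{Y\to\mathcal Z}\lesssim k(t-s)$. Fix $g\in\mathcal Z\cup Y$ and $t>s$. For the pointwise-in-time convergence of $U_N(t,s)g$ in $\mathcal Z$, I will show that the series of terms $\int_s^t Q(t,r)V_n(r,s)g\,dr$ converges absolutely in $\mathcal Z$. The $n$-th term has $\mathcal Z$-norm at most
\begin{equation*}
\int_s^t k(t-r)\,\|V_n(r,s)g\|_Y\,dr .
\end{equation*}
For $g\in\mathcal Z$ I use the first bound in \eqref{eq:Rbounds}, namely $(C\eta)^n\|k\|_{L^1}^{n-1}\|g\|_{\mathcal Z}$, which gives a summable geometric tail $\lesssim (C\eta\|k\|_{L^1})^{n}\|g\|_{\mathcal Z}$. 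For $g\in Y$ I use the second bound, which gives $(C\eta)^n(2^{2+\kappa}\|k\|_{L^1})^{n-1}k(r-s)\|g\|_Y$; then the convolution estimate \eqref{eq:convolution} yields a tail $\lesssim 4^{-n}k(t-s)\|g\|_Y$ by \eqref{eq:eta}. Completeness of $\mathcal Z$ then gives the limit $U(t,s)g$, with a Cauchy tail bound
\begin{equation*}
\|U(t,s)g-U_N(t,s)g\|_{\mathcal Z}\lesssim 4^{-N}\bigl(\|g\|_{\mathcal Z}+k(t-s)\|g\|_Y\bigr),
\end{equation*}
where the first summand is used for $\mathcal Z$ data and the second for $Y$ data.

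For the $L^1((s,t);\mathcal Z)$ convergence, I integrate this tail bound in the time variable over $(s,t)$. The $\mathcal Z$ case gives $(t-s)4^{-N}\|g\|_{\mathcal Z}\to0$. The $Y$ case gives $4^{-N}\|k\|_{L^1}\|g\|_Y\to0$, since $k$ is integrable near $0$. Strong measurability of $r\mapsto U_N(r,s)g$ into $\mathcal Z$ holds because these functions are built from Bochner integrals (Lemma~\ref{lem:appendix-finite-sums}) and from $Q(\cdot,s)g$, which is continuous for positive times by Lemma~\ref{lem:appendix-Z-differentiability}. The limit $U(\cdot,s)g$ is then measurable as an a.e.\ pointwise limit, and it is in fact a uniform limit on each $[s+\varepsilon,t]$.

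Similarly, for $V_{N+1}(\cdot,s)g\to0$ in $L^1((s,t);Y)$, \eqref{eq:Rbounds} gives
\begin{equation*}
\int_s^t\|V_{N+1}(r,s)g\|_Y\,dr\le 4^{-N}\,C\eta\,\|k\|_{L^1}\,\|g\|_Y
\end{equation*}
for $g\in Y$. For $g\in\mathcal Z$ the same integral is at most $(t-s)(C\eta)^{N+1}\|k\|_{L^1}^N\|g\|_{\mathcal Z}$; both tend to zero.

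The only delicate point is the $Y$-data case near $r=s$. There the pointwise bound blows up like $k(r-s)$, so the convergence is not uniform in time, and integrability must come from $k\in L^1$ rather than from a supremum. Once the tail bounds are written with the explicit factor $k(r-s)$, the argument is a routine geometric-series estimate.
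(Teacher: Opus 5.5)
Your proposal is correct and follows essentially the same route as the paper. Both arguments take the geometric tails from \eqref{eq:Rbounds} and \eqref{eq:eta}, compose with $Q(t,r)\colon Y\to\mathcal Z$ using the convolution estimate \eqref{eq:convolution} to get $4^{-N}\|g\|_{\mathcal Z}$ and $4^{-N}k(t-s)\|g\|_Y$ tails, and then integrate in time using $k\in L^1$; your remark on strong measurability covers a detail the paper leaves implicit.
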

\begin{proof}
The geometric estimates \eqref{eq:Rbounds} and
\eqref{eq:eta} give
\begin{equation*}
	\sum_{n>N}\|V_n(r,s)\|_{\mathcal Z\to Y}
	\lesssim 4^{-N},
	\qquad
	\sum_{n>N}\|V_n(r,s)\|_{Y\to Y}
	\lesssim 4^{-N}k(r-s).
\end{equation*}
Multiplication by $Q(t,r):Y\to\mathcal Z$ and
integration over $s<r<t$ show that the series
defining $U(t,s)g$ converges in $\mathcal Z$, with
\begin{equation*}
	\|(U-U_N)(t,s)g\|_{\mathcal Z}
	\lesssim 4^{-N}\|g\|_{\mathcal Z},
	\qquad g\in\mathcal Z,
\end{equation*}
and
\begin{equation*}
	\|(U-U_N)(t,s)g\|_{\mathcal Z}
	\lesssim 4^{-N}k(t-s)\|g\|_Y,
	\qquad g\in Y,
\end{equation*}
using \eqref{eq:convolution} in the second estimate.

For convergence in $L^1((s,t);\mathcal Z)$,
apply these bounds at time $\tau$ and integrate over
$s<\tau<t$:
\begin{equation*}
	\int_s^t\|(U-U_N)(\tau,s)g\|_{\mathcal Z}\,d\tau
	\lesssim 4^{-N}(t-s)\|g\|_{\mathcal Z},
	\qquad g\in\mathcal Z,
\end{equation*}
\begin{equation*}
	\int_s^t\|(U-U_N)(\tau,s)g\|_{\mathcal Z}\,d\tau
	\lesssim 4^{-N}\|k\|_{L^1}\|g\|_Y,
	\qquad g\in Y.
\end{equation*}
Both bounds tend to zero.
The bounds on $V_{N+1}$ in \eqref{eq:Rbounds},
integrated over the same interval, prove convergence
to zero in $L^1((s,t);Y)$.

\end{proof}

We can now pass to the limit and verify the equation and trace for $U$.
\begin{lemma}[The local integral equation for $U(t,s)$]
\label{lem:appendix-U-equation}
Let $g\in\mathcal Z\cup Y$. For every compact interval $I\subset(0,\infty)$,
\begin{equation*}U(\cdot,s)g|_I\in C([s,\infty);L^\infty(I)),\end{equation*}
and
\begin{equation}\label{eq:appendix-U-integral}
 U(t,s)g|_I=g|_I+
 \int_s^t\big([a_0(r)^2(1-\chi)+a_\infty(r)^2\chi]LU(r,s)g\big)|_I\,dr,
 \quad t\ge s.
\end{equation}
The integral is a Bochner integral in $L^\infty(I)$.
For almost every $t>s$, the strong derivative in $L^\infty(I)$
exists and satisfies
\begin{equation*}
 \frac{d}{dt}(U(t,s)g|_I)
 =\big([a_0(t)^2(1-\chi)+a_\infty(t)^2\chi]LU(t,s)g\big)|_I.
\end{equation*}
Moreover,
\begin{equation}\label{qwqw}
	\|U(t,s)\|_{Y\to Y}\lesssim 1,\qquad0<t-s\le1/2.
\end{equation}
\end{lemma}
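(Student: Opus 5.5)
The plan is to pass to the limit $N\to\infty$ in the finite identity \eqref{eq:appendix-finite-integral} of Lemma~\ref{lem:appendix-finite-sums}, using Lemma~\ref{lem:appendix-series-convergence}. Fix $g\in\mathcal Z\cup Y$, $t>s$ and a compact interval $I$. By Lemma~\ref{lem:appendix-series-convergence}, $U_N(t,s)g\to U(t,s)g$ in $\mathcal Z$, hence in $L^\infty(I)$ by \eqref{eq:envelope}. Also $\int_s^t\|V_{N+1}(r,s)g\|_Y\,dr\to0$, so the middle term of \eqref{eq:appendix-finite-integral} tends to zero in $L^\infty(I)$. For the right-hand side, $L:\mathcal Z\to L^\infty(I)$ is bounded by \eqref{eq:fixed-L-E} and $\mathcal Z\subset E_{7/6,7/6}$, and the coefficient is bounded. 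The $L^1((s,t);\mathcal Z)$ convergence of $U_N(\cdot,s)g$ therefore gives convergence of the Bochner integrals in $L^\infty(I)$. This proves \eqref{eq:appendix-U-integral} for $t>s$; the case $t=s$ is trivial. Strong measurability of $r\mapsto LU(r,s)g|_I$ follows since it is an $L^1$ limit of the measurable functions $LU_N(r,s)g|_I$.

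Continuity and the a.e.\ derivative then follow from the integral identity once the integrand is shown to be locally Bochner integrable down to $r=s$.
\begin{itemize}
\item For $g\in\mathcal Z$, \eqref{eq:L1} gives $\|LU(r,s)g\|_{L^\infty(I)}\lesssim_I\|g\|_{\mathcal Z}$ uniformly.
\item For $g\in Y$, use \eqref{eq:L2} away from $s$. Near $s$, \eqref{qwqw} together with \eqref{eq:fixed-L-Y} gives a bound uniform on $0<r-s\le1/2$.
\end{itemize}
Hence the integrand lies in $L^\infty_{\mathrm{loc}}([s,\infty);L^\infty(I))$. The integral identity then yields $U(\cdot,s)g|_I\in C([s,\infty);L^\infty(I))$, which includes the initial trace. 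The Lebesgue differentiation theorem for Bochner integrals gives the strong derivative for a.e.\ $t$.

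It remains to prove \eqref{qwqw}, and I would do this first, since the $Y$ case of the argument above uses it. Write $U=Q+\sum_n\int_s^tQ(t,r)V_n(r,s)\,dr$.
\begin{itemize}
\item For $0<t-s\le1/2$, the time arguments $\int_s^ta_j^2$ are at most $1$ because $\eta_0\le1/4$. So \eqref{eq:fixedYY} and boundedness of multiplication by $\chi,1-\chi$ on $Y$ give $\|Q(t,s)\|_{Y\to Y}\lesssim1$.
\item For the series, apply the same bound to $Q(t,r)$ with $t-r\le1/2$. By \eqref{eq:Rsum}, the contribution is bounded by $\int_s^t\eta_0 k(r-s)\,dr\lesssim\eta_0\|k\|_{L^1}$.
\end{itemize}
The only subtle point is that these series and integrals must converge as Bochner integrals in $Y$, not merely in $\mathcal Z$. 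The $V_n$ are $Y$-valued Bochner integrals by Lemma~\ref{lem:appendix-finite-sums}, and the integrand $Q(t,r)V_n(r,s)g$ is $Y$-valued with integrable norm. The $Y$-valued limit then agrees with the $\mathcal Z$-valued one, since both embed into $L^\infty_{\mathrm{loc}}$. I expect this identification of limits, and the bookkeeping of measurability, to be the main (if mild) obstacle. The rest is a direct limit passage.
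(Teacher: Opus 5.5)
Your proposal is correct and follows the paper's proof in all essentials. You pass $N\to\infty$ in \eqref{eq:appendix-finite-integral} via Lemma~\ref{lem:appendix-series-convergence}, read off continuity and the a.e.\ derivative from the integral identity, and get \eqref{qwqw} from the short-time bound $\|Q(t,r)\|_{Y\to Y}\lesssim1$ (via \eqref{eq:fixedYY}) together with \eqref{eq:Rsum}.

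There are two minor differences. First, the paper gets integrability of the integrand near $s$ directly from $q\mapsto\|U(q,s)g\|_{\mathcal Z}\in L^1((s,t))$, so it does not need \eqref{qwqw} first. Second, it bounds the finite sums $U_N(t,s)g$ in $Y$ and then lets $N\to\infty$. That avoids the $Y$-valued Bochner measurability issue you flag, because only a norm bound is needed there, not a $Y$-valued integral.
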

\begin{proof}
By Lemma~\ref{lem:appendix-series-convergence},
\eqref{eq:fixed-L-E}, the boundedness of $a_j$,
and the embeddings $\mathcal Z,Y\subset L^\infty(I)$,
letting $N\to\infty$ in
\eqref{eq:appendix-finite-integral} gives
\eqref{eq:appendix-U-integral} in $L^\infty(I)$.

For the trace and continuity, the identity \eqref{eq:appendix-U-integral} gives,
for $s\le t_1<t_2$,
\begin{equation*}
	\|(U(t_2,s)g-U(t_1,s)g)\|_{L^\infty(I)}
	\lesssim_I
	\int_{t_1}^{t_2}\|U(q,s)g\|_{\mathcal Z}\,dq.
\end{equation*}
By Lemma~\ref{lem:appendix-series-convergence},
the integrand belongs to $L^1((s,t))$ for every $t>s$.
Consequently,
\begin{equation*}
	U(\cdot,s)g|_I\in C([s,\infty);L^\infty(I)),
	\qquad
	\lim_{\tau\downarrow s}
	\|U(\tau,s)g-g\|_{L^\infty(I)}=0.
\end{equation*}
Bochner differentiation of
\eqref{eq:appendix-U-integral} yields
\begin{equation*}
	\frac{d}{dt}(U(t,s)g|_I)
	=
	\big(
	[a_0(t)^2(1-\chi)+a_\infty(t)^2\chi]
	LU(t,s)g
	\big)|_I
\end{equation*}
for almost every $t>s$.

For \eqref{qwqw}, let $g\in Y$ and
$0<t-s\le1/2$.
For $s\le r<t$, the coefficient bounds imply
\begin{equation*}
	0<\int_r^t a_j(q)^2\,dq\le1,
	\qquad j\in\{0,\infty\}.
\end{equation*}
Hence \eqref{eq:Q} and \eqref{eq:fixedYY} give
\begin{equation}\label{qw}
	\|Q(t,r)\|_{Y\to Y}\lesssim1.
\end{equation}
Applying \eqref{eq:Rsum} and (\ref{qw}) to
\eqref{eq:appendix-UN-definition} gives
\begin{equation*}
	\|U_N(t,s)g\|_Y\lesssim
	\|g\|_Y+
	\int_s^t\sum_{n=1}^N\|V_n(r,s)g\|_Y\,dr
	\lesssim
	\left(1+\eta_0\int_s^t k(r-s)\,dr\right)\|g\|_Y
	\lesssim\|g\|_Y.
\end{equation*}
From Lemma~\ref{lem:appendix-series-convergence}, one can pass
$N\to\infty$ in the above estimate, which yields
\begin{equation*}
	\|U(t,s)g\|_Y\lesssim\|g\|_Y,
	\qquad 0<t-s\le1/2.
\end{equation*}
This proves \eqref{qwqw}.

\end{proof}

We finally justify the inhomogeneous formula, first as a
$\mathcal Z$-valued integral and then as a local integral solution.
\begin{lemma}[The Duhamel integral in $\mathcal Z$]\label{lem:appendix-Duhamel-bound}
Let $0<T<\infty$, $z\in\mathcal Z$, and
$g\in L^\infty((0,T);Y)$.
For $0<t\le T$, define
\begin{equation*}
 u(t)=U(t,0)z+\int_0^tU(t,s)g(s)\,ds,
\end{equation*}
and set $u(0)=z$. The integral is a Bochner integral in $\mathcal Z$.
Moreover, we have
\begin{equation*}
 \sup_{0<t\le T}\|u(t)\|_{\mathcal Z}
 \lesssim \|z\|_{\mathcal Z}+\|k\|_{L^1(0,\infty)}
 \|g\|_{L^\infty((0,T);Y)}.
\end{equation*}
\end{lemma}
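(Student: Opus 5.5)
The homogeneous term $U(t,0)z$ is already an element of $\mathcal Z$ by Lemma~\ref{lem:appendix-series-convergence}. By \eqref{eq:L1} it satisfies $\|U(t,0)z\|_{\mathcal Z}\lesssim\|z\|_{\mathcal Z}$ uniformly in $t$. The substance of the lemma is therefore the Duhamel integral, and the plan is to show that $s\mapsto U(t,s)g(s)$ is strongly measurable from $(0,t)$ into $\mathcal Z$ with an integrable norm.

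\textbf{Integrability.} By \eqref{eq:L2},
\begin{equation*}
\|U(t,s)g(s)\|_{\mathcal Z}\lesssim k(t-s)\|g\|_{L^\infty((0,T);Y)}
\end{equation*}
for almost every $s\in(0,t)$. Since $\int_0^t k(t-s)\,ds\le\|k\|_{L^1(0,\infty)}$, the norm is integrable. Once measurability is known, Bochner's criterion gives existence of the integral, and the triangle inequality for Bochner integrals gives the stated bound after taking the supremum over $0<t\le T$.

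\textbf{Measurability.} This is the step I expect to be the main obstacle. The map $s\mapsto U(t,s)$ is not known to be continuous in operator norm, and $g$ is only strongly measurable into $Y$. I would proceed in three stages.

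\emph{Simple forces.} Approximate $g$ by simple functions $g_m=\sum_i\mathbf 1_{E_i}y_i$ with $y_i\in Y$ and $g_m\to g$ in $Y$ almost everywhere. For such $g_m$ it suffices to show that $s\mapsto U(t,s)y$ is strongly measurable into $\mathcal Z$ for each fixed $y\in Y$.

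\emph{Fixed datum.} I would use the series \eqref{eq:U}. The map $s\mapsto Q(t,s)y$ is continuous on $(0,t)$ into $\mathcal Z$, because $s\mapsto\int_s^t a_j^2$ is Lipschitz and $S(\cdot)y\in C^1((0,\infty);\mathcal Z)$ by Lemma~\ref{lem:appendix-Z-differentiability}. The same reasoning gives continuity of $s\mapsto V_1(t,s)y$ into $Y$. This uses that $L\chi-\chi L$ and $(1-\chi)\chi L$ are bounded from $\mathcal Z$ to $Y$ (Lemma~\ref{lem:commutator}). The coefficients $a_j(t)$ are frozen at the fixed time $t$, so they do not affect measurability in $s$. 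Each $V_n(t,s)y$ and each term $\int_s^tQ(t,r)V_n(r,s)y\,dr$ is then a parametric Bochner integral of jointly measurable, norm-dominated integrands, hence measurable in $s$. The finite sums $U_N(t,s)y$ are therefore measurable in $s$. They converge to $U(t,s)y$ in $\mathcal Z$ for every $s$ by Lemma~\ref{lem:appendix-series-convergence}, and a pointwise limit of strongly measurable functions is strongly measurable.

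\emph{General forces.} The uniform operator bound \eqref{eq:L2} gives
\begin{equation*}
\|U(t,s)g_m(s)-U(t,s)g(s)\|_{\mathcal Z}\lesssim k(t-s)\|g_m(s)-g(s)\|_Y\longrightarrow0
\end{equation*}
for almost every $s$. Hence $s\mapsto U(t,s)g(s)$ is an almost-everywhere limit of strongly measurable maps, which yields strong measurability.

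If the joint-measurability bookkeeping in the series becomes heavy, I would fall back on separability. Pettis' theorem requires weak measurability and essential separability of the range. Essential separability follows because the range lies in the closed span of the countably many values $U_N(t,s_q)y_i$, with $s_q$ rational. Weak measurability then follows from the same pointwise limits.
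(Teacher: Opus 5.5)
Your proof is correct and follows the paper's argument: the paper bounds the Duhamel term using \eqref{eq:L2} together with $\int_0^t k(t-s)\,ds\le\|k\|_{L^1(0,\infty)}$, and the homogeneous term using \eqref{eq:L1}. The paper takes strong measurability of $s\mapsto U(t,s)g(s)$ for granted, so your series-based measurability argument is extra rigor rather than a different route; note that in the inner iterates of $V_n$, and in $\int_s^t Q(t,r)V_n(r,s)y\,dr$, the coefficients $a_j(r)$ appear at intermediate times $r$ and not only at $t$, which is harmless because they are measurable, and your separability fallback (which would additionally need continuity of $s\mapsto U_N(t,s)y$, not shown) is not needed.
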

\begin{proof}
By \eqref{eq:L2} and $k\in L^1(0,\infty)$,
\begin{equation*}
	\int_0^t\|U(t,s)g(s)\|_{\mathcal Z}\,ds
	\lesssim
	\|g\|_{L^\infty((0,t);Y)}
	\int_0^t k(t-s)\,ds
	\lesssim
	\|k\|_{L^1(0,\infty)}
	\|g\|_{L^\infty((0,t);Y)}.
\end{equation*}
Together with \eqref{eq:L1} for the homogeneous term,
this gives the stated bound.

\end{proof}

\begin{lemma}[The local integral equation for the Duhamel formula]
\label{lem:appendix-Duhamel-equation}
Under the assumptions of Lemma~\ref{lem:appendix-Duhamel-bound},
for every compact interval $I\subset(0,\infty)$,
\begin{equation*}
	u|_I\in C([0,T];L^\infty(I)),
\end{equation*}
and
\begin{equation}\label{eq:appendix-Duhamel-integral}
 u(t)|_I=z|_I+\int_0^t
 \big([a_0(r)^2(1-\chi)+a_\infty(r)^2\chi]Lu(r)+g(r)\big)|_I\,dr,
 \quad0\le t\le T.
\end{equation}
The integral is a Bochner integral in $L^\infty(I)$.
For almost every $t\in(0,T)$,
\begin{equation*}
 \frac{d}{dt}(u(t)|_I)
 =\big([a_0(t)^2(1-\chi)+a_\infty(t)^2\chi]Lu(t)+g(t)\big)|_I,
\end{equation*}
where the derivative is a strong derivative in $L^\infty(I)$.
Furthermore,
\begin{equation*}
 \|LU(r,s)g(s)\|_{L^\infty(I)}\lesssim_{I,T}\|g(s)\|_Y
\end{equation*}
for almost every $s\in(0,T)$ and every $s<r\le T$. Consequently,
\begin{equation*}
 \int_0^t\int_s^t\|LU(r,s)g(s)\|_{L^\infty(I)}\,dr\,ds
 \lesssim_{I,T}\int_0^t(t-s)\|g(s)\|_Y\,ds<\infty,
 \qquad0<t\le T.
\end{equation*}
\end{lemma}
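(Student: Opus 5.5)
The plan is to reduce everything to two facts already available: the integral identity \eqref{eq:appendix-U-integral} for $U(\cdot,s)g(s)$, and a uniform local bound on $LU(r,s)g(s)$. I will establish the bound first, since it yields the double-integral estimate and makes the Fubini exchange legitimate.

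For the bound on $\|LU(r,s)g(s)\|_{L^\infty(I)}$, split according to $r-s$. If $0<r-s\le1/2$, use \eqref{qwqw} to get $\|U(r,s)g(s)\|_Y\lesssim\|g(s)\|_Y$, and then \eqref{eq:fixed-L-Y}. If $r-s>1/2$, use \eqref{eq:L2}, which gives $\|U(r,s)g(s)\|_{\mathcal Z}\lesssim k(1/2)\|g(s)\|_Y$. Then apply \eqref{eq:fixed-L-E}, noting that $\mathcal Z\subset E_{7/6,7/6}$ by \eqref{eq:envelope}, together with boundedness of the $E_{3/2,3/2}$ weight on $I$. In both cases we get $\lesssim_{I,T}\|g(s)\|_Y$, and integrating over $s<r<t$ gives the stated double bound by $\int_0^t(t-s)\|g(s)\|_Y\,ds$. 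Measurability of $(r,s)\mapsto LU(r,s)g(s)|_I$ I would take from strong measurability of $g$ and continuity of $U$ in its arguments away from the diagonal. This is routine but should be mentioned.

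For \eqref{eq:appendix-Duhamel-integral}, write \eqref{eq:appendix-U-integral} with initial time $s$ and datum $g(s)$, evaluated at time $t$, and integrate over $s\in(0,t)$. The left side becomes the Duhamel integral restricted to $I$; this uses that restriction to $I$ is bounded from $\mathcal Z$ to $L^\infty(I)$ and hence commutes with the $\mathcal Z$-valued Bochner integral from Lemma~\ref{lem:appendix-Duhamel-bound}. On the right side we get $\int_0^tg(s)|_I\,ds$ plus a double integral over $0<s<r<t$, and the bound above allows Bochner Fubini to reorder it as $\int_0^t dr\int_0^r ds$. Next, $L$ can be moved inside the inner integral $\int_0^r U(r,s)g(s)\,ds$. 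Boundedness of $L:\mathcal Z\to L^\infty(I)$ covers the part where $r-s$ stays away from $0$; near the diagonal I will use $Y$-valued integrability, via \eqref{qwqw} and \eqref{eq:fixed-L-Y}, on $s\in(r-1/2,r)$. The multiplier $a_0(r)^2(1-\chi)+a_\infty(r)^2\chi$ is bounded on $I$ and independent of $s$, so it also passes through. Adding the homogeneous identity for $U(t,0)z$ from Lemma~\ref{lem:appendix-U-equation} then gives \eqref{eq:appendix-Duhamel-integral}.

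The remaining claims follow directly from the identity. The integrand lies in $L^1((0,T);L^\infty(I))$: the part $Lu(r)|_I$ is bounded by \eqref{eq:fixed-L-E} and the $\mathcal Z$ bound of Lemma~\ref{lem:appendix-Duhamel-bound}, and $g|_I$ is in $L^\infty$. This gives continuity of $u|_I$ on $[0,T]$ and the trace $z$, and Lebesgue differentiation for Bochner integrals gives the strong derivative almost everywhere. I expect the main obstacle to be exchanging $L$ with the inner integral near $s=r$, where $U(r,s)$ is only $Y\to Y$ bounded rather than $Y\to\mathcal Z$ with an integrable constant uniformly. I would handle this by splitting the inner integral at $s=r-1/2$ and using \eqref{eq:fixed-L-Y} on the near part.
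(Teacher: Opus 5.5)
Your proposal is correct and follows the paper's proof essentially step by step. You use the same $r-s\le 1/2$ versus $r-s>1/2$ split for the bound on $LU(r,s)g(s)$, integrate \eqref{eq:appendix-U-integral} over $s$, apply Fubini, commute $L$ with the inner integral, and add the homogeneous identity. Your near-diagonal split of $\int_0^r U(r,s)g(s)\,ds$ is unnecessary: \eqref{eq:L2} with $k\in L^1(0,\infty)$ already makes it a $\mathcal Z$-valued Bochner integral (as in Lemma~\ref{lem:appendix-Duhamel-bound}), so boundedness of $L:\mathcal Z\to L^\infty(I)$ via \eqref{eq:fixed-L-E} moves $L$ inside directly, which is what the paper does.
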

\begin{proof}
For the last two bounds, when $r-s\le1/2$, combine the $Y\to Y$
bound in Lemma~\ref{lem:appendix-U-equation} with
$L:Y\to L^\infty(I)$. When $r-s>1/2$, use \eqref{eq:L2},
$k(r-s)\le k(1/2)$, and $L:\mathcal Z\to L^\infty(I)$.
This proves the pointwise norm bound and its stated double integral.

For \eqref{eq:appendix-Duhamel-integral},
\eqref{eq:appendix-U-integral} gives
\begin{equation*}
	[U(t,s)g(s)-g(s)]|_I
	=
	\int_s^t
	\big(
	[a_0(r)^2(1-\chi)+a_\infty(r)^2\chi]
	LU(r,s)g(s)
	\big)|_I\,dr.
\end{equation*}
Integrating over $0<s<t$ yields
\begin{equation*}
	\left(\int_0^tU(t,s)g(s)\,ds\right)\Big|_I
	-
	\int_0^t g(s)|_I\,ds
	=
	\int_0^t\int_s^t
	\big(
	[a_0(r)^2(1-\chi)+a_\infty(r)^2\chi]
	LU(r,s)g(s)
	\big)|_I\,dr\,ds.
\end{equation*}
By Fubini, justified by the last bound, the right-hand side equals
\begin{equation*}
	\int_0^t\int_0^r
	\big(
	[a_0(r)^2(1-\chi)+a_\infty(r)^2\chi]
	LU(r,s)g(s)
	\big)|_I\,ds\,dr.
\end{equation*}
By \eqref{eq:fixed-L-E} and
$\mathcal Z\subset E_{7/6,7/6}$,
\begin{equation*}
	\int_0^r(LU(r,s)g(s))|_I\,ds
	=
	\left(
	L\int_0^rU(r,s)g(s)\,ds
	\right)\Big|_I.
\end{equation*}
Consequently,
\begin{equation*}
	\left(\int_0^tU(t,s)g(s)\,ds\right)\Big|_I
	=
	\int_0^t g(r)|_I\,dr
	+
	\int_0^t
	\left(
	[a_0(r)^2(1-\chi)+a_\infty(r)^2\chi]
	L\int_0^rU(r,s)g(s)\,ds
	\right)\Big|_I\,dr.
\end{equation*}
For the homogeneous term,
Lemma~\ref{lem:appendix-U-equation} gives
\begin{equation*}
	U(t,0)z|_I
	=
	z|_I+
	\int_0^t
	\big(
	[a_0(r)^2(1-\chi)+a_\infty(r)^2\chi]
	LU(r,0)z
	\big)|_I\,dr.
\end{equation*}
Adding these identities and using the Duhamel formula
for $u$ proves \eqref{eq:appendix-Duhamel-integral}.
Its Bochner integrand belongs to
$L^1((0,T);L^\infty(I))$, so
\begin{equation*}
	u(\cdot)|_I\in C([0,T];L^\infty(I)),
\end{equation*}
and
\begin{equation*}
	\frac{d}{dt}(u(t)|_I)
	=
	\big(
	[a_0(t)^2(1-\chi)+a_\infty(t)^2\chi]Lu(t)
	+g(t)
	\big)|_I
\end{equation*}
strongly in $L^\infty(I)$ for almost every $t\in(0,T)$.

\end{proof}

\clearpage
\section*{Acknowledgement}
The author thanks Prof. Alex Cohen for providing financial support to cover the costs of ChatGPT usage during the preparation of this work.

\section*{Conflict of interest statement}
The author does not have any possible conflict of interest.

\section*{Data availability statement}
The manuscript has no associated data.
\bibliographystyle{plain}
\bibliography{kz_stability_revised}
\end{document}